\newtheorem{thm}{Theorem}
\newtheorem{prop}[thm]{Proposition}
\newtheorem{lem}[thm]{Lemma}
\newtheorem{cor}[thm]{Corollary}
\newtheorem{conj}[thm]{Conjecture}
\newtheorem{claim}{Claim}
\theoremstyle{definition}
\newtheorem{prob}[thm]{Problem}
\numberwithin{thm}{section}
\newcommand{\cB}{\mathcal{B}}
\newcommand{\cC}{\mathcal{C}}
\newcommand{\cD}{\mathcal{D}}
\newcommand{\cE}{\mathcal{E}}
\newcommand{\cF}{\mathcal{F}}
\newcommand{\cG}{\mathcal{G}}
\newcommand{\cI}{\mathcal{I}}
\newcommand{\cK}{\mathcal{K}}
\newcommand{\cM}{\mathcal{M}}
\newcommand{\cN}{\mathcal{N}}
\newcommand{\cR}{\mathcal{R}}
\newcommand{\cS}{\mathcal{S}}
\newcommand{\cT}{\mathcal{T}}
\newcommand{\cV}{\mathcal{V}}
\newcommand{\cW}{\mathcal{W}}
\newcommand{\recongraph}{\mathbf{RG}}
\newcommand{\colorfulcomplex}{\mathbf{Col}}
\newcommand{\intersectioncomplex}{\mathbf{Int}}
\newcommand{\ordercomplex}{\mathbf{Ord}}
\newcommand{\colorfulnerve}{\mathbf{CN}}
\newcommand{\nerve}{\mathbf{N}}
\newcommand{\convex}{\mathrm{conv}}
\newcommand{\colorfulsubdivision}{f}
\newcommand{\bilinearfunction}{g}
\title{Hall's theorem for reconfigurations and higher dimensional topological connectedness}
\author{Ronen Wdowinski\thanks{Institute of Discrete Mathematics, Graz University of Technology, Steyrergasse 30, 8010 Graz, Austria. Email: wdowinski@math.tugraz.at.}}
\date{\today}
\begin{document}

\maketitle

\begin{abstract}
One widely applied sufficient condition for the existence of a colorful simplex in a vertex-colored simplicial complex is a topological extension of Hall's transversal theorem due to Aharoni, Haxell, and Meshulam. We prove a similar topological Hall theorem that provides a sufficient condition for being able to transform any colorful simplex into any other through a sequence of one-vertex swaps while always maintaining a colorful simplex, meaning that the associated reconfiguration graph is connected. In fact, we prove a generalized topological Hall theorem about the homological connectedness of the space of colorful simplices, as well as a matroidal generalization of this result. We deduce sufficient conditions for reconfiguration graphs to be connected for various combinatorial structures of interest such as independent transversals in graphs, matchings in bipartite hypergraphs, and intersections of matroids. In particular, we give an alternative proof of a maximum degree condition for independent transversal reconfigurability by Buys, Kang, and Ozeki. We also deduce tight reconfiguration versions of the colorful Helly, colorful Carath\'eodory, and Tverberg theorems from discrete geometry, confirming a conjecture of Oliveros, Rold{\'a}n, Sober{\'o}n, and Torres.
\end{abstract}

\section{Introduction}
We use the following notation throughout this paper: Given subsets $V_1, \ldots, V_n$ of a finite set $V$ and an index subset $I \subseteq [n] \coloneqq \{1,\ldots,n\}$, we denote $V_I \coloneqq \bigcup_{i \in I} V_i$.

Combinatorial reconfiguration problems have attracted a lot of attention in the fields of graph theory and theoretical computer science. See \cite{mynhardt2019reconfiguration, nishimura2018introduction} for recent surveys on the topic, including reconfigurations of proper vertex-colorings, independent sets, dominating sets, and satisfiability sets. Reconfiguration problems are usually concerned with finding step-by-step transformations from one feasible solution to another feasible solution in such a way that every intermediate step is also a feasible solution. The solution space to a given problem is typically represented as a reconfiguration graph, whose vertices represent feasible solutions and whose edges represent valid single-step moves. A particular problem of interest is to decide when a given reconfiguration graph is connected. For example, it would imply that associated Markov chains on the solution space have unique stationary distributions, which lend themselves to the possibility of efficient approximate sampling and counting algorithms if one could also show that the Markov chains are rapidly mixing. The goal of this paper is to demonstrate how topological methods developed around Hall's transversal theorem can be adapted to showing that reconfiguration graphs are connected in various natural combinatorial settings, and how these results extend to higher dimensional connectedness.

Our main theorems are about colorful simplices in vertex-colored simplicial complexes. An abstract simplicial complex (or \textit{complex}) $\cC$ is a collection of subsets of some finite ground set $V$ that is closed under taking subsets. Each set in $\cC$ called a \textit{simplex} or \textit{face}, and $V(\cC) \coloneqq \bigcup \cC$ is called the set of \textit{vertices} of $\cC$. Given a complex $\cC$ and a partition $\cV$ of its vertex set, a \textit{colorful simplex} of $(\cC, \cV)$ is a simplex in $\cC$ that contains exactly one vertex from each class in $\cV$. This terminology comes from viewing $\cV$ as the color classes of an associated vertex-coloring $\lambda : V(\cC) \rightarrow [n]$. We define a reconfiguration graph $\recongraph(\cC, \cV)$ as follows. The vertex set of the graph is the collection of all colorful simplices of $(\cC, \cV)$, and two colorful simplices of $(\cC, \cV)$ are joined by an edge in the graph if their union is a simplex in $\cC$ of size $|\cV|+1$, i.e., if they are faces of a common simplex with one more vertex. Note that this adjacency condition is stronger than simply requiring that the colorful simplices differ in one color class, but it is crucial for applying topological methods. 

One general and powerful tool for showing the existence of colorful simplices in vertex-colored simplicial complexes is a topological extension of Hall's theorem (Theorem \ref{thm:topological-hall}) which was originally noted by Aharoni (see \cite{meshulam2001clique}) and proven by Meshulam \cite{meshulam2003domination, meshulam2001clique}. An earlier version was proven implicitly by Aharoni and Haxell \cite{aharoni2000hall}. One of our main theorems (Theorem \ref{thm:topological-reconfiguration-graph}) is that a surprisingly straightforward variation of this topological Hall theorem, namely an excess version of it, provides a sufficient condition for the reconfiguration graph $\recongraph(\cC, \cV)$ to be connected. Thus, we uncover a fundamental link, from the topological point of view, between the topics of existence and reconfiguration of combinatorial structures. Roughly speaking, they correspond to different levels of topological connectedness. We further extend this line of thought by also proving a generalized topological Hall theorem (Theorem \ref{thm:topological-reconfiguration-complex}) about the higher dimensional topological connectedness of the space of colorful simplices, in a homological setting. We study this space of colorful simplices in the form of a simplicial complex that we call the \textit{colorful complex}. It resembles the well-known homomorphism complex of graphs \cite{babson2006complexes, babson2007proof}, which itself has also been studied topologically for reconfiguration applications \cite{dochtermann2009hom, dochtermann2023homomorphism, wrochna2020homomorphism}. We also extend our new topological Hall theorems to matroidal settings (Theorems \ref{thm:complex-matroid-reconfiguration} and \ref{thm:complex-matroid-connectedness}), in the spirit of Aharoni and Berger \cite{aharoni2006intersection}. We describe applications of our reconfiguration theorems to problems in graph theory and discrete geometry, roughly showing that certain excess versions of some existence results imply connected reconfiguration graphs, with the conditions often being tight. These parallel reconfiguration results in other areas, such as Jerrum's result \cite{jerrum1995very} that the reconfiguration graph on proper $k$-colorings of a graph $G$ with maximum degree $\Delta$ is connected when $k \ge \Delta+2$.

Our original motivation comes from work of Buys, Kang, and Ozeki \cite{buys2025reconfiguration}, who introduced and studied reconfigurations of independent transversals in graphs. Given a graph $G$ and partition $\cV = \{V_1, \ldots, V_n\}$ of its vertices, an \textit{independent transversal} of $(G, \cV)$ is an independent set of $G$ consisting of one vertex from each class $V_i$ of $\cV$. In other words, it is a colorful simplex of $(\cI(G), \cV)$, where $\cI(G)$ is the collection of all independent sets of $G$. A celebrated theorem of Haxell \cite{haxell2001note} states that if $G$ has maximum degree $\Delta$ and $|V_i| \ge 2\Delta$ for all $i$, then $(G, \cV)$ always has an independent transversal. Buys, Kang, and Ozeki \cite{buys2025reconfiguration} proved the following extension of Haxell's theorem to the reconfiguration setting.

\begin{thm}[\cite{buys2025reconfiguration}] \label{thm:BKO}
Let $G$ be a graph with maximum degree $\Delta$, and let $\cV = \{V_1, \ldots, V_n\}$ be a partition of $V(G)$ such that $|V_i| \ge 2\Delta$ for all $i$. If $G[V_I]$ is not the disjoint union of $|I|$ copies of the complete bipartite graph $K_{\Delta,\Delta}$, for all nonempty $I \subseteq [n]$, then $\recongraph(\cI(G), \cV)$ is connected.
\end{thm}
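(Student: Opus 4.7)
The plan is to deduce Theorem~\ref{thm:BKO} from the reconfiguration version of topological Hall (Theorem~\ref{thm:topological-reconfiguration-graph}) applied to the independence complex $\cI(G)$ together with the partition $\cV$. By that theorem, it suffices to verify the excess topological Hall condition: for every nonempty $I \subseteq [n]$, the induced subcomplex $\cI(G[V_I])$ is at least $(|I|-1)$-connected. This is one level of connectivity higher than what the original topological Hall theorem requires in order to recover Haxell's existence theorem, so the whole task is to close this ``excess'' gap of one using the extra $K_{\Delta,\Delta}$-exclusion hypothesis.

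The starting point is Meshulam's connectivity bound for independence complexes: for any graph $H$ of maximum degree at most $\Delta$ on $m$ vertices, $\cI(H)$ is at least $(\lceil m/(2\Delta)\rceil - 2)$-connected. Applied to $H = G[V_I]$ with $m = |V_I| \ge 2\Delta|I|$, this yields $(|I|-2)$-connectivity, matching Haxell's threshold. To gain the missing unit, I would split into cases. If either $|V_I| > 2\Delta|I|$ strictly, or some vertex $v \in V_I$ has degree strictly less than $\Delta$ inside $G[V_I]$, then a standard sharpening of Meshulam's bound already wins: in the former case the ceiling jumps from $|I|$ to $|I|+1$, and in the latter case the link/deletion recursion relating $\cI(H)$, $\cI(H - v)$, and $\cI(H - N[v])$, combined with a Mayer--Vietoris argument exploiting smaller effective degree at $v$, produces the extra level of connectivity.

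The remaining truly extremal case is when each $V_i$ has exactly $2\Delta$ vertices and every vertex of $V_I$ has degree exactly $\Delta$ inside $G[V_I]$. The main obstacle is then the following extremal characterization of Meshulam's bound: any $\Delta$-regular graph $H$ on $2\Delta k$ vertices for which $\cI(H)$ fails to be $(k-1)$-connected must be a disjoint union of $k$ copies of $K_{\Delta,\Delta}$. I expect to establish this by induction on $k$ using the same Mayer--Vietoris sequence linking $\cI(H)$, $\cI(H - v)$, and $\cI(H - N[v])$, carefully tracing through when every inequality in the induced connectivity recursion can be simultaneously tight; this parallels the extremal analyses of Szab\'o--Tardos and Haxell for independent transversals. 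Granting this characterization, the assumption that $G[V_I]$ is not the disjoint union of $|I|$ copies of $K_{\Delta,\Delta}$ forces strict improvement in the connectivity estimate, verifies the excess topological Hall condition, and hence proves that $\recongraph(\cI(G), \cV)$ is connected.
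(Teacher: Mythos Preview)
Your proposal is correct and follows essentially the same route as the paper: verify the excess topological Hall condition $\eta(\cI(G[V_I])) \ge |I|+1$ for every nonempty $I$, then invoke Theorem~\ref{thm:topological-reconfiguration-graph}. The only difference is packaging: the paper does not carry out your case analysis directly but instead cites as a black box the equality characterization $\eta(\cI(H)) \ge |V(H)|/(2\Delta)$ with equality iff $H$ is a disjoint union of $K_{\Delta,\Delta}$'s (Lemma~\ref{lemma:connectedness-maxdegree-1}, from \cite{haxell2024degree}, implicit in \cite{aharoni2015cooperative}), which subsumes your Cases~1--3 in one stroke; your sketch of Case~2 and the extremal Case~3 via the link/deletion recursion is exactly how that lemma is proved in those references.
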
 

In particular, Theorem \ref{thm:BKO} implies that $\recongraph(\cI(G), \cV)$ is connected whenever $|V_i| \ge 2\Delta+1$ for all $i$. The proof method of \cite{buys2025reconfiguration} was combinatorial (adapting ideas from \cite{graf2020finding}), as was the original proof of Haxell's theorem \cite{haxell1995condition, haxell2001note}. On the other hand, Haxell's theorem and many variations of it have also been proven using the topological Hall theorem mentioned above. We use our reconfiguration variation of the topological Hall theorem to give an alternative proof of Theorem \ref{thm:BKO}. We also give a simplified combinatorial proof of the corollary that $|V_i| \ge 2\Delta+1$ for all $i$ implies a connected reconfiguration graph.
Other graph theory applications of our extension of the topological Hall theorem include reconfigurations of rainbow matchings in hypergraphs, of matchings in bipartite hypergraphs, and of list colorings in graphs. 

In the direction of discrete geometry, we prove tight reconfiguration versions of the colorful Helly, colorful Carath\'eodory, and Tverberg theorems. These are deduced from a reconfiguration analogue of the topological colorful Helly theorem due to Kalai and Meshulam \cite{kalai2005topological}. In particular, our reconfiguration version of Tverberg's theorem confirms a conjecture of Oliveros, Rold{\'a}n, Sober{\'o}n, and Torres \cite[Conjecture 2]{oliveros2025tverberg}. To describe our result, we are given a finite point set $X \subset \mathbb{R}^d$ and an integer $r \ge 1$. The reconfiguration graph $\recongraph_{\mathrm{Tv}}(X, r)$ has vertex set consisting of all ordered partitions $(X_1, \ldots, X_r)$ of $X$ into $r$ classes whose convex hulls have nonempty intersection ($\bigcap_{i=1}^r \convex(X_i) \neq \emptyset$), so-called \textit{ordered Tverberg $r$-partitions}. Two ordered Tverberg $r$-partitions are joined by an edge in the graph if they differ by the placement of a single point $x$ among the $r$ classes, and excluding $x$ from either of the two $r$-partitions results in an ordered Tverberg $r$-partition of $X - \{x\}$. Tverberg's theorem \cite{tverberg1966generalization} states that $\recongraph_{\mathrm{Tv}}(X, r)$ is nonempty whenever $|X| \ge (d+1)(r - 1)+1$. We show the following.

\begin{thm} \label{thm:Tverberg-reconfiguration-1}
Let $X$ be a finite set of points in $\mathbb{R}^d$, and let $r \ge 1$ be an integer. If $|X| \ge (d+1)(r-1)+2$, then $\recongraph_{\mathrm{Tv}}(X, r)$ is connected.
\end{thm}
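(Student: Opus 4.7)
The plan is to reduce Theorem \ref{thm:Tverberg-reconfiguration-1} to the topological Hall reconfiguration theorem (Theorem \ref{thm:topological-reconfiguration-graph}) via Sarkaria's tensor trick, producing along the way a reconfiguration analogue of the colorful Carath\'eodory theorem. Fix generic vectors $v_1, \ldots, v_r \in \mathbb{R}^{r-1}$ with $\sum_j v_j = 0$, and set $\tilde{x}_i^{(j)} \coloneqq (x_i, 1) \otimes v_j \in \mathbb{R}^{(d+1)(r-1)}$ and $V_i \coloneqq \{\tilde{x}_i^{(j)} : j \in [r]\}$. Since $\sum_j v_j = 0$, the origin lies in $\convex(V_i)$ for each $i$. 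Sarkaria's identity gives a bijection between ordered Tverberg $r$-partitions of $X$ and colorful $N$-subsets $S$ of $V_{[N]} = \bigcup_i V_i$ with $0 \in \convex(S)$, where $X_j = \{x_i : \tilde{x}_i^{(j)} \in S\}$.

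I would next define $\cC$ to be the simplicial complex on $V_{[N]}$ whose facets are (i) the colorful Tverberg lifts, namely the colorful $N$-subsets $S$ with $0 \in \convex(S)$, and (ii) the $(N+1)$-element unions $S \cup S'$ of pairs of Tverberg lifts differing in exactly one color class $V_i$ whose common $(N-1)$-face $T = S \cap S'$ also satisfies $0 \in \convex(T)$. By construction, colorful simplices of $(\cC, \{V_i\}_{i=1}^N)$ correspond to ordered Tverberg $r$-partitions, and since the facets of $\cC$ have size at most $N+1$, the adjacency condition ``$S \cup S' \in \cC$ of size $N+1$'' in $\recongraph(\cC, \{V_i\})$ is precisely the $\recongraph_{\mathrm{Tv}}$ adjacency condition. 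Thus $\recongraph(\cC, \{V_i\}) \cong \recongraph_{\mathrm{Tv}}(X, r)$, and to conclude via Theorem \ref{thm:topological-reconfiguration-graph} it suffices to verify that $\cC[V_I]$ is $(|I|-1)$-connected for every nonempty $I \subseteq [N]$. Since $\cC[V_I]$ is structurally the analogous Sarkaria complex for the sub-point set $\{x_i : i \in I\}$, enriched by the faces forced by $0 \in \convex(V_i)$, this should follow from a quantitative refinement of the topological colorful Carath\'eodory theorem in the spirit of Kalai and Meshulam, with the hypothesis $N \ge (d+1)(r-1)+2$ providing exactly the required slack.

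The main obstacle is establishing the uniform $(|I|-1)$-connectedness of $\cC[V_I]$. For large $|I|$ it follows from the topological colorful Carath\'eodory theorem in $\mathbb{R}^{(d+1)(r-1)}$: the complex of subsets of a point set $Y$ containing $0$ in their convex hull, where every color class already contains the origin, is typically $(|Y| - (d+1)(r-1) - 2)$-connected, and the extra point afforded by the hypothesis $(d+1)(r-1)+2$ (versus the classical existence threshold $(d+1)(r-1)+1$) buys exactly the one unit of extra connectedness needed to pass from existence to reconfiguration. For small $|I|$ the dimension count alone is too coarse, and one must use that each $V_i$ individually contains $0$ in its convex hull to exhibit enough type-(ii) facets in $\cC[V_I]$ to maintain high connectedness, echoing the general pattern in the paper's main theorems that an excess topological condition upgrades existence of a colorful simplex to connectedness of the reconfiguration graph.
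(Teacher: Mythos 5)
Your Sarkaria reduction is the same one the paper uses (Lemma \ref{lem:Sarkaria}): the tensor lift turns ordered Tverberg $r$-partitions into colorful tuples containing the origin in their convex hull, and the adjacency matches the adjacency of $\recongraph_{\mathrm{Tv}}(X,r)$, so that part of your plan is sound. The gap is in the second half, where you must actually prove a reconfiguration colorful Carath\'eodory theorem. You define $\cC$ by declaring its facets to be exactly the Tverberg lifts and the $(N+1)$-sets encoding the allowed moves, and then you need $\eta(\cC[V_I]) \ge |I|+1$ for every nonempty $I$. But this $\cC$ is an ad hoc complex: its hypothesis in Theorem \ref{thm:topological-reconfiguration-graph} is essentially a restatement of the connectivity you are trying to prove, and no cited result controls it. In particular, your claim that ``$\cC[V_I]$ is structurally the analogous Sarkaria complex for the sub-point set $\{x_i : i \in I\}$'' is false: a colorful subset of $V_I$ is a face of $\cC$ if and only if it sits inside one of your \emph{global} facets, which neither requires nor is implied by $0$ lying in its convex hull; for small $|I|$ the origin is typically not in the hull of any colorful subset of $V_I$ at all, yet you still must show $\cC[V_I]$ is $(|I|-1)$-connected. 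Moreover, the ``complex of subsets of $Y$ containing $0$ in their convex hull'' that you invoke for large $|I|$ is not a simplicial complex (the property is closed under taking supersets, not subsets), so the Kalai--Meshulam $d$-Leray machinery cannot be applied to it directly; it applies to the \emph{Alexander dual} of the nerve-type complex of sets whose associated convex bodies do intersect, and that is a different object from your $\cC[V_I]$.

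This is precisely where the paper's proof does real work and where your proposal has no argument. The paper proves Theorem \ref{thm:topological-Helly-reconfigure} by applying the matroidal Theorem \ref{thm:complex-matroid-reconfiguration} to the pair $(\cC^{\star}, \cM^{\ast})$, where $\cC$ is the $d$-representable (hence $d$-Leray) nerve complex and $\cM$ is the partition matroid; Lemma \ref{lem:Alexander-dual} supplies the bound $\eta_H(\cC^{\star}[X]) \ge |X|-d-1$, and the excess $n \ge d+2$ enters through the rank condition $r(\cM[V-A]) \ge d+2$. Note that after dualizing, the matroid $\cM^{\ast}$ is no longer a partition matroid, so the plain Theorem \ref{thm:topological-reconfiguration-graph} that you plan to use does not suffice; the matroidal generalization is needed. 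The reconfiguration colorful Carath\'eodory theorem (Theorem \ref{thm:colorful-caratheodory-reconfigure}) is then obtained from this Helly statement via Farkas' lemma (half-space duality), and only then does Sarkaria's trick give Theorem \ref{thm:Tverberg-reconfiguration-1}. To repair your proof you would need to replace the unverified connectivity claim for your hand-built $\cC[V_I]$ with this Alexander-duality-plus-matroid mechanism (or an equivalent substitute), since as written the key step is assumed rather than proved.
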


Oliveros, Rold{\'a}n, Sober{\'o}n, and Torres \cite{oliveros2025tverberg} considered a similar reconfiguration graph on $X \subset \mathbb{R}^d$ and $r \ge 1$, but with their vertices being \textit{unordered Tverberg $r$-partitions} $\{X_1, \ldots, X_r\}$, and adjacency not requiring that the exclusion of an element $x$ in the two unordered Tverberg $r$-partitions also be an unordered Tverberg $r$-partition. They conjectured the existence of an integer $k = k(d)$, depending only on the dimension $d$, such that their reconfiguration graph is connected whenever $|X| \ge (d+1)(r-1) +1+ k(d)$, and they showed that $|X| \ge 3(d+1)(r-1) + 2$ is sufficient. Theorem \ref{thm:Tverberg-reconfiguration-1} implies that we can take $k(d) = 1$ for all dimensions $d$, the best possible (see \cite[Example 1]{oliveros2025tverberg}). A higher dimensional homological connectedness version of Theorem \ref{thm:Tverberg-reconfiguration-1} is given by Theorem \ref{thm:tverberg-homological-connectedness}.

This paper is organized as follows. In Section \ref{sec:main-theorem}, we state our main theorems and introduce the topological tools used in our proofs and applications. In Section \ref{sec:proofs}, we give both our homotopical and homological proofs of our reconfiguration topological Hall theorem. In Section \ref{sec:higher}, we prove our generalized topological Hall theorem about the topological connectedness of the space of colorful simplices. In Section \ref{sec:intersections}, we generalize our topological Hall theorems to matroidal settings. In Section \ref{sec:combinatorial-applications}, we describe graph theory applications about reconfigurations of independent transversals, rainbow matchings, bipartite hypergraph matchings, and list colorings. In Section \ref{sec:geometric-applications}, we describe discrete geometry applications about reconfiguration versions of the colorful Helly, colorful Carath\'eodory, and Tverberg theorems, as well as their higher dimensional connectedness generalizations. We conclude with a few general problems and further lines of investigations.

\section{Main theorems and topological tools} \label{sec:main-theorem}

In this section, we state our main theorems and collect relevant topological tools for the proofs and applications.

\subsection{Main theorems} \label{sec:main-theorems}

Recall that an \textit{abstract simplicial complex} (or \textit{complex}) $\cC$ is a nonempty collection of subsets $\sigma$ (called \textit{simplices} or \textit{faces}) of some finite ground set $V$ such that if $\sigma \in \cC$ and $\tau \subseteq \sigma$, then $\tau \in \cC$. We distinguish the ground set $V$ from the \textit{vertex set} $V(\cC) \coloneqq \bigcup \cC$, although if a ground set is not specified then we implicitly assume $V = V(\cC)$. The \textit{dimension} of simplex $\sigma$ of $\cC$ is $d \coloneqq |\sigma| - 1$, and we call $\sigma$ a \textit{$d$-simplex}. A \textit{geometric simplex} $\sigma$ is the convex hull of affinely independent points in $\mathbb{R}^d$ (called its \textit{extreme points} or \textit{vertices}), and a \textit{face} of $\sigma$ is the convex hull of some finite subset of the vertices of $\sigma$. A \textit{geometric simplicial complex} $\cK$ is a collection of geometric simplices in $\mathbb{R}^d$ such that (1) if $\sigma \in \cK$ and $\tau$ is a face of $\sigma$ then $\tau \in \cK$, and (2) if $\sigma, \sigma' \in \cK$ then $\sigma \cap \sigma'$ is a face of both $\sigma$ and $\sigma'$. The union of all simplices of $\cK$ is called the \textit{polyhedron} of $\cK$ and denoted $\lVert \cK \rVert$. A standard fact is that every complex $\cC$ can be realized as a geometric simplicial complex $\cK$, meaning that the simplices in $\cC$ are precisely the vertex sets of the geometric simplices in $\cK$. The polyhedron $\lVert \cK \rVert$ of a geometric realization of $\cC$ is unique up to homeomorphism, and we denote the associated topological space by $\lVert \cC \rVert$.

A complex $\cC$ is said to be \textit{homotopically $k$-connected} if, for every integer $-1 \le j \le k$, every continuous map $f : S^j \rightarrow \lVert \cC \rVert$ can be extended to a continuous map $\tilde{f} : B^{j+1} \rightarrow \lVert \cC \rVert$ (where $B^{j+1}$ denotes a closed $(j+1)$-dimensional ball and $S^j$ denotes its boundary $j$-dimensional sphere). A complex $\cC$ is said to be \textit{homologically $k$-connected} if, for every integer $-1 \le j \le k$, the $j$th reduced simplicial homology group $\widetilde{H}_j(\cC) = \widetilde{H}_j(\cC; \mathbb{Q})$ with rational coefficients vanishes. For intuition, being (homotopically or homologically) $(-1)$-connected means being nonempty, being $0$-connected means being path-connected, and being homotopically $1$-connected is the same as being simply-connected. Since our simplicial complexes are finite, being homotopically $\infty$-connected is the same as being contractible, whereas being homologically $\infty$-connected is commonly known as being $\mathbb{Q}$-acyclic. For many of our results, though not all, the choice of rational coefficients for homological connectedness could be replaced by coefficients in any field or even in any abelian group.

Following notation of Aharoni and Berger \cite{aharoni2006intersection}, the parameter $\eta_{\pi}(\cC)$ (respectively, $\eta_H(\cC)$) is two plus the maximum integer $k$ for which complex $\cC$ is homotopically (respectively, homologically) $k$-connected. In short,
\begin{align*}
	\eta_{\pi}(\cC) &\coloneqq \max \{k : \pi_j(\cC) = 0 \text{ for all } {-}2 \le j \le k\} + 2, \\
	\eta_H(\cC) &\coloneqq \max \{k : \widetilde{H}_j(\cC) = 0 \text{ for all } {-}2 \le j \le k\} + 2,
\end{align*}
where $\pi_j(\cC)$ denotes the $j$th homotopy group of $\cC$ with respect to an arbitrary base point, and again $\widetilde{H}_j(\cC)$ denotes the $j$th reduced homology group of $\cC$ with rational coefficients. By convention, we always have $\pi_{-2}(\cC) = \widetilde{H}_{-2}(\cC) = 0$, whereas $\pi_{-1}(\cC) = \widetilde{H}_{-1}(\cC) = 0$ is taken to mean that $\cC$ is nonempty. In general, we have $\eta_H(\cC) \ge \eta_{\pi}(\cC)$, and Hurewicz's theorem implies that equality holds whenever $\eta_{\pi}(\cC) \ge 3$. We use the notation $\eta$ whenever a result applies to both $\eta_{\pi}$ and $\eta_H$. For a subset $X$ of the ground set $V$ of $\mathcal{C}$, the induced subcomplex of $\cC$ on ground set $X$ is
\begin{align*}
	\cC[X] \coloneqq \{\sigma \in \cC : \sigma \subseteq X\}.
\end{align*}

The following is the standard topological Hall theorem about the existence of colorful simplices in vertex-colored simplicial complexes. It was proven by Meshulam \cite{meshulam2003domination, meshulam2001clique} in the stronger homological setting, with the homotopical version proven earlier implicitly by Aharoni and Haxell \cite{aharoni2000hall}. The result broadly generalizes the hard direction of Hall's theorem \cite{hall1935representation} as well as Rado's theorem \cite{rado1942theorem} from matroid theory.

\begin{thm}[\cite{meshulam2003domination}] \label{thm:topological-hall}
Let $\cC$ be a complex, let $\cV = \{V_1, \ldots, V_n\}$ be a partition of $V(\cC)$, and let $\eta \in \{\eta_{\pi}, \eta_H\}$. If
\begin{align*}
	\eta (\cC [ V_I ] ) \ge |I| \hspace{25pt} \text{for all } I \subseteq [n],
\end{align*}
then $(\cC, \cV)$ has a colorful simplex. 
\end{thm}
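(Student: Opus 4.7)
The plan is to argue by induction on $n$. The base case $n = 1$ is immediate: $\eta(\cC[V_1]) \geq 1$ says by convention that $\cC[V_1]$ is nonempty, so any vertex in $V_1$ is a colorful simplex.

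For the inductive step, I would aim to find some vertex $v \in V_n$ whose link $\cL_v := \text{lk}_\cC(v)$, together with the induced partition $\{V_1 \cap V(\cL_v), \ldots, V_{n-1} \cap V(\cL_v)\}$, again satisfies the hypothesis of the theorem. The inductive hypothesis would then yield a colorful simplex $\sigma$ of $\cL_v$, and $\sigma \cup \{v\}$ would be a colorful simplex of $(\cC, \cV)$. To establish the existence of such a $v$, I would argue by contradiction: suppose every $v \in V_n$ carries a witness $I_v \subseteq [n-1]$ with $\eta(\cL_v[V_{I_v}]) < |I_v|$. The goal is to assemble these local obstructions into a global one contradicting $\eta(\cC[V_{[n]}]) \geq n$. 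The main tool is the standard gluing inequality
\[
\eta(\cA \cup \cB) \;\geq\; \min\bigl(\eta(\cA),\ \eta(\cB),\ \eta(\cA \cap \cB) + 1\bigr),
\]
which follows from the Mayer--Vietoris long exact sequence in the homological case and from a van Kampen-style argument (combined with Hurewicz in higher ranges) in the homotopical case. Applying this iteratively to the covering of $\cC[V_{[n]}]$ by the subcomplex $\cC[V_{[n-1]}]$ together with the stars $\text{star}_\cC(v)$ for $v \in V_n$ (each of which is a cone on $\cL_v$ and so contractible), one sees that the connectivity of $\cC[V_{[n]}]$ is controlled by the connectivities of the relevant induced subcomplexes of the links $\cL_v$, contradicting the simultaneous presence of all the defects $I_v$.

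The main obstacle I expect is the bookkeeping: the inductive statement actually needs to be slightly stronger than ``$\cL_v$ is $(n-2)$-connected'' — it must maintain the full uniform condition $\eta(\cL_v[V_J]) \geq |J|$ for every $J \subseteq [n-1]$. This is genuinely more than mere connectivity of $\cL_v$, and it is the reason the hypothesis must be imposed for all $I \subseteq [n]$, not just $I = [n]$. I would handle this either by strengthening the induction hypothesis accordingly, or more cleanly by working with a minimal counterexample $(\cC, \cV)$ (say, one with the fewest simplices): minimality forces every $v \in V_n$ to admit a local witness $I_v$, and the Mayer--Vietoris / van Kampen gluing can then be applied uniformly across $v \in V_n$ to derive the contradiction. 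The homological version ($\eta_H$) is technically the cleaner case to write down first, since Mayer--Vietoris is fully exact; the homotopical version ($\eta_\pi$) then follows by noting that the obstructions constructed live in a dimension range where Hurewicz identifies $\eta_\pi$ with $\eta_H$.
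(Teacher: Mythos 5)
Your inductive step rests on the claim that the Hall hypothesis forces some $v \in V_n$ whose link again satisfies the hypothesis with respect to $\{V_1,\ldots,V_{n-1}\}$, and this claim is false, so the contradiction argument you sketch cannot be completed. Concretely, take $n=3$, $V_1=\{a_1,a_2\}$, $V_2=\{b_1,b_2\}$, $V_3=\{c_1,c_2\}$, and let $\cC$ have maximal faces $\{a_1,b_1,c_1\}$, $\{a_2,b_2,c_1\}$, $\{b_1,b_2,c_2\}$, $\{b_1,c_1,c_2\}$, $\{b_2,c_1,c_2\}$. The subcomplex on $\{b_1,b_2,c_1,c_2\}$ is three faces of a tetrahedron (a disk), and the two remaining triangles are glued to it along the single edges $b_1c_1$ and $b_2c_1$, so $\cC$ is contractible; moreover $\cC[V_{\{1,2\}}]$, $\cC[V_{\{1,3\}}]$, $\cC[V_{\{2,3\}}]$ are connected and each $\cC[V_i]$ is nonempty, so $\eta(\cC[V_I])\ge |I|$ for all $I$ (for both $\eta_\pi$ and $\eta_H$). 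Yet $\mathrm{lk}_\cC(c_1)[V_{\{1,2\}}]$ consists of the two disjoint edges $a_1b_1$ and $a_2b_2$ (the edge $b_1b_2$ is not in this link since $b_1b_2c_1\notin\cC$), hence is disconnected, and $\mathrm{lk}_\cC(c_2)$ contains no vertex of $V_1$ at all. So every vertex of $V_3$ carries a defect $I_v$, even though a colorful simplex ($\{a_1,b_1,c_1\}$) exists and the full hypothesis holds; there is simply nothing to contradict. (Relabelling does not rescue the written argument: it purports to produce a good vertex in whichever class is peeled last, and that implication is what the example refutes; whether \emph{some} class always contains a good vertex is a further unproved claim.)

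This also pinpoints why the Mayer--Vietoris mechanism cannot do the job you want: the gluing inequality bounds $\eta$ of a union \emph{from below} by the $\eta$'s of the pieces, so poorly connected links merely make the bound uninformative--to contradict the hypothesis you would need to manufacture a nontrivial low-dimensional cycle in some $\cC[V_I]$ out of the local defects, which is exactly the hard content of the theorem (and, by the example, is impossible in general). Your base observation is consistent with this: when $|V_n|=1$ the Mayer--Vietoris sequence for $\cC[V_{J\cup\{n\}}]=\cC[V_J]\cup\mathrm{star}(v)$ does show the unique vertex is good, but for $|V_n|\ge 2$ the witnesses $I_v$ vary with $v$ and no uniform gluing assembles them. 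This is why the known proofs, including the ones this paper builds on, take a different route: the theorem is quoted from Meshulam, whose proof goes through a homological nerve theorem, and the arguments given here for the reconfiguration analogues either build a triangulation of (the skeleton of) $\Delta^n$ together with a simplicial map into $\cC$ face by face, using the connectivity hypothesis as an extension property, and then invoke a Sperner-type lemma, or construct a chain map/chain homotopy $D$ with a Sperner-type support condition $D(I)\in C_{|I|}(\cC[V_I])$ dimension by dimension. In all of these, high connectedness is used simultaneously over all faces $I$ of the simplex to extend maps or chains, rather than to locate a single vertex whose link inherits the hypothesis.
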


There is also a well-known and easily-derived \textit{deficiency} version of Theorem \ref{thm:topological-hall} (see \cite{aharoni2001ryser, haxell2019topological}). This states that for any fixed integer $d \ge 0$, if 
\begin{align*}
	\eta (\cC [ V_I ] ) \ge |I| - d \hspace{25pt} \text{for all } I \subseteq [n],
\end{align*}
then $(\cC, \cV)$ has a \textit{partial colorful simplex} of size $n - d$ (that is, a simplex on $n - d$ vertices containing at most one vertex of each color). This deficiency version played a crucial role, for example, in Aharoni's proof \cite{aharoni2001ryser} of Ryser's conjecture \cite{henderson1971permutation} for the case of $3$-uniform hypergraphs. Our main theorem is one step in the other direction, stating that an \textit{excess} version of Theorem \ref{thm:topological-hall} provides a sufficient condition for the reconfiguration graph on colorful simplices to be connected. 

\begin{thm} \label{thm:topological-reconfiguration-graph}
Let $\cC$ be a complex, let $\cV = \{V_1, \ldots, V_n\}$ be a partition of $V(\cC)$, and let $\eta \in \{\eta_{\pi}, \eta_H\}$. If
\begin{align*}
	\eta (\cC [ V_I ] ) \ge |I| + 1 \hspace{25pt} \text{for all nonempty } I \subseteq [n],
\end{align*}
then $\recongraph(\cC, \cV)$ is connected.
\end{thm}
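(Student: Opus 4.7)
The hypothesis is the ``$+1$ excess'' strengthening of the topological Hall condition in Theorem \ref{thm:topological-hall}, and the conclusion sits one connectivity level above ``nonempty.'' The natural plan is to mimic the proof of Theorem \ref{thm:topological-hall} one dimension higher, with the extra unit of excess absorbing the extra unit of target connectivity.

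I would proceed by induction on $n$. The base case $n=1$ is immediate: $\eta(\cC[V_1]) \ge 2$ says $\cC[V_1]$ is path-connected, equivalently its $1$-skeleton is a connected graph, and this $1$-skeleton coincides with $\recongraph(\cC, \cV)$ when there is a single color class.

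For the inductive step, given two colorful simplices $\sigma_0$ and $\sigma_1$ with $u := \sigma_0 \cap V_n$ and $v := \sigma_1 \cap V_n$, I would first reconfigure $\sigma_0$ to a colorful simplex whose $V_n$-vertex is $v$, and then apply the induction hypothesis inside $\mathrm{link}_\cC(v)$ with the restricted partition $\{V_1, \ldots, V_{n-1}\}$. To move the $V_n$-vertex from $u$ to $v$, take a path $u = w_0, w_1, \ldots, w_\ell = v$ inside the path-connected complex $\cC[V_n]$ (available since $\eta(\cC[V_n]) \ge 2$); at each step $i$, apply Theorem \ref{thm:topological-hall} (no excess needed) in an appropriate rank-$n$ subcomplex to produce an ``almost-colorful'' $(n+1)$-simplex containing both $w_i$ and $w_{i+1}$ together with a colorful transversal of $V_1, \ldots, V_{n-1}$, yielding an edge in $\recongraph(\cC, \cV)$ between a colorful simplex containing $w_i$ and one containing $w_{i+1}$.

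The main obstacle is that neither the link $\mathrm{link}_\cC(v)$ nor the bridging subcomplexes above automatically inherit the excess (let alone ordinary) Hall condition from the hypothesis on $\cC$. I expect the resolution to require a Mayer-Vietoris or cofiber-sequence comparison of $\cC[V_I \cup \{v\}]$, $\cC[V_I]$, and $\mathrm{link}_\cC(v)[V_I]$ that transfers connectivity from $\cC$ to the link with a loss of exactly one unit---precisely the unit of excess built into the hypothesis. Verifying this bookkeeping, separately in the homotopical and homological cases absorbed in $\eta \in \{\eta_\pi, \eta_H\}$, will be the technical heart of the argument.
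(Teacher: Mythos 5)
There is a genuine gap, and it is exactly at the point you flag as the ``main obstacle'': no Mayer--Vietoris (or cofiber) bookkeeping can rescue the inductive step, because the hypothesis of Theorem \ref{thm:topological-reconfiguration-graph} only constrains induced subcomplexes $\cC[V_I]$ on unions of \emph{entire} color classes. Your plan needs connectivity information about $\mathrm{link}_\cC(v)[V_I]$ for a single vertex $v \in V_n$, and the natural excision/Mayer--Vietoris comparison would relate this link to $\cC[V_I \cup \{v\}]$ --- a subcomplex containing only part of the class $V_n$ --- about which the hypothesis says nothing at all. A vertex $v\in V_n$ can lie in very few simplices (even in just one colorful simplex), with all of the class-level connectivity of $\cC[V_I\cup V_n]$ carried by the other vertices of $V_n$; then $\mathrm{link}_\cC(v)$ fails the inherited Hall condition badly while the hypothesis on $\cC$ holds. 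The same problem kills the bridging step: producing an edge of $\recongraph(\cC,\cV)$ between colorful simplices through consecutive path vertices $w_i, w_{i+1}$ of $\cC[V_n]$ requires a topological Hall condition for $\mathrm{link}_\cC(\{w_i,w_{i+1}\})$ restricted to $V_1\cup\cdots\cup V_{n-1}$, and that link can even be empty under the theorem's hypothesis. So the induction on $n$ by peeling off one color class via links cannot be closed from the stated assumptions; this is not a matter of unverified bookkeeping but of information that is simply absent.

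For contrast, the paper avoids any link induction and instead runs a global construction indexed by the faces $I$ of the simplex $\Delta^n$, so that the connectivity assumption is invoked exactly for the sets $V_I$ it covers. Homotopically, one builds skeleton-by-skeleton a triangulation of the prism $\Delta^n\times[0,1]$ together with a simplicial map to $\cC$, using $\eta_\pi(\cC[V_{I(F)}])\ge |I(F)|+1$ to fill each lateral face $F$, and then applies a reconfiguration (R-Sperner) version of Sperner's lemma to extract a path of colorful simplices. Homologically, one constructs by induction on $\dim I$ a chain homotopy $D$ with $D(I)\in C_{|I|}(\cC[V_I])$ between the chain maps induced by the two colorful simplices, using the vanishing of $\widetilde{H}_{|I|-1}(\cC[V_I])$, and then a weighted excess/flow argument on the resulting $(n+1)$-chain shows the two colorful simplices lie in the same component of $\recongraph(\cC,\cV)$. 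If you want to salvage your outline, you would have to replace the link-based induction by a construction of this global type.
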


Unlike the deficiency version, Theorem \ref{thm:topological-reconfiguration-graph} does not appear to be easily reduced to Theorem \ref{thm:topological-hall}, although the proof ideas are similar. While the statement of Theorem \ref{thm:topological-reconfiguration-graph} is stronger and more widely applicable for homological connectedness $\eta_H$, the proof is more intuitive for homotopical connectedness $\eta_{\pi}$ and also relates better to classical topological results. Specifically, our homotopical proof of Theorem \ref{thm:topological-reconfiguration-graph} is a reduction to a reconfiguration variation of Sperner's classical triangulation lemma \cite{sperner1928neuer} which can be shown to be equivalent to a parameterized extension of Brouwer's fixed point theorem due to Browder \cite{browder1960continuity}. This theorem of Browder states that for every continuous map $f : \Delta^n \times [0,1] \rightarrow \Delta^n$, where $\Delta^n$ is an embedded $n$-dimensional simplex, the parameterized fixed point set $\{(x,t) \in \Delta^n \times [0,1] : f(x,t) = x\}$ contains a connected component that intersects both $\Delta^n \times \{0\}$ and $\Delta^n \times \{1\}$. Such a connected component is not necessarily guaranteed to be path-connected (see \cite[Example 2]{solan2023browder}). An equivalent parameterized extension of the KKM theorem was proven by Herings and Talman \cite{herings1998intersection}. See \cite{eaves1972homotopies, garcia1975fixed, herings2001variational, kulpa2000parametric, solan2023browder, talman2012parameterized} for some other related or equivalent results.

Our homological proof of Theorem \ref{thm:topological-reconfiguration-graph} is inspired by the homological proof of Theorem \ref{thm:topological-hall} given in \cite[Proposition 2.6]{deloera2019discrete} (also found in \cite{cho2025colorful}). In fact, we later extend our homological proof and give a generalized topological Hall theorem about the homological connectedness of the space of colorful simplices. For a complex $\cC$ and a partition $\cV = \{V_1, \ldots, V_n\}$ of $V(\cC)$, we define the \textit{colorful complex} $\colorfulcomplex(\cC, \cV)$ to be the abstract simplicial complex whose vertex set is the collection of all simplices $\sigma \in \cC$ that span the classes of $\cV$ (i.e., that contain at least one vertex of every color), and where $\sigma_1, \ldots, \sigma_{\ell}$ form a simplex of this complex whenever $\sigma_1 \subset \cdots \subset \sigma_{\ell}$. The complex $\colorfulcomplex(\cC, \cV)$, an order complex, is the barycentric subdivision of a natural polyhedral complex whose $1$-skeleton is the reconfiguration graph $\recongraph(\cC, \cV)$. It is similar in construction to the well-known \textit{homomorphism complex} between two graphs $G$ and $H$, a complex that was studied by Babson and Kozlov \cite{babson2006complexes, babson2007proof} (see also \cite{dochtermann2009hom}) in connection to Lov\'asz's \cite{lovasz1978kneser} classical work on the chromatic numbers of Kneser graphs. We prove the following.

\begin{thm} \label{thm:topological-reconfiguration-complex}
Let $\cC$ be a complex, let $\cV = \{V_1, \ldots, V_n\}$ be a partition of $V(\cC)$, and let $m \ge 0$ be an integer. If
\begin{align*}
	\eta_H (\cC [ V_I ] ) \ge |I| + m \hspace{25pt} \text{for all nonempty } I \subseteq [n],
\end{align*}
then $\eta_H(\colorfulcomplex(\cC, \cV)) \ge m+1$.
\end{thm}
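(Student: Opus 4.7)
The plan is to prove the theorem by induction on $n$ via a Nerve-type argument. The base case $n = 1$ is immediate: $\colorfulcomplex(\cC, \cV)$ is the order complex of the poset of nonempty faces of $\cC[V_1] = \cC$, i.e., the barycentric subdivision of $\cC$, so $\eta_H(\colorfulcomplex(\cC, \cV)) = \eta_H(\cC) \ge 1 + m$.

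For $n \ge 2$, I would cover $\colorfulcomplex(\cC, \cV)$ by the subcomplexes $\Delta_v$, $v \in V_n$, where $\Delta_v$ consists of chains $\sigma_1 \subsetneq \cdots \subsetneq \sigma_\ell$ of color-spanning faces with $v \in \sigma_1$ (equivalently, $v$ in every element). Every chain lies in some $\Delta_v$ since its minimum element is color-spanning and must contain some vertex of $V_n$. For any $\sigma \subseteq V_n$, the intersection $\Delta_\sigma := \bigcap_{v \in \sigma} \Delta_v$ is the order complex of color-spanning faces of $\cC$ containing $\sigma$, which is nonempty exactly when $\sigma \in \cC$ extends to a color-spanning face.

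The key structural claim is a natural homotopy equivalence $\Delta_\sigma \simeq \colorfulcomplex(\mathrm{link}_\cC(\sigma)[V_1 \cup \cdots \cup V_{n-1}], (V_1, \ldots, V_{n-1}))$, a colorful complex on only $n - 1$ classes, realized by a poset retraction that discards the $V_n$-part (beyond $\sigma$) of each color-spanning face. I would then derive the transferred Hall bounds $\eta_H(\mathrm{link}_\cC(\sigma)[V_I]) \ge |I| + m$ for nonempty $I \subseteq [n-1]$ by iteratively applying the Mayer--Vietoris long exact sequence of $\cC[V_I \cup \sigma'] = \cC[V_I] \cup \overline{\mathrm{st}}_\cC(\sigma')$ as $\sigma'$ grows from $\emptyset$ to $\sigma$, using that each closed star $\overline{\mathrm{st}}_\cC(\sigma')$ is contractible. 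By the induction hypothesis on $n$, each nonempty $\Delta_\sigma$ is then homologically $(m-1)$-connected. Moreover, Theorem~\ref{thm:topological-hall} applied to the transferred Hall bounds forces every simplex $\sigma \in \cC[V_n]$ to extend to a color-spanning face, so the nerve of the cover is exactly $\cC[V_n]$, which by the Hall hypothesis for $I = \{n\}$ is $(m-1)$-connected. A generalized Nerve Theorem then yields $\eta_H(\colorfulcomplex(\cC, \cV)) \ge m + 1$.

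The hardest step will be the transfer of Hall conditions to the links. Already for $\sigma = \{v\}$, one needs the bound $\eta_H(\cC[V_I \cup \{v\}]) \ge |I| + m + 1$ to close the Mayer--Vietoris argument, but $V_I \cup \{v\}$ is not of the form $V_J$ when $|V_n| \ge 2$, so this does not follow directly from the hypothesis on $(\cC, \cV)$. Circumventing this will likely require either an interleaved induction on $|V_n|$ that peels off one vertex of $V_n$ at a time while preserving appropriate connectivity, or a more global spectral sequence argument on the polyhedral complex $\cP(\cC, \cV)$ whose barycentric subdivision is $\colorfulcomplex(\cC, \cV)$, in the spirit of the homological proof of Theorem~\ref{thm:topological-hall} in the cited references.
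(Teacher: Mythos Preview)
Your nerve-by-induction strategy is genuinely different from the paper's proof, and the structural claims you make are correct: the cover $\{\Delta_v : v \in V_n\}$ of $\colorfulcomplex(\cC,\cV)$ has intersections $\Delta_\sigma$ that deformation-retract (via $\tau \mapsto (\tau \cap V_{[n-1]}) \cup \sigma$) onto $\colorfulcomplex(\mathrm{link}_\cC(\sigma)[V_{[n-1]}], (V_1,\ldots,V_{n-1}))$, and the nerve of the cover is the subcomplex of $\cC[V_n]$ consisting of faces extending to color-spanning simplices. The problem is exactly where you locate it, and it is fatal to the scheme as written.

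The transfer of Hall conditions to links does not follow from the hypothesis. The Mayer--Vietoris step for $\sigma = \{v\}$ needs $\widetilde{H}_{|I|+m-1}(\cC[V_I \cup \{v\}]) = 0$, and $V_I \cup \{v\}$ is not a union of color classes. This is not a technicality: one can arrange $\cC$ so that all the $\cC[V_J]$ are highly connected while $\mathrm{link}_\cC(v)[V_I]$ is disconnected for some $v \in V_n$. Neither of your proposed workarounds repairs this. Peeling vertices of $V_n$ one at a time fails because deleting a vertex can lower $\eta_H(\cC[V_J])$ for any $J \ni n$, so the Hall hypothesis is not preserved along the peel. And note that the gap bites twice: without the transferred bounds you can neither bound the connectivity of $\Delta_\sigma$ via induction nor identify the nerve with $\cC[V_n]$ (your argument for the latter invokes the topological Hall theorem on the link, which again needs the transferred bounds).

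The paper's proof avoids links entirely. It works at the chain level: it defines a ``colorful subdivision'' map $f^I : C_{p+|I|-1}(\cC[V_I]) \to C_p(\colorfulcomplex(\cC[V_I],\{V_i:i\in I\}))$ and a bilinear ``restriction'' map $g : C_p^{\mathrm{col}}(\cC) \times C_q(\Delta^n) \to C_{p+q-n}(\cC)$, proves that both commute appropriately with $\partial$, reduces via cellular approximation to showing that cycles of the form $f(R)$ bound, and then builds a chain homotopy $D : C_p(\Delta^n) \to C_{p+m}(\cC)$ face-by-face on $\Delta^n$ using only the single vanishing $\widetilde{H}_{|I|+m-2}(\cC[V_I]) = 0$ at each step. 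This is the ``more global'' argument you gesture at in your last sentence, and it is the route that actually closes. (The paper also records a nerve argument, but on a different cover, and notes that it yields a strictly weaker conclusion than Theorem~\ref{thm:topological-reconfiguration-complex}.)
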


Theorem \ref{thm:topological-reconfiguration-complex} provides a sufficient condition for a different reconfiguration graph to be connected, namely the $(m-1)$-dimensional up-down walk on the complex $\colorfulcomplex(\cC, \cV)$. This reconfiguration graph has vertex set consisting of all $(m-1)$-simplices of $\colorfulcomplex(\cC, \cV)$, and two $(m-1)$-simplices are joined by an edge if they are faces of a common $m$-simplex of $\colorfulcomplex(\cC, \cV)$. Such a reconfigurability result is obtained using the simplicial Hodge theorem relating the vanishing of cohomology over the reals to a positive spectral gap of the reconfiguration graph (see \cite{mukherjee2016random}).

In Section \ref{sec:intersections}, we further generalize Theorems \ref{thm:topological-reconfiguration-graph} and \ref{thm:topological-reconfiguration-complex} to matroidal settings, with the main results being Theorems \ref{thm:complex-matroid-reconfiguration} and \ref{thm:complex-matroid-connectedness}. These follow similar lines to Aharoni and Berger's \cite{aharoni2006intersection} matroidal generalization of the topological Hall theorem, but with an extra ingredient from posets. In particular, Theorem \ref{thm:complex-matroid-connectedness} implies an analogous deficiency version of Theorem \ref{thm:topological-reconfiguration-complex} (see Theorem \ref{thm:topological-hall-deficiency}). For combinatorial concreteness and brevity, the combinatorial applications described in this paper focus on reconfiguration results coming from Theorem \ref{thm:topological-reconfiguration-graph} and \ref{thm:complex-matroid-reconfiguration}, although higher dimensional results may be deduced from Theorems \ref{thm:topological-reconfiguration-complex} and \ref{thm:complex-matroid-connectedness}.

\subsection{Further topological notions}

Our proofs and applications will use basic notions from algebraic topology which we briefly review below. See \cite{hatcher2002algebraic, munkres2018elements, spanier2012algebraic} for standard references on algebraic topology.

For the homotopical proof of Theorem \ref{thm:topological-reconfiguration-graph}, we use the following notions. A \textit{triangulation} of a topological space $X$ is a complex $\cT$ whose geometric realization is homeomorphic to $X$. If $X$ is a connected, triangulable, $d$-dimensional manifold, such as a ball or sphere, then any triangulation $\cT$ of $X$ has the following three properties:
\begin{itemize}
	\item[(1)] (\textit{pure}) Every simplex in $\cT$ is contained in some $d$-simplex in $\cT$.
	\item[(2)] (\textit{nonbranching}) Every $(d - 1)$-simplex is a face of exactly one or two $d$-simplices, if $d \ge 2$.
	\item[(3)] (\textit{strongly connected}) For every pair $\sigma, \tau$ of $d$-simplices in $\cT$, there is a sequence of $d$-simplices $\sigma = \sigma_0, \sigma_1, \ldots, \sigma_{k} = \tau$ such that $\sigma_{i-1} \cap \sigma_{i}$ is a $(d-1)$-simplex for all $1 \le i \le k$.
\end{itemize}
Complexes satisfying properties (1), (2), and (3) are usually called $d$-dimensional \textit{pseudomanifolds}. Properties (1) and (2) will implicitly be used in our homotopical proof of Theorem \ref{thm:topological-reconfiguration-graph}.

For two complexes $\cC$ and $\cD$, a \textit{simplicial map} $f : \cC \rightarrow \cD$ is a function from the ground set of $\cC$ to the ground set of $\cD$ with the property that $f(\sigma) \in \cD$ for all $\sigma \in \cC$. The following proposition, a consequence of the simplicial approximation theorem, is a useful discrete characterization of homotopical connectedness (see \cite[Proposition 2.8]{szabo2006extremal} or \cite[Proposition 5.2.33]{narins2015extremal}). 

\begin{prop}[\cite{szabo2006extremal}] \label{prop:connectedness}
A complex $\cC$ is homotopically $k$-connected if and only if for every integer $-1 \le j \le k$ and every simplicial map $f : \cS^{j} \rightarrow \cC$, where $\cS^{j}$ is a triangulation of a $j$-sphere, there exists a triangulation $\cB^{j+1}$ of a $(j+1)$-ball with $\cS^{j}$ as its boundary and a simplicial map $\tilde{f} : \cB^{j+1} \rightarrow \cC$ that extends $f$.
\end{prop}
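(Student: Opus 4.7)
The plan is to prove both directions via the relative simplicial approximation theorem: given a pair $(\cK, \cL)$ of complexes, a target $\cM$, and a continuous map $F : \lVert \cK \rVert \to \lVert \cM \rVert$ whose restriction to $\lVert \cL \rVert$ is already simplicial, one can find a subdivision $\cK'$ of $\cK$ that refines only simplices of $\cK \setminus \cL$ (so that $\cL$ sits inside $\cK'$ unchanged) together with a simplicial approximation $g : \cK' \to \cM$ of $F$ that coincides with $F$ on $\lVert \cL \rVert$. The degenerate case $j = -1$ is handled separately: both sides of the equivalence then simply assert that $\cC$ is nonempty.

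For the forward direction, assume $\cC$ is homotopically $k$-connected, fix $0 \le j \le k$, and let $f : \cS^j \to \cC$ be a simplicial map. Its geometric realization extends to a continuous map $\tilde F : B^{j+1} \to \lVert \cC \rVert$ by hypothesis. I would triangulate $B^{j+1}$ as the cone $\cS^j * v$ over $\cS^j$, whose boundary subcomplex is precisely $\cS^j$. Applying the relative simplicial approximation theorem to $\tilde F$ with $\cL = \cS^j$ (on which $\tilde F$ is already simplicial, namely $f$) produces a subdivision $\cB^{j+1}$ of $\cS^j * v$ that fixes $\cS^j$ on the boundary, together with a simplicial map $\tilde f : \cB^{j+1} \to \cC$ extending $f$, which is exactly the required object.

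For the backward direction, assume the simplicial extension property holds, and let $F : S^j \to \lVert \cC \rVert$ be any continuous map. Ordinary simplicial approximation of $F$, after sufficiently refining a chosen triangulation of $S^j$, yields a triangulation $\cS^j$ and a simplicial map $f : \cS^j \to \cC$ that is homotopic to $F$ via a straight-line carrier homotopy inside $\lVert \cC \rVert$. By hypothesis, $f$ extends to a simplicial map $\tilde f : \cB^{j+1} \to \cC$, hence to a continuous extension of $f$ from $S^j$ to $B^{j+1}$. Using a collar neighborhood of $\partial B^{j+1} = S^j$ in $B^{j+1}$, I would glue the homotopy from $F$ to $f$ on the collar to the continuous realization of $\tilde f$ on the inner ball, yielding the desired extension of $F$. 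The main technical obstacle is the forward direction: the boundary triangulation $\cS^j$ is prescribed and must not be refined by the subdivision, which is exactly the content of relative (as opposed to ordinary) simplicial approximation; the remaining bookkeeping, including the collar gluing and the trivial case $j = -1$, is then routine.
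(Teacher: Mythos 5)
Your proof is correct: the paper itself gives no proof of this proposition, citing it from Szab\'o--Tard\H{o}s and Narins as ``a consequence of the simplicial approximation theorem,'' and your argument is exactly that standard argument. You also correctly isolate the one genuine subtlety --- that in the forward direction the prescribed boundary triangulation $\cS^j$ must not be refined --- and handle it legitimately by invoking Zeeman's relative simplicial approximation theorem applied to the cone $\cS^j \ast v$, while the backward direction via ordinary simplicial approximation and a collar gluing is routine, as you say.
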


For our homological proof of Theorem \ref{thm:topological-reconfiguration-graph}, we follow standard terminology from simplicial homology. Fix a commutative ring $\cR$ (with unit). For an integer $p \ge 0$, an \textit{oriented $p$-simplex} $[v_0, \ldots, v_p]$ is an ordering of the vertices of a $p$-simplex $\{v_0, \ldots, v_p\} \in \cC$, under the equivalence relation $[v_{\pi(0)}, \ldots, v_{\pi(p)}] = \text{sign}(\pi) \cdot [v_0, \ldots, v_{p}]$ for any permutation $\pi$ of $\{0,1, \ldots, p\}$. The \textit{$p$-dimensional chain group} $C_p(\cC) = C_p(\cC; \cR)$ is the free $\cR$-module generated by the oriented $p$-simplices of $\cC$. The elements of $C_p(\cC)$ are called \textit{$p$-chains}. The boundary operator $\partial = \partial_p : C_{p}(\cC) \rightarrow C_{p-1}(\cC)$ is defined on oriented simplices by
\begin{align*}
	\partial_p([v_0, \ldots, v_p]) \coloneqq \sum_{i = 0}^p (-1)^i [v_0, \ldots, \hat{v}_i, \ldots, v_p],
\end{align*}
where $[v_0, \ldots, \hat{v}_i, \ldots, v_p]$ is the oriented simplex obtained by removing vertex $v_i$ from $[v_0, \ldots, v_p]$. The map $\partial_p$ extends to $C_{p}(\cC)$ by linearity. In the setting of reduced homology, which is the case for us, the chain group $C_{-1}(\cC)$ is taken to be $\cR$, and $\partial_{0} \coloneqq \epsilon$ is the \textit{augmentation map}, defined by $\epsilon(a_0 [v_0] + \cdots + a_k [v_k]) \coloneqq a_0 + \cdots + a_k$, where $a_0, \ldots, a_k \in \cR$. The submodule $Z_p(\cC)$ of $C_p(\cC)$ is defined to be the kernel of $\partial_p$, whose elements are called \textit{$p$-cycles}. The submodule $B_p(\cC)$ of $C_p(\cC)$ is defined to be the image of $\partial_{p+1}$, whose elements are called \textit{$p$-boundaries}. We have $\partial_p \circ \partial_{p+1} = 0$, and thus $B_p(\cC)$ is a submodule of $Z_p(\cC)$. The quotient module
\begin{align*}
	\widetilde{H}_p(\cC) \coloneqq Z_p(\cC) / B_p(\cC)
\end{align*}
is called the \textit{$p$th reduced homology group} of $\cC$ (with coefficients in $\cR$). 

Every simplicial map $f : \cC \rightarrow \cD$ induces a homomorphism $f_{\#} = (f_{\#})_p : C_p(\cC) \rightarrow C_p(\cD)$ for all integers $p \ge -1$, defined on oriented simplices by $f_{\#}([v_0, \ldots, v_p]) \coloneqq [f(v_0), \ldots, f(v_p)]$ if $f(v_0), \ldots, f(v_p)$ are distinct, and $f_{\#}([v_0, \ldots, v_p]) = 0$ otherwise. Here, $(f_{\#})_{-1}$ denotes the identity map on $\cR$. A standard fact is that $f_{\#}$ is an \textit{augmentation-preserving chain map}, meaning it commutes with the boundary operator (including the augmentation map $\partial_0 = \epsilon$), i.e., $\partial_p \circ (f_{\#})_{p} = (f_{\#})_{p-1} \circ \partial_p$.

\subsection{Some complexes and their topological connectedness} \label{sec:lower-bounds}

Here, we review some useful results on the topological connectedness of complexes that are relevant for some applications of Theorem \ref{thm:topological-reconfiguration-graph}. See \cite{aharoni2006intersection, aharoni2024coloring, haxell2016independent, haxell2019topological} for more detailed surveys on this topic.

The \textit{join} of two complexes $\cC$ and $\cD$, assuming their ground sets are disjoint, is the complex $\cC \ast \cD \coloneqq \{\sigma \cup \tau : \sigma \in \cC, \tau \in \cD\}$. The following describes the topological connectedness of joins.

\begin{prop}[\cite{aharoni2006intersection, aharoni2015cooperative}] \label{prop:joins}
For any two complexes $\cC$ and $\cD$ on disjoint ground sets, we have $\eta_{\pi}(\cC \ast \cD) \ge \eta_{\pi}(\cC) + \eta_{\pi}(\cD)$ and $\eta_H(\cC \ast \cD) = \eta_H(\cC) + \eta_H(\cD)$.
\end{prop}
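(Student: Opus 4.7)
My plan is to reduce both statements to the classical connectivity and homology formulas for topological joins, and then translate them into the $\eta$ convention used in the paper. The first step is to observe that the polyhedron $\lVert \cC \ast \cD \rVert$ of the abstract join is canonically homeomorphic to the topological join $\lVert \cC \rVert \ast \lVert \cD \rVert$, so I may work with either description interchangeably. Throughout, I write $a = \eta(\cC)$ and $b = \eta(\cD)$.

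For the homotopical bound $\eta_\pi(\cC \ast \cD) \ge a + b$, I would invoke the classical fact that if $X$ is $m$-connected and $Y$ is $n$-connected then $X \ast Y$ is $(m+n+2)$-connected. A streamlined proof uses the homotopy equivalence $X \ast Y \simeq \Sigma(X \wedge Y)$ for CW complexes (where $\Sigma$ denotes unreduced suspension), combined with the smash product connectivity bound from cellular approximation and the fact that suspension raises connectivity by one. Substituting $m = a-2$ and $n = b-2$ gives $(a+b-2)$-connectedness of the join, which is exactly $\eta_\pi(\cC \ast \cD) \ge a + b$.

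For the homological equality $\eta_H(\cC \ast \cD) = a + b$, I would invoke the Künneth-type formula for joins, which over $\mathbb{Q}$ reads
\begin{align*}
\widetilde{H}_k(\cC \ast \cD) \;\cong\; \bigoplus_{i+j=k-1} \widetilde{H}_i(\cC) \otimes_{\mathbb{Q}} \widetilde{H}_j(\cD).
\end{align*}
The cleanest derivation is via the reduced Mayer--Vietoris sequence applied to the standard open cover of $\lVert \cC \rVert \ast \lVert \cD \rVert$ by the two open cone halves (which deformation retract to $\lVert \cC \rVert$ and $\lVert \cD \rVert$ respectively and have intersection homotopy equivalent to $\lVert \cC \rVert \times \lVert \cD \rVert$), combined with the ordinary Künneth theorem over a field. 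The smallest $k$ at which any summand on the right can be nonzero is $k = (a-1) + (b-1) + 1 = a + b - 1$, yielding $\eta_H(\cC \ast \cD) \ge a + b$; when $a$ and $b$ are finite, working over the field $\mathbb{Q}$ guarantees that the tensor product of the nonvanishing summands $\widetilde{H}_{a-1}(\cC)$ and $\widetilde{H}_{b-1}(\cD)$ is itself nonzero, certifying the reverse inequality and hence equality.

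The only mildly technical step is verifying the Künneth-for-joins isomorphism cleanly; the rest is bookkeeping with the $\eta$ index shift. The reason one only obtains an inequality in the homotopical case is that \textit{a priori} Whitehead products may contribute extra low-dimensional homotopy classes in the join that have no analogue at the level of rational homology, so there is no mechanism to recover the reverse inequality for $\eta_\pi$ in general.
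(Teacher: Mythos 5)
The paper does not prove this proposition itself; it quotes it from \cite{aharoni2006intersection, aharoni2015cooperative}, and the standard arguments behind that citation are exactly the ones you use: the classical connectivity bound for joins (via $\Sigma(X \wedge Y)$) on the homotopical side, and the K\"unneth formula for joins with field coefficients on the homological side --- the same formula the paper itself later invokes from \cite{bjorner1995topological}. Your index bookkeeping ($m = a-2$, $n = b-2$, lowest nonvanishing degree $a+b-1$, and nonvanishing of the tensor product over $\mathbb{Q}$ for the reverse inequality) is correct, and the only points left implicit are degenerate ones (an empty factor, or $\eta_H$ infinite), where the join formula makes the claim trivial, so the proposal is correct and essentially the intended proof.
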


The main complexes for which we apply Theorem \ref{thm:topological-reconfiguration-graph} are \textit{independence complexes}, \textit{matching complexes}, \textit{matroids}, and \textit{nerves}. The \textit{independence complex} $\cI(G)$ of a graph $G$ is the collection of independent sets of $G$ (i.e., vertex subsets of $G$ not inducing any edges). The \textit{matching complex} $\cM(G)$ of a multi-hypergraph $G$ is the collection of all matchings of $G$ (i.e., edge subsets of $G$ that are pairwise disjoint). The matching complex of $G$ is the same as the independence complex of the line graph of $G$. \textit{Matroids} are described below, while \textit{nerves} are described in the applications when they are relevant. To apply topological Hall theorems, one needs general lower bounds on the topological connectedness of the complexes of interest. 

When it comes to the independence complex $\cI(G)$ of a graph $G$, two distinct general useful tools for lower bounding its topological connectedness are a recursive lower bound of Meshulam \cite{meshulam2003domination} and an eigenvalue bound of Aharoni, Berger, and Meshulam \cite{aharoni2005eigenvalues} (the latter only applicable for homological connectedness with rational coefficients). Among many other lower bounds, these have been used to establish various domination-type lower bounds for $\eta(\cI(G))$ \cite{haxell2019topological, meshulam2003domination, meshulam2001clique}, including the following two which have also been established using more direct triangulation arguments \cite{aharoni2006intersection, aharoni2002triangulated, aharoni2000hall}. For terminology, a vertex subset $X$ of $G$ is said to \textit{strongly dominate} a vertex subset $Y$ if every vertex in $Y$ is adjacent to some vertex in $X$. (The more typical notion of graph domination would only require that every vertex in $Y - X$ be adjacent to some vertex of $X$.) 

\begin{thm}[\cite{aharoni2006intersection, aharoni2000hall, meshulam2001clique}] \label{thm:domination-connectedness}
For any graph $G$ and $\eta \in \{\eta_{\pi}, \eta_H\}$, we have
\begin{itemize}
	\item[(a)] $\eta(\cI(G)) \ge \tilde{\gamma}(G)/2$, where $\tilde{\gamma}(G)$ is the minimum size of a strongly dominating set of $V(G)$;
	\item[(b)] $\eta(\cI(G)) \ge i\gamma(G)$, where $i\gamma(G)$ is the minimum integer $\ell$ such that every independent set of $G$ is strongly dominated by a vertex set of size at most $\ell$.  
\end{itemize}
\end{thm}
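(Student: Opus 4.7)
The plan is to derive both bounds from Meshulam's standard recursion for independence complexes, which asserts that for any vertex $w$ of $G$,
\[
\eta(\cI(G)) \ge \min\{\eta(\cI(G - w)),\; \eta(\cI(G - N[w])) + 1\}.
\]
This follows from Mayer--Vietoris applied to the decomposition $\cI(G) = \cI(G - w) \cup (\{w\} \ast \cI(G - N[w]))$ together with the contractibility of the star $\{w\} \ast \cI(G - N[w])$; for $\eta_\pi$ the gluing argument is simplicial via Proposition~\ref{prop:connectedness}, and for $\eta_H$ it is the long exact Mayer--Vietoris sequence. Throughout, one may assume that $G$ has no isolated vertex, else $\cI(G)$ is a cone and $\eta(\cI(G)) = \infty$.

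For part (b), I would induct on $|V(G)|$. Let $k = i\gamma(G)$, fix an independent set $I^* \in \cI(G)$ realizing $\tilde\gamma_G(I^*) = k$, and pick any vertex $v \in V(G) \setminus I^*$ (available whenever $k \ge 2$; the $k \le 1$ case is immediate). Since $I^* \in \cI(G - v)$ and every strong dominator of $I^*$ in $G - v$ is also one in $G$, one has $i\gamma(G - v) \ge \tilde\gamma_{G - v}(I^*) \ge \tilde\gamma_G(I^*) = k$. For the neighborhood branch, I would prove $i\gamma(G - N[v]) \ge k - 1$ by contradiction: if every independent set of $G - N[v]$ admitted a strong dominator of size $\le k - 2$ there, then taking such a dominator $D'$ for $I^* \setminus N[v] \in \cI(G - N[v])$ and setting $D'' = D' \cup \{v\}$ would yield a strong dominator of $I^*$ in $G$ of size $\le k - 1$, since each vertex of $I^* \cap N[v] \subseteq N(v)$ (using $v \notin I^*$) is dominated by $v$ and each vertex of $I^* \setminus N[v]$ is already dominated by $D'$. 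This contradicts $\tilde\gamma_G(I^*) = k$. The inductive hypothesis combined with Meshulam's recursion then yields $\eta(\cI(G)) \ge \min\{k,\; (k-1)+1\} = k$.

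For part (a), I would induct analogously. Take a minimum strongly dominating set $D$ of $G$ with $|D| = k = \tilde\gamma(G)$. Since $D$ strongly dominates itself, $G[D]$ has no isolated vertex and hence contains an edge $uv$. Applying Meshulam's recursion at $v$, the elementary inequalities
\[
\tilde\gamma(G - v) \ge k - 1 \quad \text{and} \quad \tilde\gamma(G - N[v]) \ge k - 2
\]
hold, by extending a minimum strong dominator of the reduced graph by the partner $u$, or by both $u$ and $v$, respectively. The inductive hypothesis yields $\eta(\cI(G)) \ge (k-1)/2$, which by the integrality of $\eta$ promotes to $\eta(\cI(G)) \ge k/2$ for even $k$. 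For odd $k$, a half-unit gap remains and the naive induction does not close; this is resolved either by a more careful choice of pivot vertex (e.g., one adjacent to a leaf of $G$, which forces $\cI(G - v)$ to be contractible automatically) or by a direct triangulation argument.

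The main obstacle is the parity gap in part (a), which is not closed by a single application of Meshulam's recursion and requires an additional structural argument, typically exploiting local structure around the edge $uv \subseteq D$. Part (b)'s subtlety lies in the verification that $i\gamma(G - N[v]) \ge k - 1$, which crucially exploits the freedom of choosing $v$ outside the extremal independent set $I^*$ so that $v$ alone dominates all vertices of $I^*$ in $N(v)$ without needing an adjoined partner. Both parts admit alternative direct triangulation proofs via Proposition~\ref{prop:connectedness} and Sperner-style bookkeeping, as alluded to in the statement, which sidestep the parity issue entirely.
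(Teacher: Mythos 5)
The paper itself does not prove Theorem \ref{thm:domination-connectedness}; it quotes it from \cite{aharoni2000hall, aharoni2006intersection, meshulam2001clique}, so your argument has to be measured against the standard proofs. Your part (b) is correct and is essentially the known recursion proof: the vertex recursion $\eta(\cI(G)) \ge \min\{\eta(\cI(G-v)),\, \eta(\cI(G-N[v]))+1\}$ holds for both $\eta_{\pi}$ (gluing lemma) and $\eta_H$ (Mayer--Vietoris), and your choice of an extremal independent set $I^*$ together with a pivot $v \notin I^*$ gives exactly $i\gamma(G-v) \ge k$ and $i\gamma(G-N[v]) \ge k-1$, with the degenerate cases (isolated vertices, $k \le 1$, $I^* \subseteq N(v)$) handled correctly.

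Part (a), however, has a genuine gap which your sketch acknowledges but does not close. With the vertex recursion you only get $\tilde{\gamma}(G-v) \ge k-1$ and $\tilde{\gamma}(G-N[v]) \ge k-2$, hence $\eta(\cI(G)) \ge \lceil (k-1)/2 \rceil$, which falls short of $k/2$ whenever $k$ is odd --- half of all cases, not an edge case. Neither proposed repair works: a pivot ``adjacent to a leaf'' exists only if $G$ has a degree-one vertex (any regular graph has none), and ``a direct triangulation argument'' is an appeal to the literature rather than a proof. The standard way to get the clean bound is to run Meshulam's recursion in its \emph{edge} form: for an edge $e = uv$ one has $\eta_H(\cI(G)) \ge \min\{\eta_H(\cI(G-e)),\, \eta_H(\cI(G - (N[u]\cup N[v]))) + 1\}$, where $G-e$ deletes only the edge; this follows from Mayer--Vietoris applied to $\cI(G-e) = \cI(G) \cup \bigl(\sigma_{uv} \ast \cI(G - (N[u]\cup N[v]))\bigr)$, whose intersection with $\cI(G)$ is the suspension of $\cI(G - (N[u]\cup N[v]))$. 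Since deleting an edge cannot decrease $\tilde{\gamma}$, while $\tilde{\gamma}(G - (N[u]\cup N[v])) \ge \tilde{\gamma}(G) - 2$ (adjoin $u$ and $v$ to a total dominating set of the smaller graph), induction on the number of edges gives exactly $\tilde{\gamma}(G)/2$ with no parity loss. Note also that this edge recursion is homological; for $\eta_{\pi}$ the cited references obtain (a) by explicit triangulation constructions \cite{aharoni2000hall, aharoni2002triangulated, aharoni2006intersection}, so a recursion-based write-up should either restrict part (a) to $\eta_H$ or follow those arguments for the homotopical statement.
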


For lower bounds on the topological connectedness of matching complexes $\cM(G)$ of multi-hypergraphs $G$, the \textit{matching number} $\nu(G)$ of multi-hypergraph $G$ is the cardinality of a largest matching of $G$. A \textit{fractional matching} of $G$ is a real vector $x \in \mathbb{R}^{E(G)}$ such that $\sum_{e \ni v} x_e \le 1$ for all $v \in V(G)$, and $x_e \ge 0$ for all $e \in E(G)$. The \textit{fractional matching number} $\nu^{\ast}(G)$ is the maximum of $\sum_{e \in E(G)} x_e$ among all fractional matchings $x$ of $G$. Note that $\nu^{\ast}(G) \ge \nu(G)$. The lower bounds in the theorem below were shown in \cite{aharoni2000hall} and \cite{aharoni2005eigenvalues}, respectively. The first one is an easy consequence of Theorem \ref{thm:domination-connectedness}(b).

\begin{thm}[\cite{aharoni2005eigenvalues, aharoni2000hall}] \label{thm:matching-number}
For any $r$-graph $G$, we have $\eta(\cM(G)) \ge \frac{\nu(G)}{r}$ and $\eta_H(\cM(G)) \ge \frac{\nu^\ast(G)}{r}$.
\end{thm}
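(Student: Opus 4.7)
The plan is to prove the two bounds separately, both by passing to the line graph $L(G)$ and using an independence-complex bound: the first follows from the domination bound (b) of Theorem \ref{thm:domination-connectedness}, while the second needs the spectral bound of Aharoni, Berger, and Meshulam \cite{aharoni2005eigenvalues}.

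For $\eta(\cM(G)) \ge \nu(G)/r$, I would use the identification $\cM(G) = \cI(L(G))$. By Theorem \ref{thm:domination-connectedness}(b) it suffices to show $i\gamma(L(G)) \ge \nu(G)/r$, i.e., to exhibit an independent set of $L(G)$ that cannot be strongly dominated by fewer than $\nu(G)/r$ vertices. Take any maximum matching $M^\ast$ of $G$; it is an independent set of $L(G)$ of size $\nu(G)$. If $S \subseteq E(G)$ strongly dominates $M^\ast$ in $L(G)$, then every $e \in M^\ast$ meets some $f \in S$ at a vertex of $G$. Each $f \in S$ has exactly $r$ vertices, and since $M^\ast$ is a matching each such vertex lies in at most one edge of $M^\ast$, so $f$ can strongly dominate at most $r$ edges of $M^\ast$. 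Hence $r|S| \ge |M^\ast| = \nu(G)$, which gives the desired lower bound on $i\gamma(L(G))$.

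For the sharper bound $\eta_H(\cM(G)) \ge \nu^\ast(G)/r$, integral strong domination is no longer sufficient, since it cannot distinguish $\nu(G)$ from $\nu^\ast(G)$. Instead, I would apply the Aharoni--Berger--Meshulam spectral lower bound for $\eta_H$ of independence complexes to $H = L(G)$. That bound controls $\eta_H(\cI(H))$ via the spectrum of a weighted Laplacian on $H$, and it admits a clean fractional-domination reformulation: roughly, a fractional independent set $y$ of $H$ gives a lower bound on $\eta_H(\cI(H))$ equal to $\sum y / \max_v \sum_{u \sim v} y_u$. Taking $y$ to be (the edge-indicator vector of) a maximum fractional matching of $G$, of total mass $\nu^\ast(G)$, the numerator becomes $\nu^\ast(G)$, and the denominator is at most $r$ by the same single-edge counting as in the first part (each edge $f$ of $G$ contains at most $r$ vertices and thus meets a $y$-mass of at most $r$). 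This gives the claimed bound.

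The main obstacle is the second inequality, where the content of the proof lies in \cite{aharoni2005eigenvalues}: the passage from an integral strong-domination estimate to a fractional one requires the Hodge/eigenvalue machinery, which does not extend to homotopical connectedness (explaining why the $\nu^\ast$ bound is stated only for $\eta_H$). The first inequality is, by contrast, a one-line counting argument once Theorem \ref{thm:domination-connectedness}(b) is available.
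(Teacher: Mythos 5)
The paper itself does not prove Theorem \ref{thm:matching-number}: it is quoted from \cite{aharoni2000hall, aharoni2005eigenvalues}, with only the remark that the first bound is an easy consequence of Theorem \ref{thm:domination-connectedness}(b). Your treatment of the first inequality is exactly that intended derivation and is correct: a maximum matching $M^\ast$ of $G$ is an independent set of $L(G)$, and any edge set $S$ strongly dominating it in $L(G)$ satisfies $r|S| \ge \nu(G)$ because each edge of $S$ meets at most $r$ edges of $M^\ast$; hence $i\gamma(L(G)) \ge \nu(G)/r$ and Theorem \ref{thm:domination-connectedness}(b) applies.

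For the second inequality, however, the intermediate statement you attribute to \cite{aharoni2005eigenvalues} is false as stated. The claimed bound $\eta_H(\cI(H)) \ge \bigl(\sum_u y_u\bigr) / \max_v \sum_{u \sim v} y_u$ for a fractional independent set $y$ fails already for $H = K_2$ with $y \equiv 1/2$: the ratio equals $2$, while $\eta_H(\cI(K_2)) = 1$. In the matching-complex framing this is $G = P_3$ with $r = 2$ and the maximum fractional matching $(1/2,1/2)$, so the counterexample persists even if $y$ is required to be a fractional matching of the underlying hypergraph. Consequently the chain ``$\eta_H \ge \text{ratio} \ge \nu^\ast(G)/r$'' is unsound, even though its conclusion is true. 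The correct input from \cite{aharoni2005eigenvalues} is their weighted-Laplacian spectral theorem, applied with vertex weights coming from the fractional matching and using that the neighborhood of each vertex of $L(G)$ is covered by $r$ cliques; in fact the inequality $\eta_H(\cM(G)) \ge \nu^\ast(G)/r$ is itself a theorem stated there, so for the purposes of this paper one simply cites it, as the paper does. Your closing observation---that the fractional bound genuinely requires the spectral machinery and is only available for $\eta_H$---is accurate; it is the specific ``clean reformulation'' that must be repaired (e.g., by the correct weighted spectral statement) or replaced by a direct citation.
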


Finally, we describe relevant terminology and facts from matroid theory that we use in Section \ref{sec:intersections}. See \cite{oxley2006matroid} for more background on matroid theory. A \textit{matroid} $\cM$ on ground set $V$ (or, more accurately, its independence complex) is an abstract simplicial complex on $V$ that satisfies the following \textit{augmentation property}: if $A, B \in \cM$ and $|A| < |B|$, then there exists $x \in B$ such that $A \cup \{x\} \in \cM$. The sets in $\cM$ are usually called \textit{independent}, otherwise they are \textit{dependent}. All maximal independent sets of a matroid $\cM$ have the same size, and this size is called the \textit{rank} of $\cM$ and denoted by $r(\cM)$. A maximal independent set is called a \textit{basis}, while a minimal dependent set is called a \textit{circuit}. For any subset $X \subseteq V$, the induced subcomplex $\cM[X] \coloneqq \{A \in \cM : A \subseteq X\}$ is again a matroid, and so is the contraction $\cM / X \coloneqq \{A \subseteq V - X : A \cup B \in \cM\}$, where $B$ is any fixed basis of $\cM[X]$. For an integer $k \ge 0$, the \textit{$k$-truncation} of $\cM$ is the matroid $\cM_{\le k} \coloneqq \{A \in \cM : |A| \le k \}$. A \textit{flat} of $\cM$ is a subset $F \subseteq V(\cM)$ such that $r(\cM[F \cup \{x\}]) > r(\cM[F])$ for all $x \in V - F$. The \textit{dual matroid} of $\cM$ is the matroid  $\cM^{\ast} \coloneqq \{A \subseteq V : r(\cM[V - A]) = r(\cM)\}$, which has rank $r(\cM^{\ast}) = |V| - r(\cM)$. More generally, $r(\cM^{\ast}[X]) = r(\cM[V - X]) + |X| - r(\cM)$. We have $(\cM^\ast)^\ast = \cM$. A \textit{loop} of $\cM$ is singleton subset of $V$ that is dependent in $\cM$, whereas a \textit{coloop} of $\cM$ is a loop of $\cM^{\ast}$. 

There are many classes of matroids, and the most relevant one for us implicit throughout this paper is the class of \textit{partition matroids}. Here, given a partition $\cV = \{V_1, \ldots, V_n\}$ of a ground set $V$, the associated partition matroid is $\cM_{\cV} \coloneqq \{ A \subseteq V : |A \cap V_i| \le 1 \text{ for all } i\}$. The following well-known result gives the topological connectedness of matroids.

\begin{prop}[\cite{bjorner1992homology}] \label{prop:matroid-connectedness}
If $\cM$ is a matroid and $\eta \in \{\eta_{\pi}, \eta_H\}$, then $\eta(\cM) = r(\cM)$ if $\cM$ has no coloop, and $\eta(\cM) = \infty$ otherwise.
\end{prop}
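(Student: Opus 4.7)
The plan is to split into two cases depending on whether $\cM$ has a coloop. If $v$ is a coloop of $\cM$, then by definition $r(\cM[V - \{v\}]) < r(\cM)$, so no basis of $\cM$ is contained in $V - \{v\}$; equivalently, every basis of $\cM$ contains $v$. Since every simplex $\sigma \in \cM$ extends to a basis by the augmentation property of matroids, we have $\sigma \cup \{v\} \in \cM$ for every $\sigma \in \cM$. Thus $\cM$ is a cone with apex $v$, hence contractible, giving $\eta_\pi(\cM) = \eta_H(\cM) = \infty$.

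Now suppose $\cM$ has no coloop. Here I would show $\eta(\cM) = r(\cM)$ by invoking the classical fact that the independence complex of any matroid admits a pure shelling (due to Provan--Billera and Bj\"orner). A shellable pure $(r-1)$-dimensional complex is homotopy equivalent to a wedge of $(r-1)$-spheres, where $r = r(\cM)$. Since a wedge of $(r-1)$-spheres is homotopically $(r-2)$-connected, this immediately gives $\eta_\pi(\cM) \ge r$, and hence also $\eta_H(\cM) \ge r$.

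To obtain the matching upper bound $\eta(\cM) \le r$, I would show $\widetilde{H}_{r-1}(\cM) \ne 0$. The number of spheres in the wedge equals $|\widetilde{\chi}(\cM)|$, and by M\"obius inversion on the geometric lattice $L(\cM)$ of flats, one has $|\widetilde{\chi}(\cM)| = |\mu_{L(\cM)}(\hat{0}, \hat{1})|$; a classical theorem of Rota gives $(-1)^r \mu_{L(\cM)}(\hat{0}, \hat{1}) > 0$ whenever $\cM$ has no coloops, so the top homology is nonzero. The main obstacle is this nonvanishing step, since the lower bound falls out directly from shellability but the matching upper bound requires nontrivial matroid-theoretic input. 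A more self-contained alternative would induct on $|V(\cM)|$ via the Mayer--Vietoris sequence arising from the decomposition $\cM = (\cM \setminus v) \cup \mathrm{st}_\cM(v)$ with intersection $\mathrm{lk}_\cM(v) = \cM / v$, for any non-coloop $v$; this fits cleanly because deletion of a non-coloop preserves rank while contraction drops rank by one, matching the degree shift in the long exact sequence.
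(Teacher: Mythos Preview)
The paper does not prove this proposition; it simply cites Bj\"orner's survey, so there is no in-paper argument to compare against. Your overall strategy (cone argument for the coloop case, shellability for the lower bound, nonvanishing of top homology for the upper bound) is the standard one and is correct in outline.

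There is, however, a genuine error in your upper-bound step. The identity $|\widetilde{\chi}(\cM)| = |\mu_{L(\cM)}(\hat 0,\hat 1)|$ that you invoke is false in general: for $\cM = U_{2,3}$ the independence complex is a circle, so $|\widetilde{\chi}(\cM)| = 1$, while the lattice of flats is the rank-$2$ Boolean-like lattice with three atoms, giving $|\mu(\hat 0,\hat 1)| = 2$. You have conflated two different complexes: $\mu_{L(\cM)}(\hat 0,\hat 1)$ governs the order complex of the proper part of $L(\cM)$, and via the characteristic polynomial equals $(-1)^r T_\cM(1,0)$, whereas the number of spheres in the independence complex is $(-1)^{r-1}\widetilde{\chi}(\cM) = T_\cM(0,1)$. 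Rota's sign theorem gives $T_\cM(1,0) > 0$ for \emph{loopless} matroids, not the statement you need. The fix is either to argue directly that $T_\cM(0,1) > 0$ for coloop-free $\cM$ (e.g., it counts bases of internal activity zero in any linear order, and such a basis exists precisely when there is no coloop), or to apply Rota's theorem to the dual matroid, using $T_{\cM^\ast}(x,y) = T_\cM(y,x)$ so that $T_\cM(0,1) = T_{\cM^\ast}(1,0) > 0$ since $\cM^\ast$ is loopless exactly when $\cM$ is coloop-free. Your Mayer--Vietoris alternative is also workable, but note that both $\cM \setminus v$ and $\cM / v$ can individually acquire coloops even when $\cM$ has none, so the induction needs a short case analysis (if $\cM \setminus v$ has a coloop then it is contractible and the long exact sequence reduces to $\widetilde H_{r-1}(\cM) \cong \widetilde H_{r-2}(\cM/v)$, and one checks that in this situation $\cM/v$ is coloop-free).
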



\section{Proof of topological Hall theorem for reconfigurations} \label{sec:proofs}

In this section, we prove Theorem \ref{thm:topological-reconfiguration-graph}, our reconfiguration variation of the topological Hall theorem, for both of the topological connectedness parameters $\eta_{\pi}$ and $\eta_H$. To conform with standard topological notation, particularly when it comes to homology, in this section we take the vertex partition of our complex $\cC$ to be $\cV \coloneqq \{V_0, \ldots, V_n\}$ (the indexing starts at $0$).

\subsection{A Sperner-type lemma} \label{sec:sperner}

In this subsection, we state and prove a reconfiguration variation of Sperner's lemma \cite{sperner1928neuer}. This is the first step in our homotopical proof of Theorem \ref{thm:topological-reconfiguration-graph}, and although it is not needed for our homological proof, it helps to motivate its idea. Our specific Sperner-type lemma does not appear to have been written explicitly before, but basically the same ideas have appeared in various forms in some previous works (e.g., \cite{garcia1975fixed, herings2001variational, kulpa2000parametric, talman2012parameterized}). Our proof also somewhat resembles Gale's proof \cite{gale1979game} that the game of Hex cannot end in a draw. As mentioned in Section \ref{sec:main-theorems}, all of these results, including ours, are closely related to Browder's \cite{browder1960continuity} parameterized extension of Brouwer's fixed point theorem, as well as Herings and Talman's \cite{herings1998intersection} equivalent parameterized extension of the KKM theorem.

First we recall Sperner's lemma \cite{sperner1928neuer} in its classical form. Let $\Delta^{n}$ denote an embedded $n$-dimensional simplex, say $\Delta^n$ is the convex hull of the affinely independent point set $\{x_0, \ldots, x_n\}$. Let $\cT$ be any finite triangulation of $\Delta^{n}$. A vertex-coloring $\lambda : V(\cT) \rightarrow \{0,1,\ldots,n\}$ is said to be a \textit{Sperner coloring} of $\cT$ if each of the extreme points $x_i$ of $\Delta^n$ is assigned the color $i$, and every other vertex $v$ of $\cT$ is assigned one of the colors present among the extreme points of the minimal face containing $v$. We implicitly associate the vertex-coloring $\lambda : V(\cT) \rightarrow \{0,1,\ldots,n\}$ with the partition of $V(\cT)$ given by its color classes, i.e., $\cV \coloneqq \{\lambda^{-1}(i) : i \in \{0,1,\ldots,n\}\}$.

\begin{lem}[Sperner's lemma \cite{sperner1928neuer}] \label{lem:Sperner}
Let $\cT$ be a triangulation of an $n$-dimensional simplex $\Delta^{n}$, and let $\lambda : V(\cT) \rightarrow \{0,1,\ldots,n\}$ be a Sperner coloring of $\cT$. Then $(\cT, \lambda)$ has an odd number of colorful simplices.
\end{lem}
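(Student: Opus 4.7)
The plan is to prove Lemma \ref{lem:Sperner} by induction on $n$, using a double-counting argument on ``almost colorful'' faces. Call an $(n-1)$-simplex $\tau$ of $\cT$ \emph{rainbow} if its vertex colors under $\lambda$ are exactly $\{0, 1, \ldots, n-1\}$. The base case $n = 0$ is immediate: the complex $\cT$ consists of the single vertex $x_0$, which receives color $0$, giving exactly one colorful $0$-simplex.

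For the inductive step, I would classify every $n$-simplex $\sigma \in \cT$ by how many rainbow $(n-1)$-faces it has. A quick case check shows that $\sigma$ has exactly $1$ rainbow facet if $\sigma$ is colorful; exactly $2$ rainbow facets if the color set of $\sigma$ is exactly $\{0,\ldots,n-1\}$ (some color appears twice, and dropping either repeated vertex yields a rainbow facet); and $0$ otherwise. Summing over all $n$-simplices, the total count of incident (simplex, rainbow facet) pairs is congruent to the number $C$ of colorful $n$-simplices modulo $2$.

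Next I would count the same quantity by summing over rainbow $(n-1)$-simplices $\tau$. Each interior $\tau$ is a face of exactly two $n$-simplices of $\cT$, contributing $0 \pmod 2$, so the count modulo $2$ equals the number of rainbow $(n-1)$-simplices on $\partial \Delta^n$. The key topological input is to pin down which boundary facets can be rainbow. For $\tau \subseteq \partial \Delta^n$, each vertex of $\tau$ lies in some minimal face of $\Delta^n$, and by the Sperner condition its color must appear among the extreme points of that face; hence if $\tau$ has all $n$ colors $\{0,\ldots,n-1\}$ then $\tau$ must lie in a proper face of $\Delta^n$ that contains $x_0, \ldots, x_{n-1}$. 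The only such proper face is the facet $F_n = \mathrm{conv}(x_0, \ldots, x_{n-1})$ opposite $x_n$.

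Thus the boundary rainbow $(n-1)$-simplices are exactly the colorful simplices of the induced triangulation of $F_n$ with the restricted Sperner coloring, and by the inductive hypothesis their number is odd. Combining the two counts yields $C \equiv 1 \pmod 2$, completing the proof. The main subtlety I expect to need care with is the boundary analysis in the inductive step, specifically verifying that the Sperner condition forces rainbow boundary facets to lie on $F_n$; everything else is a clean double count.
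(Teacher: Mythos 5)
Your proof is correct: it is the classical induction-plus-parity (double counting of ``rainbow'' facets) argument for Sperner's lemma. Note that the paper does not prove this lemma at all --- it is quoted as a classical result of Sperner and used as a black box in the proof of Lemma \ref{lem:Sperner-reconfiguration} --- so there is no internal proof to compare against; your argument is simply the standard one from the literature. The two points you flag as needing care are indeed the only places where anything must be checked, and both are standard: (i) the simplices of $\cT$ contained in the facet $F_n$ opposite $x_n$ form a triangulation of $F_n$, and the restricted coloring is a Sperner coloring of it (the minimal face of $\Delta^n$ containing a vertex of $F_n$ is a face of $F_n$), so the inductive hypothesis applies; and (ii) each $(n-1)$-simplex of $\cT$ lies in exactly two $n$-simplices if it is interior and exactly one if it lies on $\partial\Delta^n$, which is the nonbranching property the paper itself invokes for triangulations of manifolds. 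With those two facts made explicit, your carrier argument showing that a rainbow boundary facet must lie in $F_n$ closes the induction, and the count $C \equiv 1 \pmod 2$ follows as you state.
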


Proceeding to our new setting, an $(n+1)$-dimensional \textit{simplicial prism} is a polytope of the form $P^{n+1} \coloneqq \Delta^{n} \times [0,1]$. Again assume that the simplex $\Delta^{n}$ is the convex hull of the affinely independent point set $\{x_0, \ldots, x_{n}\}$. Then, for all integers $0 \le k \le n+1$, every $k$-dimensional (polyhedral) face of $P^{n+1}$ is the convex hull of one of the following types of collections of extreme points:
\begin{itemize}
	\item[(a)] $\{(x_i,0) : i \in I \}$ or $\{(x_i,1) : i \in I\}$ for some nonempty $I \subseteq \{0,1,\ldots,n\}$ with $|I| = k+1$;
	\item[(b)] $\{(x_i,0), (x_i,1) : i \in I\}$ for some nonempty $I \subseteq \{0,1,\ldots,n\}$ with $|I| = k$.
\end{itemize}
The faces with extreme points of type (a) are called \textit{base faces}, and the faces with extreme points of type (b) are called \textit{lateral faces}. All faces of $P^{n+1}$ of dimension $n$ are called \textit{facets}. The \textit{supporting face} of a point $v \in P^{n+1}$, denoted $\text{supp}(v)$, is the inclusion-wise minimal face of $P^{n+1}$ that contains $v$. For a face $F$ of $P^{n+1}$, we denote by $I(F)$ the subset $I \subseteq \{0,1,\ldots,n\}$ that defines the extreme points of $F$ as in (a) or (b):
\begin{align*}
	I(F) \coloneqq \{i \in \{0,1,\ldots,n\} : (x_i, 0) \in F \text{ or } (x_i, 1) \in F \}.
\end{align*}

Let $\cT$ be a triangulation of an $(n+1)$-dimensional simplicial prism $P^{n+1}$. We say that a vertex-coloring $\lambda : V(\cT) \rightarrow \{0,1,\ldots,n\}$ is an \textit{R-Sperner coloring} of $\cT$ if $\lambda(v) \in I(\text{supp}(v))$ for every vertex $v \in V(\cT)$. In other words, it is an assignment of a color to each vertex of $\cT$ such that
\begin{itemize}
	\item[(i)] the extreme points $(x_i,0), (x_i,1) \in V(\cT)$ are assigned the color $i$, for all $i \in \{0,1,\ldots,n\}$, and
	\item[(ii)] every other vertex $v \in V(\cT)$ is assigned one of the colors that is present among the extreme points of the supporting face of $v$.
\end{itemize}
A useful observation is that an R-Sperner coloring of $\cT$ restricts to an ordinary Sperner coloring on both of the base facets $\Delta^{n} \times \{0\}$ and $\Delta^{n} \times \{1\}$ of $P^{n+1}$. The following is our Sperner-type lemma, with the statement and proof illustrated for the case $n = 1$ in Figure \ref{fig:sperner}.

\begin{figure}
\begin{center}
	\leavevmode
	\includegraphics[scale=1]{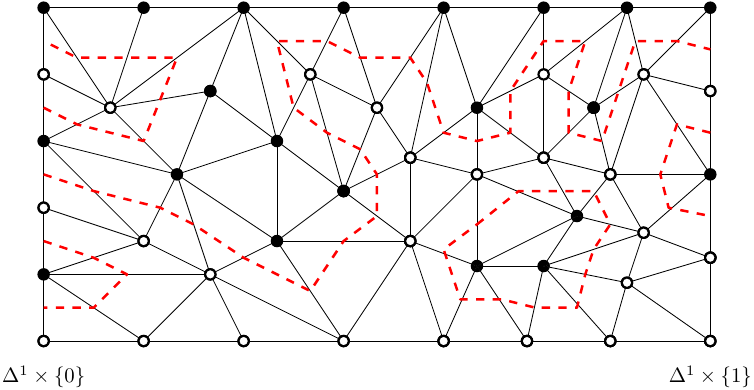}
\end{center}
\caption{An illustration of our Sperner-type lemma (Lemma \ref{lem:Sperner}) on the prism $P^2 = \Delta^1 \times [0,1]$. The associated graph $G$ is represented by dashed segments which connect adjacent colorful simplices. There is an odd number of paths that connect a colorful simplex in $\Delta^1 \times \{0\}$ to a colorful simplex in $\Delta^1 \times \{1\}$.}
\label{fig:sperner}
\end{figure}

\begin{lem} \label{lem:Sperner-reconfiguration}
Let $\cT$ be a triangulation of an $(n+1)$-dimensional simplicial prism $P^{n+1} \coloneqq \Delta^{n} \times [0,1]$, and let $\lambda : V(\cT) \rightarrow \{0,1,\ldots,n\}$ be an R-Sperner coloring of $\cT$. Then there is an odd number of sequences $S_0, S_1, \ldots, S_N$ of colorful simplices $S_i$ of $(\cT, \lambda)$ with the properties that $S_0 \subseteq \Delta^{n} \times \{0\}$, $S_N \subseteq \Delta^{n} \times \{1\}$, and $S_{j - 1}, S_j$ are faces of a common $(n+1)$-simplex for all $1 \le j \le N$.
\end{lem}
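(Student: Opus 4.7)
My plan is to mimic the classical graph-theoretic argument for Sperner's lemma (in the style of Gale's proof of the Hex theorem), lifted from the simplex $\Delta^n$ to the prism $P^{n+1}$. I will build an auxiliary graph $G$ whose $v_0$-to-$v_1$ paths correspond bijectively to the desired sequences, and then conclude by a parity (handshake) argument.

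Concretely, the vertex set of $G$ consists of all $(n+1)$-simplices of $\cT$ together with two outside vertices $v_0, v_1$ representing the base facets $\Delta^n \times \{0\}$ and $\Delta^n \times \{1\}$. The edges of $G$ are placed in bijection with the colorful $n$-simplices $S$ of $(\cT, \lambda)$: an interior colorful simplex $S$ (a face of exactly two $(n+1)$-simplices $\sigma, \sigma'$ of $\cT$, by the nonbranching property of the triangulation) contributes an edge $\sigma\sigma'$, while a boundary colorful simplex $S$ (a face of a unique $(n+1)$-simplex $\sigma$) contributes an edge from $\sigma$ to the appropriate outside vertex. The first key check, which uses the R-Sperner hypothesis, is that boundary colorful simplices only occur on the base facets: on a lateral facet $F$ we have $|I(F)| = n$, so every vertex of a simplex in $F$ is colored from a set of size $n$ and no $n$-face in $F$ can carry all $n+1$ colors.

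Next I analyze the degrees in $G$. An $(n+1)$-simplex $\sigma$ has $n+2$ vertices colored from $\{0, 1, \ldots, n\}$; either some color is missing from $\sigma$, in which case it has no colorful $n$-face, or every color is present, in which case exactly one color is repeated and $\sigma$ has precisely two colorful $n$-faces (obtained by deleting each of the two repeated vertices). Hence every non-outside vertex of $G$ has degree $0$ or $2$. The degree of $v_i$ equals the number of colorful $n$-simplices of $\cT$ lying in $\Delta^n \times \{i\}$; since the R-Sperner coloring restricts to an ordinary Sperner coloring on each base facet, classical Sperner's lemma (Lemma \ref{lem:Sperner}) gives that $\deg_G(v_0)$ and $\deg_G(v_1)$ are both odd.

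Because all non-outside vertices of $G$ have even degree, $G$ decomposes as a disjoint union of isolated vertices, cycles, and paths whose endpoints lie in $\{v_0, v_1\}$. Each edge incident to $v_0$ initiates a path terminating either at $v_0$ again (contributing $2$ to $\deg_G(v_0)$) or at $v_1$ (contributing $1$), giving
\[
\deg_G(v_0) \;\equiv\; \#\{v_0\text{-to-}v_1 \text{ paths in } G\} \pmod{2}.
\]
Since $\deg_G(v_0)$ is odd, the number of $v_0$-to-$v_1$ paths in $G$ is odd as well. Each such path $v_0, \sigma_1, \ldots, \sigma_N, v_1$ corresponds to its edge sequence $S_0, S_1, \ldots, S_N$ of colorful $n$-simplices, where $S_{j-1}$ and $S_j$ are both faces of $\sigma_j$, $S_0 \subseteq \Delta^n \times \{0\}$, and $S_N \subseteq \Delta^n \times \{1\}$, matching the sequences in the lemma. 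The main obstacle, though more careful bookkeeping than a deep difficulty, is applying the R-Sperner condition correctly to exclude lateral-facet colorful simplices, and handling the case where a single $(n+1)$-simplex contributes both of its colorful faces to the boundary (e.g., one to each base facet), which still fits cleanly as an intermediate vertex in the path structure.
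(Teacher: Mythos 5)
Your proof is correct and follows essentially the same route as the paper's: a path-and-cycle parity argument that combines classical Sperner's lemma on the two base facets with the observations that each $(n+1)$-simplex has zero or two colorful faces and that the R-Sperner condition excludes colorful simplices from lateral facets. The only difference is cosmetic: you work with the dual ``rooms-and-doors'' graph (vertices are $(n+1)$-simplices plus two outside vertices, edges are colorful simplices), whereas the paper puts the colorful simplices themselves as vertices, and the two graphs encode the same paths and the same parity count.
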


\begin{proof}
Construct a graph $G$ whose vertices are the colorful simplices of $(\cT, \lambda)$, and two colorful simplices are joined by an edge of $G$ whenever they are faces of a common $(n+1)$-simplex of $\cT$. Every colorful simplex is contained in either one or two $(n+1)$-simplices of $\cT$, and those contained in exactly one $(n+1)$-simplex are precisely those that lie in a facet of $P^{n+1}$. In addition, every $(n+1)$-simplex of $\cT$ contains either zero or two colorful simplices as faces. Thus, the vertices of the graph $G$ all have degree $1$ or $2$. Hence, $G$ is a collection of paths and cycles. Now, by Sperner's lemma (Lemma \ref{lem:Sperner}), the set $C_0$ of colorful simplices contained in the base facet $\Delta^{n} \times \{0\}$ has odd cardinality, and the same is true for the set $C_1$ of colorful simplices contained in the base facet $\Delta^{n} \times \{1\}$. On the other hand, by construction there are no colorful simplices contained in any of the lateral facets of $P^{n+1}$. Thus, $C_0 \cup C_1$ is the set of all degree $1$ vertices of $G$. The vertices of $C_0$ that are not path-connected to any vertex of $C_1$ in $G$ come in pairs that are path-connected to each other. This leaves an odd number of vertices of $C_0$ that are each path-connected to distinct vertices of $C_1$ in $G$, which proves the desired statement.
\end{proof}

\subsection{Homotopical connectedness} \label{sec:homotopical-proof}

We now proceed to our homotopical proof of Theorem \ref{thm:topological-reconfiguration-graph}. This is done by an appropriate reduction to Lemma \ref{lem:Sperner-reconfiguration}, and it is modeled after proofs of the topological Hall theorem via Sperner's lemma (e.g., \cite{aharoni2000hall,haxell2019topological,szabo2006extremal}).

\begin{proof}[Proof of Theorem \ref{thm:topological-reconfiguration-graph} for $\eta = \eta_{\pi}$]
Let $S \coloneqq \{u_0, \ldots, u_n\}$ and $T \coloneqq \{v_0, \ldots, v_n\}$ be the vertex sets of two colorful simplices of $(\cC,\cV)$, where $u_i, v_i \in V_i$ for all $i \in \{0,1,\ldots,n\}$. We wish to show the existence of a reconfiguration sequence from $S$ to $T$ consisting of colorful simplices of $(\cC,\cV)$. Let $\Delta^{n}$ denote an embedded $n$-dimensional simplex which is the convex hull of point set $\{x_0, \ldots, x_n\}$, and consider the $(n+1)$-dimensional simplicial prism $P \coloneqq \Delta^{n} \times [0,1]$. Inductively, for $0 \le k \le n+1$, we construct a triangulation $\cT_k$ of the $k$-skeleton $P^{(k)}$ of $P$ and a simplicial map $f_k : \cT_k \rightarrow \cC$ satisfying the following three properties:
\begin{itemize}
	\item[(1)] $\cT_k$ includes all base faces of $P$ up to dimension $k$;
	\item[(2)] $f_k(x_i,0) = u_i$ and $f_k(x_i,1) = v_i$ for all $i \in \{0,1\ldots,n\}$; and
	\item[(3)] the vertex-coloring $\lambda_k : V(\cT_k) \rightarrow \{0,1,\ldots,n\}$, defined by $\lambda_k(x) \coloneqq i$ whenever $f_k(x) \in V_i$, satisfies property (i) of an R-Sperner coloring, as well as property (ii) for all faces of $P$ up to dimension $k$.
\end{itemize}
	
We start with $k = 0$. The $0$-skeleton $\cT_0 \coloneqq P^{(0)}$ is just the set of extreme points of $P$, and we define the simplicial map $f_0 : \cT_0 \rightarrow \cC$ by setting $f_0(x_i,0) \coloneqq u_i$ and $f_0(x_i,1) \coloneqq v_i$ for all $i \in \{0,1,\ldots,n\}$. The properties (1), (2), (3) are satisfied. Assuming that we have defined $\cT_{k-1}$ and $\lambda_{k-1}$ satisfying the properties (1), (2), (3), we now define $\cT_{k}$ and $f_{k}$. First, for every $k$-dimensional base face $F$ of $P$, we simply include $F$ in $\cT_k$. This is allowed because by (1) the boundary of $F$ is included in $\cT_{k-1}$. No new vertices were added in $F$, so taking $f_k$ to be the same as $f_0$ on $V(F)$, we get a simplicial map from $F$ to $\cC$ satisfying (2). Now consider a $k$-dimensional lateral face $F$ of $P$, which has $|I(F)| = k$. The boundary of $F$ is contained in $P^{(k-1)}$, so that $\cT_{k-1}$ contains a triangulation $\cS_F$ of the boundary of $F$. Note that $\cS_F$ is a triangulated $(k-1)$-dimensional sphere. Since $\lambda_{k-1}$ is R-Sperner on the facets of $F$, we have $f_{k-1}(V(\cS_F)) \subseteq V_{I(F)}$. Thus, $f_{k-1}$ restricts to a simplicial map $g_{F} : \cS_F \rightarrow \cC[V_{I(F)}]$. Since 
\begin{align*}
	\eta_{\pi}(\cC[V_{I(F)}]) \ge |I(F)|+1 = k+1
\end{align*}
by our theorem assumption, there exists a triangulation $\cB_F$ of $F$ whose boundary is $\cS_F$, and a simplicial map $\tilde{g}_{F} : \cB_F \rightarrow \cC[V_{I(F)}]$ extending $g_F$. We include the $k$-skeleton of $\cB_F$ in $\cT_k$, and we define $f_{k}$ to be $\tilde{g}_F$ on $V(\cB_F)$. Doing this for all $k$-dimensional faces $F$ of $P$, we obtain a triangulation of $\cT_k$ of $P^{(k)}$ extending $\cT_{k-1}$ and a simplicial map $f_k : \cT_k \rightarrow \cC$ extending $f_{k-1}$. The induced vertex-coloring $\lambda_k : V(\cT_k) \rightarrow \{0,1,\ldots,n\}$ defined in property (3) is R-Sperner by construction. This completes the construction of $\cT_k$ and $f_k$.
	
Now we apply Lemma \ref{lem:Sperner-reconfiguration} to the triangulation $\cT \coloneqq \cT_{n+1}$ and the induced $R$-Sperner coloring $\lambda \coloneqq \lambda_{n+1}$ of $\cT$ defined in property (3). This yields a sequence $S_0, S_1, \ldots, S_N$ of colorful simplices of $(\cT,\lambda)$ such that $S_0 = \Delta^{n} \times \{0\}$ and $S_N = \Delta^{n} \times \{1\}$ (using property (1)), and $S_{j-1}, S_j$ are faces of a common $(n+1)$-simplex of $\cT$ for all $1 \le j \le N$. We also have that $f(V(S_0)) = S$ and $f(V(S_N)) = T$ (using property (2)). Since each $S_j$ is a colorful simplex of $(\cT,\lambda)$, each of the images $f(V(S_j))$ forms a colorful simplex of $(\cC,\cV)$. Moreover, $f(V(S_{j-1}))$ and $f(V(S_{j}))$ are either the same colorful simplex, or they are faces of the common $n$-simplex $f(V(S_{j-1}) \cup V(S_{j}))$ in $\cC$. Thus, after removing redundant colorful simplices, the sequence of images $S = f(V(S_0)), f(V(S_1)), \ldots, f(V(S_{N-1})), f(V(S_N)) = T$ gives a reconfiguration from $S$ to $T$ consisting of colorful simplices of $(\cC,\cV)$. This finishes the proof.
\end{proof}

\subsection{Homological connectedness} \label{sec:homological-proof}

In this subsection, we give our homological proof of Theorem \ref{thm:topological-reconfiguration-graph}. Our proof takes inspiration from the homological proof of the topological Hall theorem given in \cite[Proposition 2.6]{deloera2019discrete}, where the authors describe a simplified argument using $\mathbb{Z}_2$-coefficients. This approach was also more recently used in \cite{cho2025colorful} to give a common generalization of Sperner's lemma and the topological Hall theorem in a homology setting. It is a different approach from Meshulam's original proof \cite{meshulam2003domination}, which uses nerves and implies a weaker reconfiguration result (Theorem \ref{thm:colorful-simplex-nerve} when $m = 1$).

Our proof uses reduced homology with coefficients in a fixed commutative ring $\cR$, which is subsequently specialized to $\cR = \mathbb{Q}$ so as to phrase in terms of $\eta_H$. In our current setting, the abstract $n$-dimensional simplex $\Delta^n$ is taken to have vertex set $\{0,1,\ldots,n\}$, and for an oriented simplex $I = [i_0, \ldots, i_p] \in C_p(\Delta^n)$ we sometimes abuse notation and treat it like a subset, e.g., writing $i_0 \in I$ or $V_I \coloneqq \bigcup_{j=0}^p V_{i_j}$ or $|I| = p+1$. For an $n$-chain $c \in C_n(\cC)$, the \textit{colorful support} of $c$ is defined to be the $n$-chain obtained from $c$ by keeping only the terms which are oriented colorful simplices of $(\cC, \cV)$. The following is our key lemma.

\begin{lem} \label{lem:chain-homotopy}
Let $S = [ u_0, \ldots, u_n ]$ and $T = [ v_0, \ldots, v_n ]$ be two oriented colorful simplices of $(\cC, \cV)$, with $u_i, v_i \in V_i$ for all $i$. If
\begin{align*}
	\widetilde{H}_{|I| - 1}(\cC[V_I]) = 0 \hspace{25pt} \text{for all nonempty } I \subseteq \{0,1,\ldots,n\},
\end{align*}
then there exists a chain $K \in C_{n+1}(\cC)$ whose boundary $\partial K$ has colorful support $T - S$.
\end{lem}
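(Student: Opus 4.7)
The plan is to mimic the classical prism operator construction from algebraic topology, building an $(n+1)$-chain on $\cC$ inductively that plays the role of a chain-level ``homotopy'' between $S$ and $T$. Specifically, for each nonempty $I = \{i_0 < i_1 < \cdots < i_k\} \subseteq \{0,1,\ldots,n\}$, I will construct a chain $K_I \in C_{|I|}(\cC[V_I])$ that behaves like the image of the product cell $\Delta^I \times [0,1]$, where $\Delta^I \times \{0\}$ maps to the face $S_I := [u_{i_0},\ldots,u_{i_k}]$ and $\Delta^I \times \{1\}$ maps to $T_I := [v_{i_0},\ldots,v_{i_k}]$. Motivated by the product boundary formula $\partial(\sigma \times [0,1]) = (-1)^{\dim \sigma}(\sigma\times\{1\} - \sigma\times\{0\}) + (\partial \sigma) \times [0,1]$, I impose the recursion
\begin{equation*}
\partial K_I \;=\; (-1)^{|I|-1}(T_I - S_I) \;+\; \sum_{j=0}^{|I|-1}(-1)^j K_{I \setminus \{i_j\}}.
\end{equation*}

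The construction is by induction on $|I|$. For the base case $|I|=1$, the element $[v_i] - [u_i] \in C_0(\cC[V_i])$ has augmentation $0$ and is therefore a reduced $0$-cycle; since $\widetilde{H}_0(\cC[V_i]) = 0$ by the hypothesis applied to $I = \{i\}$, it bounds a $1$-chain $K_{\{i\}} \in C_1(\cC[V_i])$. For the inductive step, assume $K_J$ satisfying the recursion has been produced for every nonempty proper subset $J \subsetneq I$. Define the candidate boundary
\begin{equation*}
\Sigma_I \;:=\; (-1)^{|I|-1}(T_I - S_I) \;+\; \sum_{j=0}^{|I|-1}(-1)^j K_{I \setminus \{i_j\}} \;\in\; C_{|I|-1}(\cC[V_I]).
\end{equation*}
The heart of the argument is to verify that $\partial \Sigma_I = 0$. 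Expanding $\partial T_I$ and $\partial S_I$ in the standard way and using the inductive formula for $\partial K_{I\setminus\{i_j\}}$, the $T_{I\setminus\{i_j\}}$ and $S_{I\setminus\{i_j\}}$ contributions from the two sources cancel because of the matched $(-1)^{|I|-1}$ vs.\ $(-1)^{|I|-2}$ signs. The remaining contributions are of the form $(-1)^{j+\ell}K_{I\setminus\{i_j,i_\ell\}}$ (for $\ell<j$) against $(-1)^{j+\ell-1}K_{I\setminus\{i_j,i_\ell\}}$ (for $\ell>j$), which cancel pairwise upon swapping the roles of $j$ and $\ell$. Since $\Sigma_I$ is then a cycle in $C_{|I|-1}(\cC[V_I])$, the hypothesis $\widetilde{H}_{|I|-1}(\cC[V_I]) = 0$ supplies a chain $K_I \in C_{|I|}(\cC[V_I])$ with $\partial K_I = \Sigma_I$, completing the induction.

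Finally, take $I_{\max} = \{0,1,\ldots,n\}$ and set $K := (-1)^n K_{I_{\max}} \in C_{n+1}(\cC)$. The recursion gives
\begin{equation*}
\partial K \;=\; (T - S) \;+\; (-1)^n\sum_{j=0}^n(-1)^j K_{I_{\max}\setminus\{j\}}.
\end{equation*}
Each chain $K_{I_{\max}\setminus\{j\}}$ is supported in $\cC[V_{I_{\max}\setminus\{j\}}]$, so every oriented simplex occurring in it misses color class $V_j$; in particular none of these simplices is colorful. Therefore the colorful support of $\partial K$ is precisely $T - S$, as required.

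The main obstacle is bookkeeping: all the conceptual content is classical (the prism chain homotopy), so the proof stands or falls on correctly tracking signs in the cancellation that shows $\partial \Sigma_I = 0$ and in ensuring that the final chain has the right coefficient $\pm 1$ on $T - S$. No new topological input is needed beyond the dimension-by-dimension application of the vanishing-homology hypothesis at each step of the induction.
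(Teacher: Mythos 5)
Your proposal is correct and is essentially the paper's own argument: it builds the prism-type chain homotopy inductively over the faces $I$ of $\Delta^n$, using the vanishing of $\widetilde{H}_{|I|-1}(\cC[V_I])$ to fill in each cycle, and then observes that the lateral terms miss a color, so the colorful support of $\partial K$ is $T-S$. Your chains $K_I$ coincide with the paper's chain homotopy $D$ up to the sign normalization $K_I = (-1)^{|I|-1}D(I)$, so the two proofs differ only in bookkeeping conventions.
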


\begin{proof}
Let $f, g : \Delta^n \rightarrow \cC$ denote the simplicial maps given by $f(i) \coloneqq u_i$ and $g(i) \coloneqq v_i$ for all $i \in \{0, 1, \ldots, n\}$. These induce augmentation-preserving chain maps $f_{\#}, g_{\#}: C_p(\Delta^n) \rightarrow C_p(\cC)$, given by $f_{\#}([ i_0, \ldots, i_p ]) \coloneqq [ u_{i_0}, \ldots, u_{i_p} ]$ and $g_{\#}([ i_0, \ldots, i_p ]) \coloneqq [ v_{i_0}, \ldots, v_{i_p} ]$, and with $f_{\#}(a) = g_{\#}(a) = a$ for all $a \in \cR$. Our goal is to construct, in increasing dimensions $p \ge 1$, a homomorphism $D = D_p : C_{p}(\Delta^n) \to C_{p+1}(\cC)$ satisfying the following two properties for all oriented simplices $I \in C_p(\Delta^n)$:
\begin{itemize}
	\item[(1)] $D(I) \in C_{|I|}(\cC[V_I])$, 
	\item[(2)] $\partial (D(I)) + D(\partial I) = g_{\#}(I) - f_{\#}(I)$.
\end{itemize}
Property (1) is a Sperner-type condition, whereas property (2) states that $D$ is a \textit{chain homotopy} between $f_{\#}$ and $g_{\#}$. Intuitively, we want $D(I)$ to be supported only on vertices in $V_I$ and to have the geometric structure of a prism between base facets $f_{\#}(I)$ and $g_{\#}(I)$ with lateral facets $D(\partial I)$.

We start by setting $D(a) = 0$ for all $a \in \cR$. Assume that we have defined a homomorphism $D$ satisfying properties (1) and (2) on all oriented simplices up to dimension $p - 1$, where $p \ge 0$. For an oriented simplex $I \in C_p(\Delta^{n})$, we define the chain
\begin{align*}
	c \coloneqq g_{\#}(I) - f_{\#}(I) - D(\partial I),
\end{align*}
which lies in $C_{|I|-1}(\cC[V_I])$. We claim that $\partial c = 0$. If $p = 0$, say $I = [ i ]$, we have $c = [ v_i ] - [ u_i ]$, so that $\partial c = \epsilon ([ v_i ] - [ u_i ]) = 0$. And if $p \ge 1$, by induction and the fact that $f_{\#}$ and $g_{\#}$ are chain maps, we have
\begin{align*}
	\partial c &= \partial (g_{\#}(I)) - \partial (f_{\#}(I))  - \partial (D(\partial I)) \\
	&= \partial (g_{\#}(I)) - \partial (f_{\#}(I))  - (g_{\#}(\partial I) - f_{\#}(\partial I) - D(\partial^2 I)) = 0,
\end{align*}
as required. Since $\widetilde{H}_{|I| - 1}(\cC[V_I]) = 0$ by assumption, having $\partial c = 0$ implies that $c$ is the boundary of some chain $c' \in C_{|I|}(\cC[V_I])$, and we set $D(I) \coloneqq c'$. Having defined $D$ on all oriented simplices $I \in C_p(\Delta^n)$, we extend $D$ to all chains in $C_p(\Delta^n)$ by linearity. Such a $D$ will satisfy properties (1) and (2) for all oriented simplices $I \in C_q(\Delta^n)$ of dimension $q \le p$. Doing this procedure up to dimension $p = n$, this completes our construction of $D$.

Now with $I^\ast \coloneqq [0,1,\ldots,n]$, we take our desired chain $K \in C_{n+1}(\cC)$ to be $K \coloneqq D(I^\ast)$. By property (2), we have that
\begin{align*}
	\partial K = T - S - D(\partial I^{\ast}).
\end{align*}
Moreover, by property (1) we have that $D(J) \in C_{|J|}(\cC[V_J])$ for all oriented simplices $J$ in the support of $\partial I^{\ast}$. This shows that $\partial K$ has colorful support $T - S$.
\end{proof}

To finish our homological proof of Theorem \ref{thm:topological-reconfiguration-graph}, we now combine Lemma \ref{lem:chain-homotopy} with a suitable combinatorial argument. This last step can also be done more algebraically, as in Section \ref{sec:higher-1}.

\begin{proof}[Proof of Theorem \ref{thm:topological-reconfiguration-graph} for $\eta = \eta_H$]
We start by fixing some total ordering $\prec$ on the vertices of $\cC$ with the property that $u \prec v$ whenever $u \in V_i$, $v \in V_j$, and $i < j$. Say that an oriented simplex $[ v_0, \ldots, v_k ]$ is in \textit{elementary form} if it respects this total ordering, that is, $v_0 \prec \cdots \prec v_k$.

Now let $S \coloneqq [ u_0, \ldots, u_n ]$ and $T \coloneqq [ v_0, \ldots, v_n ]$ be any two oriented colorful simplices of $(\cC, \cV)$ such that $u_i, v_i \in V_i$ for all $i$. The goal is to show that (the unoriented counterparts of) $S$ and $T$ lie in the same connected component of the reconfiguration graph $\recongraph(\cC, \cV)$. Our theorem's assumption on $\eta_H$ implies that the hypothesis of Lemma \ref{lem:chain-homotopy} is satisfied with $\cR = \mathbb{Q}$. This yields a chain $K \in C_{n+1}(\cC)$ whose boundary $\partial K$ has colorful support $T - S$. Assume that $K = a_1 K_1 + \cdots + a_N K_N$, where the $a_i$ are coefficients in $\cR - \{0\}$ and the $K_i$ are distinct oriented $(n+1)$-simplices in elementary form. 

We construct a directed graph $G$ with edge weights in $\cR - \{0\}$ as follows. The vertex set of $G$ consists of all oriented colorful simplices $\sigma$ of $(\cC, \cV)$ in elementary form. We put a directed edge $(\sigma_0, \sigma_1)$ in $G$ pointing from $\sigma_0$ to $\sigma_1$ if there exists some oriented $(n+1)$-simplex $K_i$, with $i \in \{1, \ldots, N\}$, whose boundary $\partial K_i$ has colorful support $\sigma_1 - \sigma_0$. In this case, such a $K_i$ is unique and the directed edge $(\sigma_0, \sigma_1)$ is given weight $w(\sigma_0, \sigma_1) = a_i$ corresponding to the coefficient of $K_i$. Notice that the underlying unweighted, undirected graph of $G$ is a subgraph of the reconfiguration graph $\recongraph(\cC, \cV)$. Thus, to prove the theorem it suffices to show that $S$ and $T$ lie in the same weakly connected component of $G$.

Let $\delta^{+}(\sigma)$ denote the set of directed edges of $G$ pointing toward the vertex $\sigma$, and let $\delta^{-}(\sigma)$ denote the set of directed edges pointing away from $\sigma$. The \textit{excess} of vertex $\sigma$ is defined to be the quantity
\begin{align*}
	\sum_{e \in \delta^{+}(\sigma)} w(e) - \sum_{e \in \delta^{-}(\sigma)} w(e).
\end{align*}
The property that $\partial K$ has colorful support $T - S$ is equivalent to saying that the excess of every vertex $\sigma$ of $G$ is $-1$ if $\sigma = S$, is $+1$ if $\sigma = T$, and is $0$ for all other $\sigma$. A well-known fact is that in any weakly connected component $C$ of $G$, the sum of the excesses of all vertices of $C$ is $0$. This follows from the equality
\begin{align*}
	\sum_{\sigma \in C} \sum_{e \in \delta^{+}(\sigma)} w(e) = \sum_{\sigma \in C} \sum_{e \in \delta^{-}(\sigma)} w(e),
\end{align*}
which holds because both sides calculate the total weight of all directed edges that lie between two vertices in $C$. Therefore, $S$ and $T$ lie in the same weakly connected component of $G$.
\end{proof}

\section{Higher dimensional topological Hall theorems} \label{sec:higher}

In this section, we prove Theorem \ref{thm:topological-reconfiguration-complex}, which is a higher dimensional homological generalization of the usual topological Hall theorem (Theorem \ref{thm:topological-hall}) and our reconfiguration variation of it (Theorem \ref{thm:topological-reconfiguration-graph}). In the second subsection, we also describe a variation of Theorem \ref{thm:topological-reconfiguration-complex} coming from homological nerve theorems. Like in Section \ref{sec:proofs}, our vertex partition will be of the form $\cV \coloneqq \{V_0, \ldots, V_n\}$.

\subsection{Proof of Theorem \ref{thm:topological-reconfiguration-complex}} \label{sec:higher-1}

For a complex $\cC$ and partition $\cV = \{V_0, \ldots, V_n\}$ of $V(\cC)$, recall that the \textit{colorful complex} $\colorfulcomplex(\cC, \cV)$ is the abstract simplicial complex with vertex set consisting of all simplices $\sigma$ of $\cC$ that span the classes of $\cV$ (and thus contain a colorful simplex), and a collection $\sigma_0, \ldots, \sigma_k$ of such simplices of $\cC$ form a simplex of $\colorfulcomplex(\cC, \cV)$ whenever $\sigma_0 \subset \cdots \subset \sigma_k$. The colorful complex $\colorfulcomplex(\cC, \cV)$ is the barycentric subdivision of a natural polyhedral complex which we denote $\colorfulcomplex^{\mathrm{P}}(\cC, \cV)$. Specifically, the cells of $\colorfulcomplex^{\mathrm{P}}(\cC, \cV)$ are indexed by all simplices of $\cC$ spanning the classes of $\cV$, and the closure of a cell $T$ indexed by $\tau$ consists of all cells $S$ indexed by a simplex $\sigma$ of $\cC$ spanning the classes of $\cV$ and satisfying $\sigma \subseteq \tau$. See Figure \ref{fig:colorful-complex} for illustration. For a more geometric view, consider a geometric realization $\cK$ of the complex $\cC$ with polyhedron $\lVert \cK \rVert \in \mathbb{R}^d$. In the barycenter of the embedding of each colorful simplex $\sigma$ of $(\cC, \cV)$, we put a vertex $v_{\sigma}$. Then the closure of a cell $T$ of $\colorfulcomplex^{\mathrm{P}}(\cC, \cV)$, indexed by the simplex $\tau$ of $\cC$, is the convex hull of the vertices $v_{\sigma}$ over all colorful simplices $\sigma$ of $(\cC, \cV)$ satisfying $\sigma \subseteq \tau$. Such a cell $T$ is a polyhedron of the form $\Delta_0 \times \cdots \times \Delta_n$ where $\Delta_i$ is an embedded simplex with $|\tau \cap V_i|$ vertices, and thus $T$ has dimension $|\tau| - n - 1$. The 1-skeleton of $\colorfulcomplex^{\mathrm{P}}(\cC, \cV)$ is the reconfiguration graph $\recongraph(\cC, \cV)$.

\begin{figure}
\begin{center}
	\leavevmode
	\includegraphics[scale=0.85]{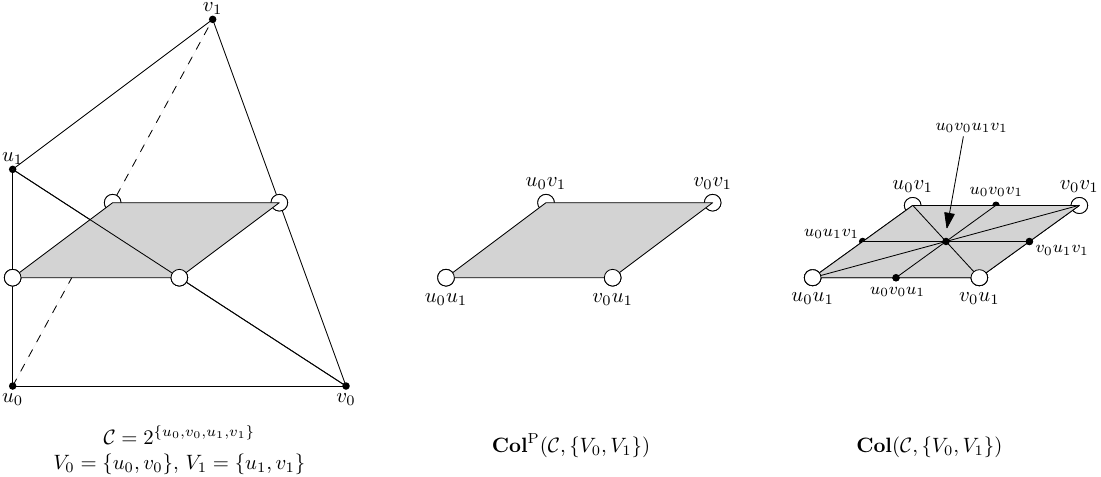}
\end{center}
\caption{An illustration of the colorful complex $\colorfulcomplex(\cC, \cV)$ and the corresponding polyhedral complex $\colorfulcomplex^{\mathrm{P}}(\cC, \cV)$, where $\cC$ is the simplex with vertices $\{u_0, v_0, u_1, v_1\}$, and $\cV = \{V_0, V_1\}$ is given by $V_0 = \{u_0, v_0\}$, $V_1 = \{u_1, v_1\}$.}
\label{fig:colorful-complex}
\end{figure}

To prove Theorem \ref{thm:topological-reconfiguration-complex}, it suffices to prove the following stronger result, where the homology coefficients are over any fixed commutative ring $\cR$.

\begin{thm} \label{thm:high-dimensional-homology}
Let $\cC$ be a complex, let $\cV = \{V_0, \ldots, V_n\}$ be a partition of $V = V(\cC)$, and let $m \ge 0$ be an integer. If
\begin{align*}
	\widetilde{H}_{|I|+m-2}(\cC[V_I]) = 0 \hspace{25pt} \text{for all nonempty } I \subseteq \{0,1,\ldots,n\},
\end{align*}
then $\widetilde{H}_{m-1}(\mathbf{Col}(\cC,\cV)) = 0$.
\end{thm}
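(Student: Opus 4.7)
The strategy extends the chain-homotopy argument of Lemma \ref{lem:chain-homotopy} to higher dimensions.

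First I would reduce the problem to a chain-complex statement inside $\cC$. Since $\colorfulcomplex(\cC,\cV)$ is the barycentric subdivision of $\colorfulcomplex^{\mathrm{P}}(\cC,\cV)$, the two complexes share homology. The cellular chain complex of $\colorfulcomplex^{\mathrm{P}}(\cC,\cV)$ identifies with the quotient $S_* := C_*(\cC)/N_*$, where $N_* \subseteq C_*(\cC)$ is the subcomplex generated by simplices failing to meet every color class and the quotient differential $\partial_S$ projects the ordinary simplicial boundary by discarding non-spanning faces. Under this identification the theorem becomes $H_{n+m-1}(S_*, \partial_S) = 0$.

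Second, given a $\partial_S$-cycle $z \in S_{n+m-1}$, I would build a $\partial_S$-preimage in two stages. Its ordinary boundary $\partial z$ lies in $N_{n+m-2}$ and is automatically a cycle in $N_*$; the key subproblem is to find $u \in N_{n+m-1}$ with $\partial u = -\partial z$. Once $u$ is available, $z + u \in C_{n+m-1}(\cC)$ is a cycle in $\cC$, and the $I = \{0,1,\ldots,n\}$ case of the hypothesis (which says $\widetilde{H}_{n+m-1}(\cC) = 0$) yields $w \in C_{n+m}(\cC)$ with $\partial w = z + u$. A short computation then shows that the spanning part of $w$ is a $\partial_S$-preimage of $z$, completing the proof.

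The main obstacle is producing $u$, which amounts to a filling statement inside the non-spanning subcomplex $\cC' := \bigcup_{I \subsetneq \{0,\ldots,n\}} \cC[V_I]$. I would do this through an inductive Sperner-type construction patterned after Lemma \ref{lem:chain-homotopy}: for each nonempty $I \subsetneq \{0,\ldots,n\}$, in order of increasing $|I|$, construct a chain $u_I \in C_{|I|+m-1}(\cC[V_I])$ whose boundary absorbs the portion of $-\partial z$ (plus accumulated boundary contributions from $u_{I'}$ with $I' \subsetneq I$) supported on $\cC[V_I]$. At each step the chain to be lifted sits in $C_{|I|+m-2}(\cC[V_I])$ and can be shown to be a cycle via $\partial^2 = 0$ together with the compatibility of earlier lifts; the hypothesis $\widetilde{H}_{|I|+m-2}(\cC[V_I]) = 0$ then produces $u_I$, and $u := \sum_I u_I$ is the required filling. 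The delicate part is the combinatorial bookkeeping needed to verify that each accumulated intermediate chain is genuinely a cycle at the moment of lifting, which mirrors the chain-homotopy identity $\partial D(I) + D(\partial I) = g_{\#}(I) - f_{\#}(I)$ from Lemma \ref{lem:chain-homotopy}, now with the color-set $I$ playing the role that a simplex of $\Delta^n$ played there.
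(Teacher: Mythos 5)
Your reduction in the first step is off by an augmentation, and this is fatal precisely in the cases the theorem is mainly for. The chain groups of $S_*=C_*(\cC)/N_*$ do agree (after the shift by $n$) with the cellular chain groups of $\colorfulcomplex^{\mathrm{P}}(\cC,\cV)$, but the homology you get this way is the \emph{unreduced} homology of $\colorfulcomplex(\cC,\cV)$: since every simplex on at most $n$ vertices misses some color, $S_p=0$ for $p<n$, so there is no augmentation left in $S_*$ and $H_{n}(S_*)$ is free on the connected components of the space of colorful simplices rather than computing $\widetilde H_0$. Equivalently, $H_{k+n}(S_*)\cong H_{k+n}(\cC,\cC')$ where $\cC'$ is the subcomplex of non-spanning simplices. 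Consequently, for $m=1$ your reformulated goal $H_{n+m-1}(S_*)=H_n(S_*)=0$ asserts that there are \emph{no} colorful simplices at all, which is false under the hypotheses: take $\cC$ to be a single colorful $n$-simplex together with all its faces; every $\cC[V_I]$ is a simplex, so the hypotheses hold, $\colorfulcomplex(\cC,\cV)$ is a point (so $\widetilde H_0=0$, as the theorem asserts), yet $H_n(S_*)=\cR\neq 0$. For $m=0$ the reformulation is vacuous ($S_{n-1}=0$), while the theorem is the topological Hall theorem. The step of your argument that actually breaks is the production of $u\in N_{n+m-1}$ with $\partial u=-\partial z$: this needs $[\partial z]=0$ in $\widetilde H_{n+m-2}(\cC')$, and for $m\le 1$ that group need not vanish under the hypotheses (in the example above $\partial z$ is the fundamental cycle of $\cC'=\partial\Delta^n$, and $N_{n}=0$). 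Any correct low-degree argument must retain the augmentation data: for $m=1$ one must show that a \emph{difference} $T-S$ of two colorful simplices is a $\partial_S$-boundary (this is exactly what Lemma \ref{lem:chain-homotopy} accomplishes), and for $m=0$ one must produce a colorful simplex; neither follows from the quotient-complex statement.

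For $m\ge 2$ the reduced/unreduced distinction disappears and your outline is sound, and it is organized differently from the paper: the paper normalizes cycles to the form $\colorfulsubdivision(R)$ (Lemma \ref{lem:universal-cycle}) and then constructs a chain-homotopy operator $D$ indexed by the faces $I$ of $\Delta^n$, which requires the restriction maps $\bilinearfunction(R,I)$ and the commutation lemmas (Lemmas \ref{lem:boundary-commute}--\ref{lem:commute-second}); your route would replace this by (i) the cellular identification, (ii) filling $\partial z$ inside $\cC'=\bigcup_i\cC[V-V_i]$, and (iii) a single application of the hypothesis for $I=\{0,\dots,n\}$. Be aware, though, that step (ii) is not mere bookkeeping: it amounts to the Mayer--Vietoris/nerve-type vanishing $\widetilde H_{n+m-2}(\cC')=0$, whose proof uses the hypotheses for all proper $I$ and carries essentially the same weight as the paper's construction of $D$. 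Even granting it, the argument as proposed only yields the theorem for $m\ge 2$, so it does not prove the stated result.
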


The proof idea is the same as the homological proof of Theorem \ref{thm:topological-reconfiguration-graph} that was given in Section \ref{sec:homological-proof}. Namely, we wish to construct a homomorphism $D = D_p : C_{p}(\Delta^n) \rightarrow C_{p+m}(\cC)$ satisfying certain chain homotopy and Sperner-type conditions on the faces of the abstract $n$-dimensional simplex $\Delta^n$, which we again assume has vertex set $\{0,1,\ldots,n\}$. However, before constructing $D$ we need some additional algebraic steps, which make up the bulk of this section. We assume for convenience that $m \ge 1$, but a modified version of this argument works for $m = 0$ (basically the same proof as \cite[Proposition 2.6]{deloera2019discrete}). Like before, we fix a total ordering $\prec$ on the vertices of $\cC$ with the property that $u \prec v$ whenever $u \in V_i$, $v \in V_j$, and $i < j$. We say that an oriented simplex $[ v_0, \ldots, v_p ] \in C_p(\cC)$ is in \textit{elementary form} if $v_0 \prec \cdots \prec v_p$. Likewise, we say that oriented simplex $I = [i_0, \ldots, i_p] \in C_p(\Delta^n)$ is in \textit{elementary form} if $i_0 < \cdots < i_p$.

First, we define linear functions that functorially relate the original complex $\cC$ to the colorful complex $\colorfulcomplex(\cC, \cV)$. These are modifications of the well-known algebraic subdivision operator $\mathrm{sd}$ (see \cite[Chapter 2.17]{munkres2018elements}). For a simplex (or oriented simplex) $\sigma$ of $\cC$ that spans the classes of $\cV$, we denote by $\hat{\sigma}$ the corresponding vertex of $\colorfulcomplex(\cC, \cV)$. Fixing an oriented simplex $I$ on $\Delta^n$, we define a linear function
\begin{align*}
	\colorfulsubdivision^I = (\colorfulsubdivision^I)_p : C_{p+|I|-1}(\cC[V_I]) \rightarrow C_{p}(\colorfulcomplex(\cC[V_I], \{V_i : i \in I\}))
\end{align*}
inductively on $p \ge 0$ as follows. For $p = 0$ and an oriented simplex $\sigma \in C_{|I|-1}(\cC[V_I])$ in elementary form, we put $\colorfulsubdivision^I(\sigma) \coloneqq [\hat{\sigma}]$ if $\sigma$ is a colorful simplex of $(\cC[V_I], \{V_i : i \in I\})$, and we put $\colorfulsubdivision^I(\sigma)=0$ otherwise. We extend $\colorfulsubdivision^I$ to all chains in $C_{|I|-1}(\cC[V_I])$ by linearity. For $p \ge 1$ and an oriented simplex $\sigma \in C_{p+|I|-1}(\cC[V_I])$ in elementary form, we put
\begin{align*}
	\colorfulsubdivision^I(\sigma) \coloneqq [\hat{\sigma}, \colorfulsubdivision^I(\partial \sigma)].
\end{align*}
(For a vertex $w$ and chain $c \coloneqq a_1 \sigma_1 + \cdots + a_k \sigma_k$, where $a_i \in \cR$ and $\sigma_i$ are oriented simplices, we use the bracket notation $[w, c] \coloneqq a_1[w, \sigma_1] + \cdots + a_k[w, \sigma_k]$.) Again we extend to all chains in $C_{p+|I|-1}(\cC[V_I])$ by linearity. At a high level, we are associating every oriented simplex $\sigma$ with its corresponding oriented cell in $\mathbf{Col}^{\mathrm{P}}(\cC[V_I], \{V_i : i \in I\})$, and constructing the simplicial chain on the subdivided cell that is consistent with the orientation of $\sigma$. Notice that $f^I(\sigma) = 0$ whenever $\sigma$ is an oriented simplex on $\cC[V_I]$ that does not span the color classes $\{V_i : i \in I\}$. We start with the following lemma.

\begin{lem} \label{lem:boundary-commute}
The map $\colorfulsubdivision^I$ commutes with the boundary operator $\partial$.
\end{lem}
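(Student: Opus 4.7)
The plan is to prove the identity $\partial \circ \colorfulsubdivision^I = \colorfulsubdivision^I \circ \partial$ by induction on $p \ge 1$, directly unpacking the recursive definition of $\colorfulsubdivision^I$. The central tool is the cone boundary formula: for a vertex $w$ and a chain $c$ of positive dimension, $\partial[w, c] = c - [w, \partial c]$, whereas for a $0$-chain $c$ one instead has $\partial[w, c] = c - \epsilon(c)\,[w]$. By linearity it suffices to verify the identity on oriented simplices $\sigma \in C_{p+|I|-1}(\cC[V_I])$ in elementary form, so I would always argue on a single such $\sigma$.

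For the inductive step $p \ge 2$, the definition $\colorfulsubdivision^I(\sigma) = [\hat\sigma, \colorfulsubdivision^I(\partial\sigma)]$ combined with the first cone formula gives
\[
\partial\,\colorfulsubdivision^I(\sigma) = \colorfulsubdivision^I(\partial\sigma) - [\hat\sigma,\, \partial\,\colorfulsubdivision^I(\partial\sigma)],
\]
and the inductive hypothesis together with $\partial^2 = 0$ forces the second term to equal $[\hat\sigma, \colorfulsubdivision^I(\partial^2\sigma)] = 0$, yielding the desired identity.

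The main obstacle is the base case $p = 1$, where $\colorfulsubdivision^I(\partial\sigma)$ is a $0$-chain and the cone formula instead produces a correction term $\epsilon(\colorfulsubdivision^I(\partial\sigma))\,[\hat\sigma]$; the task is to show that this correction vanishes. I plan to do this by a direct combinatorial argument on the color structure of codimension-$1$ faces of $\sigma \in C_{|I|}(\cC[V_I])$. If $\sigma$ does not span the classes $\{V_i : i \in I\}$, then neither does any codimension-$1$ face of $\sigma$, so $\colorfulsubdivision^I(\partial\sigma) = 0$ and both sides vanish. Otherwise $\sigma$ has $|I|+1$ vertices distributed over $|I|$ classes, so exactly one class $V_{i^*}$ contains two vertices of $\sigma$; by the elementary form convention these two vertices must occupy consecutive positions $j_0$ and $j_0+1$ in $\sigma$. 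Precisely the two codimension-$1$ faces obtained by deleting one of these two vertices are colorful, so they contribute $(-1)^{j_0}[\hat\sigma_{(j_0)}] + (-1)^{j_0+1}[\hat\sigma_{(j_0+1)}]$ to $\colorfulsubdivision^I(\partial\sigma)$, and applying $\epsilon$ gives $(-1)^{j_0} + (-1)^{j_0+1} = 0$, as needed. Extending $\colorfulsubdivision^I$ to $C_{-1}(\cC[V_I]) = \cR$ as the identity cleanly absorbs the remaining check at the augmentation level.
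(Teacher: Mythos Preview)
Your argument is correct and follows the same inductive cone computation as the paper. In fact you are more careful at the base: the paper writes ``This is straightforward for $p=0$'' and then applies the identity $\partial[\hat\sigma,c]=c-[\hat\sigma,\partial c]$ uniformly for $p\ge 1$, whereas you correctly isolate $p=1$ and give the combinatorial reason the augmentation term $\epsilon\bigl(\colorfulsubdivision^I(\partial\sigma)\bigr)$ vanishes (the two colorful codimension-$1$ faces sit at consecutive positions $j_0,j_0+1$ and contribute opposite signs). One small slip: your last sentence about extending $\colorfulsubdivision^I$ ``to $C_{-1}(\cC[V_I])=\cR$ as the identity'' does not type-check, since at level $p=-1$ the domain would be $C_{|I|-2}(\cC[V_I])$, which is $\cR$ only when $|I|=1$. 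This extension is unnecessary once you have verified $p=1$ directly, so you can simply drop that remark.
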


\begin{proof}
By linearity, it suffices to show that $\partial \colorfulsubdivision^I(\sigma) = \colorfulsubdivision^I(\partial \sigma)$ for every oriented simplex $\sigma \in C_{p+|I|-1}(\cC)$ in elementary form. This is straightforward for $p=0$. For $p \ge 1$, by induction we have
\begin{align*}
	\partial \colorfulsubdivision^I(\sigma) = \colorfulsubdivision^{I}(\partial \sigma) - [\hat{\sigma}, \partial (\colorfulsubdivision^I(\partial \sigma))] = \colorfulsubdivision^{I}(\partial \sigma) - [\hat{\sigma}, \colorfulsubdivision^I(\partial^2 \sigma)] = \colorfulsubdivision^{I}(\partial \sigma),
\end{align*}
as required.
\end{proof}

For convenience, we now put
\begin{align*}
	\colorfulsubdivision \coloneqq \colorfulsubdivision^{[0,1,\ldots,n]}.
\end{align*}
Let $C_{p}^{\mathrm{col}}(\cC)$ denote the subgroup of the chain group $C_{p}(\cC)$ supporting only those oriented simplices that span the classes of $\cV$. The following lemma allows us to focus our attention in Theorem \ref{thm:high-dimensional-homology} to cycles that take a particular form. Two chains are called \textit{homologous} if their difference is a boundary.

\begin{lem} \label{lem:universal-cycle}
Every $k$-cycle of $\colorfulcomplex(\cC, \cV)$ is homologous to a cycle of the form $\colorfulsubdivision(R)$ for some chain $R \in C_{k+n}^{\mathrm{col}}(\cC)$.
\end{lem}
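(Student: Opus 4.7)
The plan is to proceed by strong induction on the complexity measure $N(z) \coloneqq \max\{|\tau| : \hat\tau \text{ appears in the support of } z\}$, with a secondary induction on the number of simplices of that maximum size appearing in the support. Any $k$-simplex $[\hat\tau_0, \ldots, \hat\tau_k]$ of $\colorfulcomplex(\cC, \cV)$ forces $|\tau_k| \ge n+1+k$ because $\tau_0 \subsetneq \cdots \subsetneq \tau_k$ with $|\tau_0| \ge n+1$, so $N(z) \ge n+1+k$ whenever $z \ne 0$.

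Fix $\sigma$ with $|\sigma| = N(z)$. By maximality of $\sigma$ under inclusion in the support of $z$, every term of $z$ involving $\hat\sigma$ has $\hat\sigma$ at the last vertex position, so the corresponding subchain equals $(-1)^k [\hat\sigma, y_\sigma]$ for a $(k-1)$-chain $y_\sigma$ supported on the subcomplex $L_\sigma$ of $\colorfulcomplex(\cC, \cV)$ generated by those $\hat\tau$ with $\tau \subsetneq \sigma$. Applying the identity $\partial [\hat\sigma, y_\sigma] = y_\sigma - [\hat\sigma, \partial y_\sigma]$ and separating the $\hat\sigma$-involving portion of $\partial z = 0$ from the rest forces $\partial y_\sigma = 0$, so $y_\sigma$ is a cycle in $L_\sigma$. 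The geometric heart of the proof is the observation that $L_\sigma$ is a simplicial subdivision of the topological boundary of the polyhedral cell $\Delta_0 \times \cdots \times \Delta_n$ of $\colorfulcomplex^{\mathrm{P}}(\cC, \cV)$ indexed by $\sigma$ (where $\Delta_i$ spans $\sigma \cap V_i$), hence a topological sphere of dimension $|\sigma| - n - 2$; moreover, the recursion $\colorfulsubdivision(\sigma) = [\hat\sigma, \colorfulsubdivision(\partial\sigma)]$ together with Lemma \ref{lem:boundary-commute} exhibits $\colorfulsubdivision(\partial\sigma)$ as a generator of the top reduced homology of $L_\sigma$.

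In the base case $N(z) = n+1+k$, the top simplex $\tau_k$ in each term of $z$ has size exactly $n+1+k$ and is thus automatically maximal, so $z = \sum_\sigma (-1)^k[\hat\sigma, y_\sigma]$ summed over all such $\sigma$. Since $L_\sigma$ is a $(k-1)$-sphere and $y_\sigma$ is a top-dimensional cycle, we have $y_\sigma = a_\sigma \colorfulsubdivision(\partial\sigma)$ for a unique $a_\sigma \in \cR$, and the recursion unwinds to $(-1)^k[\hat\sigma, y_\sigma] = (-1)^k a_\sigma \colorfulsubdivision(\sigma)$. Summing exhibits $z = \colorfulsubdivision(R)$ with $R = (-1)^k \sum_\sigma a_\sigma \sigma \in C_{k+n}^{\mathrm{col}}(\cC)$. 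In the inductive step $N(z) > n+1+k$, the link $L_\sigma$ has dimension $|\sigma| - n - 2 > k-1$, so the $(k-1)$-cycle $y_\sigma$ bounds: $y_\sigma = \partial w_\sigma$ for some $k$-chain $w_\sigma$ on $L_\sigma$. Then $(-1)^k[\hat\sigma, y_\sigma]$ differs from $(-1)^k w_\sigma$ by the boundary of $(-1)^{k+1}[\hat\sigma, w_\sigma]$; replacing this subchain of $z$ yields a homologous cycle $\tilde z$ whose support avoids $\hat\sigma$ and whose new terms involve only $\tau \subsetneq \sigma$. Iterating this over all size-$N(z)$ vertices strictly decreases $N$, closing the induction.

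The main obstacle I anticipate is the careful bookkeeping of signs that arise from the recursive definition of $\colorfulsubdivision$ and from moving $\hat\sigma$ between the last and first positions in the bracket notation, together with verifying that $\colorfulsubdivision(\partial\sigma)$ genuinely generates (rather than merely lies in) the top homology of $L_\sigma$. The latter should follow by identifying $L_\sigma$ with the subdivided boundary of a product of simplices and recognizing that $\colorfulsubdivision(\partial\sigma) = \partial \colorfulsubdivision(\sigma)$ is the simplicial chain representing the fundamental class; once these details are pinned down, the inductive reduction above is routine.
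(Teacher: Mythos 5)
Your argument is correct, and its core coincides with the paper's proof: in both, the decisive step is to take a cycle whose top vertices $\hat\sigma$ all have $|\sigma|=n+k+1$, decompose it as $\sum_\sigma[\hat\sigma,y_\sigma]$, observe that each $y_\sigma$ is a top-dimensional cycle in the descending link of $\hat\sigma$ --- the barycentric subdivision of the boundary of the product-of-simplices cell indexed by $\sigma$, hence a triangulated $(k-1)$-sphere --- and conclude $y_\sigma=a_\sigma\,\partial\colorfulsubdivision(\sigma)=a_\sigma\colorfulsubdivision(\partial\sigma)$, so the cycle equals $\colorfulsubdivision(R)$. Where you genuinely differ is the preliminary reduction: the paper pushes an arbitrary $k$-cycle into $\colorfulcomplex(\cC^{(k+n)},\cV)$ in one stroke via the exact sequence of the pair together with the vanishing of the relative cellular homology of $\colorfulcomplex^{\mathrm{P}}(\cC,\cV)$ against its polyhedral $k$-skeleton, whereas you obtain the same reduction by a descending induction on $N(z)=\max|\sigma|$, using that for $|\sigma|>n+k+1$ the descending link $L_\sigma$ is a sphere of dimension $|\sigma|-n-2>k-1$, so $y_\sigma=\partial w_\sigma$ bounds there and $[\hat\sigma,y_\sigma]$ can be traded, modulo $\partial[\hat\sigma,w_\sigma]$, for a chain supported on strictly smaller simplices. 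Your route is more elementary --- everything stays at the chain level and no cellular machinery is invoked --- at the cost of the explicit double induction and sign bookkeeping; the paper's is shorter given standard tools, but both rest on exactly the same geometric facts about the cells of $\colorfulcomplex^{\mathrm{P}}(\cC,\cV)$. The one detail you rightly flag, and which the paper also treats briskly, is that $\partial\colorfulsubdivision(\sigma)$ generates (not merely lies in) the rank-one module of top-dimensional cycles of the boundary sphere; this follows by induction on the recursive definition of $\colorfulsubdivision$, which exhibits $\colorfulsubdivision(\sigma)$ as a coherently oriented sum of all top simplices of the subdivided cell with coefficients $\pm1$, and the claim holds over any coefficient ring since the boundary sphere is an orientable pseudomanifold.
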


\begin{proof}
This is a standard cellular approximation argument. Recall that the simplicial complex $\colorfulcomplex(\cC, \cV)$ is the barycentric subdivision of the polyehdral complex $\colorfulcomplex^{\mathrm{P}}(\cC, \cV)$. Notice also that $\colorfulcomplex(\cC^{(k+n)}, \cV)$ is the barycentric subdivision of the polyehdral $k$-skeleton $(\colorfulcomplex^{\mathrm{P}}(\cC, \cV))^{(k)}$. By the exactness axiom of reduced homology, the following sequence is exact:
\begin{align*}
	\widetilde{H}_{k}(\colorfulcomplex(\cC^{(k+n)}, \cV)) \xlongrightarrow{i_{\ast}} \widetilde{H}_{k}(\colorfulcomplex(\cC, \cV)) \xlongrightarrow{j_{\ast}} H_{k}(\colorfulcomplex(\cC, \cV), \colorfulcomplex(\cC^{(k+n)},\cV)),
\end{align*}
where $i_{\ast}$ and $j_{\ast}$ are induced from the inclusion maps $i : \colorfulcomplex(\cC^{(k+n)}, \cV) \rightarrow \colorfulcomplex(\cC, \cV)$ and $j : (\colorfulcomplex(\cC, \cV), \emptyset) \rightarrow (\colorfulcomplex(\cC, \cV), \colorfulcomplex(\cC^{(k+n)},\cV))$, respectively. By basic properties of cellular homology, 
\begin{align*}
	H_{k}(\colorfulcomplex(\cC, \cV), \colorfulcomplex(\cC^{(k+n)},\cV)) \cong H_{k}^{\mathrm{cell}}(\colorfulcomplex^{\mathrm{P}}(\cC, \cV), (\colorfulcomplex^{\mathrm{P}}(\cC, \cV))^{(k)}) = 0.
\end{align*}
This implies that the map $i_{\ast}$ is surjective, and hence every $k$-cycle $\colorfulcomplex(\cC, \cV)$ is homologous to a cycle in $Z_{k}(\colorfulcomplex(\cC^{(k+n)}, \cV))$. 
	
Now it suffices to show that every cycle $c \in Z_{k}(\colorfulcomplex(\cC^{(k+n)}, \cV))$ is of the form $f(R)$ for some chain $R \in C_{k+n}^{\mathrm{col}}(\cC)$. By construction, we can write $c$ uniquely as $c = \sum_{\sigma} [\hat{\sigma}, d_{\sigma}]$, where the sum is over all oriented simplices $\sigma \in C_{k}^{\mathrm{col}}(\cC)$ in elementary form, and $d_{\sigma}$ is some chain in $C_{k-1}(\colorfulcomplex(\cC^{(k+n)}, \cV))$ whose support lies in the link of $\hat{\sigma}$ in $\colorfulcomplex(\cC^{(k+n)}, \cV)$. Note that
\begin{align*}
	0 = \partial c = \sum_{\sigma} \partial [\hat{\sigma}, d_{\sigma}] = \sum_{\sigma} (d_{\sigma} - [\hat{\sigma}, \partial d_{\sigma}]).
\end{align*}
This implies that $\partial d_{\sigma} = 0$ for all $\sigma$. Since the link of $\hat{\sigma}$ in $\colorfulcomplex(\cC^{(k+n)}, \cV)$ is a triangulated $(k-1)$-sphere, up to constant factors $d_{\sigma}$ is the boundary of the star of $\hat{\sigma}$, i.e., $d_{\sigma} = a_{\sigma} \partial f(\sigma)$ for some $a_{\sigma} \in \cR$. Therefore,
\begin{align*}
	c = \sum_{\sigma} [\hat{\sigma}, a_{\sigma} \partial f(\sigma)] = \sum_{\sigma} a_{\sigma}[\hat{\sigma}, f(\partial \sigma)] = f\left(\sum_{\sigma} a_{\sigma} \sigma\right),
\end{align*}
as required.
\end{proof}

For our next step, we define a family of bilinear functions
\begin{align*}
	\bilinearfunction = \bilinearfunction_{p,q} : C_{p}^{\text{col}}(\cC) \times C_{q}(\Delta^n) \rightarrow C_{p+q-n}(\cC),
\end{align*}
which will indicate how we restrict a chain on $\cC$ to a lower-dimensional chain whose vertices have colors in a given subset $I \subseteq \{0,1,\ldots,n\}$. Fix an oriented simplex $I \in C_{q}(\Delta^n)$ in elementary form. Consider an oriented simplex $\sigma \in C_{p}^{\text{col}}(\cC)$ of the form
\begin{align*}
	\sigma \coloneqq [\sigma_0, \sigma_1],
\end{align*}
where $\sigma_0$ is an oriented simplex of $\cC[V_I]$, $\sigma_1$ is an oriented simplex of $\cC[V - V_I]$, and $\sigma_0$ and $\sigma_1$ are each in elementary form. Call such a representation of oriented simplex $\sigma$ as being \textit{$I$-elementary}. We put
\begin{align*}
	\bilinearfunction_{p,q}(\sigma, I) \coloneqq \sigma_0
\end{align*}
if $|\sigma_1| = n-q$ (and $|\sigma_0| = p+q-n+1$), and we put $\bilinearfunction_{p,q}(\sigma, I) = 0$ otherwise. In other words, we either prune the last $n-q$ coordinates of $\sigma$ (removing the vertices not in $V_I$), or we send to $0$. Since $\sigma$ contains a colorful simplex, we always have $|\sigma_1| \ge n - q$. We extend $\bilinearfunction_{p,q}$ to all of $C_{p}(\cC) \times C_{q}(\Delta^n)$ by bilinearity. The following lemma partly explains the motivation for this definition.

\begin{lem} \label{lem:basic-boundary}
For an oriented simplex $I \in C_q(\Delta^n)$ and color $j \notin I$, the chain $\bilinearfunction_{p,q}(R, I)$ is the support of $\partial \bilinearfunction_{p,q+1}(R, [j, I])$ on the oriented simplices whose vertices lie entirely in $V_I$.
\end{lem}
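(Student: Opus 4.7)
The plan is to reduce to a single oriented simplex via bilinearity and then to match both sides directly along the natural color decomposition of that simplex. By bilinearity of $\bilinearfunction$ in its first argument, it suffices to prove the claim when $R = \sigma$ is a single oriented simplex in $C_p^{\mathrm{col}}(\cC)$. I decompose the vertex set of $\sigma$ by color into three buckets: vertices in $V_I$, vertices in the single color class $V_j$, and vertices in $V - V_J$, where $J$ is the elementary form of $\{j\} \cup I$.

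The first step is a dimension-counting argument that isolates when both sides can be nonzero. For $\bilinearfunction_{p,q}(\sigma, I)$ to be nonzero, $\sigma$ must have exactly $p + q - n + 1$ vertices in $V_I$ and $n - q$ vertices in $V - V_I$. For $\bilinearfunction_{p,q+1}(\sigma, [j, I])$ to be nonzero, $\sigma$ must have $p + q - n + 2$ vertices in $V_J$ and $n - q - 1$ vertices in $V - V_J$; and for $\partial \bilinearfunction_{p,q+1}(\sigma, [j, I])$ to contribute a term supported in $V_I$, the vertex deleted by the relevant summand of $\partial$ must lie in $V_j$. Using the spanning condition that $\sigma \in C_p^{\mathrm{col}}(\cC)$ contains at least one vertex of every color, these constraints together force the ``alive'' case in which $\sigma$ has $p + q - n + 1$ vertices in $V_I$, exactly one vertex $v_j$ in $V_j$, and $n - q - 1$ vertices in $V - V_J$; in every other case both sides vanish, and the identity is trivial.

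In the alive case, both sides equal $\pm \sigma_0$, where $\sigma_0$ denotes the unique elementary oriented simplex on $\sigma \cap V_I$: the left-hand side by the very definition of $\bilinearfunction_{p,q}(\sigma, I)$, and the right-hand side because deleting $v_j$ from the elementary oriented simplex on $\sigma \cap V_J$ produces exactly $\sigma_0$ as an unoriented simplex. Thus the underlying unoriented simplex is the same on both sides.

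The main obstacle is sign tracking. Three sign contributions enter on the right-hand side: (i) the sign $(-1)^r$ that relates $[j, I]$ to its elementary form $J$, where $r$ is the position of $j$ in $J$; (ii) the alternating sign $(-1)^s$ from $\partial$ when $v_j$ occupies position $s$ in the elementary $V_J$-part of $\sigma$; and (iii) the permutation sign relating the $J$-elementary and $I$-elementary presentations of $\sigma$, which amounts to moving $v_j$ past the block of vertices separating its two positions. A direct inversion count shows that these three sign contributions combine to match the sign that appears in $\bilinearfunction_{p,q}(\sigma, I)$ on the left-hand side, yielding the claimed identity.
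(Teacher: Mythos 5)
Your route is the same as the paper's: reduce to a single oriented simplex $\sigma \in C_p^{\mathrm{col}}(\cC)$ by linearity, note that outside one ``alive'' configuration both the left-hand side and the $V_I$-supported part of the right-hand side vanish (your bucket count compresses the paper's cases (i)--(ii); in particular, when $\sigma$ has two or more vertices of color $j$, every face of its $V_J$-part retains a $j$-colored vertex, so no boundary term is $V_I$-supported, while the left side is already zero), and in the alive case identify both sides with $\pm\sigma_0$, the elementary simplex on $\sigma \cap V_I$. This matches the paper's case (iii), and you are actually more explicit than the paper about the sign sources: the paper writes $\sigma = \pm[\sigma_0', \sigma_1']$ and then silently drops the $\pm$.

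The gap is your final sentence: the ``direct inversion count'' is asserted, not performed, and that is exactly where all the content sits. Carrying it out with the conventions as literally stated, the three contributions are $(-1)^r$ with $r = |\{i \in I : i < j\}|$, $(-1)^s$ with $s$ the position of the $j$-colored vertex in the elementary $V_J$-part, and $(-1)^{(a-s)+t}$ for the shuffle, where $a = |\sigma \cap V_I|$ and $t$ is the number of colors smaller than $j$ lying outside $I \cup \{j\}$; since $r+t = j$, the product is $(-1)^{j+a}$, which is not identically $+1$. For instance, with $n=1$, $V_0 = \{u_1,u_2\}$, $V_1 = \{v\}$, $u_1 \prec u_2 \prec v$, $\sigma = [u_1,u_2,v]$, $I = [0]$, $j=1$, the left side is $[u_1,u_2]$, while $\partial g_{2,1}(\sigma,[1,0]) = -[u_2,v] + [u_1,v] - [u_1,u_2]$ has $V_I$-supported part $-[u_1,u_2]$. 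So ``the signs combine to match'' cannot simply be declared: either a compensating sign must be built into the normalization of $g$ (or one works over $\mathbb{Z}_2$, where the issue evaporates), and a complete proof has to carry out the count and confront this. The paper's own proof elides the same point, so this is a defect you inherited rather than introduced, but as a standalone argument the decisive verification is missing.
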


\begin{proof}
By the linearity of $\bilinearfunction(\cdot, I)$, it suffices to show this when the chain $R$ is an oriented simplex $\sigma \in C_p^{\mathrm{col}}(\cC)$. Assume that $I$ is in elementary form, and $\sigma \coloneqq [\sigma_0, \sigma_1]$ is in $I$-elementary form. Also call $I'$ the result of putting $[j, I]$ in elementary form, so that $I' = (-1)^{|\{i \in I : i < j\}|} [j, I]$. Then say that $\sigma = \pm [\sigma_0', \sigma_1']$, where $[\sigma_0', \sigma_1']$ is in $I'$-elementary form. There are three cases:
\begin{itemize}
	\item[(i)] $|\sigma_1'| > n - q - 1$. Then we also have $|\sigma_1| > n - q$, and hence $\bilinearfunction_{p,q}(\sigma, I) = 0 = \partial \bilinearfunction_{p,q+1}(\sigma, [j, I])$.
	
	\item[(ii)] $|\sigma_1'| = n - q - 1$ and $|\sigma_1| > n - q$. In this case, $\sigma_0'$ contains more than one vertex of color $j$, and thus the support of $\partial \bilinearfunction_{p,q+1}(\sigma, [j, I]) = \pm \partial \sigma_0'$ on the oriented simplices whose vertices lie entirely in $V_I$ is $0$. In addition, we have $\bilinearfunction_{p,q}(\sigma, I) = 0$ because $|\sigma_1| > n - q$.
		
	\item[(iii)] $|\sigma_1'| = n - q - 1$ and $|\sigma_1| = n - q$. Then $\sigma_0'$ has a unique vertex of color $j$, and we can write 
	\begin{align*}
		\partial g_{p, q+1}(\sigma, [j, I]) = (-1)^{|\{i \in I : i < j\}|} \partial g_{p, q+1}(\sigma, I') = (-1)^{|\{i \in I : i < j\}|} \partial \sigma_0'.
	\end{align*}
	On the right-hand side, it is easy to see that the unique oriented simplex term whose vertices lie entirely in $V_I$ is $\sigma_0$. And we also have that $g_{p,q}(\sigma, I) = \sigma_0$.
\end{itemize}
This finishes the proof.
\end{proof}

The next lemma states that the function $g$ commutes with the boundary operator $\partial$ in the first input.

\begin{lem} \label{lem:commute-first}
We have $\partial \bilinearfunction(R, I) = \bilinearfunction(\partial R, I)$.
\end{lem}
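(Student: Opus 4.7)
The plan is to verify the identity $\partial g(R, I) = g(\partial R, I)$ by reducing to a single oriented simplex via bilinearity and then expanding the boundary with a Leibniz-type rule for concatenated oriented simplices. Given $R = \sigma$ an oriented simplex, I would write $\sigma$ in its $I$-elementary form $\sigma = \epsilon[\sigma_0, \sigma_1]$, where $\epsilon \in \{\pm 1\}$ is the sign of the shuffle that groups the vertices of $V_I$ to the front of the elementary form. The standard identity
\begin{equation*}
    \partial [\sigma_0, \sigma_1] \;=\; [\partial \sigma_0, \sigma_1] \;+\; (-1)^{|\sigma_0|}\,[\sigma_0, \partial \sigma_1]
\end{equation*}
then reduces the verification to evaluating $g$ on the two resulting chains term by term.

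In the main case $|\sigma_1| = n - q$ we have $g(\sigma, I) = \epsilon \sigma_0$ and hence $\partial g(\sigma, I) = \epsilon\,\partial \sigma_0$. On the other side, each summand of $[\partial \sigma_0, \sigma_1]$ still carries the original $\sigma_1$ of size $n - q$, so $g$ returns the corresponding term of $\partial \sigma_0$; each summand of $[\sigma_0, \partial \sigma_1]$, by contrast, has $V - V_I$-part of size $n - q - 1$ and so lies outside the support of $g$. Multiplying by the overall shuffle sign $\epsilon$ yields the identity in this case.

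The complementary case $g(\sigma, I) = 0$ requires showing $g(\partial \sigma, I) = 0$. The contribution from $[\partial \sigma_0, \sigma_1]$ still vanishes termwise by the size condition on $\sigma_1$, and the only delicate subcase is $|\sigma_1| = n - q + 1$, where removing a single vertex from $\sigma_1$ produces a $V - V_I$-part of exactly the right size to be picked up by $g$. Here lies the main obstacle: I need the resulting contributions from $[\sigma_0, \partial \sigma_1]$ to cancel in pairs. I would exploit the fact that the total order $\prec$ fixed in Section \ref{sec:homological-proof} groups all vertices of a given color class consecutively, so any two vertices of $\sigma_1$ sharing a color occupy adjacent positions of its elementary form; their corresponding removals then contribute opposite signs to the alternating sum $\partial \sigma_1$ and cancel after $g$ is applied. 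Tracking the shuffle sign $\epsilon$ consistently across both sides of the identity is the additional, routine, bookkeeping that completes the proof.
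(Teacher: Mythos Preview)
Your proposal is correct and follows essentially the same approach as the paper: reduce by bilinearity to a single oriented simplex in $I$-elementary form $[\sigma_0,\sigma_1]$, expand $\partial$ via the Leibniz rule, and split into the cases $|\sigma_1| = n-q$, $|\sigma_1| = n-q+1$, and $|\sigma_1| > n-q+1$, with the middle case handled by the same pairwise cancellation you describe (the paper phrases it as ``$\partial\sigma_1$ supports two oriented colorful simplices with opposite signs''). The only cosmetic difference is that the paper works directly with $\sigma$ already in $I$-elementary form and so avoids carrying your shuffle sign $\epsilon$.
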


\begin{proof}
By the linearity of $\bilinearfunction(\cdot, I)$, it suffices to verify that $\partial \bilinearfunction_{p,q}(\sigma, I) = \bilinearfunction_{p-1,q}(\partial \sigma, I)$ for every oriented simplex $\sigma \in C_{q}^{\text{col}}(\cC)$ in $I$-elementary form, say $\sigma \coloneqq [\sigma_0, \sigma_1]$. We have
\begin{align*}
	\bilinearfunction_{p-1,q}( \partial \sigma, I) = \bilinearfunction_{p-1,q}([\partial \sigma_0, \sigma_1], I) + (-1)^{|\sigma_0|-1} \bilinearfunction_{p-1,q}([\sigma_0, \partial \sigma_1], I).
\end{align*}
There are three cases:
\begin{itemize}
	\item[(i)] $|\sigma_1| > n - q + 1$. Then we already have $\partial \bilinearfunction_{p,q}(\sigma, I) = 0 = \bilinearfunction_{p-1,q}(\partial \sigma, I)$.
	
	\item[(ii)] $|\sigma_1| = n - q + 1$. Then the first term of the right-hand side is $0$. The second term of the right-hand side is also $0$ because $\partial \sigma_1$ supports two oriented colorful simplices of $(\cC[V - V_I], \{V_i : i \notin I\})$ with opposite signs, and thus $\bilinearfunction_{p-1,q}([\sigma_0, \partial \sigma_1]) = \sigma_0 - \sigma_0 = 0$. Hence, $\partial \bilinearfunction_{p,q}(\sigma, I) = 0 = \bilinearfunction_{p-1,q}(\partial \sigma, I)$.
	
	\item[(iii)] $|\sigma_1| = n - q$. Then the first term of the right-hand side is $\partial \sigma_0$ and the second term is $0$. Therefore, $\partial \bilinearfunction_{p,q}(\sigma, I) = \partial \sigma_0 = \bilinearfunction_{p-1,q}(\partial \sigma, I)$.
\end{itemize}
This finishes the proof.
\end{proof}

Our proof of Theorem \ref{thm:high-dimensional-homology} will require the additional property that $g$ commutes with the boundary operator in the second input: $\partial \bilinearfunction(R, I) = \bilinearfunction(R, \partial I)$. This is not true for general chains $R$, but our goal is to show that it is true for chains $R$ satisfying $\partial \colorfulsubdivision(R) = 0$, which is all that we need. For showing this, we let
\begin{align*}
	\pi^I : \colorfulcomplex(\cC, \cV) \rightarrow \colorfulcomplex(\cC[V_I], \{V_i : i \in I\})
\end{align*}
denote the simplicial map that sends a vertex $\hat{\sigma}$ of $\colorfulcomplex(\cC, \cV)$, corresponding to simplex $\sigma$ of $\cC$, to the vertex $\hat{\tau}$ of $\colorfulcomplex(\cC[V_I], \{V_i : i \in I\})$, corresponding to simplex $\tau \coloneqq \sigma \cap V_I$ of $\cC[V_I]$. Since $\pi^I$ is a simplicial map, it induces a chain map 
\begin{align*}
	\pi_{\#}^I : C_p(\colorfulcomplex(\cC, \cV)) \rightarrow C_p(\colorfulcomplex(\cC[V_I], \{V_i : i \in I\})).
\end{align*}
The following lemma explains how $\pi_{\#}^I$ relates to the previously defined maps.

\begin{lem} \label{lem:pi-lemma}
We have $\pi_{\#}^I(\colorfulsubdivision(R)) = \colorfulsubdivision^I(\bilinearfunction(R, I))$.
\end{lem}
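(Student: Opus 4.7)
The plan is to prove the identity by bilinearity and induction on $\dim R - n$, noting that when this quantity is zero the chain $R$ is supported on $n$-simplices of $\cC$. By linearity in $R$, it suffices to take $R = \sigma$ a single oriented $(p+n)$-simplex for some $p \ge 0$; writing $\sigma$ (up to a sign captured by bilinearity of $g$) in $I$-elementary form as $\sigma = [\sigma_0, \sigma_1]$ with $\sigma_0$ in $V_I$ and $\sigma_1$ in $V \setminus V_I$ lets me interface cleanly with the definitions of $g$ and $\pi^I$.

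For the base case $p = 0$, the simplex $\sigma$ has $n + 1$ vertices and spans every color class of $\cV$, and hence is colorful. Then $f(\sigma) = [\hat{\sigma}]$, and since $\pi^I$ sends $\hat{\sigma}$ to $\widehat{\sigma \cap V_I} = \hat{\sigma_0}$ as vertices of the target complex, one has $\pi_{\#}^I(f(\sigma)) = [\hat{\sigma_0}]$. On the other side, $|\sigma_1| = n - q$ (with $q = |I| - 1$) forces $g(\sigma, I) = \sigma_0$, and the $p=0$ case of the definition of $f^I$ gives $f^I(\sigma_0) = [\hat{\sigma_0}]$, so both sides coincide.

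For the inductive step, assume the identity for all chains of dimension $(p-1) + n$. Using the recursive definition $f(\sigma) = [\hat{\sigma}, f(\partial \sigma)]$ and the chain map property of $\pi_{\#}^I$,
\begin{align*}
\pi_{\#}^I(f(\sigma)) = \bigl[\pi^I(\hat{\sigma}),\, \pi_{\#}^I(f(\partial \sigma))\bigr] = \bigl[\hat{\sigma_0},\, \pi_{\#}^I(f(\partial \sigma))\bigr].
\end{align*}
Applying the inductive hypothesis term-by-term to the support of $\partial \sigma$ (with the vanishing of $f$ on non-spanning boundary terms matched by a corresponding vanishing of $g$ in the relevant combinatorial cases) and then invoking Lemma \ref{lem:commute-first},
\begin{align*}
\pi_{\#}^I(f(\partial \sigma)) = f^I(g(\partial \sigma, I)) = f^I\bigl(\partial g(\sigma, I)\bigr).
\end{align*}
There are then two subcases. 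If $|\sigma_1| = n - q$, then $g(\sigma, I) = \sigma_0$ and the recursive definition $f^I(\sigma_0) = [\hat{\sigma_0}, f^I(\partial \sigma_0)]$ matches the bracket above. If $|\sigma_1| > n - q$, then $g(\sigma, I) = 0$, so $f^I\bigl(\partial g(\sigma, I)\bigr) = 0$ and the outer bracket $[\hat{\sigma_0}, 0]$ also vanishes, as does $f^I(g(\sigma,I))$.

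The main obstacle I anticipate is the careful bookkeeping of signs introduced when $\sigma$ is not natively in $I$-elementary form, together with the slightly subtle claim that the non-colorful terms appearing in $\partial \sigma$ contribute zero on both sides. For the first issue, bilinearity of both $g$ and $f^I$ ensures the signs on the two sides stay synchronized. For the second, a direct case analysis based on which color class is destroyed by removing a vertex of $\sigma$, together with the requirement $|\sigma_1| = n-q$ in the definition of $g$, pins down when the non-colorful terms can yield nonzero $g$; these exceptional cases are exactly where they cancel in the alternating sum $\partial \sigma$, which is consistent with Lemma \ref{lem:commute-first}.
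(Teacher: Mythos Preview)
Your proposal is correct and follows essentially the same approach as the paper: reduce by linearity to a single oriented simplex $\sigma=[\sigma_0,\sigma_1]$ in $I$-elementary form, induct on $|\sigma|$, handle the colorful base case directly, and in the inductive step unfold the recursion $f(\sigma)=[\hat\sigma,f(\partial\sigma)]$, apply the induction hypothesis together with Lemma~\ref{lem:commute-first}, and split on whether $g(\sigma,I)=\sigma_0$ or $g(\sigma,I)=0$. The only minor remark is that the step $\pi_\#^I([\hat\sigma,f(\partial\sigma)])=[\pi^I(\hat\sigma),\pi_\#^I(f(\partial\sigma))]$ uses that $\pi_\#^I$ is induced by a simplicial map (not the ``chain map property'' per se), and your worries about non-spanning boundary terms and signs are already absorbed by linearity and the vanishing of $f$ on non-spanning simplices, exactly as in the paper.
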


\begin{proof}
By linearity, it suffices to verify this when $R$ is an oriented simplex $\sigma \in C_p^{\mathrm{col}}(\cC)$ in $I$-elementary form, say $\sigma \coloneqq [\sigma_0, \sigma_1]$. We apply induction on $|\sigma|$. If $|\sigma| = n+1$, meaning that $\sigma$ is an oriented colorful simplex of $(\cC, \cV)$, then
\begin{align*}
	\pi_{\#}^I(\colorfulsubdivision(\sigma)) = \pi_{\#}^I([\hat{\sigma}]) = [\hat{\sigma}_0] = \colorfulsubdivision^I(\sigma_0) = \colorfulsubdivision^I(\bilinearfunction(\sigma, I)).
\end{align*}
If $|\sigma| > n+1$, then by induction we have 
\begin{align*}
	\pi_{\#}^I(\colorfulsubdivision(\sigma)) &= \pi_{\#}^I([\hat{\sigma}, \colorfulsubdivision(\partial \sigma)]) = [\pi^I(\hat{\sigma}), \pi_{\#}^I(\colorfulsubdivision(\partial \sigma))] \\
	&= [\hat{\sigma}_0, \colorfulsubdivision^{I}(\bilinearfunction(\partial \sigma, I))] = [\hat{\sigma}_0, \colorfulsubdivision^{I}(\partial g(\sigma, I))].
\end{align*}
If $g(\sigma, I) = 0$, then we also have that $\pi_{\#}^I(\colorfulsubdivision(\sigma)) = 0$. Otherwise, if $g(\sigma, I) = \sigma_0$, then we get that $\pi_{\#}^I(\colorfulsubdivision(\sigma)) = [\hat{\sigma}_0, \colorfulsubdivision^{I}(\partial \sigma_0)] = \colorfulsubdivision^I(g(\sigma, I))$. This finishes the proof.
\end{proof}

Now we prove the desired property that $g$ commutes with the boundary operator $\partial$ in the second input.

\begin{lem} \label{lem:commute-second}
If chain $R \in C_{m+n-1}^{\mathrm{col}}(\cC)$ satisfies $\partial \colorfulsubdivision(R) = 0$, then $\partial \bilinearfunction(R, I) = \bilinearfunction(R, \partial I)$.
\end{lem}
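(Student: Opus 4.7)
The strategy is to form $c \coloneqq \partial \bilinearfunction(R, I) - \bilinearfunction(R, \partial I)$, verify that $c$ is supported on oriented simplices spanning all classes $\{V_i : i \in I\}$, show that $\colorfulsubdivision^I(c) = 0$, and then conclude $c = 0$ from the injectivity of $\colorfulsubdivision^I$ on the ``colorful'' part of the chain complex.

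For the first step, fix $I = [i_0, \ldots, i_q]$ in elementary form and apply Lemma \ref{lem:basic-boundary} with $I \mapsto I \setminus \{i_k\}$ and $j \mapsto i_k$, noting that $[i_k, I \setminus \{i_k\}]$ rearranges to $(-1)^k I$ in elementary form. This identifies $\bilinearfunction(R, I \setminus \{i_k\})$ with $(-1)^k$ times the restriction of $\partial \bilinearfunction(R, I)$ to oriented simplices whose vertices lie entirely in $V_{I \setminus \{i_k\}}$. Combining with $\partial I = \sum_{k=0}^{q} (-1)^k (I \setminus \{i_k\})$, the two signs of $(-1)^k$ cancel, and $\bilinearfunction(R, \partial I)$ equals the restriction of $\partial \bilinearfunction(R, I)$ to those oriented simplices missing at least one color from $I$. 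Since every simplex in the support of $\bilinearfunction(R, I)$ already spans $\{V_i : i \in I\}$, a single boundary can remove at most one color, so the contributions from different missing colors are disjoint. Hence $c$ is exactly the restriction of $\partial \bilinearfunction(R, I)$ to the remaining terms, all of which span $\{V_i : i \in I\}$.

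For the second step, using Lemma \ref{lem:boundary-commute}, Lemma \ref{lem:pi-lemma}, the chain-map property of $\pi^I_{\#}$, and the hypothesis $\partial \colorfulsubdivision(R) = 0$,
\begin{align*}
    \colorfulsubdivision^I(\partial \bilinearfunction(R, I)) = \partial \colorfulsubdivision^I(\bilinearfunction(R, I)) = \partial \pi^I_{\#}(\colorfulsubdivision(R)) = \pi^I_{\#}(\partial \colorfulsubdivision(R)) = 0.
\end{align*}
At the same time, every oriented simplex in the support of $\bilinearfunction(R, \partial I)$ lives on vertices in some proper sub-union $V_{I \setminus \{i_k\}} \subsetneq V_I$, so it does not span $\{V_i : i \in I\}$ and is annihilated by $\colorfulsubdivision^I$. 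Together these give $\colorfulsubdivision^I(c) = 0$.

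For the final step, I would insert a short sub-claim: the restriction of $\colorfulsubdivision^I$ to chains supported on simplices spanning $\{V_i : i \in I\}$ is injective. The reason is that every oriented simplex in the support of $\colorfulsubdivision^I(\sigma)$ has $\hat{\sigma}$ as its leading vertex (by the recursion $\colorfulsubdivision^I(\sigma) = [\hat{\sigma}, \colorfulsubdivision^I(\partial \sigma)]$), so for distinct simplices $\sigma \neq \sigma'$ of the same dimension spanning all colors in $I$, the supports of $\colorfulsubdivision^I(\sigma)$ and $\colorfulsubdivision^I(\sigma')$ lie on oriented simplices with distinct leading vertices and cannot cancel. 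A short induction on $|\sigma|$ shows $\colorfulsubdivision^I(\sigma) \neq 0$: the base case $|\sigma| = |I|$ gives $[\hat{\sigma}]$, and for $|\sigma| > |I|$ there is a vertex $v \in \sigma$ whose removal still spans all colors, and the terms $\colorfulsubdivision^I(\sigma \setminus \{v\})$ in $\colorfulsubdivision^I(\partial \sigma)$ have distinct leading vertices, so none can cancel. Applied to $c$, injectivity forces $c = 0$, i.e., $\partial \bilinearfunction(R, I) = \bilinearfunction(R, \partial I)$. The main obstacle is this leading-vertex / injectivity argument, which is the only ingredient not already supplied by the earlier lemmas; the rest is careful bookkeeping of signs and supports.
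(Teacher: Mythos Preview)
Your proof is correct and follows essentially the same approach as the paper: both use Lemma~\ref{lem:basic-boundary} to identify $\bilinearfunction(R,\partial I)$ with the non-colorful part of $\partial \bilinearfunction(R,I)$, both compute $\colorfulsubdivision^I(\partial \bilinearfunction(R,I))=0$ via Lemmas~\ref{lem:boundary-commute} and~\ref{lem:pi-lemma} together with the hypothesis, and both conclude from the injectivity of $\colorfulsubdivision^I$ on simplices spanning all colors in $I$. Your version supplies more detail on this last injectivity claim (the leading-vertex argument), which the paper asserts in a single sentence; your chain of equalities also routes through $\partial \colorfulsubdivision^I(\bilinearfunction(R,I))$ rather than the paper's $\colorfulsubdivision^I(\bilinearfunction(\partial R,I))$, but both are valid.
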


\begin{proof}
Fix an oriented simplex $I \in C_q(\Delta^n)$ in elementary form, say $I \coloneqq [i_0, \ldots, i_q]$, and put $I_k \coloneqq [i_0, \ldots, \hat{i}_k, \ldots, i_q]$. By Lemma \ref{lem:basic-boundary}, $g_{p,q-1}(R, I_k)$ is precisely the support of the chain $\partial g_{p,q}(R, [i_k, I_k]) = (-1)^k \partial g_{p,q}(R, I)$ on oriented simplices consisting only of vertices from $V_{I_k}$. Assume for now that every oriented simplex in the support of $\partial \bilinearfunction_{p,q}(R, I)$ does not span the classes $\{V_i : i \in I\}$, i.e., it misses some color $i_k$ from $I$. Then we would get that
\begin{align*}
	\partial g_{p,q}(R, I) = \sum_{k=0}^q (-1)^k g_{p,q-1}(R, I_k) = g_{p,q-1}(R, \partial I),
\end{align*}
which would finish the proof of the lemma.

What is left to show is that every oriented simplex in the support of $\partial \bilinearfunction_{p,q}(R, I)$ misses some color $i_k$ from $I$. By Lemma \ref{lem:pi-lemma}, Lemma \ref{lem:boundary-commute}, and the assumption that $\partial \colorfulsubdivision(R) = 0$, we have 
\begin{align*}
	\colorfulsubdivision^I(\partial \bilinearfunction(R, I)) = \colorfulsubdivision^I(\bilinearfunction(\partial R, I)) = \pi_{\#}^I(\colorfulsubdivision(\partial R)) = \pi_{\#}^I(\partial \colorfulsubdivision(R)) = 0.
\end{align*}
This shows that $\partial \bilinearfunction(R, I)$ lies in the kernel of $\colorfulsubdivision^I$. On the other hand, notice that $\colorfulsubdivision^I$ is injective on oriented simplices of $\cC[V_I]$ that span the classes $\{V_i : i \in I\}$. This means that all the oriented simplices in the support of $\partial \bilinearfunction(R, I)$ do not span the classes $\{V_i : i \in I\}$, i.e., they miss some color from $I$, as required.
\end{proof}

The proof of Theorem \ref{thm:high-dimensional-homology} now proceeds basically the same as our proof of Lemma \ref{lem:chain-homotopy} in Section \ref{sec:homological-proof}. We only sketch this part. The goal is to show that every $(m-1)$-cycle of $\colorfulcomplex(\cC, \cV)$ is an $(m-1)$-boundary. By Lemma \ref{lem:universal-cycle}, it suffices to show this for cycles of the form $\colorfulsubdivision(R)$ for some fixed chain $R \in C_{m+n-1}^{\mathrm{col}}(\cC)$. Since $\partial \colorfulsubdivision(R) = 0$, Lemma \ref{lem:commute-second} applies. We follow the same inductive procedure as the proof of Lemma \ref{lem:chain-homotopy} to construct a homomorphism $D = D_p : C_{p}(\Delta^n) \rightarrow C_{p+m}(\cC)$ that satisfies the following two properties for all oriented simplices $I \in C_p(\Delta^n)$:
\begin{itemize}
	\item[(1)] $D(I) \in C_{|I|+m-1}(\cC[V_I])$, 
	\item[(2)] $\partial D(I) + D(\partial I) = g(R, I)$.
\end{itemize}
The construction of such a homomorphism $D$ is where Lemma \ref{lem:commute-second} is needed. Now taking $I^{\ast} \coloneqq [0,1,\ldots,n]$, we get that $\partial D(I^\ast) + D(\partial I^\ast) = R$ by property (2). Because $\colorfulsubdivision(D(\partial I^\ast)) = 0$ by property (1), we have by Lemma \ref{lem:boundary-commute} that
\begin{align*}
	\partial \colorfulsubdivision(D(I^\ast)) = \colorfulsubdivision( \partial D(I^\ast)) = \colorfulsubdivision(\partial D(I^\ast) + D(\partial I^\ast)) = f(R).
\end{align*}
This demonstrates that $f(R)$ is indeed a boundary, concluding the proof of Theorem \ref{thm:high-dimensional-homology}.

\subsection{A nerve variation}

In this subsection, we prove a similar result to Theorem \ref{thm:topological-reconfiguration-complex} that comes from homological nerve theorems. This nerve approach more closely follows Meshulam's original proof \cite{meshulam2003domination, meshulam2001clique} of the topological Hall theorem, but the conclusion is weaker in the case of reconfigurations $m=1$.

Given a complex $\cC$ and a finite collection $\cF$ of subcomplexes of $\cC$, the \textit{nerve} $\nerve(\cF)$ of $\cF$ is the complex on vertex set $\cF$ whose simplices are the subsets of $\cF$ with nonempty intersection. In our current setting, let $\cC$ be a complex and let $\cV = \{V_0, \ldots, V_n\}$ be a partition of $V(\cC)$. Let $\cF$ be the collection of maximal simplices of $\cC$ that span the classes of $\cV$, and define the \textit{colorful nerve} to be the complex $\colorfulnerve(\cC,\cV) \coloneqq \nerve(\cF)$. We quickly show the following variation of Theorem \ref{thm:topological-reconfiguration-complex}.

\begin{thm} \label{thm:colorful-simplex-nerve}
Let $\cC$ be a complex, let $\cV = \{V_1, \ldots, V_n\}$ be a partition of $V(\cC)$, and let $m \ge 0$ be an integer. If 
\begin{align*}
	\eta_H(\cC[V_I]) \ge |I| + m \hspace{30pt} \text{for all nonempty } I \subseteq [n],
\end{align*}
then $\eta_H(\colorfulnerve(\cC, \cV)) \ge m+1$.
\end{thm}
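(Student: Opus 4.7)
The plan is to identify $\colorfulnerve(\cC,\cV)$ up to rational homology with a subcomplex of $\cC$ via the nerve theorem, and then bound its homological connectedness by combining a Mayer--Vietoris argument with Meshulam's homological nerve theorem from \cite{meshulam2003domination, meshulam2001clique}.

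First, let $\cC^\ast \coloneqq \bigcup_{F \in \cF} \langle F \rangle$, where $\langle F \rangle$ denotes the simplex $F$ together with all of its faces. The family $\{\langle F \rangle : F \in \cF\}$ is a cover of $\cC^\ast$ by simplices, and any nonempty intersection of simplices is itself a simplex, hence contractible. The homological version of the nerve theorem for good covers then gives $\widetilde{H}_{\ast}(\colorfulnerve(\cC,\cV)) \cong \widetilde{H}_{\ast}(\cC^\ast)$, so it suffices to prove $\eta_H(\cC^\ast) \ge m+1$.

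To bound $\eta_H(\cC^\ast)$, I would decompose $\cC = \cC^\ast \cup \cK$, where $\cK \coloneqq \bigcup_{i \in [n]} \cC[V - V_i]$ is the subcomplex of non-spanning simplices: every simplex of $\cC$ either spans all classes (and so extends to some $F \in \cF$) or misses a color class. The Mayer--Vietoris long exact sequence for this decomposition, together with $\eta_H(\cC) \ge n+m$ (the $I = [n]$ case of the hypothesis), reduces the problem to controlling $\widetilde{H}_{\ast}(\cK)$ and $\widetilde{H}_{\ast}(\cC^\ast \cap \cK)$. For the first, apply Meshulam's homological nerve theorem to the cover $\{\cC[V - V_i]\}_{i=1}^n$ of $\cK$: the nerve is $\partial \Delta^{[n]} \simeq S^{n-2}$, and nonempty $|J|$-fold intersections are the induced subcomplexes $\cC[V_{[n]\setminus J}]$, which satisfy $\eta_H \ge n - |J| + m$ by hypothesis. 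This connectivity suffices to degenerate the associated spectral sequence and yield $\widetilde{H}_j(\cK) \cong \widetilde{H}_j(S^{n-2})$ for $j \le n+m-2$. A parallel nerve analysis for $\cC^\ast \cap \cK$, using the cover $\{\cC^\ast \cap \cC[V - V_i]\}_{i=1}^n$ together with the simpler structure of each $\langle F \rangle$, controls its low-dimensional homology analogously.

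The main obstacle is the regime $m \ge n-1$, where the nontrivial class in $\widetilde{H}_{n-2}(\partial \Delta^{[n]}) \cong \mathbb{Q}$ lies in the range $j \le m-1$ and contributes to $\widetilde{H}_{n-2}(\cK)$; the Mayer--Vietoris sequence will not kill it unless it cancels with a matching class in $\widetilde{H}_{n-2}(\cC^\ast \cap \cK)$. Verifying that the two nerve-theoretic analyses produce such matching classes, by naturality of the Leray spectral sequence under the inclusion $\cC^\ast \cap \cK \hookrightarrow \cK$, is the technical core of the argument. Once this cancellation is in place, the Mayer--Vietoris sequence forces $\widetilde{H}_j(\cC^\ast) = 0$ for all $j \le m-1$, which is exactly $\eta_H(\cC^\ast) \ge m+1$, completing the proof.
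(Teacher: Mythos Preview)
Your reduction to computing $\eta_H(\cC^\ast)$ via the good-cover nerve theorem is fine, and the decomposition $\cC = \cC^\ast \cup \cK$ together with the cover $\{\cC[V-V_i]\}$ of $\cK$ is the right instinct---these are exactly the subcomplexes the paper uses. But there is a real gap in your step (d). The $|J|$-fold intersections in your proposed cover of $\cC^\ast \cap \cK$ are the subcomplexes $\cC^\ast \cap \cC[V_{[n]\setminus J}]$, and these are \emph{not} $\cC[V_{[n]\setminus J}]$: a simplex $\sigma \subseteq V_{[n]\setminus J}$ lies in $\cC^\ast$ only when it extends inside $\cC$ to a simplex spanning all $n$ colour classes, and nothing in the hypothesis tells you which simplices do this. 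So you have no bound on $\eta_H(\cC^\ast \cap \cC[V_{[n]\setminus J}])$, the ``parallel nerve analysis'' does not go through, and the cancellation in (e) has nothing to stand on. Invoking ``the simpler structure of each $\langle F\rangle$'' does not rescue it either, since the relevant intersections mix several maximal $F$'s at once.

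The paper avoids all of this by throwing both families into a \emph{single} cover of $\cC$: take $\cF = \{\cC[V-V_i]\}_{i\in[n]} \cup \{\langle F\rangle : F \text{ maximal spanning}\}$. Any intersection that involves at least one $\langle F\rangle$ is a nonempty face of $F$, hence acyclic; intersections using only the first family are precisely the $\cC[V_{[n]\setminus J}]$, whose connectivity the hypothesis does control. One application of Montejano's nerve inequality then gives $\widetilde H_{n+m-2}(\nerve(\cF))=0$. But $\nerve(\cF) = \partial\Delta^{[n]} \ast \colorfulnerve(\cC,\cV)$, and since $\partial\Delta^{[n]}$ has reduced homology only in degree $n-2$, the K\"unneth formula for joins immediately yields $\widetilde H_{m-1}(\colorfulnerve(\cC,\cV))=0$. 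In effect, the join--K\"unneth step performs your hoped-for Mayer--Vietoris cancellation for free, with no separate analysis of $\cC^\ast \cap \cK$ required.
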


In the case of reconfigurations $m = 1$, the conclusion of Theorem \ref{thm:colorful-simplex-nerve} is that for any two colorful simplices $S, T$ of $(\cC, \cV)$, there is a sequence $S = S_0, S_1, \ldots, S_N = T$ of colorful simplices of $(\cC, \cV)$ such that $S_{j - 1} \cap S_j \neq \emptyset$ for all $1 \le j \le N$. This is weaker than the conclusion of Theorem \ref{thm:topological-reconfiguration-graph}. In the language of hypergraphs, Theorem \ref{thm:colorful-simplex-nerve} only allows one to conclude the existence of \textit{loose walks} between colorful simplices, whereas Theorem \ref{thm:topological-reconfiguration-graph} allows one to conclude the existence of \textit{tight walks} between colorful simplices.

Proceeding to the proof of Theorem \ref{thm:colorful-simplex-nerve}, we apply the following homological nerve theorem of Montejano \cite{montejano2017variation} (see also \cite{meunier2020different}), which is a refinement of Meshulam's original nerve theorem \cite{meshulam2001clique}. It applies with coefficients over any fixed field (an analogous statement also holds for coefficients in any finitely generated abelian group).

\begin{thm}[\cite{montejano2017variation}] \label{lem:nerve}
Let $\cC$ be a complex, let $\cF$ be a finite collection of subcomplexes of $\cC$ such that $\bigcup \cF = \cC$, and let $-1 \le k < |\cF|$ be an integer. If
\begin{align*}
	\widetilde{H}_{k - |\sigma|}\bigl( \textstyle{\bigcap \sigma} \bigr) = 0 \hspace{30pt} \text{for all } \sigma \in \nerve(\cF) \text{ with } 1 \le |\sigma| \le k+1,
\end{align*}
then $\dim \widetilde{H}_{k}(\nerve(\cF)) \le \dim \widetilde{H}_{k}(\cC)$.
\end{thm}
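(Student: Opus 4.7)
The plan is to use the Mayer--Vietoris (Leray-style) spectral sequence associated to the cover $\cF$ of $\cC$, and exploit the hypothesis as an antidiagonal vanishing condition on its $E_1$-page.

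Specifically, consider the double complex $K^{p,q} = \bigoplus_{\sigma \in \nerve(\cF),\, |\sigma| = p+1} C_q(\bigcap \sigma)$, with vertical differential the simplicial boundary and horizontal differential the \v{C}ech alternating-restriction map. A generalized Mayer--Vietoris argument identifies the total complex of $K$ (suitably augmented) with the reduced chain complex of $\cC$ up to quasi-isomorphism. Filtering by the \v{C}ech index $p$ and taking vertical homology first yields a spectral sequence
\[ E_1^{p,q} = \bigoplus_{|\sigma| = p+1} \widetilde{H}_q\bigl( \textstyle{\bigcap \sigma} \bigr) \;\Longrightarrow\; \widetilde{H}_{p+q}(\cC), \]
with $E_2^{k,0}$ identifiable with $\widetilde{H}_k(\nerve(\cF))$ through the reduced \v{C}ech complex.

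The hypothesis asserts precisely that $E_1^{p,q} = 0$ on the antidiagonal $p+q = k-1$ for $0 \le p \le k$ (i.e., $|\sigma| = p+1 \in \{1, \ldots, k+1\}$ and $q = k - |\sigma|$). Combined with the automatic vanishing $\widetilde{H}_q(\bigcap \sigma) = 0$ for $q \le -1$ (nonempty intersections have trivial reduced homology in negative degrees), the entire antidiagonal $p+q = k-1$ is zero on $E_1$, and hence on every later page $E_r$. Consequently the outgoing differentials $d_r : E_r^{k,0} \to E_r^{k-r,\,r-1}$ land on this vanishing antidiagonal and are zero, while the incoming differentials $d_r : E_r^{k+r,\,1-r} \to E_r^{k,0}$ for $r \ge 2$ have sources supported in $q = 1-r \le -1$ and hence vanish as well. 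Thus $E_\infty^{k,0} = E_2^{k,0} \cong \widetilde{H}_k(\nerve(\cF))$, which embeds into the associated graded of the abutting filtration on $\widetilde{H}_k(\cC)$, giving
\[ \dim \widetilde{H}_k(\nerve(\cF)) = \dim E_\infty^{k,0} \le \dim \widetilde{H}_k(\cC). \]

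The main technical obstacle is identifying $E_2^{k,0}$ precisely with $\widetilde{H}_k(\nerve(\cF))$ when lower-fold intersections $\bigcap \sigma$ (with $|\sigma|$ well below $k$) fail to be connected, so that the bottom \v{C}ech row carries a nontrivial local system $\{\widetilde{H}_0(\bigcap \sigma)\}_\sigma$ rather than constant $\mathbb{Q}$-coefficients. A cleaner alternative route avoiding this issue is induction on $|\cF|$: peel off a single cover element $\cA_N \in \cF$, apply ordinary Mayer--Vietoris in parallel to the decompositions $\cC = (\bigcup_{i < N} \cA_i) \cup \cA_N$ and $\nerve(\cF) = \nerve(\cF \setminus \{\cA_N\}) \cup \mathrm{st}(\cA_N)$ (with contractible star), and check via a diagram chase that the vanishing hypotheses transfer at level $k$ to the subcover $\cF \setminus \{\cA_N\}$ and at level $k-1$ to the intersection cover $\{\cA_i \cap \cA_N\}_{i < N}$, using the identity $\bigcap_{i \in \sigma}(\cA_i \cap \cA_N) = \bigcap_{i \in \sigma \cup \{N\}} \cA_i$ to shift $|\sigma|$ by one.
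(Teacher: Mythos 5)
The paper does not prove this statement---it is quoted from Montejano \cite{montejano2017variation}---so there is no in-paper proof to compare against; judged on its own, your main route uses the right tool but has a genuine gap at its central step, and you essentially acknowledge this without repairing it. The spectral sequence you describe does not exist as stated. If the $E_1$-entries are the \emph{reduced} groups $\widetilde{H}_q(\bigcap\sigma)$ (which is what makes your antidiagonal $p+q=k-1$ vanish), then the nerve never appears on the $E_2$-page: the row carrying $C_*(\nerve(\cF))$ is the augmentation row $q=-1$, which is killed by vertical homology because every $\bigcap\sigma$ is nonempty, and the abutment is then not $\widetilde{H}_*(\cC)$ either. If instead you use non-reduced homology (so the sequence does abut to $H_*(\cC)$), the antidiagonal entry at $(p,q)=(k-1,0)$, namely $\bigoplus_{|\sigma|=k}H_0(\bigcap\sigma)$, does \emph{not} vanish, and $E^2_{k,0}$ is the homology of $\nerve(\cF)$ with the local coefficient system $\mathcal{H}_0\colon\sigma\mapsto H_0(\bigcap\sigma)$, not with constant coefficients. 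This is exactly the obstacle you flag, and it is not a technicality: your sketch never uses the $|\sigma|=k$ part of the hypothesis (connectedness of $k$-fold intersections), which is precisely what closes the gap. Since each augmentation $H_0(\bigcap\sigma)\to\cR$ is onto with kernel $\widetilde{H}_0(\bigcap\sigma)$, and this kernel vanishes on the $(k-1)$-simplices of the nerve (i.e.\ $|\sigma|=k$), any $k$-cycle of $\nerve(\cF)$ with constant coefficients lifts coefficientwise to a cycle with $\mathcal{H}_0$-coefficients; hence $H_k(\nerve(\cF);\mathcal{H}_0)\twoheadrightarrow H_k(\nerve(\cF);\cR)$. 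Combined with your (correct) observation that all differentials into and out of position $(k,0)$ vanish for $r\ge 2$ (their targets sit on the antidiagonal with $q\ge 1$, where the hypothesis does apply), this gives $\dim H_k(\nerve(\cF))\le\dim E^2_{k,0}=\dim E^\infty_{k,0}\le\dim H_k(\cC)$, and passing from non-reduced to reduced homology (trivial for $k\ge1$; direct check for $k\in\{-1,0\}$) finishes. Without this lifting step the argument is incomplete.

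Your fallback induction on $|\cF|$ does not rescue the proof. The hypothesis transfers you state are fine, but the two Mayer--Vietoris sequences---for $\cC=\cC'\cup\cA_N$ and for $\nerve(\cF)=\nerve(\cF')\cup\overline{\mathrm{st}}(\cA_N)$---are not linked by any natural maps (there is no map between $\cC$ and $\nerve(\cF)$ in sight), so the proposed ``diagram chase'' has nothing to chase. From the inductive bounds $\dim\widetilde{H}_k(\nerve(\cF'))\le\dim\widetilde{H}_k(\cC')$ and $\dim\widetilde{H}_{k-1}(\mathrm{lk}(\cA_N))\le\dim\widetilde{H}_{k-1}(\cC'\cap\cA_N)$ together with the nerve sequence one only gets $\dim\widetilde{H}_k(\nerve(\cF))\le\dim\widetilde{H}_k(\cC')+\dim\widetilde{H}_{k-1}(\cC'\cap\cA_N)$, whereas in the space-level sequence these two contributions to $\dim\widetilde{H}_k(\cC)$ are cut down by images and kernels of maps that have no counterpart on the nerve side; the desired inequality cannot be assembled formally from these ingredients. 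Supplying the missing comparison is essentially what the double-complex/local-system argument above does, which is also the route of the known proofs.
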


We now use Theorem \ref{lem:nerve} to prove the following stronger version of Theorem \ref{thm:colorful-simplex-nerve}, again with coefficients in a fixed field.

\begin{thm}
Let $\cC$ be a complex, let $\cV = \{V_0, \ldots, V_n\}$ be a partition of $V(\cC)$, and let $m \ge 0$ be an integer. If
\begin{align*}
	\widetilde{H}_{|I| + m - 2}\left( \cC[V_I] \right) = 0 \hspace{30pt} \text{for all nonempty } I \subseteq \{0,1,\ldots,n\},
\end{align*}
then $\widetilde{H}_{m - 1}(\colorfulnerve(\cC, \cV)) = 0$.
\end{thm}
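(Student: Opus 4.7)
The plan is to apply Montejano's homological nerve theorem (Theorem~\ref{lem:nerve}) to the cover of an appropriate subcomplex of $\cC$. Let $2^F$ denote the simplicial simplex on vertex set $F$, and set $\cC^+ \coloneqq \bigcup_{F \in \cF} 2^F$, a subcomplex of $\cC$. The family $\{2^F\}_{F \in \cF}$ is a cover of $\cC^+$, and its nerve coincides with $\colorfulnerve(\cC, \cV) = \nerve(\cF)$, since $\bigcap_{F \in \sigma} 2^F \neq \emptyset$ if and only if $\bigcap_{F \in \sigma} F \neq \emptyset$.

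For every $\sigma \in \colorfulnerve(\cC, \cV)$ with $|\sigma| \ge 1$, the intersection $\bigcap_{F \in \sigma} 2^F = 2^{\bigcap_{F \in \sigma} F}$ is a nonempty simplex, hence contractible. Thus $\widetilde{H}_j(\bigcap \sigma) = 0$ for every $j \ge -1$, so the intersection hypothesis of Theorem~\ref{lem:nerve} is automatically satisfied for any $k$. Applying the theorem with $k = m-1$ yields
\begin{align*}
	\dim \widetilde{H}_{m-1}(\colorfulnerve(\cC, \cV)) \le \dim \widetilde{H}_{m-1}(\cC^+).
\end{align*}

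It then remains to prove that $\widetilde{H}_{m-1}(\cC^+) = 0$, which I would do by repeating the chain-homotopy construction from the proof of Theorem~\ref{thm:high-dimensional-homology} in Section~\ref{sec:higher-1}. Namely, given an $(m-1)$-cycle $z$ in $\cC^+$, view it as a cycle in $\cC$ and inductively build chains $D(I) \in C_{|I|+m-1}(\cC[V_I])$ satisfying a suitable chain-homotopy identity, with each inductive step invoking the hypothesis $\widetilde{H}_{|I|+m-2}(\cC[V_I]) = 0$ to fill a cycle; unwinding at $I = [0,\ldots,n]$ produces a chain whose boundary agrees with $z$.

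The main obstacle is ensuring the resulting filling sits inside $\cC^+$ rather than only in $\cC$. Since the hypothesis is stated for $\cC[V_I]$ and not $\cC^+[V_I]$, some care is needed: the top-level simplices appearing in $D([0,\ldots,n])$ span the classes of $\cV$ and hence automatically lie in $\cC^+$, while the lower-dimensional contributions from $D(\partial I^\ast)$ can be handled by replacing each problematic face with an equivalent chain supported on $\cC^+$, using that every vertex and edge participating in the inductive construction extends to a maximal spanning simplex of $\cC$. Alternatively, one can carry out the entire inductive construction directly on the subcomplexes $\cC^+[V_I]$, verifying that their homology vanishes in the relevant degrees by a separate induction on $|I|$.
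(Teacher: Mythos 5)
There is a genuine gap, and it lies in the second half of your argument. Your first step is fine but makes no progress: since every nonempty intersection $\bigcap_{F \in \sigma} 2^F$ is a (closed) simplex and hence contractible, the classical nerve lemma already gives that $\colorfulnerve(\cC,\cV)$ is homotopy equivalent to $\cC^+ \coloneqq \bigcup_{F \in \cF} 2^F$, so $\widetilde{H}_{m-1}(\colorfulnerve(\cC,\cV)) = 0$ is \emph{equivalent} to $\widetilde{H}_{m-1}(\cC^+) = 0$; Montejano's inequality (Theorem \ref{lem:nerve}) applied to this cover uses the hypothesis nowhere and merely restates the theorem. All of the content is therefore in the claim $\widetilde{H}_{m-1}(\cC^+) = 0$, and your sketch does not establish it. The hypothesis controls the induced subcomplexes $\cC[V_I]$ of the \emph{full} complex, in degree $|I|+m-2$ (at $I = \{0,\ldots,n\}$ it gives $\widetilde{H}_{n+m-1}(\cC) = 0$, not $\widetilde{H}_{m-1}$ of anything), and it says nothing about $\cC^+$ or $\cC^+[V_I]$. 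The chain-homotopy machinery of Theorem \ref{thm:high-dimensional-homology} does not transplant as you describe: there the input is a top-dimensional chain $R$ supported on spanning simplices with $\partial\colorfulsubdivision(R)=0$, and the right-hand side of the homotopy identity is the restriction $\bilinearfunction(R,I)$; an arbitrary $(m-1)$-cycle $z$ of $\cC^+$ has the wrong dimension and no colorful structure, so it is unclear what identity your $D(I)$ should satisfy or why the cycles to be filled at each step are cycles. Moreover, your rescue claims are false as stated: a filling chain obtained from $\widetilde{H}_{|I|+m-2}(\cC[V_I]) = 0$ need not be supported on simplices that span the classes, nor on vertices that extend to spanning simplices (an $(n+m)$-simplex on $n+1$ color classes can miss classes entirely), so there is no reason $D(I^\ast)$, or any "equivalent replacement" of its faces, lies in $\cC^+$; and $\widetilde{H}_{|I|+m-2}(\cC^+[V_I]) = 0$ is not a consequence of the hypothesis, so the "alternative" induction has nothing to run on.

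The paper's proof uses the nerve theorem in a different way that lets the hypothesis enter. It covers all of $\cC$ (not $\cC^+$) by the family $\cF = \{\cC[V-V_0],\ldots,\cC[V-V_n]\} \cup V(\colorfulnerve(\cC,\cV))$: intersections of the complements $\cC[V-V_j]$ are exactly the induced subcomplexes $\cC[V_I]$, where the assumption $\widetilde{H}_{|I|+m-2}(\cC[V_I]) = 0$ is used, while any intersection involving a maximal spanning simplex is a nonempty simplex and vanishes automatically. Applying Theorem \ref{lem:nerve} at degree $k = n+m-1$ and using $\widetilde{H}_{n+m-1}(\cC) = 0$ gives $\widetilde{H}_{n+m-1}(\nerve(\cF)) = 0$; since $\nerve(\cF)$ is the join of the boundary of an $n$-simplex with $\colorfulnerve(\cC,\cV)$, the K\"unneth formula for joins shifts the degree down by $n$ and yields $\widetilde{H}_{m-1}(\colorfulnerve(\cC,\cV)) = 0$. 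If you want to keep your reduction to $\cC^+$, you would still need an argument of comparable substance for $\widetilde{H}_{m-1}(\cC^+) = 0$; the steps you outline do not provide one.
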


\begin{proof}
For $V \coloneqq V(\cC)$, we define the subcomplex $\cD_j \coloneqq \cC[V - V_j]$ for all $j \in \{0,1,\ldots,n\}$, and we put $\cF' \coloneqq \{\cD_0, \ldots, \cD_n\}$. We apply Theorem \ref{lem:nerve} to the collection $\cF \coloneqq \cF' \cup V(\colorfulnerve(\cC, \cV))$ and the integer $k = n + m - 1$. It is easy to see that $\bigcup \cF = \cC$. Now consider any $\sigma \in \nerve(\cF)$ with $|\sigma| \ge 1$. Then we can write $\sigma = \{\cD_j : j \in J\} \cup \{ \cE_i : i \in \{1,\ldots,\ell\} \}$, where $J$ is some proper subset of $\{0,1,\ldots,n\}$ and $\cE_i \in V(\colorfulnerve(\cC, \cV))$ for all $i \in \{1,\ldots,\ell\}$. Denote $I = I(J) \coloneqq \{0,1,\ldots,n\} - J$, so that $I$ is nonempty. 
	
We verify that $\cF$ satisfies the conditions of Theorem \ref{lem:nerve}. First assume that $\ell = 0$, i.e., $\sigma = \{\cD_j : j \in J\}$. Then we have $\textstyle{\bigcap \sigma} = \cC[V_I]$, and the theorem assumption yields that $\widetilde{H}_{k - |\sigma|}\bigl( \textstyle{\bigcap \sigma} \bigr) = \widetilde{H}_{|I| + m - 2}\left( \cC[V_I] \right) = 0$. Now assume that $\ell \ge 1$. Then $\tau \coloneqq \bigcap_{i=1}^{\ell} \cE_i$ is a nonempty simplex in $\cC$, and hence so is $\bigcap \sigma = \tau \cap V_I$. Thus, we automatically have $\widetilde{H}_{k - |\sigma|}\bigl( \textstyle{\bigcap \sigma} \bigr) = 0$. Thus, the conditions of Theorem \ref{lem:nerve} are satisfied. From this, we deduce that $\dim \widetilde{H}_{n+m-1}(\nerve(\cF)) \le \dim \widetilde{H}_{n+m-1}(\cC)$. The right-hand side is $0$ by the theorem assumption with $I \coloneqq \{0,1,\ldots,n\}$, so we get that $\widetilde{H}_{n+m-1}(\nerve(\cF)) = 0$. 
	
Now observe that the nerve $\nerve(\cF)$ is equal to the join $\nerve(\cF') \ast \colorfulnerve(\cC, \cV)$: It is easy to see that $\nerve(\cF)$ is necessarily contained in $\nerve(\cF') \ast \colorfulnerve(\cC, \cV)$; and, on the other hand, we already argued above that any $\sigma$ in the right-hand side necessarily has nonempty intersection and hence is contained in $\nerve(\cF)$. Also observe that $\nerve(\cF')$ is the boundary of the $n$-dimensional simplex on vertex set $\cF'$, and hence its only non-vanishing reduced homology group is $\widetilde{H}_{n-1}(\nerve(\cF')) \cong \mathbb{F}$. Therefore, applying the K\"unneth formula for joins (see \cite[equation 9.12]{bjorner1995topological}), we have
\begin{align*}
	0 = \widetilde{H}_{n+m-1}(\nerve(\cF)) \cong \bigoplus_{\substack{p+q = \\ n+m-2}} \left[ \widetilde{H}_{p}(\nerve(\cF')) \otimes \widetilde{H}_{q}(\colorfulnerve(\cC, \cV)) \right] \cong \widetilde{H}_{m-1}(\colorfulnerve(\cC, \cV)),
\end{align*}
as we wanted to show.
\end{proof}

\section{Intersections of complexes and matroids} \label{sec:intersections}

In this section, we provide a generalization our topological Hall theorems to matroidal settings, following the seminal work of Aharoni and Berger \cite{aharoni2006intersection}. These results are relevant for the discrete geometry applications in Section \ref{sec:geometric-applications}. Our proof exhibits an interesting application of interval subdivisions of posets.

Going beyond colorful simplices as in the topological Hall theorem (Theorem \ref{thm:topological-hall}), Aharoni and Berger \cite{aharoni2006intersection} proved the following result providing a sufficient condition for the existence of a set of specified size lying in the intersection of a given complex and matroid on the same ground set. It generalizes the hard direction of Edmonds' matroid intersection theorem \cite{edmonds2003submodular}, which is about the intersection of two matroids.

\begin{thm}[\cite{aharoni2006intersection}] \label{thm:aharoni-berger}
Let $\cC$ be a complex and let $\cM$ be a matroid on the same ground set $V$, let $1 \le k \le r(\cM)$ be an integer, and let $\eta \in \{\eta_{\pi}, \eta_H\}$. If
\begin{align*}
	\eta(\cC[X]) + r(\cM[V - X]) \ge k
\end{align*}
for all $X \subseteq V$ for which $V - X$ is a flat of $\cM$, then there exists a set of size $k$ in both $\cC$ and $\cM$.
\end{thm}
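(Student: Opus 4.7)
The plan is to derive Theorem \ref{thm:aharoni-berger} from the topological Hall theorem (Theorem \ref{thm:topological-hall}) by constructing an auxiliary vertex-colored simplicial complex whose colorful simplices project onto $k$-element sets in $\cC \cap \cM$. First, I would reduce to the case $r(\cM) = k$ by replacing $\cM$ with its truncation $\cM_{\le k}$; the given hypothesis is preserved because every flat of $\cM_{\le k}$ is either a flat of $\cM$ or equal to $V$, and in the latter case the required bound reduces to $k \le r(\cM)$, which holds by assumption. After this reduction a size-$k$ set in $\cC \cap \cM$ is precisely a basis of $\cM$ lying in $\cC$.

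Next, I would use the interval subdivision of the poset $\cP$ of flats of $\cM$ to organize the matroid structure into $k$ color classes indexed by rank levels. Concretely, take the ground set $V^{\ast} = V \times [k]$ with color partition $\cV^{\ast} = \{V \times \{i\} : i \in [k]\}$, and declare $\{(v_1,1), \ldots, (v_k,k)\}$ to be a colorful simplex of the auxiliary complex $\cC^{\ast}$ precisely when $\{v_1, \ldots, v_k\} \in \cC \cap \cM$ and the labeling by $i$ is compatible with some flag of flats of $\cM$ refining the filtration generated by the $v_j$'s. The interval subdivision of $\cP$ is what makes this definition simplicially well-behaved: its faces encode chains of nested intervals of flats, and these chains are exactly the combinatorial data needed to stratify partial colorful simplices by the flat they span. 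In particular, for any nonempty $I \subseteq [k]$ the induced subcomplex $\cC^{\ast}[V^{\ast}_I]$ decomposes as a join (or as a union with controllable intersection behavior via a Mayer--Vietoris-type argument in the homological setting) of a piece coming from an induced subcomplex $\cC[V - F]$ and a piece coming from the truncation of $\cM$ to an appropriately chosen flat $F$.

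Combining Proposition \ref{prop:joins} with Proposition \ref{prop:matroid-connectedness} and the hypothesis $\eta(\cC[V-F]) + r(\cM[F]) \ge k$, I would then verify $\eta(\cC^{\ast}[V^{\ast}_I]) \ge |I|$ for every nonempty $I \subseteq [k]$ by selecting, for each $I$, a flat $F$ of rank $k - |I|$ so that the matroidal side contributes $|I|$ minus the complex-side contribution. Applying Theorem \ref{thm:topological-hall} to $(\cC^{\ast}, \cV^{\ast})$ then produces a colorful simplex, whose projection to $V$ is the desired size-$k$ set in $\cC \cap \cM$.

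The main obstacle will be making the interval-subdivision construction precise enough to simultaneously achieve three things: (i) colorful simplices of $\cC^{\ast}$ correspond faithfully to size-$k$ sets in $\cC \cap \cM$; (ii) for each nonempty $I \subseteq [k]$, the induced subcomplex $\cC^{\ast}[V^{\ast}_I]$ admits a join/union decomposition whose pieces are amenable to the connectivity tools of Propositions \ref{prop:joins} and \ref{prop:matroid-connectedness}; and (iii) the flat $F$ witnessing the bound can be chosen uniformly enough across the subsets $I$ that the "flats only" restriction in the hypothesis is sufficient, rather than needing control over all subsets $X$. Interval subdivisions are the natural device for this because each interval $[F_1, F_2]$ in the flat poset corresponds to a contraction-deletion $\cM[F_2]/F_1$ that carries precisely the matroidal connectivity we need to plug into the inductive/join estimates.
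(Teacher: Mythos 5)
There is a genuine gap. Your reduction to $k = r(\cM)$ via truncation is fine and matches what is needed, but the heart of your argument --- the auxiliary complex on $V \times [k]$ colored by rank levels --- is never actually constructed, and the two properties you yourself flag as obstacles, (i) and (ii), are exactly the points that fail to be justified. You only specify which top-dimensional sets should be colorful simplices (``compatible with some flag of flats''), not the full downward-closed complex, so $\cC^{\ast}[V^{\ast}_I]$ is not a well-defined object to which Theorem \ref{thm:topological-hall} can be applied. More seriously, the verification step is circular: you propose to certify $\eta(\cC^{\ast}[V^{\ast}_I]) \ge |I|$ by ``selecting, for each $I$, a flat $F$ of rank $k-|I|$,'' but the connectedness of an induced subcomplex is a property of the complex as defined, and its faces mix labelings coming from \emph{all} flags of flats at the relevant coranks; different faces pass through different flats, so there is no single $F$ for which $\cC^{\ast}[V^{\ast}_I]$ is a join of $\cC[V-F]$ with a truncation of $\cM[F]$. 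The hypothesis only bounds $\eta(\cC[X])$ when $V-X$ is a flat, and nothing in the proposal converts that into control over the worst-case topology of these mixed subcomplexes. The interval subdivision of the flat poset does not rescue this: in the paper it is used (Lemma \ref{lem:interval-subdivision}, inside the proof of Theorem \ref{thm:complex-matroid-connectedness}) only to identify the colorful complex of an auxiliary pair with the intersection complex, i.e., for the higher-dimensional statement; it plays no role in making a rank-level coloring ``simplicially well-behaved,'' and for bare existence it is not needed at all.

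The route that actually works (Aharoni--Berger's, and the one the paper generalizes in Theorem \ref{thm:complex-matroid-connectedness}) uses a different auxiliary pair: after truncating so that $k = r(\cM)$, take two disjoint copies $V^{(1)}, V^{(2)}$ of $V$, set $\cD \coloneqq \cC_1 \ast \cM_2^{\ast}$ (the join of $\cC$ on the first copy with the \emph{dual} matroid on the second), and color by the $|V|$ pairs $W_i = \{v_i^{(1)}, v_i^{(2)}\}$. For $X \subseteq V$ corresponding to $I$, Proposition \ref{prop:joins} and Proposition \ref{prop:matroid-connectedness} give $\eta(\cD[W_I]) \ge \eta(\cC[X]) + r(\cM[V-X]) + |X| - r(\cM)$; when $V-X$ is not a flat some $x \in X$ is a coloop of $\cM^{\ast}[X]$ and the bound is $+\infty$, and when $V-X$ is a flat the hypothesis gives $\eta(\cD[W_I]) \ge |I|$. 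Theorem \ref{thm:topological-hall} then yields a colorful simplex $S^{(1)} \cup (V-S)^{(2)}$ with $S \in \cC$ and $V - S \in \cM^{\ast}$, so $S$ contains a basis of $\cM$, which is the desired set of size $k$ in $\cC \cap \cM$. Your proposal, by contrast, replaces the dual-matroid doubling trick with an unconstructed rank-level coloring, and as written the key decomposition and connectivity estimates do not go through.
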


\subsection{Reconfiguration results}

First we describe a reconfiguration version of Theorem \ref{thm:aharoni-berger}, thus providing an extension of our reconfiguration topological Hall theorem (Theorem \ref{thm:topological-reconfiguration-graph}) to a matroidal setting. The proof will be in the next subsection, as a special case of our higher dimensional connectedness result (Theorem \ref{thm:complex-matroid-connectedness}).

Let $\cC$ be a complex and let $\cM$ be a matroid on the same ground set $V$. We define the reconfiguration graph $\recongraph(\cC, \cM)$ to have vertex set consisting of all faces of $\cC$ that are also bases of $\cM$, and two such sets $S, T$ form an edge of the graph if $S \cup T$ is a simplex of $\cC$ with cardinality $r(\cM)+1$. Also, given an integer $1 \le k \le r(\cM)$, we denote $\recongraph(\cC, \cM; k) \coloneqq \recongraph(\cC, \cM_{\le k})$, where $\cM_{\le k} \coloneqq \{A \in \cM : |A| \le k\}$ denotes the $k$-truncation of $\cM$. The following is our reconfiguration version of Theorem \ref{thm:aharoni-berger} of Aharoni and Berger \cite{aharoni2006intersection}. 

\begin{thm} \label{thm:complex-matroid-reconfiguration}
Let $\cC$ be a complex and let $\cM$ be a matroid on the same ground set $V$, let $1 \le k \le r(\cM)$ be an integer, and let $\eta \in \{\eta_{\pi}, \eta_H\}$. If
\begin{align*}
	\eta(\cC[X]) + r(\cM[V - X]) \ge k + 1
\end{align*}
for all $X \subseteq V$ for which $V - X$ is a flat of $\cM$ of rank at most $k - 1$, then $\recongraph(\cC, \cM; k)$ is connected. 
\end{thm}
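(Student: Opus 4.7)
The plan is to deduce Theorem~\ref{thm:complex-matroid-reconfiguration} from the higher-dimensional matroidal connectedness result Theorem~\ref{thm:complex-matroid-connectedness} at the level $m = 1$, exactly mirroring how Theorem~\ref{thm:topological-reconfiguration-graph} is obtained from Theorem~\ref{thm:topological-reconfiguration-complex}. The analogous matroidal colorful complex $\colorfulcomplex(\cC, \cM; k)$ should be the order complex whose vertices are those simplices $\sigma \in \cC$ with $r(\cM[\sigma]) \ge k$ (equivalently, $\sigma$ contains a basis of $\cM_{\le k}$), and whose simplices are strict chains $\sigma_0 \subsetneq \cdots \subsetneq \sigma_\ell$. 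Under the hypothesis of the theorem, Theorem~\ref{thm:complex-matroid-connectedness} with $m = 1$ yields $\eta_H(\colorfulcomplex(\cC, \cM; k)) \ge 2$, so in particular this complex is nonempty and path-connected.

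The remaining step is to translate homological path-connectedness of $\colorfulcomplex(\cC, \cM; k)$ into combinatorial connectedness of $\recongraph(\cC, \cM; k)$. Given two bases $S, T$ of $\cM_{\le k}$ lying in $\cC$, a combinatorial path $S = \sigma_0, \sigma_1, \ldots, \sigma_\ell = T$ in $\colorfulcomplex(\cC, \cM; k)$ uses at each step a strict containment between consecutive $\sigma_{i-1}$ and $\sigma_i$. I would refine it by choosing an arbitrary basis $B_i$ of $\cM_{\le k}$ inside $\sigma_i$ with $B_0 = S$ and $B_\ell = T$: consecutive bases $B_{i-1}, B_i$ both lie inside the larger simplex $\tau_i \coloneqq \sigma_{i-1} \cup \sigma_i \in \cC$, and by the classical symmetric basis-exchange property the basis-exchange graph of the matroid $\cM_{\le k}[\tau_i]$ is connected via single-element swaps. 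Downward-closure of $\cC$ ensures that every intermediate basis is a face of $\cC$ and every swap-union has size $k + 1$ and lies in $\tau_i$. Concatenating these refinements across $i = 1, \ldots, \ell$ yields a walk in $\recongraph(\cC, \cM; k)$ from $S$ to $T$.

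The main obstacle is thus establishing Theorem~\ref{thm:complex-matroid-connectedness} itself. Its proof should follow the outline of Section~\ref{sec:higher-1} used for Theorem~\ref{thm:topological-reconfiguration-complex}, with the role previously played by the linearly ordered set $\{0, 1, \ldots, n\}$ of color classes now taken over by the lattice of flats of $\cM_{\le k}$. The \emph{interval subdivision} of this lattice (the ``extra ingredient from posets'' alluded to in Section~\ref{sec:main-theorems}) should take the place of the abstract simplex $\Delta^n$ in indexing the chain-homotopy maps $D$ and the bilinear functions $\bilinearfunction$ of Section~\ref{sec:higher-1}. In the partition case, the natural restrictions $\cC[V_I] = \cC[\bigcup_{i \in I} V_i]$ are indexed by subsets $I$; in the matroid case, the analogous restrictions $\cC[V - F]$ are indexed by flats $F$, and the dimension shifts $|I|$ in the induction are replaced by coranks $r(\cM) - r(\cM[F])$. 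This codimension interpretation is what makes the hypothesis $\eta(\cC[X]) + r(\cM[V - X]) \ge k + 1$ (in which the rank $r(\cM[V - X])$ for a flat complement $V - X$ appears additively) the correct matroidal replacement for the partition hypothesis $\eta(\cC[V_I]) \ge |I| + 1$.
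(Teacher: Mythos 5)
Your top-level reduction is exactly the paper's: Theorem \ref{thm:complex-matroid-reconfiguration} is obtained from Theorem \ref{thm:complex-matroid-connectedness} with $m=1$ (applied to the $k$-truncation), and your ``matroidal colorful complex'' is precisely the paper's intersection complex $\intersectioncomplex(\cC,\cM;k)$. Your basis-exchange refinement correctly fills in what the paper only calls ``easy to see,'' namely that connectedness of $\intersectioncomplex(\cC,\cM;k)$ gives connectedness of $\recongraph(\cC,\cM;k)$: consecutive vertices on a path in the order complex are nested, so both chosen bases lie in the larger face $\tau_i\in\cC$, the restriction of the $k$-truncation to $\tau_i$ is a rank-$k$ matroid whose basis-exchange graph is connected, and downward closure of $\cC$ makes every single-element swap an edge of $\recongraph(\cC,\cM;k)$. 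Since $\eta_H\ge\eta_\pi$, handling the $\eta_H$ hypothesis suffices for both parameters, as you implicitly use.

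The gap lies in your third paragraph, i.e.\ in how Theorem \ref{thm:complex-matroid-connectedness} itself would be proved, and your sketch is not the paper's argument. The paper does not rerun the chain-homotopy machinery of Section \ref{sec:higher-1} over the lattice of flats of $\cM_{\le k}$. Instead it reduces to the already-proven partition case: it doubles the ground set, forms the join $\cD\coloneqq\cC_1\ast\cM_2^{\ast}$ with the \emph{dual} matroid, takes the partition $\cW$ into pairs $W_i=\{v_i^{(1)},v_i^{(2)}\}$, verifies $\eta_H(\cD[W_I])\ge |I|+m$ via the join formula and the connectedness of matroids (flats enter only here, to dispose of the case where some element of $X$ is a coloop of $\cM^{\ast}[X]$), applies Theorem \ref{thm:topological-reconfiguration-complex}, and finally invokes Walker's Lemma \ref{lem:interval-subdivision} to identify $\colorfulcomplex(\cD,\cW)$ with the interval subdivision of the poset $P(\cC,\cM)$ of faces of $\cC$ containing a basis of $\cM$, i.e.\ with a subdivision of $\intersectioncomplex(\cC,\cM)$. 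In particular, the interval subdivision is applied to $P(\cC,\cM)$, not to the flats lattice. Your proposed alternative --- indexing the maps $D$ and $\bilinearfunction$ by flats, with coranks replacing $|I|$ --- is left unexecuted, and the hard points do not transfer automatically: the partition argument uses the single fundamental oriented simplex $[0,1,\ldots,n]$ of $\Delta^n$, whereas the proper part of the flats lattice of a rank-$k$ matroid is in general a wedge of many spheres rather than the boundary of one simplex, and the analogues of Lemmas \ref{lem:universal-cycle} and \ref{lem:commute-second}, which exploit the product-of-simplices cell structure of the polyhedral complex underlying $\colorfulcomplex(\cC,\cV)$, would have to be re-proved for the intersection complex. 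So as written the proposal correctly performs the specialization but leaves the substantive theorem it relies on unproven; the clean route is the dual-matroid join reduction just described.
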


In the case $k = r(\cM)$, one could also state the condition of Theorem \ref{thm:complex-matroid-reconfiguration} in the form
\begin{align*}
	\eta(\cC[X]) \ge r(\cM/(V - X)) + 1
\end{align*}
for all nonempty $X \subseteq V$  for which $V - X$ is a flat of $\cM$. Our reconfiguration version of the topological Hall theorem (Theorem \ref{thm:topological-reconfiguration-graph}) is the special case of Theorem \ref{thm:complex-matroid-reconfiguration} when $k = r(\cM)$ and $\cM$ is a partition matroid. 

We distinguish the special case of the intersection of two matroids $\cM$ and $\cN$ on the same ground set $V$. In this setting, the reconfiguration graph $\recongraph(\cM, \cN; k)$ has vertex set consisting of all common independent sets of $\cM$ and $\cN$ of cardinality $k$, and two such sets $S, T$ form an edge of the graph if $S \cup T$ is an independent set of $\cM$ of cardinality $k+1$. Theorem \ref{thm:complex-matroid-reconfiguration} gives the following.

\begin{thm} \label{thm:matroid-intersection}
Let $\cM$ and $\cN$ be matroids on the same ground set $V$, and let $k \ge 1$ be an integer. If
\begin{align*}
	r(\cM[X]) + r(\cN[V - X]) \ge k+1
\end{align*}
for all $X \subseteq V$ for which $V - X$ is a flat of $\cN$ of rank at most $k - 1$, then $\recongraph(\cM, \cN; k)$ is connected. 
\end{thm}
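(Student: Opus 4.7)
The plan is to derive this as an immediate specialization of Theorem \ref{thm:complex-matroid-reconfiguration}, by taking the complex $\cC$ there to be the independence complex of $\cM$, and the matroid there to be $\cN$. Under this substitution, one first checks that the reconfiguration graph $\recongraph(\cC, \cN; k)$ defined prior to Theorem \ref{thm:complex-matroid-reconfiguration} coincides with the graph $\recongraph(\cM, \cN; k)$ defined just before Theorem \ref{thm:matroid-intersection}: its vertices are faces of $\cM$ that are also bases of $\cN_{\le k}$, i.e., common independent sets of $\cM$ and $\cN$ of size $k$; and two such sets $S, T$ are adjacent precisely when $S \cup T$ is a simplex of $\cM$ of cardinality $r(\cN_{\le k}) + 1 = k+1$, which is the same as saying $S \cup T$ is independent in $\cM$ and has size $k+1$.

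The next step is to verify that the rank hypothesis in Theorem \ref{thm:matroid-intersection} implies the topological-connectedness hypothesis in Theorem \ref{thm:complex-matroid-reconfiguration}. For this I would invoke Proposition \ref{prop:matroid-connectedness}, which asserts $\eta(\cM[X]) = r(\cM[X])$ when $\cM[X]$ is coloop-free and $\eta(\cM[X]) = \infty$ otherwise; in both cases $\eta(\cM[X]) \ge r(\cM[X])$. Thus $r(\cM[X]) + r(\cN[V - X]) \ge k+1$ automatically upgrades to $\eta(\cM[X]) + r(\cN[V - X]) \ge k+1$, and the quantification (over those $X$ for which $V - X$ is a flat of $\cN$ of rank at most $k-1$) is literally identical on both sides. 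Applying Theorem \ref{thm:complex-matroid-reconfiguration} then yields the connectedness of $\recongraph(\cM, \cN; k)$.

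There is no substantive obstacle here: Theorem \ref{thm:matroid-intersection} is essentially a direct corollary, with the only content being the translation between topological and rank-theoretic connectedness for matroids via Proposition \ref{prop:matroid-connectedness}. All the technical difficulty sits upstream in the proof of Theorem \ref{thm:complex-matroid-reconfiguration}, which is where the matroidal generalization of the reconfiguration topological Hall theorem must be carried out, along the lines of Aharoni and Berger's argument together with the poset-theoretic ingredient flagged at the opening of Section \ref{sec:intersections}.
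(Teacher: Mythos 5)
Your proposal is correct and matches the paper's own derivation: the paper obtains Theorem \ref{thm:matroid-intersection} exactly by specializing Theorem \ref{thm:complex-matroid-reconfiguration} with $\cC$ the independence complex of $\cM$ and the matroid taken to be $\cN$, using Proposition \ref{prop:matroid-connectedness} to pass from $r(\cM[X])$ to $\eta(\cM[X])$. Your identification of $\recongraph(\cC,\cN;k)$ with $\recongraph(\cM,\cN;k)$ and the verification of the hypotheses are exactly the (implicit) argument in the paper.
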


The matroid intersection theorem of Edmonds \cite{edmonds2003submodular} states that the size $\nu(\cM, \cN)$ of a largest set in common independent set of matroids $\cM$ and $\cN$ on the same ground set $V$ is precisely given by 
\begin{align*}
	\nu(\cM, \cN) = \min \{r(\cM[X]) + r(\cN[V - X]) : X \subseteq V\},
\end{align*}
with the minimum attained by some $X \subseteq V$ for which $V - X$ is a flat of $\cN$. Thus, Theorem \ref{thm:matroid-intersection} implies the following corollary.

\begin{cor} \label{cor:matroid-intersection}
	Let $\cM$ and $\cN$ be matroids on the same ground set $V$. If $k \le \nu(\cM, \cN) - 1$, then $\recongraph(\cM, \cN; k)$ is connected. 
\end{cor}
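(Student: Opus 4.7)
The plan is to derive this corollary as an immediate consequence of Theorem \ref{thm:matroid-intersection} together with Edmonds' matroid intersection formula already cited in the discussion preceding the corollary. Recall that Edmonds' theorem gives
\begin{align*}
\nu(\cM, \cN) = \min \{r(\cM[X]) + r(\cN[V - X]) : X \subseteq V\}.
\end{align*}
The key observation is that this formula supplies a global lower bound $r(\cM[X]) + r(\cN[V - X]) \ge \nu(\cM, \cN)$ for every $X \subseteq V$, not just for those $X$ with $V - X$ a flat of $\cN$. So the hypothesis $k \le \nu(\cM, \cN) - 1$ immediately yields $r(\cM[X]) + r(\cN[V - X]) \ge k + 1$ for every $X \subseteq V$, which is strictly stronger than what Theorem \ref{thm:matroid-intersection} requires.

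Given this, the proof reduces to a one-line invocation of Theorem \ref{thm:matroid-intersection}, and there is essentially no obstacle to overcome. The only point worth noting for completeness is that $\recongraph(\cM, \cN; k)$ is nonempty under the assumption $k \le \nu(\cM, \cN) - 1 < \nu(\cM, \cN)$, so the conclusion of connectedness is not vacuous; this already follows from the existence direction of Edmonds' theorem (or, alternatively, from Theorem \ref{thm:aharoni-berger} applied with $\cC = \cM$, which is implicit in the proof chain leading to Theorem \ref{thm:matroid-intersection}).
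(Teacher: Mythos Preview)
Your proposal is correct and matches the paper's own argument essentially verbatim: the paper likewise invokes Edmonds' min-max formula to get $r(\cM[X]) + r(\cN[V - X]) \ge \nu(\cM, \cN) \ge k+1$ for all $X \subseteq V$, and then applies Theorem~\ref{thm:matroid-intersection}. Your added remark on nonemptiness is a harmless extra detail that the paper leaves implicit.
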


Corollary \ref{cor:matroid-intersection} is in fact a special case of \cite[Theorem 15]{bousquet2023feedback} by Bousquet, Hommelsheim, Kobayashi, M\"uhlenthaler, and Suzuki on reconfigurations of matroid parity sets. In addition to the intersection of two matroids, matroid parity sets include other structures of interest such as feedback vertex sets of graphs, matchings in general graphs, and connected vertex covers. In the setting of the intersection of two matroids $\cM$ and $\cN$, the reconfiguration graph of \cite{bousquet2023feedback} does not require that adjacent common independent sets $S, T$ satisfy that $S \cup T$ be an independent set of $\cM$, unlike our setting. Nevertheless, their proof method still works under our stronger adjacency condition. On the other hand, their proof method does not immediately extend to Theorem \ref{thm:matroid-intersection} in the case when $k = r(\cN)$, which is allowed because the stated condition is about \textit{nonempty} $X \subseteq V$. It is still possible to prove Theorem \ref{thm:matroid-intersection} using a standard augmenting path argument, but we do not include this here. 

Aharoni and Berger \cite{aharoni2006intersection} used their Theorem \ref{thm:aharoni-berger} to obtain various non-trivial results about the existence, covering, and packing of sets lying in the intersection of any given number of matroids. One may use Theorem \ref{thm:complex-matroid-reconfiguration} to obtain reconfiguration versions of their results.

\subsection{Higher dimensional connectedness}

Now we state and also prove a generalized higher dimensional version of Theorems \ref{thm:aharoni-berger} and \ref{thm:complex-matroid-reconfiguration}, thus giving our matroidal generalization of Theorem \ref{thm:topological-reconfiguration-complex}. 

Let $\cC$ be a complex and let $\cM$ be a matroid on the same ground set $V$. We define the \textit{intersection complex} $\intersectioncomplex(\cC, \cM)$ as the abstract simplicial complex whose vertex set is the collection of all simplices $\sigma \in \cC$ that contain a basis of $\cM$, and where $\sigma_1, \ldots, \sigma_{\ell}$ form a simplex of this complex whenever $\sigma_1 \subset \cdots \subset \sigma_{\ell}$. Also define $\intersectioncomplex(\cC, \cM; k) \coloneqq \intersectioncomplex(\cC, \cM_{\le k})$. It is easy to see that $\intersectioncomplex(\cC, \cM; k)$ is connected if and only if $\recongraph(\cC, \cM; k)$ is connected. The following is our result.

\begin{thm} \label{thm:complex-matroid-connectedness}
Let $\cC$ be a complex and let $\cM$ be a matroid on the same ground set $V$, and let $1 \le k \le r(\cM)$ and $m \ge 0$ be integers. If
\begin{align*}
	\eta_H(\cC[X]) + r(\cM[V - X]) \ge k + m
\end{align*}
for all $X \subseteq V$ for which $V - X$ is a flat of $\cM$ of rank at most $k-1$, then $\eta_H(\intersectioncomplex(\cC, \cM; k)) \ge m+1$. 
\end{thm}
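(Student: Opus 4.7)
The plan is to follow the chain-level construction of Theorem \ref{thm:high-dimensional-homology} from Section \ref{sec:higher-1}, with the reference simplex $\Delta^n$ replaced by a poset-based complex encoding the lattice of flats of $\cM$. First I would reduce to the case $k = r(\cM)$ by passing to the $k$-truncation $\cM_{\le k}$: the bases of $\cM_{\le k}$ are exactly the $k$-element independent sets of $\cM$, so $\intersectioncomplex(\cC, \cM; k) = \intersectioncomplex(\cC, \cM_{\le k})$; the flats of $\cM_{\le k}$ of rank at most $k - 1$ coincide with the flats of $\cM$ of the same ranks; and $r(\cM[V - X]) = r(\cM_{\le k}[V - X])$ on the relevant subsets, so the hypothesis transfers verbatim.

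The combinatorial template of Section \ref{sec:higher-1} is organized around faces $I \subseteq \{0, \ldots, n\}$ of the reference simplex $\Delta^n$, which correspond bijectively to the proper flats $F = V - V_I$ of the partition matroid associated to $\cV$. In the general matroidal setting, the natural replacement is the order complex of the poset $\cL$ of proper flats of $\cM_{\le k}$, or better still, the interval subdivision of $\cL$: its cells are indexed by chains $F_0 \subsetneq \cdots \subsetneq F_\ell$ of proper flats, recording not only the individual restrictions $V - F_j$ but also transitions along the flag. This is the extra poset ingredient alluded to in the introduction of Section \ref{sec:intersections}. Writing $X_j = V - F_j$, the hypothesis $\eta_H(\cC[X_j]) \ge k + m - r(F_j)$ is precisely what is needed to lift chain boundaries supported in $\cC[X_j]$ to chains of the appropriate dimension.

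With this reference complex in hand, I would construct matroidal analogs of the subdivision map $\colorfulsubdivision$, the restriction bilinear map $\bilinearfunction$, the projections $\pi^I$, and the chain homotopy $D$ from Section \ref{sec:higher-1}. A matroidal universal-cycle lemma generalizing Lemma \ref{lem:universal-cycle} would be obtained by cellular approximation on the natural polyhedral structure on $\intersectioncomplex(\cC, \cM; k)$, whose cells are indexed by the simplices of $\cC$ that contain a basis of $\cM_{\le k}$. The chain homotopy $D$ would then be built inductively over chains of flats, using the hypothesis to lift cycles to bounding chains at each stage. The conclusion $\eta_H(\intersectioncomplex(\cC, \cM; k)) \ge m + 1$ would follow by the same endgame as in Section \ref{sec:higher-1}.

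The main obstacle I foresee is the matroidal analog of Lemma \ref{lem:commute-second}: the commutation of $\bilinearfunction$ with the boundary operator in its second (flag-indexed) input, under the assumption that the first input is an $\colorfulsubdivision$-cycle. In the partition case this reduced to straightforward sign bookkeeping on faces of $\Delta^n$. In the matroidal case, the boundary of a chain of flats involves deleting intermediate flats from the chain, and one must carefully track how this affects the restrictions $V - F_j$ and the corresponding chain supports. The interval subdivision is designed precisely to make this tracking possible: by enriching the framework from individual flats to intervals of flats, one should be able to enforce the commutation identity in a form general enough to close the inductive construction of $D$ and complete the proof.
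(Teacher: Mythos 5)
Your overall strategy---redo the chain-homotopy machinery of Section \ref{sec:higher-1} with the simplex $\Delta^n$ replaced by (a subdivision of) the lattice of flats of $\cM$---is genuinely different from the paper's proof, and as it stands it has real gaps rather than just unfinished bookkeeping. First, the step you yourself flag, the matroidal analogue of Lemma \ref{lem:commute-second}, is the heart of the method and you only assert that the interval subdivision ``should'' make it work; in the partition case the sign analysis rests on every face of $\Delta^n$ being an oriented simplex with a canonical elementary form, and no analogous canonical orientation scheme for chains of flats of a general geometric lattice is proposed. Second, your matroidal universal-cycle lemma is based on ``the natural polyhedral structure on $\intersectioncomplex(\cC,\cM;k)$ whose cells are indexed by the simplices of $\cC$ that contain a basis,'' but these are not regular cells in general: for $\cM = U_{2,3}$ and $\tau$ a triangle of $\cC$, the poset of spanning subsets of $\tau$ is a cone over a three-element antichain, whose order complex is a tree whose ``boundary'' is three points rather than $S^0$, so it is not the face poset of a $1$-cell and the cellular-approximation argument of Lemma \ref{lem:universal-cycle} does not transfer. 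Third, the endgame of Section \ref{sec:higher-1} evaluates $D$ on the single top cell $I^{\ast}=[0,1,\ldots,n]$ of the reference simplex; the order complex of the proper part of the flat lattice has no top cell (it is a wedge of $(r-2)$-spheres), so one would instead have to work with a fundamental cycle supported on all maximal chains of flats, and none of this is set up.

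The paper avoids this entire program by reducing to the already-proven partition case. After the same truncation to $k = r(\cM)$, it forms the complex $\cD \coloneqq \cC_1 \ast \cM_2^{\ast}$ on a doubled ground set $V^{(1)} \cup V^{(2)}$, partitioned into the $n$ pairs $W_i = \{v_i^{(1)}, v_i^{(2)}\}$. The hypothesis of Theorem \ref{thm:topological-reconfiguration-complex} for $(\cD,\cW)$ is verified via the join formula (Proposition \ref{prop:joins}) and the connectedness of matroids (Proposition \ref{prop:matroid-connectedness}), with the restriction to flats entering exactly where a coloop of $\cM^{\ast}[X]$ makes $\eta_H$ infinite. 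Then one checks that the poset of $\cW$-spanning faces of $\cD$ is order-isomorphic to the interval subdivision of the poset $P(\cC,\cM)$ of faces of $\cC$ containing a basis, so by Walker's lemma (Lemma \ref{lem:interval-subdivision}) $\colorfulcomplex(\cD,\cW)$ is a subdivision of $\intersectioncomplex(\cC,\cM)$, and the conclusion follows from Theorem \ref{thm:topological-reconfiguration-complex} applied to $(\cD,\cW)$. In other words, the interval subdivision is used as a homeomorphism device identifying the two order complexes---not, as in your plan, as a new reference complex over which the chain homotopy must be rebuilt. If you want to salvage your approach, you would need to supply the orientation and fundamental-cycle apparatus for the flat lattice from scratch; the duality-plus-join reduction gets the theorem with none of that.
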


Again, in the case $k = r(\cM)$, the condition of Theorem \ref{thm:complex-matroid-connectedness} can be rephrased as
\begin{align*}
	\eta_H(\cC[X]) \ge r(\cM/(V - X)) + m
\end{align*}
for all nonempty $X \subseteq V$ for which $V - X$ is a flat of $\cM$.
Our proof of Theorem \ref{thm:complex-matroid-connectedness} follows the same approach as Aharoni and Berger's proof of Theorem \ref{thm:aharoni-berger}, which itself was modeled after Welsh's proof \cite{welsh1970matroid} of Edmonds' matroid intersection theorem \cite{edmonds2003submodular} using Rado's theorem \cite{rado1942theorem}. In our new setting, we also need an extra ingredient from the theory of partially ordered sets (or posets). 

Given a finite poset $P = (P, \le)$, its \textit{order complex} $\ordercomplex(P)$ is the abstract simplicial complex with vertex set $P$, in which elements $a_1, \ldots, a_{\ell} \in P$ form a simplex in the complex whenever they form a chain $a_1 \le \cdots \le a_{\ell}$. Both the colorful complex and intersection complex we have defined can be described as order complexes, as noted in the proof below. A \textit{closed interval} of $P$ is a subset of the form $[a, b] \coloneqq \{c \in P : a \le c \le b\}$, for some $a, b \in P$ with $a \le b$. The \textit{interval subdivision} $\mathrm{in}(P)$ of poset $P$ is the poset on the closed intervals of $P$, partially ordered by inclusion $\subseteq$. Notice that $[a_1, b_1] \subseteq [a_2, b_2]$ if and only if $a_2 \le a_1 \le b_1 \le b_2$. We need the following lemma of Walker \cite{walker1988canonical}.

\begin{lem}[\cite{walker1988canonical}] \label{lem:interval-subdivision}
For a finite poset $P$, the order complex $\ordercomplex(\mathrm{in}(P))$ is a simplicial subdivision of the order complex $\ordercomplex(P)$. In particular, $\ordercomplex(\mathrm{in}(P))$ and $\ordercomplex(P)$ have homeomorphic geometric realizations.
\end{lem}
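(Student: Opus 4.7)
The plan is to build an explicit geometric subdivision of $\ordercomplex(P)$ whose underlying abstract simplicial complex is $\ordercomplex(\mathrm{in}(P))$; the homeomorphism of geometric realizations then follows immediately. Fix a geometric realization of $\ordercomplex(P)$ in which each $a \in P$ corresponds to a point $v_a$, and define a map $\phi$ on the vertex set of $\ordercomplex(\mathrm{in}(P))$ by placing each interval at the midpoint of its endpoints,
\begin{align*}
\phi([a,b]) \coloneqq \tfrac{1}{2}(v_a + v_b),
\end{align*}
extended affinely over each simplex of $\ordercomplex(\mathrm{in}(P))$ to give a continuous map $\Phi : \lVert \ordercomplex(\mathrm{in}(P)) \rVert \to \lVert \ordercomplex(P) \rVert$. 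A simplex of $\ordercomplex(\mathrm{in}(P))$ is a chain $[a_1,b_1] \subsetneq \cdots \subsetneq [a_k,b_k]$, which is tantamount to the zigzag chain $a_k \le \cdots \le a_1 \le b_1 \le \cdots \le b_k$ in $P$; hence each $\phi([a_l,b_l])$ lies in the simplex of $\ordercomplex(P)$ spanned by this chain, so $\Phi$ sends each closed simplex of $\ordercomplex(\mathrm{in}(P))$ into a closed simplex of $\ordercomplex(P)$.

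The next step is to reduce to the case of a totally ordered poset. Since the $v_a$ are affinely independent, $\phi([a,b])$ lies in the closed simplex $|C|$ spanned by a maximal chain $C$ of $P$ if and only if $a, b \in C$. Thus the subcomplex of $\ordercomplex(\mathrm{in}(P))$ mapping into $|C|$ is naturally identified with $\ordercomplex(\mathrm{in}(C))$ for $C$ viewed as a totally ordered subposet, and the subdivisions arising from different maximal chains automatically agree on their shared faces because $\phi$ is defined purely in terms of endpoint labels. So it suffices to prove: for a totally ordered set $C = \{c_1 < \cdots < c_n\}$, the map $\Phi$ realizes $\ordercomplex(\mathrm{in}(C))$ as a simplicial subdivision of the $(n-1)$-simplex $|C|$.

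I would prove this last assertion by induction on $n$; the case $n = 1$ is trivial. For $n \ge 2$, the element $[c_1, c_n]$ is the unique maximum of $\mathrm{in}(C)$, so $\ordercomplex(\mathrm{in}(C))$ is the simplicial cone from the vertex $[c_1, c_n]$ over $\ordercomplex(\mathrm{in}(C')) \cup \ordercomplex(\mathrm{in}(C''))$, where $C' \coloneqq C \setminus \{c_n\}$ and $C'' \coloneqq C \setminus \{c_1\}$, with overlap $\ordercomplex(\mathrm{in}(C' \cap C''))$. The image point $m \coloneqq \phi([c_1, c_n]) = \tfrac{1}{2}(v_{c_1} + v_{c_n})$ is the midpoint of the edge $v_{c_1} v_{c_n}$, and a direct convex-combination argument decomposes $|C|$ along the hyperplane through $m, v_{c_2}, \ldots, v_{c_{n-1}}$ into the two cones $m \ast |C'|$ and $m \ast |C''|$: given $p = \sum_i \lambda_i v_{c_i} \in |C|$ with $\lambda_1 \ge \lambda_n$, one rewrites $p = (\lambda_1 - \lambda_n) v_{c_1} + \sum_{i=2}^{n-1} \lambda_i v_{c_i} + 2 \lambda_n \cdot m$, placing $p$ in $m \ast |C'|$, and symmetrically for $\lambda_1 \le \lambda_n$. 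By the inductive hypothesis, $\Phi$ yields simplicial subdivisions of $|C'|$ and $|C''|$ that agree on $|C' \cap C''|$, and coning from $m$ produces the desired simplicial subdivision of $|C|$.

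The main obstacle is the geometric splitting of $|C|$ into the two cones $m \ast |C'|$ and $m \ast |C''|$ meeting cleanly along $m \ast |C' \cap C''|$, which is handled by the explicit convex-combination rewriting above; the remaining ingredients (simplex-preservation of $\Phi$, naturality across maximal chains, and that coning a simplicial subdivision from an external point yields a simplicial subdivision) are largely bookkeeping. The final assertion that $\lVert \ordercomplex(\mathrm{in}(P)) \rVert$ and $\lVert \ordercomplex(P) \rVert$ are homeomorphic is then immediate from the simplicial subdivision.
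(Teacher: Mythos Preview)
The paper does not prove this lemma; it is quoted from Walker \cite{walker1988canonical} and used as a black box in the proof of Theorem~\ref{thm:complex-matroid-connectedness}. So there is no ``paper's own proof'' to compare against, and your proposal supplies what the paper omits.

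Your argument is correct. The midpoint placement $\phi([a,b]) = \tfrac{1}{2}(v_a+v_b)$ is the natural choice, the reduction to a single maximal chain $C$ is sound because $\phi([a,b])$ lies in the closed face $|C|$ precisely when $a,b\in C$, and the inductive decomposition of $|C|$ into the two cones $m\ast|C'|$ and $m\ast|C''|$ via the coefficient comparison $\lambda_1 \gtrless \lambda_n$ is exactly right (and the intersection being $m\ast|C'\cap C''|$ follows from the same calculation by forcing $\lambda_1=\lambda_n$). One point worth making explicit in a final write-up, which you gesture at under ``bookkeeping'': the induction should be read as producing a simplicial subdivision of the \emph{full} simplex $|C|$ with all its faces, so that every face $|D|$ for $D\subseteq C$ is itself a union of cells of the subdivision, namely the cells of $\ordercomplex(\mathrm{in}(D))$; this is what makes the gluing across maximal chains of $P$ go through for faces other than $|C'|$ and $|C''|$, and it is preserved by coning from $m$. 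With that clarification, the proof is complete.
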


\begin{proof}[Proof of Theorem \ref{thm:complex-matroid-connectedness}]
It suffices to prove the theorem for $k = r(\cM)$, with the result for general $k$ obtained by replacing $\cM$ by its $k$-truncation $\cM_{\le k}$. Say that $V \coloneqq \{v_1, \ldots, v_n\}$. Let $V^{(1)}, V^{(2)}$ denote two disjoint copies of $V$, let $\cC_1$ denote the copy of $\cC$ on $V^{(1)}$, and let $\cM_2$ denote the copy of $\cM$ on $V^{(2)}$. For every vertex $v \in V$, let $v^{(1)}, v^{(2)}$ denote the copies of $v$ in $V^{(1)}, V^{(2)}$, and extend this to copies $X^{(1)}, X^{(2)}$ of vertex subsets $X \subseteq V$. Define the complex $\cD \coloneqq \cC_1 \ast \cM_2^{\ast}$ on vertex set $W \coloneqq V^{(1)} \cup V^{(2)}$, and define the partition $\cW \coloneqq \{W_1, \ldots, W_n\}$ of $W$ by setting $W_i \coloneqq \{v_i^{(1)}, v_i^{(2)}\}$ for all $i$. As usual, we denote $W_I \coloneqq \bigcup_{i \in I} W_i$.

\begin{claim}
We have $\eta_H(\cD[W_I]) \ge |I| + m$ for all nonempty $I \subseteq [n]$.
\end{claim}

\begin{proof}
Fix any nonempty $I \subseteq [n]$, and let $X \coloneqq \{v_i : i \in I\} \subseteq V$. By Propositions \ref{prop:joins} and \ref{prop:matroid-connectedness}, we have
\begin{align*}
	\eta_H(\cD[W_I]) &\ge \eta_H\left(\cC_1[X^{(1)}]\right) + \eta_H\left(\cM_2^{\ast}[X^{(2)}]\right) \\ 
	&\ge \eta_H(\cC[X]) + r(\cM[V - X]) - r(\cM) + |X|.
\end{align*}
First suppose that $V - X$ is not a flat of $\cM$. Then there exists $x \in X$ such that $r(\cM[(V - X) \cup \{x\}]) = r(\cM[V - X])$. This means that $x$ is a loop of $\cM/(V - X)$ and hence is a coloop of $\cM^{\ast}[X]$. In particular, $\eta_H \left(\cM_{2}^{\ast}[X^{(2)}]\right) = \infty$, so that $\eta_H(\cD[W_I]) = \infty$. Now suppose that $V - X$ is a flat of $\cM$. Then combining the above lower bound on $\eta_H(\cD[W_I])$ with the theorem assumption and the fact that $|X| = |I|$, we obtain that $\eta_H(\cD[W_I]) \ge |I| + m$, as required.
\end{proof}

Let $P(\cC, \cM)$ denote the poset of simplices of $\cC$ that contain a basis of $\cM$, partially ordered by inclusion. Also let $P(\cD, \cW)$ denote the poset of simplices of $\cD$ that span the classes of $\cW$, partially ordered by inclusion. Notice that $\intersectioncomplex(\cC, \cM) = \ordercomplex(P(\cC, \cM))$ and $\colorfulcomplex(\cD, \cW) = \ordercomplex(P(\cD, \cW))$.

\begin{claim}
The poset $P(\cD, \cW)$ is order isomorphic to the interval subdivision of the poset $P(\cC, \cM)$.
\end{claim}

\begin{proof}
Define the map $f : P(\cD, \cW) \rightarrow \mathrm{in}(P(\cC, \cM))$ which sends $S^{(1)} \cup T^{(2)}$ to the closed interval $[V - T, S]$. First we verify that $f$ is a well-defined bijection. If $S^{(1)} \cup T^{(2)}$ lies in $P(\cD, \cW)$, then we have $S \in \cC$, $T \in \cM^{\ast}$, and $V - T \subseteq S$. These imply that $V - T$ and $S$ are simplices of $\cC$ that contain a basis of $\cM$, and thus $f(S^{(1)} \cup T^{(2)}) = [V - T, S]$ is indeed a closed interval of $P(\cC, \cM)$. Converely, the inverse $f^{-1} : \mathrm{in}(P(\cC, \cM)) \rightarrow P(\cD, \cW)$ sends $[T, S]$ to $S^{(1)} \cup (V - T)^{(2)}$. If $[T, S]$ is a closed interval of $P(\cC, \cM)$, then $S, T \in \cC \cap \cM^{\ast}$ and $T \subseteq S$, which together imply that $f^{-1}([T, S]) = S^{(1)} \cup (V - T)^{(2)}$ indeed lies in $P(\cD, \cW)$. Now we verify that $f$ is order-preserving. The inclusion $S_1^{(1)} \cup T_1^{(2)} \subseteq S_2^{(1)} \cup T_2^{(2)}$ in $P(\cD, \cW)$ holds if and only if we have $V - T_2 \subseteq V - T_1 \subseteq S_1 \subseteq S_2$, and this holds if and only if we have the inclusion $[V - T_1, S_1] \subseteq [V - T_2, S_2]$ in $P(\cC, \cM)$, i.e., $f(S_1^{(1)} \cup T_1^{(2)}) \subseteq f(S_2^{(1)} \cup T_2^{(2)})$. This proves the claim.
\end{proof}

From Claim 1 and Theorem \ref{thm:topological-reconfiguration-complex}, we get that $\eta_H(\colorfulcomplex(\cD,\cW)) \ge m+1$. From Claim 2 and Lemma \ref{lem:interval-subdivision}, we get that $\intersectioncomplex(\cC, \cM)$ is homeomorphic to $\colorfulcomplex(\cD,\cW)$ as geometric realizations. Combining these facts yields that $\eta_H(\intersectioncomplex(\cC, \cM)) \ge m+1$, as required.
\end{proof}

We remark that basic changes to the proof above can be used to establish the following stronger version of Theorem \ref{thm:complex-matroid-connectedness}, with homology coefficients lying in any fixed field. The result in turn also holds with coefficients in any fixed abelian group, via the universal coefficient theorem. 

\begin{thm} \label{thm:complex-matroid-homology}
Let $\cC$ be a complex and let $\cM$ be a matroid on the same ground set $V$, and let $m \ge 0$ be an integer. If
\begin{align*}
	\widetilde{H}_{r(\cM/(V - X)) + m - 2}(\cC[X]) = 0
\end{align*}
for all nonempty $X \subseteq V$ for which $V - X$ is a flat of $\cM$, then $\widetilde{H}_{m-1}(\intersectioncomplex(\cC, \cM)) = 0$. 
\end{thm}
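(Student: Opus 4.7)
The plan is to follow the proof of Theorem~\ref{thm:complex-matroid-connectedness} essentially verbatim, substituting the homological Theorem~\ref{thm:high-dimensional-homology} for the connectivity-based Theorem~\ref{thm:topological-reconfiguration-complex} and the K\"unneth formula for joins over a field for the inequality $\eta_H(\cA \ast \cB) \ge \eta_H(\cA) + \eta_H(\cB)$. Adopt the earlier setup: take disjoint copies $V^{(1)}, V^{(2)}$ of $V$, form $\cD \coloneqq \cC_1 \ast \cM_2^\ast$ on $W \coloneqq V^{(1)} \cup V^{(2)}$, and let $\cW \coloneqq \{W_1, \ldots, W_n\}$ with $W_i \coloneqq \{v_i^{(1)}, v_i^{(2)}\}$. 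The poset isomorphism established in the proof of Theorem~\ref{thm:complex-matroid-connectedness} is a purely combinatorial statement that applies unchanged; combined with Lemma~\ref{lem:interval-subdivision}, it produces a homeomorphism between the geometric realizations $\lVert \intersectioncomplex(\cC, \cM) \rVert$ and $\lVert \colorfulcomplex(\cD, \cW) \rVert$. So it suffices to show $\widetilde{H}_{m-1}(\colorfulcomplex(\cD, \cW)) = 0$ and appeal to Theorem~\ref{thm:high-dimensional-homology}.

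The core calculation is to verify the hypothesis $\widetilde{H}_{|I| + m - 2}(\cD[W_I]) = 0$ of Theorem~\ref{thm:high-dimensional-homology} for every nonempty $I \subseteq \{1, \ldots, n\}$. Writing $X \coloneqq \{v_i : i \in I\}$, we have $\cD[W_I] = \cC_1[X^{(1)}] \ast \cM_2^\ast[X^{(2)}]$, and the K\"unneth formula for joins over a field gives
\begin{align*}
	\widetilde{H}_{|I| + m - 2}(\cD[W_I]) \cong \bigoplus_{p + q = |I| + m - 3} \widetilde{H}_p(\cC[X]) \otimes \widetilde{H}_q(\cM^\ast[X]).
\end{align*}
By Proposition~\ref{prop:matroid-connectedness}, the reduced homology of the matroid $\cM^\ast[X]$ is identically zero when $\cM^\ast[X]$ has a coloop, and otherwise vanishes outside the top degree $q = r(\cM^\ast[X]) - 1 = r(\cM[V - X]) + |I| - r(\cM) - 1$. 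The case analysis in the proof of Theorem~\ref{thm:complex-matroid-connectedness} already shows that $\cM^\ast[X]$ has a coloop precisely when $V - X$ is not a flat of $\cM$, in which case the K\"unneth sum collapses to zero. When $V - X$ is a flat of $\cM$, the sole surviving summand sits at bidegree $(p, q)$ with
\begin{align*}
	p = |I| + m - 3 - q = r(\cM) - r(\cM[V - X]) + m - 2 = r(\cM/(V - X)) + m - 2,
\end{align*}
and the hypothesis of the theorem kills the factor $\widetilde{H}_p(\cC[X])$.

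With the hypothesis of Theorem~\ref{thm:high-dimensional-homology} verified, we obtain $\widetilde{H}_{m-1}(\colorfulcomplex(\cD, \cW)) = 0$, which transfers through the homeomorphism above to the desired conclusion $\widetilde{H}_{m-1}(\intersectioncomplex(\cC, \cM)) = 0$. The only subtle point is bookkeeping the K\"unneth bidegree so as to land precisely on the index $r(\cM/(V - X)) + m - 2$ appearing in the hypothesis; no new geometric or topological idea beyond the proof of Theorem~\ref{thm:complex-matroid-connectedness} is required. The extension to coefficients in an arbitrary abelian group is then standard: matroid independence complexes are Cohen--Macaulay and in particular have torsion-free integer reduced homology, so the Tor terms in the K\"unneth formula for joins vanish and the same calculation goes through with any coefficient group in place of the field.
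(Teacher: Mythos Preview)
Your proposal is correct and is precisely the ``basic changes to the proof above'' that the paper alludes to: you reuse the join construction $\cD = \cC_1 \ast \cM_2^\ast$, the poset isomorphism with the interval subdivision, and Lemma~\ref{lem:interval-subdivision}, while swapping Theorem~\ref{thm:topological-reconfiguration-complex} for Theorem~\ref{thm:high-dimensional-homology} and the inequality $\eta_H(\cA \ast \cB) \ge \eta_H(\cA) + \eta_H(\cB)$ for the K\"unneth formula for joins. Your bidegree bookkeeping and the flat/coloop case split are exactly right, and your route to arbitrary abelian coefficients via torsion-freeness of matroid homology is a mild variant of the paper's appeal to the universal coefficient theorem.
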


Finally, we note that Theorem \ref{thm:complex-matroid-connectedness} implies the following deficiency version of our generalized topological Hall theorem (Theorem \ref{thm:topological-reconfiguration-complex}). We define the complex $\colorfulcomplex(\cC, \cV; k)$ to have vertex set consisting of all simplices $\sigma \in \cC$ that span at least $k$ classes of $\cV$, and where $\sigma_1, \ldots, \sigma_\ell$ form a simplex whenever $\sigma_1 \subset \cdots \subset \sigma_\ell$. That is, $\colorfulcomplex(\cC, \cV; k) \coloneqq \intersectioncomplex(\cC, \cM_{\cV}; k)$ where $\cM_{\cV}$ is the partition matroid associated with partition $\cV$. 

\begin{thm} \label{thm:topological-hall-deficiency}
Let $\cC$ be a complex, let $\cV = \{V_1, \ldots, V_n\}$ be a partition of $V(\cC)$, and let $m, d \ge 0$ be integers. If
\begin{align*}
	\eta_H (\cC [ V_I ] ) \ge |I| - d + m \hspace{25pt} \text{for all nonempty } I \subseteq [n] \text{ with } |I| \ge d,
\end{align*}
then $\eta_H(\colorfulcomplex(\cC, \cV; n - d)) \ge m+1$.
\end{thm}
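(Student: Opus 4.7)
The plan is to apply Theorem \ref{thm:complex-matroid-connectedness} with matroid $\cM \coloneqq \cM_{\cV}$ (the partition matroid associated to $\cV$) and truncation parameter $k \coloneqq n - d$. By construction, $\colorfulcomplex(\cC, \cV; n-d) = \intersectioncomplex(\cC, \cM_{\cV}; n-d)$, so the conclusion $\eta_H(\intersectioncomplex(\cC, \cM_{\cV}; n-d)) \ge m+1$ of Theorem \ref{thm:complex-matroid-connectedness} is exactly what we want.

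The key preliminary step is the characterization of the flats of $\cM_{\cV}$. Since the rank function of $\cM_{\cV}$ is $r(\cM_{\cV}[A]) = |\{i : A \cap V_i \neq \emptyset\}|$, adding an element of $V_i$ to a set already meeting $V_i$ fails to increase the rank, whereas adding an element from a color class not yet met increases the rank by one. Hence $F \subseteq V$ is a flat of $\cM_{\cV}$ if and only if $F = V_J$ for some $J \subseteq [n]$, and in that case $r(\cM_{\cV}[V_J]) = |J|$. Consequently, the subsets $X \subseteq V$ whose complement is a flat of $\cM_{\cV}$ of rank at most $k - 1 = n - d - 1$ are exactly the subsets of the form $X = V_I$ with $I \coloneqq [n] \setminus J$ satisfying $|I| \ge d+1$, and for such $X$ we have $r(\cM_{\cV}[V - X]) = n - |I|$.

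With this identification, the condition required by Theorem \ref{thm:complex-matroid-connectedness} reduces to
\begin{align*}
	\eta_H(\cC[V_I]) + (n - |I|) \ge (n - d) + m
\end{align*}
for all $I \subseteq [n]$ with $|I| \ge d+1$, which rearranges to $\eta_H(\cC[V_I]) \ge |I| - d + m$. This is a subcase (namely $|I| \ge d+1$) of the assumed hypothesis $\eta_H(\cC[V_I]) \ge |I| - d + m$ for all nonempty $I$ with $|I| \ge d$. Thus Theorem \ref{thm:complex-matroid-connectedness} applies and yields the result.

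There is no real obstacle in this proof: the whole argument is essentially a bookkeeping exercise, combining the identification $\colorfulcomplex(\cC, \cV; n-d) = \intersectioncomplex(\cC, \cM_{\cV}; n-d)$ with the flat classification of the partition matroid and an index shift. The only point that requires some care is matching the ``rank at most $k-1$'' condition on flats in Theorem \ref{thm:complex-matroid-connectedness} to the ``$|I| \ge d+1$'' range, to see that the hypothesis available (for $|I| \ge d$) is in fact stronger than what is needed.
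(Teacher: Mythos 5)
Your proposal is correct and is exactly the route the paper intends: Theorem \ref{thm:topological-hall-deficiency} is stated as a consequence of Theorem \ref{thm:complex-matroid-connectedness} applied to the partition matroid $\cM_{\cV}$ with $k = n-d$, via the definitional identification $\colorfulcomplex(\cC, \cV; n-d) = \intersectioncomplex(\cC, \cM_{\cV}; n-d)$. Your verification that the flats of $\cM_{\cV}$ are precisely the sets $V_J$ with $r(\cM_{\cV}[V_J]) = |J|$, so that the rank-at-most-$(k-1)$ flat condition translates to the range $|I| \ge d+1$ and the inequality rearranges to $\eta_H(\cC[V_I]) \ge |I| - d + m$, fills in the bookkeeping exactly as needed.
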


\section{Graph theory applications} \label{sec:combinatorial-applications}

The rest of this paper is dedicated to combinatorial applications of Theorem \ref{thm:topological-reconfiguration-graph}, our reconfiguration variation of the topological Hall theorem. In this section, we describe applications of Theorem \ref{thm:topological-reconfiguration-graph} to reconfiguration problems on graphs and hypergraphs. One could derive more general statements about the higher dimensional homological connectedness of the associated solution spaces using Theorem \ref{thm:topological-reconfiguration-complex}, but we leave these out for brevity.

Some applications rely on the following deficiency version of Theorem \ref{thm:topological-reconfiguration-graph}. This is the special case $m=1$ of Theorem \ref{thm:topological-hall-deficiency}, although it can also be proven more directly from Theorem \ref{thm:topological-reconfiguration-graph} by a standard argument of adding dummy vertices. For a complex $\cC$, a partition $\cV$ of $V(\cC)$, and an integer $k \ge 0$, the reconfiguration graph $\recongraph(\cC, \cV; k)$ has vertex set consisting of all partial colorful simplices of $(\cC, \cV)$ of cardinality $k$, and two such partial colorful simplices are joined by an edge if their union is a simplex in $\cC$ of cardinality $k+1$.

\begin{thm} \label{thm:topological-reconfiguration-deficiency}
Let $\cC$ be a complex, let $\cV = \{V_1, \ldots, V_n\}$ be a partition of $V(\cC)$, let $d \ge 0$ be an integer, and let $\eta \in \{\eta_{\pi}, \eta_H\}$. If
\begin{align*}
	\eta (\cC [ V_I ] ) \ge |I| - d + 1 \hspace{25pt} \text{for all nonempty } I \subseteq [n],
\end{align*}
then $\recongraph(\cC, \cV; n - d)$ is connected.
\end{thm}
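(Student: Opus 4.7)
The plan is to derive Theorem \ref{thm:topological-reconfiguration-deficiency} as the $m=1$ special case of Theorem \ref{thm:topological-hall-deficiency}, bridged by a short combinatorial argument relating path-connectedness of the colorful complex to connectedness of the reconfiguration graph. Since $\eta_H \ge \eta_\pi$ in general, the hypothesis for either choice of $\eta$ implies $\eta_H(\cC[V_I]) \ge |I| - d + 1$ for every nonempty $I \subseteq [n]$, and in particular for those with $|I| \ge d$. Applying Theorem \ref{thm:topological-hall-deficiency} with $m = 1$ therefore yields $\eta_H(\colorfulcomplex(\cC, \cV; n - d)) \ge 2$, so $\colorfulcomplex(\cC, \cV; n-d)$ is nonempty and path-connected.

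To transfer this to the reconfiguration graph, I observe that every partial colorful simplex $S$ of size $n - d$ is a minimal vertex of $\colorfulcomplex(\cC, \cV; n - d)$. Given two such $S$ and $T$, path-connectedness produces a zig-zag $S = \tau_0, \tau_1, \ldots, \tau_N = T$ of vertices of the colorful complex in which consecutive $\tau_i, \tau_{i+1}$ are comparable under inclusion. Writing $\mathrm{PC}(\tau)$ for the set of size-$(n-d)$ partial colorful simplices contained in a simplex $\tau$, the $\mathrm{PC}$ of the smaller member of each pair $\{\tau_i, \tau_{i+1}\}$ lies in both $\mathrm{PC}(\tau_i)$ and $\mathrm{PC}(\tau_{i+1})$ and is nonempty by definition of the colorful complex, so the sets overlap along the entire path. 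It then suffices to prove the auxiliary claim that each $\mathrm{PC}(\tau)$ lies in a single connected component of $\recongraph(\cC, \cV; n - d)$.

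The main technical step is this auxiliary claim, which I expect to be the only part requiring real verification. Fix $\tau \in \cC$ with color set $C_\tau = \{i : \tau \cap V_i \ne \emptyset\}$ of size at least $n - d$, and let $P, Q \in \mathrm{PC}(\tau)$ have respective color sets $C_P, C_Q \subseteq C_\tau$ of size $n - d$. I would transform $P$ into $Q$ by a sequence of single-vertex swaps confined to $\tau$: first applying within-class swaps on $C_P \cap C_Q$ to align $P$ with $Q$ on their common classes, then performing between-class swaps pairing the classes of $C_P \setminus C_Q$ with those of $C_Q \setminus C_P$, in each case dropping a vertex in the former and inserting the corresponding vertex of $Q$ in the latter (available because $C_Q \setminus C_P \subseteq C_\tau$). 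Each swap produces a size-$(n - d)$ partial colorful simplex whose union with its predecessor is contained in $\tau \in \cC$ and has cardinality $n - d + 1$, hence is a valid edge of $\recongraph(\cC, \cV; n - d)$. The paper also alludes to a direct proof by adding $d$ dummy singleton classes to $\cV$ and invoking Theorem \ref{thm:topological-reconfiguration-graph}, but the route through Theorem \ref{thm:topological-hall-deficiency} seems cleanest.
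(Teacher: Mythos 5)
Your proposal is correct and takes essentially the same route as the paper: the paper derives this theorem precisely as the $m=1$ case of Theorem \ref{thm:topological-hall-deficiency}, with the passage from connectedness of $\colorfulcomplex(\cC,\cV;n-d)=\intersectioncomplex(\cC,\cM_{\cV};n-d)$ to connectedness of $\recongraph(\cC,\cV;n-d)$ left as the ``easy to see'' equivalence noted in Section \ref{sec:intersections}. Your zig-zag along comparable simplices and the within-$\tau$ swap argument correctly fill in that bridging step, so the proposal matches the paper's proof with the routine details written out.
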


\subsection{Reconfigurations of independent transversals in graphs}

In this subsection, we give applications about reconfigurations of independent transversals in graphs. This topic was initiated by Buys, Kang, and Ozeki \cite{buys2025reconfiguration}. We give an alternative proof of their Theorem \ref{thm:BKO}, as well as some variations of it. Recall that an independent transversal of $(G, \cV)$ is a colorful simplex of $(\cI(G), \cV)$, where $\cI(G)$ is the \textit{independence complex} of $G$ (collection of all independent sets of $G$).

First, we deduce domination-type sufficient conditions for reconfigurability. Recall from Theorem \ref{thm:domination-connectedness} that $\tilde{\gamma}(G)$ denotes the minimum size of a strongly dominating set of $V(G)$, and that $i\gamma(G)$ denotes the minimum integer $\ell$ such that every independent set of $G$ is strongly dominated by a vertex set of size at most $\ell$. Combining Theorem \ref{thm:topological-reconfiguration-graph} and Theorem \ref{thm:domination-connectedness}, we obtain the following purely combinatorial sufficient conditions for the reconfiguration graph on independent transversals to be connected. 

\begin{thm} \label{thm:domination}
Let $G$ be a graph, and let $\cV = \{V_1, \ldots, V_n\}$ be a partition of $V(G)$. The following are sufficient conditions for $\recongraph(\cI(G), \cV)$ to be connected:
\begin{itemize}
	\item[(a)] $\tilde{\gamma}(G[V_I]) \ge 2|I|+1$ for all nonempty $I \subseteq [n]$;
	\item[(b)] $i\gamma(G[V_I]) \ge |I|+1$ for all nonempty $I \subseteq [n]$,
\end{itemize}
\end{thm}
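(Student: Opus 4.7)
The plan is to apply Theorem \ref{thm:topological-reconfiguration-graph} to the independence complex $\cC = \cI(G)$ with the vertex partition $\cV = \{V_1,\ldots,V_n\}$, and to verify its hypothesis using the domination-type lower bounds of Theorem \ref{thm:domination-connectedness}. The first thing to note is the compatibility between induced subcomplexes and induced subgraphs: for any $I \subseteq [n]$, the induced subcomplex $\cI(G)[V_I]$ is precisely $\cI(G[V_I])$, so topological connectedness bounds for the independence complex of the induced subgraph transfer directly.

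For part (a), I would fix a nonempty $I \subseteq [n]$ and combine Theorem \ref{thm:domination-connectedness}(a) with the assumption $\tilde{\gamma}(G[V_I]) \ge 2|I|+1$ to obtain
\begin{align*}
\eta(\cI(G)[V_I]) \;=\; \eta(\cI(G[V_I])) \;\ge\; \tfrac{\tilde{\gamma}(G[V_I])}{2} \;\ge\; \tfrac{2|I|+1}{2} \;>\; |I|.
\end{align*}
Since $\eta$ takes values in $\mathbb{Z}_{\ge -1} \cup \{\infty\}$, a strict inequality with an integer quantity upgrades to $\eta(\cI(G)[V_I]) \ge |I|+1$. Thus the hypothesis of Theorem \ref{thm:topological-reconfiguration-graph} holds, and the connectedness of $\recongraph(\cI(G),\cV)$ follows.

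For part (b), the argument is even more direct. For any nonempty $I \subseteq [n]$, Theorem \ref{thm:domination-connectedness}(b) and the assumption $i\gamma(G[V_I]) \ge |I|+1$ yield
\begin{align*}
\eta(\cI(G)[V_I]) \;=\; \eta(\cI(G[V_I])) \;\ge\; i\gamma(G[V_I]) \;\ge\; |I|+1,
\end{align*}
so again Theorem \ref{thm:topological-reconfiguration-graph} applies. I expect no genuine obstacle here: the entire proof is a routine chaining of the two cited theorems together with the elementary identification $\cI(G)[V_I] = \cI(G[V_I])$. The only minor subtlety worth flagging explicitly in the write-up is the integrality argument in (a), which is what causes the constant $2|I|+1$ (rather than $2|I|$) to be the right threshold.
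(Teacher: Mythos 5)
Your proposal is correct and follows exactly the paper's route: Theorem \ref{thm:domination} is stated there as the direct combination of Theorem \ref{thm:topological-reconfiguration-graph} with the domination bounds of Theorem \ref{thm:domination-connectedness}, using the identification $\cI(G)[V_I] = \cI(G[V_I])$, just as you do. The integrality upgrade from $\eta > |I|$ to $\eta \ge |I|+1$ in part (a) is the right (and only) subtlety, and you handle it correctly.
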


These sufficient conditions are excess versions of previous results about the existence of independent transversals, namely Haxell's theorem \cite{haxell1995condition} and the Aharoni--Haxell theorem \cite{aharoni2000hall}, respectively. Theorem \ref{thm:domination}(a) leads to the following maximum degree condition for the reconfigurability of independent transversals, which is also a corollary of Theorem \ref{thm:BKO} by Buys, Kang, and Ozeki \cite{buys2025reconfiguration}.

\begin{cor}\label{cor:two-Delta-plus-one}
Let $G$ be a graph, and let $\cV = \{V_1, \ldots, V_n\}$ be a partition of $V(G)$. If $G$ has maximum degree $\Delta$ and $|V_i| \ge 2\Delta + 1$ for all $i$, then $\recongraph(\cI(G), \cV)$ is connected.
\end{cor}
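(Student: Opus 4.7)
The plan is to apply Theorem \ref{thm:domination}(a) and verify the strong domination lower bound $\tilde\gamma(G[V_I]) \ge 2|I|+1$ directly from the maximum degree hypothesis. First I would dispose of the trivial case $\Delta = 0$, where $G$ has no edges, so the full collection of transversals forms the vertex set of $\recongraph(\cI(G), \cV)$ and any two transversals can be connected one coordinate at a time (or, equivalently, Theorem \ref{thm:domination}(a) still applies after noting that every vertex of $V_I$ must lie in $D$).

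Next, fix $\Delta \ge 1$ and a nonempty $I \subseteq [n]$, and let $D \subseteq V_I$ be a strongly dominating set of $G[V_I]$. By definition each vertex of $V_I$ has a neighbor in $D$, so $V_I \subseteq N_{G[V_I]}(D)$, and since each vertex of $D$ has at most $\Delta$ neighbors, this gives the counting bound
\begin{equation*}
|D| \;\ge\; \frac{|V_I|}{\Delta} \;\ge\; \frac{(2\Delta+1)|I|}{\Delta} \;=\; 2|I| + \frac{|I|}{\Delta}.
\end{equation*}
Because $|D|$ is an integer and $|I|/\Delta$ is strictly positive, this forces $|D| \ge 2|I|+1$. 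Hence $\tilde\gamma(G[V_I]) \ge 2|I|+1$ for every nonempty $I \subseteq [n]$, and Theorem \ref{thm:domination}(a) yields that $\recongraph(\cI(G), \cV)$ is connected.

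There is no real obstacle here; the argument is a one-line degree count once one notes that the hypothesis $|V_i| \ge 2\Delta+1$ is sharper than the $|V_i| \ge 2\Delta$ needed for existence (Haxell's theorem) by exactly the right margin to upgrade the Aharoni--Haxell-style excess condition. The only subtlety worth flagging is the integrality step: the raw counting only gives $|D| \ge (2\Delta+1)|I|/\Delta$, which exceeds $2|I|$ but need not visibly exceed $2|I|+1$ before one invokes that $|D| \in \mathbb{Z}$.
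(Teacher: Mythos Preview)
Your proof is correct and follows essentially the same route as the paper: a degree count gives $\tilde\gamma(G[V_I]) \ge |V_I|/\Delta \ge (2\Delta+1)|I|/\Delta > 2|I|$, and then integrality plus Theorem~\ref{thm:domination}(a) finishes. The only difference is that you explicitly handle the $\Delta=0$ case, which the paper tacitly ignores.
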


\begin{proof}
Fix any nonempty $I \subseteq [n]$, so that $|V_I| \ge (2\Delta+1)|I|$. Since every vertex in $V_I$ has at most $\Delta$ neighbors in $G[V_I]$, there cannot be a totally dominating set in $G[V_I]$ with cardinality less than $\frac{|V_I|}{\Delta}$, so that $\tilde{\gamma}(G[V_I]) \ge \frac{|V_I|}{\Delta} \ge \frac{(2\Delta+1)|I|}{\Delta} > 2|I|$. Applying Theorem \ref{thm:domination}(a), we deduce that $\recongraph(\cI(G), \cV)$ is connected.
\end{proof}

Now, Haxell's theorem \cite{haxell1995condition, haxell2001note} states that an independent transversal exists whenever we have $\tilde{\gamma}(G[V_I]) \ge 2|I|-1$ for all nonempty $I \subseteq [n]$. This can likewise be derived by combining the standard topological Hall theorem (Theorem \ref{thm:topological-hall}) and Theorem \ref{thm:domination-connectedness}, as basically done in \cite{aharoni2002triangulated, meshulam2001clique}. On the other hand, Haxell's original proof \cite{haxell1995condition} was purely combinatorial, described on line graphs of hypergraphs but also observed to work on general graphs (see \cite{aharoni2007independent, haxell2001note}).

Our next goal is to give a similar purely combinatorial proof of Theorem \ref{thm:domination}(a), not using topological methods. In turn, this yields a simplified combinatorial proof of Corollary \ref{cor:two-Delta-plus-one} compared to that of Buys, Kang, and Ozeki \cite{buys2025reconfiguration}. Our proof ideas are similar to those of \cite{buys2025reconfiguration}, but they incorporate a more domination emphasis like in the combinatorial proof of Haxell's theorem described in \cite{aharoni2007independent}.

For notation, given two vertex subsets $X,Y \subseteq V(G)$, their symmetric difference is the set $X \triangle Y \coloneqq (X - Y) \cup (Y - X)$. Given vertex subset $X \subseteq V(G)$, we denote $I(X) \coloneqq \{i \in [n] : v \in V_i \text{ for some } v \in X\}$. Given a transversal $T$ of classes $\{V_j : j \in J\}$ and a vertex $v \in V_j$ for some $j \in J$, we denote $T \oplus v \coloneqq (T - V_j) \cup \{v\}$, which is the transversal obtained from $T$ by replacing the representative of $V_j$ by the vertex $v$. 

\begin{proof}[Alternative proof of Theorem \ref{thm:domination}(a)]
We will prove the contrapositive statement: If $\recongraph(\cI(G), \cV)$ is disconnected, then there exists a nonempty index set $I \subseteq [n]$ and a vertex set $D \subseteq V_I$ such that $|D| \le 2|I|$ and $D$ totally dominates $G[V_I]$. First, if an independent transversal does not exist, then Haxell's theorem \cite{haxell1995condition, haxell2001note} implies the statement with $|D| \le 2|I| - 2$. Thus, we assume from now on that $\recongraph(\cI(G), \cV)$ is nonempty and disconnected. 

Let $C_1$ and $C_2$ be any two distinct connected components of $\recongraph(\cI(G), \cV)$. We pick an independent transversal from each of these components, say $S$ from $C_1$ and $T$ from $C_2$, in such a way that $I(S \triangle T)$ has minimum cardinality. By definition, $I(S \triangle T)$ is nonempty. We start with the index set $I_0 \coloneqq I(S \triangle T)$ and vertex set $D_0 \coloneqq S \triangle T \subseteq V_{I_0}$, noting that $|D_0| = 2|I_0|$. We also put $R_0 \coloneqq S \cap T$.

Inductively for $k \ge 1$, assume that we have constructed an index set $I_{k-1} \subseteq [n]$, a vertex set $D_{k-1} \subseteq V_{I_{k-1}}$ with $|D_{k-1}| \le 2|I_{k-1}|$, and an independent transversal $R_{k-1}$ of the classes $\{V_i : i \in I(S \cap T)\}$ such that $(S - T) \cup R_{k-1}$ is an independent transversal in $C_1$ and such that $(T - S) \cup R_{k-1}$ is an independent transversal in $C_2$. If $D_{k-1}$ totally dominates $G[V_{I_{k-1}}]$, then we simply return $I \coloneqq I_{k-1}$ and $D \coloneqq D_{k-1}$, and the theorem is proved. Otherwise, we first select any vertex $x_k$ in $V_{I_{k-1}}$ that is not totally dominated by $D_{k-1}$. Among all independent transversals $R_k$ of the classes $\{V_i : i \in I(S \cap T)\}$ with the properties that
\begin{itemize}
	\item $R_{k}$ agrees with $R_{k-1}$ on $I_{k-1} - I(S \triangle T)$,
	\item $(S - T) \cup R_{k}$ is an independent transversal in $C_1$, and
	\item $(T - S) \cup R_{k}$ is an independent transversal in $C_2$,
\end{itemize}
choose one so that $x_k$ has the minimum number of neighbors in $R_k$. Such an $R_k$ exists because $R_{k-1}$ satisfies the three stated properties. Let $Y_k \coloneqq N(x_k) \cap R_k$. Assuming that $Y_k$ is nonempty, we then put $I_{k} \coloneqq I_{k-1} \cup I(Y_k)$ and $D_k \coloneqq D_{k-1} \cup \{x_k\} \cup Y_k$, giving us that $D_k \subseteq V_{I_{k}}$ and $|D_{k}| \le 2|I_k|$. The procedure may then continue. Thus, assuming that the sets $Y_1, Y_2, \ldots$ are all nonempty, we get that $I_0, I_1, I_2, \ldots$ is a growing sequence of index sets contained in $[n]$, meaning the procedure eventually terminates and the statement is proved.

What is left to show is that at each step $k \ge 1$, the vertex set $Y_k$ is indeed nonempty. Suppose for contradiction that $Y_k = \emptyset$, and that we have chosen the smallest such step $k$. Recall that $x_k \in V_{I_{k-1}}$ is not totally dominated by $D_{k-1}$. First suppose that $x_k \in V_{I(S \triangle T)}$. Then we may update $S$ to $S' \coloneqq S \oplus x_k$ and update $T$ to $T' \coloneqq T \oplus x_k$, and now $I(S' \triangle T')$ has smaller cardinality than $I(S \triangle T)$, contradicting how $S$ and $T$ were originally chosen. We cannot have $x_k \in D_{k-1} - (S \triangle T)$ because, by construction, the vertex set $D_{k-1} - (S \triangle T)$ induces vertex-disjoint stars each on at least two vertices. Hence, we have $x_k \in V_j$ for some $j \in I_{k-1} - I(S \triangle T)$ and with $x_k \notin D_{k-1}$. But then letting $\ell \le k-1$ be the smallest index for which $j \in I_{\ell}$, we could have replaced $R_{\ell}$ by $R' = R_{\ell} \oplus x_{k}$, and then $x_{\ell}$ has fewer neighbors in $R'$ than it did in $R_{\ell}$, contradicting the choice of $R_{\ell}$ at step $\ell$. This finishes the proof.
\end{proof}

We proceed to give a complete alternative proof of Theorem \ref{thm:BKO} by Buys, Kang, and Ozeki \cite{buys2025reconfiguration} using the topological approach. Recall that Corollary \ref{cor:two-Delta-plus-one} was a direct consequence of Theorem \ref{thm:topological-reconfiguration-graph} and the topological connectedness lower bound 
\begin{align*}
	\eta(\cI(G)) \ge \frac{|V(G)|}{2\Delta}.
\end{align*}
This latter lower bound was a combination of the inequalities $\eta(\cI(G)) \ge \frac{\tilde{\gamma}(G)}{2}$ and $\tilde{\gamma}(G) \ge \frac{|V(G)|}{\Delta}$. To fully prove Theorem \ref{thm:BKO}, we also need a characterization of when the lower bound $\eta(\cI(G)) \ge \frac{|V(G)|}{2\Delta}$ is tight. This is provided by the following lemma, which was implicit in \cite{aharoni2015cooperative} and proven explicitly in \cite{haxell2024degree}. 

\begin{lem}[\cite{haxell2024degree}] \label{lemma:connectedness-maxdegree-1}
	If $G$ is a graph with maximum degree $\Delta$, then $\eta(\cI(G)) \ge \frac{|V(G)|}{2\Delta}$ with equality if and only if $G$ is the disjoint union of $\frac{|V(G)|}{2\Delta}$ copies of the complete bipartite graph $K_{\Delta,\Delta}$.
\end{lem}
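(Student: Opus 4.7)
The plan is to combine a simple domination counting estimate with Theorem \ref{thm:domination-connectedness}(a) for the lower bound, then handle the equality characterization via a join computation in one direction and a rigidity argument in the other. For the inequality $\eta(\cI(G)) \ge |V(G)|/(2\Delta)$, I would observe that any strongly dominating set $D \subseteq V(G)$ satisfies $|V(G)| \le \sum_{d \in D} \deg(d) \le \Delta \cdot |D|$, because every vertex must have a neighbor in $D$ and each $d \in D$ has at most $\Delta$ neighbors. Hence $\tilde{\gamma}(G) \ge |V(G)|/\Delta$, and Theorem \ref{thm:domination-connectedness}(a) then gives $\eta(\cI(G)) \ge \tilde{\gamma}(G)/2 \ge |V(G)|/(2\Delta)$.

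For the ``if'' direction of the equality case, suppose $G = \bigsqcup_{j=1}^{k} K_{\Delta,\Delta}$ with $k = |V(G)|/(2\Delta)$. Each factor $\cI(K_{\Delta,\Delta})$ is the disjoint union of the two $(\Delta{-}1)$-simplices spanned by the sides of the bipartition, hence homotopy equivalent to $S^0$, which gives $\eta_{\pi}(\cI(K_{\Delta,\Delta})) = \eta_H(\cI(K_{\Delta,\Delta})) = 1$ directly from the definition. Since the independence complex of a disjoint union of graphs is the simplicial join of the independence complexes of the components, Proposition \ref{prop:joins} yields $\eta(\cI(G)) = k = |V(G)|/(2\Delta)$ for both $\eta \in \{\eta_\pi, \eta_H\}$.

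The ``only if'' direction is the main obstacle, and I expect the bulk of the work to lie there. Assuming $\eta(\cI(G)) = |V(G)|/(2\Delta)$, the chain $\eta(\cI(G)) \ge \tilde{\gamma}(G)/2 \ge |V(G)|/(2\Delta)$ forces both inequalities to be tight, so $\tilde{\gamma}(G) = |V(G)|/\Delta$ and $\eta(\cI(G)) = \tilde{\gamma}(G)/2$. Tightness of the counting step forces $G$ to be $\Delta$-regular and to admit a strong dominating set $D$ whose open neighborhoods $\{N(d) : d \in D\}$ partition $V(G)$, so that each vertex has exactly one neighbor in $D$. The delicate step is to combine this rigid combinatorial data with the tightness of $\eta \ge \tilde{\gamma}/2$ to force each component of $G$ to be $K_{\Delta,\Delta}$. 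I would proceed by induction on $|V(G)|$ using a standard Mayer--Vietoris-style recursion of the form $\eta(\cI(G)) \ge \min(\eta(\cI(G-v)), \eta(\cI(G - N[v])) + 1)$ applied to a well-chosen vertex $v$: if any $v$ fails to sit inside a local $K_{\Delta,\Delta}$-component, then either $G - v$ or $G - N[v]$ has a stronger $\tilde{\gamma}$ vs.\ $|V|/\Delta$ slack, so the inductive hypothesis applied to the smaller graph yields strict inequality in the lemma bound for $G$, contradicting tightness. The main technical hurdle is verifying that every deviation from the disjoint-$K_{\Delta,\Delta}$ structure is detectable by some such local move, so that the induction actually closes.
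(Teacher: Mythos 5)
Your lower bound and the ``if'' direction are fine: the counting argument $|V(G)| \le \sum_{d \in D}\deg(d) \le \Delta|D|$ for a strongly dominating set $D$ gives $\tilde{\gamma}(G) \ge |V(G)|/\Delta$, and Theorem \ref{thm:domination-connectedness}(a) then gives the inequality; and for a disjoint union of $K_{\Delta,\Delta}$'s the join computation works (with the small caveat that Proposition \ref{prop:joins} gives only an inequality for $\eta_\pi$, so you should close the $\eta_\pi$ case by also invoking $\eta_\pi \le \eta_H$). Note, however, that the paper does not prove this lemma at all: it is quoted with a citation, being implicit in \cite{aharoni2015cooperative} and proven explicitly in \cite{haxell2024degree}, so the substantive comparison is between your sketch and that external proof.

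The genuine gap is the ``only if'' direction, which is precisely the nontrivial content of the cited result, and your proposal does not actually prove it. Two concrete problems. First, tightness of $\tilde{\gamma}(G) = |V(G)|/\Delta$ only forces the vertices of a minimum strongly dominating set $D$ to have degree $\Delta$ and every vertex to have exactly one neighbor in $D$; it does not force $G$ to be $\Delta$-regular, as you assert, so even the ``rigid combinatorial data'' you start from is weaker than claimed. Second, and more importantly, the inductive step is only asserted: you posit that whenever some vertex $v$ fails to lie in a local $K_{\Delta,\Delta}$, one of $G-v$ or $G-N[v]$ has enough slack in the $\tilde{\gamma}$ versus $|V|/\Delta$ comparison that the recursion $\eta(\cI(G)) \ge \min\bigl(\eta(\cI(G-v)),\, \eta(\cI(G-N[v]))+1\bigr)$ yields strict inequality, but you explicitly flag verifying this as an unresolved ``technical hurdle.'' That verification is the entire difficulty: exploiting equality in the topological bound $\eta \ge \tilde{\gamma}/2$ has no off-the-shelf characterization available in this paper, and deleting $v$ or $N[v]$ can destroy divisibility and degree structure in ways your sketch does not control (for instance, $G-v$ may have $|V|/(2\Delta)$ no longer an integer, and $G-N[v]$ may have isolated vertices, making $\eta$ infinite but the inductive hypothesis inapplicable in the form you need). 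As written, the equality characterization therefore remains unproven; you would either need to carry out this analysis in full (essentially reproving \cite{haxell2024degree}) or cite that result as the paper does.
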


\begin{proof}[Proof of Theorem \ref{thm:BKO}]
Let $G$ be a graph and let $\cV = \{V_1, \ldots, V_n\}$ be a partition of $V(G)$ satisfying the hypothesis of the theorem. Fix any nonempty subset $I \subseteq [n]$, so that $|V_I| \ge 2\Delta|I|$. By assumption, $G[V_I]$ is not the disjoint union of $\frac{|V_I|}{2\Delta}$ copies of $K_{\Delta,\Delta}$, so from Lemma \ref{lemma:connectedness-maxdegree-1} we deduce that $\eta(\cI(G[V_I])) > \frac{|V_I|}{2\Delta} \ge |I|$. Since $\eta(\cdot)$ is integral, this implies that $\eta(\cI(G[V_I])) \ge |I|+1$. By Theorem \ref{thm:topological-reconfiguration-graph}, we conclude that $\recongraph(\cI(G),\cV)$ is connected.
\end{proof}

The sufficient condition for $\recongraph(\cI(G),\cV)$ being connected given in Theorem \ref{thm:BKO} is tight. One easy example that demonstrates this is a single complete bipartite graph $K_{\Delta, \Delta}$ with a single class $V_1$. Another tight example is two disjoint copies of $K_{\Delta,\Delta}$ and two classes $V_1,V_2$ forming standard bipartitions on both of the components. These have disconnected reconfiguration graphs. All tight examples for Theorem \ref{thm:BKO} are precisely characterized in \cite{wdowinski2025tight}, where they are shown to be generated by a simple constructive procedure. On the other hand, for many classes of graphs which avoid copies of $K_{\Delta,\Delta}$, it is possible to decrease the class size lower bound $2\Delta+1$. The following theorem lists examples of this. These are obtained by combining Theorem \ref{thm:topological-reconfiguration-graph} with a topological connectedness lower bound given in the adjacent reference.

\begin{thm} \label{thm:maximum-degree-special-1}
	Let $G$ be a graph with maximum degree $\Delta$ and let $\cV = \{V_1, \ldots, V_n\}$ be a partition of $V(G)$. The following are sufficient conditions for $\recongraph(\cI(G), \cV)$ to be connected:
	\begin{itemize}
		\item[(a)] $G$ is a chordal graph and $|V_i| > \Delta + 1$ for all $i$. \cite{aharoni2002tree}
		\item[(b)] $G$ does not contain the star $K_{1,k+1}$ as an induced subgraph and $|V_i| > \frac{(2k - 1)\Delta + k}{k}$ for all $i$. \cite{aharoni2016eigenvalues}
		\item[(c)] $G$ is the line graph of a $k$-uniform linear hypergraph and $|V_i| > \Delta + k$ for all $i$. \cite{aharoni2016eigenvalues}
		\item[(d)] $G$ has maximum average degree $a$ and $|V_i| > \frac{2\Delta^2}{2\Delta - a}$ for all $i$. \cite{haxell2024degree}
	\end{itemize}
\end{thm}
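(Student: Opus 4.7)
The plan is to reduce every part to Theorem \ref{thm:topological-reconfiguration-graph} applied to the independence complex $\cI(G)$ together with the given partition $\cV$. What needs to be verified in each case is the hypothesis
\begin{align*}
    \eta(\cI(G[V_I])) \ge |I| + 1 \qquad \text{for every nonempty } I \subseteq [n].
\end{align*}
Each of the structural properties in (a)--(d) is inherited by induced subgraphs: induced subgraphs of chordal graphs are chordal; the class of $K_{1,k+1}$-induced-free graphs is closed under taking induced subgraphs; an induced subgraph of the line graph of a $k$-uniform linear hypergraph $H$ is the line graph of the $k$-uniform linear subhypergraph obtained from $H$ by removing the corresponding hyperedges; and the maximum average degree only decreases when passing to subgraphs. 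Moreover $\Delta(G[V_I]) \le \Delta$ in every case. So it suffices, in each case, to plug $G[V_I]$ into the corresponding topological connectedness lower bound for $\cI(\cdot)$ from the cited reference and check the arithmetic against the class-size assumption.

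More precisely, each of the cited references supplies a vertex-count lower bound of the shape $\eta(\cI(H)) \ge |V(H)|/c$, where $c$ is the property-specific constant appearing in the statement: $c = \Delta + 1$ in (a), $c = ((2k-1)\Delta + k)/k$ in (b), $c = \Delta + k$ in (c), and $c = 2\Delta^2/(2\Delta - a)$ in (d). Applying the relevant bound to $H = G[V_I]$ and using $|V_I| = \sum_{i \in I} |V_i| > c \cdot |I|$, which follows from the assumption $|V_i| > c$ for all $i$, gives
\begin{align*}
    \eta(\cI(G[V_I])) \ge \frac{|V_I|}{c} > |I|.
\end{align*}
Since $\eta$ is integer-valued, this yields $\eta(\cI(G[V_I])) \ge |I| + 1$, so Theorem \ref{thm:topological-reconfiguration-graph} applies and $\recongraph(\cI(G), \cV)$ is connected.

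There is no real obstacle; the heavy lifting is in the four cited topological connectedness lower bounds, which are used as black boxes. The only points requiring attention are the hereditary nature of the hypothesis under the operation $G \mapsto G[V_I]$ (handled in the first paragraph), and the fact that $\Delta$, $k$, and $a$ in the bound on $\eta(\cI(G[V_I]))$ may be replaced by (or bounded above by) the corresponding parameters of $G$ without weakening the inequality $\eta(\cI(G[V_I])) > |I|$. For part (d) in particular, it is useful to note that $\mathrm{mad}(G[V_I]) \le a$ and $\Delta(G[V_I]) \le \Delta$, so the bound $2\Delta^2/(2\Delta - a)$ stated for $G$ is a valid upper bound for the analogous quantity of $G[V_I]$, and the chain of inequalities above still goes through. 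The whole proof then amounts to listing the four bounds and performing this one-line computation in each case.
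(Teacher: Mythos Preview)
Your proposal is correct and matches the paper's approach exactly. The paper does not give a detailed proof of this theorem; it simply states that the result is ``obtained by combining Theorem \ref{thm:topological-reconfiguration-graph} with a topological connectedness lower bound given in the adjacent reference,'' which is precisely the reduction you carry out, including the integrality step $\eta(\cI(G[V_I])) > |I| \Rightarrow \eta(\cI(G[V_I])) \ge |I|+1$ that also appears in the paper's proofs of Corollary \ref{cor:two-Delta-plus-one} and Theorem \ref{thm:BKO}.
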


\subsection{Reconfigurations of rainbow matchings}

In this subsection, we describe two applications about reconfigurations of rainbow matchings in graphs and hypergraphs. Given a multi-hypergraph $G$ and a partition $\cE = \{E_1, \ldots, E_n\}$ of its edge set $E(G)$, a \textit{rainbow matching} of $(G, \cE)$ is a matching of $G$ that forms a partial transversal of $\cE$. It is a \textit{full rainbow matching} if it is a transversal of $\cE$. Letting $\cM(G)$ denote the matching complex of $G$, a full rainbow matching of $(G, \cE)$ is the same as a colorful simplex of $(\cM(G), \cE)$, and thus we study the reconfiguration graph $\recongraph(\cM(G), \cE)$. The existence of rainbow matchings is a widely studied topic inspired by the famous Ryser--Brualdi--Stein conjecture \cite{brualdi1991combinatorial, ryser1967neuere, stein1975transversals} (a proof for large even $n$ was announced in \cite{montgomery2023proof}), which asserts the existence of a rainbow matching of size $n - 1$ in any edge partition of $K_{n,n}$ into $n$ perfect matchings, as well as a full rainbow matching when $n$ is odd. 
Our work motivates the study of reconfigurations of rainbow matchings. 

For one result on reconfigurations of rainbow matchings, we have the following anologue of Corollary \ref{cor:two-Delta-plus-one} in the edge setting. 

\begin{thm} \label{thm:full-rainbow}
	Let $G$ be an $r$-graph, and let $\cE = \{E_1, \ldots, E_n\}$ be a partition of $E(G)$. If $G$ has maximum degree $\Delta$ and $|E_i| \ge r\Delta+1$ for all $i$, then $\recongraph(\cM(G), \cE)$ is connected.
\end{thm}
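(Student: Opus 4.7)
My plan is to apply Theorem~\ref{thm:topological-reconfiguration-graph} with $\eta = \eta_H$ to the matching complex $\cM(G)$, whose vertex set is $E(G)$, partitioned by $\cE = \{E_1, \ldots, E_n\}$. This reduces the theorem to verifying the excess topological Hall condition $\eta_H(\cM(G)[E_I]) \ge |I|+1$ for every nonempty $I \subseteq [n]$. Since $\cM(G)[E_I] = \cM(G[E_I])$, where $G[E_I]$ denotes the sub-$r$-graph with edge set $E_I$, the task becomes a lower bound on the homological connectedness of matching complexes of sub-$r$-graphs of $G$, each of which still has maximum degree at most $\Delta$.

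For the bound itself, I would use the fractional matching inequality $\eta_H(\cM(H)) \ge \nu^{\ast}(H)/r$ from Theorem~\ref{thm:matching-number}. The fractional matching number of $G[E_I]$ is easy to lower-bound by hand: assigning weight $y_e = 1/\Delta$ to every edge of $G[E_I]$ is a feasible fractional matching (each vertex has degree at most $\Delta$), yielding $\nu^{\ast}(G[E_I]) \ge |E_I|/\Delta \ge (r\Delta+1)|I|/\Delta$. Dividing by $r$ gives
\begin{align*}
    \eta_H(\cM(G[E_I])) \;\ge\; \frac{(r\Delta+1)|I|}{r\Delta} \;=\; |I| + \frac{|I|}{r\Delta} \;>\; |I|.
\end{align*}
Since $|I| \ge 1$ and $\eta_H$ is integer-valued, this strict inequality upgrades to $\eta_H(\cM(G[E_I])) \ge |I|+1$, and then Theorem~\ref{thm:topological-reconfiguration-graph} concludes that $\recongraph(\cM(G), \cE)$ is connected.

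The main subtlety I anticipate is the choice of connectedness estimate rather than any delicate argument. The integer matching bound $\eta(\cM(H)) \ge \nu(H)/r$ combined with the greedy estimate $\nu(H) \ge |E(H)|/(r(\Delta-1)+1)$ falls short of $|I|+1$ already for $r=2$, so one has to notice that the hypothesis $|E_i| \ge r\Delta+1$ provides a tiny fractional slack $|I|/(r\Delta)$ that is captured only by the stronger homological inequality $\eta_H \ge \nu^{\ast}/r$. This is the same mechanism that makes the tight constant $+1$ in Corollary~\ref{cor:two-Delta-plus-one} work for vertex partitions, transferred to the edge partition setting via matching complexes.
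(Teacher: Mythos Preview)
Your proposal is correct and matches the paper's own proof essentially verbatim: the paper also applies Theorem~\ref{thm:topological-reconfiguration-graph} with $\eta_H$, invokes $\eta_H(\cM(H)) \ge \nu^{\ast}(H)/r$ from Theorem~\ref{thm:matching-number}, and uses the fractional matching $x_e = 1/\Delta$ to get $\nu^{\ast}(G[E_I]) \ge |E_I|/\Delta$, then rounds up via integrality. Your closing remark about why the integer matching bound is insufficient is a nice addition not spelled out in the paper.
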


This follows from combining Theorem \ref{thm:topological-reconfiguration-graph} and the topological connectedness lower bound $\eta_H(\cM(G)) \ge \frac{|E(G)|}{r\Delta}$. The latter inequality is derived from Theorem \ref{thm:matching-number} by noticing that the vector $(x_e)_{e \in E(G)}$ given by $x_e = 1/\Delta$, for all $e \in E(G)$, is a fractional matching of $G$, so that $\nu^\ast(G) \ge |E(G)|/\Delta$. The sufficient condition of Theorem \ref{thm:full-rainbow} is tight. The easiest examples with $|E_i| = r\Delta$ for all $i$ and $\recongraph(\cM(G), \cE)$ disconnected are $r \times r$ grids $G$ with a single class $E_1$. The $r \times r$ grid $G$ consists of $r^2$ vertices $\{v_{i,j} : i, j \in [r]\}$, and there $2r$ edges $e_k = \{v_{k, j} : j \in [r]\}$ and $f_k = \{v_{i,k} : i \in [r]\}$ for $k \in [r]$. The maximum degree here is $\Delta = 2$. For tight examples with larger maximum degree $\Delta$, we may blow up the edges of $G$ into many parallel edges. More interesting tight examples can be produced using the construction procedure of \cite{wdowinski2025tight}.

In the direction of the Ryser--Buraldi--Stein conjecture, a more general conjecture of Stein \cite{stein1975transversals} asserts the existence a rainbow matching of size $n - 1$ in $K_{n,n}$ whenever $E_1, \ldots, E_n$ are any pairwise disjoint edge subsets of size $n$. Instead of a Latin square, such an edge partition corresponds to a so-called equi-$n$-square. Stein's conjecture was disproven in \cite{pokrovskiy2019counterexample}, but see \cite{anastos2025note, chakraborti2024almost} for more recent results. Aharoni, Berger, Kotlar, and Ziv \cite{aharoni2017conjecture} used topological methods to show that there exists a rainbow matching of size at least $\frac{2n}{3} - \frac{1}{2}$ in the setting of Stein's conjecture. The following is a reconfiguration analogue of their result.

\begin{thm} \label{thm:Latin-square}
	If $\cE = \{E_1, \ldots, E_n\}$ is a partition of the edge set of $K_{n,n}$ into $n$ sets each of size $n$, then $\recongraph(\cM(K_{n,n}), \cE; \left\lceil \frac{2n}{3} - \frac{3}{2} \right\rceil)$ is connected.
\end{thm}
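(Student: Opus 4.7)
The plan is to apply the deficiency version of our reconfiguration topological Hall theorem (Theorem~\ref{thm:topological-reconfiguration-deficiency}) to the matching complex $\cM(K_{n,n})$ with the edge partition $\cE = \{E_1, \ldots, E_n\}$. Setting $d \coloneqq n - \lceil 2n/3 - 3/2 \rceil$, I aim to verify the hypothesis
\[
\eta(\cM(G[E_I])) \ge |I| - d + 1 \hspace{20pt} \text{for every nonempty } I \subseteq [n],
\]
where $G \coloneqq K_{n,n}$, and then conclude via Theorem~\ref{thm:topological-reconfiguration-deficiency} that the reconfiguration graph $\recongraph(\cM(G), \cE; n - d) = \recongraph(\cM(G), \cE; \lceil 2n/3 - 3/2 \rceil)$ is connected.

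Two ingredients feed the verification. First, since every vertex of $K_{n,n}$ has degree exactly $n$, any vertex cover of the bipartite subgraph $G[E_I]$ covers at most $n$ edges per cover vertex, so its minimum vertex cover has size at least $|E_I|/n = |I|$; K\"onig's theorem applied to $G[E_I]$ then yields $\nu(G[E_I]) \ge |I|$. Second, I invoke the topological connectedness bound for matching complexes of bipartite graphs that underlies the Aharoni--Berger--Kotlar--Ziv rainbow matching theorem~\cite{aharoni2017conjecture}:
\[
\eta(\cM(H)) \ge \bigl\lceil 2\nu(H)/3 - 1/2 \bigr\rceil \hspace{20pt} \text{for every bipartite graph } H.
\]
This bound can be derived from the Aharoni--Berger matroid intersection machinery (Theorem~\ref{thm:aharoni-berger}) by expressing matchings in $H$ as common independent sets of the two vertex-side partition matroids, and it is precisely what powers the existence of a rainbow matching of size $\lceil 2n/3 - 1/2 \rceil$ in the Stein conjecture setting when combined with the classical deficiency topological Hall theorem.

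Combining the two gives $\eta(\cM(G[E_I])) \ge \lceil 2|I|/3 - 1/2 \rceil$ for every nonempty $I \subseteq [n]$, so the verification reduces to the rounding inequality $\lceil 2|I|/3 - 1/2 \rceil \ge |I| - d + 1$. This is tight precisely at $|I| = n$ (up to parity in $n$) and has slack growing linearly in $n - |I|$ for smaller $|I|$, so it holds throughout the range. Theorem~\ref{thm:topological-reconfiguration-deficiency} then delivers the stated connectedness. The main technical obstacle is the bipartite matching complex bound; once it is invoked as a black box from~\cite{aharoni2017conjecture} (or alternatively re-derived via the intersection-complex framework of Theorem~\ref{thm:complex-matroid-reconfiguration} in its $m=0$ specialization), the remainder of the argument is routine. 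As throughout the paper, the reconfiguration result loses exactly one unit in the matching size relative to the existence statement, reflecting the excess condition that distinguishes Theorem~\ref{thm:topological-reconfiguration-deficiency} from the classical deficiency topological Hall theorem.
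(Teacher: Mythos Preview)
Your proposal is correct and follows essentially the same route as the paper: combine the deficiency reconfiguration theorem (Theorem~\ref{thm:topological-reconfiguration-deficiency}) with the Aharoni--Berger--Kotlar--Ziv lower bound on the topological connectedness of matching complexes of subgraphs of $K_{n,n}$ \cite[Theorem~3.10]{aharoni2017conjecture}. Your additional K\"onig step and arithmetic check spell out what the paper leaves implicit; one minor caution is that the precise form in which you state the ABKZ bound (purely in terms of $\nu(H)$) and your aside that it follows from Theorem~\ref{thm:aharoni-berger} should be checked against \cite{aharoni2017conjecture}, but this does not affect the argument.
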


This follows from combining Theorem \ref{thm:topological-reconfiguration-deficiency} and \cite[Theorem 3.10]{aharoni2017conjecture}, the latter being a lower bound on the topological connectedness of the matching complex of subgraphs of $K_{n,n}$. In terms of equi-$n$-squares, Theorem \ref{thm:Latin-square} states that for any two partial transversals of size $\left\lceil \frac{2n}{3} - \frac{3}{2} \right\rceil$ in an equi-$n$-square, there exists a sequence of partial transversals from one to other where a given partial transversal in the sequence is obtained from the previous one by adding an entry in a distinct row and column from the other entries of that partial transversal, and then deleting some other entry. We leave open the following problems, for which the corresponding existence problems were solved in \cite{chakraborti2024almost} and \cite{montgomery2023proof}, respectively.

\begin{prob} \label{prob:equi}
If $\cE = \{E_1, \ldots, E_n\}$ consists of edge sets $E_i$ each of size $n$, estimate the optimal threshold $k = k(n)$ for which $\recongraph(\cM(K_{n,n}), \cE; k)$ is connected. In particular, determine if $k(n) = n - o(n)$ is sufficient. 
\end{prob}

\begin{prob} \label{prob:latin}
If $\cE = \{E_1, \ldots, E_n\}$ consists of perfect matchings $E_i$, determine if $k(n) = n - c$ is sufficient for  $\recongraph(\cM(K_{n,n}), \cE; k)$ being connected, for some absolute constant $c \ge 2$.
\end{prob}

\subsection{Reconfigurations of bipartite hypergraph matchings} \label{sec:hypergraph-matching}

In this subsection, we describe applications about reconfigurations of matchings in bipartite hypergraphs. Reconfigurations of matchings in graphs have been a much studied topic \cite{bonamy2019perfect, ito2011complexity, ito2019reconfiguration}. As one of the first results in this area, Ito, Demaine, Harvey, Papadimitriou, Sideri, Uehara, and Uno \cite{ito2011complexity} showed that the following natural matching reconfiguration problem can be solved in polynomial-time: given two matchings of a graph each of size $k$, determine whether one can be obtained from the other by deleting or adding one edge at a time while always maintaining a matching with at least $k - 1$ edges. The matching reconfiguration problem on bipartite graphs that we define below is basically equivalent to this one, but for bipartite hypergraphs with larger edge sizes our adjacency conditions are slightly stronger.

A multi-hypergraph $H$ is said to be \textit{bipartite} if its vertex set can be partitioned into two classes $(A,B)$, such that every edge of $H$ intersects $A$ in exactly one vertex. For a subset $X \subseteq A$, its \textit{link} $\mathrm{lk}_H(X)$ is the multi-hypergraph with vertex set $B$ and edge multiset $\{e - \{x\} : e \in E(H), x \in e \cap A\}$. Observe that a matching of $H$ that covers a subset $X \subseteq A$ corresponds to a full rainbow matching of $(G, \cE)$ where $G \coloneqq H[B]$ and $\cE \coloneqq \{ \mathrm{lk}_H(x) : x \in X\}$. We define the reconfiguration graph $\recongraph_{\mathrm{Mat}}(H, A; k)$ as having vertex set consisting of all matchings of $H$ of size $k$, and two such matchings $M_1, M_2$ are joined by an edge in the graph if their symmetric difference consists of two edges that are non-intersecting on the $B$-side. When $k = |A|$, we denote the reconfiguration graph by $\recongraph_{\mathrm{Mat}}(H, A)$. 

Using the above connection to rainbow matchings, Theorem \ref{thm:topological-reconfiguration-deficiency} translates to the following result in the setting of bipartite hypergraph matchings.

\begin{thm} \label{thm:deficiency-link}
Let $H$ be a bipartite multi-hypergraph with bipartition $(A,B)$, let $d \ge 0$ be an integer, and let $\eta \in \{\eta_{\pi}, \eta_H\}$. If
\begin{align*}
	\eta(\cM(\mathrm{lk}_H(X))) \ge |X| - d + 1 \hspace{25pt} \text{ for all nonempty } X \subseteq A,
\end{align*}
then $\recongraph_{\mathrm{Mat}}(H, A; |A| - d)$ is connected.
\end{thm}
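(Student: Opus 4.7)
The plan is to apply the deficiency version of the topological Hall theorem for reconfigurations, Theorem \ref{thm:topological-reconfiguration-deficiency}, to a suitable auxiliary multi-hypergraph. Enumerate $A = \{x_1, \ldots, x_n\}$, and let $G$ be the multi-hypergraph on vertex set $B$ formed as the disjoint union of the links of the vertices of $A$: for each $i \in [n]$, $G$ contains a copy $E_i$ of $\mathrm{lk}_H(x_i)$, with these $E_i$ pairwise disjoint as sub-multisets of $E(G)$. The family $\cE \coloneqq \{E_1, \ldots, E_n\}$ is then a partition of $E(G)$, and each edge $e \in E(H)$ with $e \cap A = \{x_i\}$ corresponds uniquely to the edge $e - \{x_i\} \in E_i$ of $G$.

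Under this bijection between $E(H)$ and $E(G)$, a set $M \subseteq E(H)$ is a matching of $H$ covering precisely the subset $X = \{x_i : i \in I\} \subseteq A$ if and only if its image is a partial colorful matching of $(\cM(G), \cE)$ spanning exactly the classes $\{E_i : i \in I\}$: disjointness on the $A$-side translates to colorfulness (at most one edge per class), while disjointness on the $B$-side is precisely the condition of being a matching in $G$. Moreover, for any nonempty $I \subseteq [n]$, the induced subcomplex $\cM(G)[E_I]$ coincides with $\cM(\mathrm{lk}_H(X))$, so the hypothesis of Theorem \ref{thm:deficiency-link} is, verbatim, the hypothesis of Theorem \ref{thm:topological-reconfiguration-deficiency} applied to $\cC \coloneqq \cM(G)$ and $\cV \coloneqq \cE$.

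Invoking Theorem \ref{thm:topological-reconfiguration-deficiency} then yields that $\recongraph(\cM(G), \cE; n - d)$ is connected. The remaining step is verifying that the above bijection induces a graph isomorphism $\recongraph(\cM(G), \cE; n - d) \cong \recongraph_{\mathrm{Mat}}(H, A; |A| - d)$. The vertex sets correspond directly. For adjacency, two partial colorful matchings $M_1, M_2$ of size $n - d$ are joined in the former if and only if $M_1 \cup M_2 \in \cM(G)$ has cardinality $n - d + 1$, equivalently $|M_1 \cap M_2| = n - d - 1$ and the edges of $M_1 \cup M_2$ are pairwise disjoint on $B$. Since each $M_i$ is itself a matching, the only genuine requirement is that the two edges in $M_1 \triangle M_2$ be disjoint on the $B$-side, which matches the definition of $\recongraph_{\mathrm{Mat}}$. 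This equivalence of adjacency conditions — with the subtlety that the two differing edges may or may not lie in the same class $E_i$ — is the one point needing careful verification; the rest of the proof is routine bookkeeping through the constructions.
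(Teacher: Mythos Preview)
Your proposal is correct and follows exactly the paper's approach: the paper simply states that ``Using the above connection to rainbow matchings, Theorem \ref{thm:topological-reconfiguration-deficiency} translates to the following result,'' and you have carefully spelled out that translation. Your verification that both reconfiguration graphs allow the two differing edges to lie in the same or in different classes $E_i$ is the only nontrivial detail, and you have handled it correctly.
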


Combining Theorem \ref{thm:deficiency-link} (for $d = 0$) and Theorem \ref{thm:matching-number}, we immediately get a reconfiguration version of Hall's theorem for hypergraphs by Aharoni and Haxell \cite{aharoni2000hall}. Recall that $\nu(\cdot)$ denotes the matching number of a multi-hypergraph (it could also be replaced by the fractional matching number $\nu^\ast(\cdot)$ here).

\begin{thm} \label{thm:hall-hypergraph}
Let $H$ be a bipartite $r$-graph with bipartition $(A,B)$. If
\begin{align*}
	\nu(\mathrm{lk}_H(X)) \ge (r - 1)|X|+1 \hspace{25pt} \text{ for all nonempty } X \subseteq A,
\end{align*}
then $\recongraph_{\mathrm{Mat}}(H, A)$ is connected.
\end{thm}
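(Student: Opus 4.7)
The plan is a direct combination of the two stated ingredients: the deficiency reconfiguration principle for bipartite hypergraph matchings (Theorem \ref{thm:deficiency-link}) with $d=0$, and the matching-number lower bound on the topological connectedness of a matching complex (Theorem \ref{thm:matching-number}). The only thing to verify is that the combinatorial hypothesis $\nu(\mathrm{lk}_H(X)) \ge (r-1)|X|+1$ feeds into the topological hypothesis $\eta(\cM(\mathrm{lk}_H(X))) \ge |X|+1$ required by Theorem \ref{thm:deficiency-link} in the setting $|A| - d = |A|$.

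First I would record the uniformity of the link: since $H$ is a bipartite $r$-graph whose edges each meet $A$ in exactly one vertex, every edge of $\mathrm{lk}_H(X)$ has cardinality $r-1$, so $\mathrm{lk}_H(X)$ is an $(r-1)$-graph. Applying Theorem \ref{thm:matching-number} to this $(r-1)$-graph and then plugging in the combinatorial hypothesis yields
\[
\eta(\cM(\mathrm{lk}_H(X))) \ge \frac{\nu(\mathrm{lk}_H(X))}{r-1} \ge |X| + \frac{1}{r-1}
\]
for every nonempty $X \subseteq A$. Since $\eta$ takes integer values, $|X|$ is an integer, and $1/(r-1)$ is strictly positive, this real-valued inequality rounds up to the desired bound $\eta(\cM(\mathrm{lk}_H(X))) \ge |X|+1$.

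Finally, I would invoke Theorem \ref{thm:deficiency-link} with $d=0$ on $H$ to conclude that $\recongraph_{\mathrm{Mat}}(H,A;|A|)$, which by definition equals $\recongraph_{\mathrm{Mat}}(H,A)$, is connected. There is no substantive obstacle; the only subtlety worth noting is the integrality rounding, which is what makes the clean combinatorial hypothesis $(r-1)|X|+1$ suffice in place of the naively expected fractional-threshold target $(r-1)(|X|+1)$. The same proof works equally well with $\eta_\pi$ or $\eta_H$, and in fact the $\eta_H$ version could be strengthened further by replacing $\nu$ with $\nu^\ast$ throughout, using the second bound in Theorem \ref{thm:matching-number}.
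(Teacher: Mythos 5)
Your proposal is correct and is exactly the paper's argument: the paper derives Theorem \ref{thm:hall-hypergraph} by combining Theorem \ref{thm:deficiency-link} with $d=0$ and Theorem \ref{thm:matching-number}, and the details you supply (the link of any $X \subseteq A$ is $(r-1)$-uniform, so $\eta(\cM(\mathrm{lk}_H(X))) \ge \nu(\mathrm{lk}_H(X))/(r-1) > |X|$, hence $\ge |X|+1$ by integrality of $\eta$) are precisely the "immediate" computation the paper leaves implicit. Your closing remark about replacing $\nu$ by $\nu^\ast$ in the $\eta_H$ case also matches the paper's parenthetical comment following the theorem.
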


The condition of Theorem \ref{thm:hall-hypergraph} is tight. Examples showing this are similar to those of Theorem \ref{thm:full-rainbow}. Specifically, construct a bipartite hypergraph $H$ by starting with the $(r-1) \times (r-1)$ grid and extending each edge by adding a single vertex $x$. Then $\nu(\mathrm{lk}_H(\{x\})) = r - 1$ but $\recongraph_{\mathrm{Mat}}(H, \{x\})$ is disconnected. More interesting tight examples are also possible. We highlight Theorem \ref{thm:hall-hypergraph} in the special case of graphs, $r = 2$. 

\begin{cor} \label{cor:hall-reconfiguration}
Let $H$ be a bipartite graph with bipartition $(A,B)$. If
\begin{align*}
	|N(X)| \ge |X| + 1 \hspace{25pt} \text{for all nonempty } X \subseteq A,
\end{align*}
then $\recongraph_{\mathrm{Mat}}(H, A)$ is connected
\end{cor}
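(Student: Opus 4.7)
The plan is to derive this as a direct specialization of Theorem \ref{thm:hall-hypergraph} to the case $r = 2$. First I would unpack the definition of the link for a bipartite graph: each edge of $H$ has the form $\{a, b\}$ with $a \in A$ and $b \in B$, so for $X \subseteq A$ the link $\mathrm{lk}_H(X)$ is a multi-hypergraph on vertex set $B$ whose edge multiset is $\{\{b\} : \{x, b\} \in E(H), \, x \in X\}$, i.e., a multiset of singletons.

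Next I would compute $\nu(\mathrm{lk}_H(X))$. A matching in a hypergraph whose edges are singletons is just a collection of pairwise disjoint singletons, so its size equals the number of distinct vertices of $B$ that appear in some edge of the link. By construction, these are exactly the vertices of $B$ adjacent in $H$ to some vertex of $X$, which is the neighborhood $N(X)$. Hence $\nu(\mathrm{lk}_H(X)) = |N(X)|$.

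Finally I would invoke Theorem \ref{thm:hall-hypergraph} with $r = 2$: its hypothesis $\nu(\mathrm{lk}_H(X)) \ge (r-1)|X| + 1$ becomes precisely $|N(X)| \ge |X| + 1$ for all nonempty $X \subseteq A$, and its conclusion that $\recongraph_{\mathrm{Mat}}(H, A)$ is connected is exactly what is claimed. Since this is a mechanical specialization, there is no substantive obstacle; the only point worth checking is that the adjacency condition in $\recongraph_{\mathrm{Mat}}(H, A)$ (symmetric difference consisting of two edges non-intersecting on the $B$-side) reduces in the graph case $r = 2$ to the familiar ``swap one edge'' relation, which is immediate since distinct edges of a bipartite graph automatically differ on the $A$-side as well.
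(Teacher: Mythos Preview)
Your proposal is correct and matches the paper's approach exactly: the paper presents this corollary simply as the $r=2$ specialization of Theorem~\ref{thm:hall-hypergraph}, and your unpacking of $\nu(\mathrm{lk}_H(X)) = |N(X)|$ is the right justification. One small quibble: in your final parenthetical, the two edges in the symmetric difference of two $A$-saturating matchings actually \emph{share} their $A$-vertex and differ on the $B$-side (not the other way around), which is why the non-intersection condition on $B$ is automatic---but this does not affect the argument.
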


Combining the deficiency version of Corollary \ref{cor:hall-reconfiguration} with Hall's matching theorem, we deduce the following.

\begin{cor} \label{cor:konig-reconfiguration}
If $H$ is a bipartite graph with bipartition $(A,B)$ and $k \le \nu(H) - 1$, then $\recongraph_{\mathrm{Mat}}(H, A; k-1)$ is connected.
\end{cor}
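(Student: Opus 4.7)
The plan is to apply the deficiency version of Corollary \ref{cor:hall-reconfiguration} (the graph case $r = 2$ of Theorem \ref{thm:deficiency-link}) with a suitably chosen $d$, and then discharge the resulting neighborhood hypothesis using K\"onig's deficiency form of Hall's matching theorem.

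First I would record the deficiency version explicitly: for a bipartite graph $H$ with bipartition $(A, B)$, if $|N(X)| \ge |X| - d + 1$ for every nonempty $X \subseteq A$, then $\recongraph_{\mathrm{Mat}}(H, A; |A| - d)$ is connected. This drops out of Theorem \ref{thm:deficiency-link} upon observing that for the graph case each link $\mathrm{lk}_H(X)$ is a $1$-uniform multi-hypergraph on $B$, so that $\cM(\mathrm{lk}_H(X))$ is the full simplex on $N(X)$ and therefore $\eta(\cM(\mathrm{lk}_H(X))) = |N(X)|$ for both choices of $\eta$.

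Next, I would take $d \coloneqq |A| - k + 1$, which aligns the conclusion with the desired $\recongraph_{\mathrm{Mat}}(H, A; k - 1)$ and reduces the task to verifying
\begin{align*}
	|N(X)| \ge |X| - |A| + k \hspace{20pt} \text{for every nonempty } X \subseteq A.
\end{align*}
K\"onig's deficiency identity states that $\nu(H) = |A| - \max_{Y \subseteq A}(|Y| - |N(Y)|)$, which rearranges to $|N(X)| \ge |X| - |A| + \nu(H)$ for every $X \subseteq A$. Combined with the hypothesis $\nu(H) \ge k + 1$, this yields $|N(X)| \ge |X| - |A| + k + 1$, which is strictly stronger than what is needed, so the deficiency version of Corollary \ref{cor:hall-reconfiguration} applies and the conclusion follows.

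There is no genuine obstacle here: beyond the translation between the matching and rainbow matching viewpoints already set up in Section \ref{sec:hypergraph-matching}, the argument uses only K\"onig's deficiency identity as a black box, and I expect the complete proof to take only a few lines.
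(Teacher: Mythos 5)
Your proposal is correct and follows the paper's own (one-line) derivation: apply the deficiency version of Corollary \ref{cor:hall-reconfiguration} (i.e., Theorem \ref{thm:deficiency-link} with $r=2$) with $d = |A|-k+1$ and verify its hypothesis via the defect form of Hall/K\"onig together with $\nu(H) \ge k+1$. One cosmetic slip: $\cM(\mathrm{lk}_H(X))$ is not literally the full simplex on $N(X)$ (parallel singleton edges occur when a vertex of $B$ has several neighbors in $X$, and a full simplex would have $\eta = \infty$ anyway), but the inequality you actually use, $\eta(\cM(\mathrm{lk}_H(X))) \ge |N(X)|$, is valid — e.g., by Theorem \ref{thm:matching-number} applied to the $1$-uniform link or by the join formula of Proposition \ref{prop:joins} — so the argument stands.
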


Corollaries \ref{cor:hall-reconfiguration} and \ref{cor:konig-reconfiguration} are not exactly new, as they are easy consequences of the matching reconfigurability criterion given in \cite[Lemma 1]{ito2011complexity}. As mentioned above, there is also a polynomial-time algorithm for finding appropriate reconfiguration sequences of matchings. In their monograph, Lov\'asz and Plummer \cite{lovasz2009matching} refer to the quantity $\max \{|N(X)| - |X| : X \subseteq A, X \neq \emptyset\}$ as the \textit{surplus} of bipartite graph $H$ with bipartition $(A, B)$, and they give some combinatorial characterizations of it. However, the surplus does not appear to have previously been related to matching reconfigurations. Unlike the case $r = 2$, the reconfiguration graph $\recongraph_{\mathrm{Mat}}(H, A; \nu(H)-1)$ is not necessarily connected for bipartite $r$-graphs $H$ when $r \ge 3$. For example, consider the $r$-graph $H$ with the $2r$ vertices $x_1, \ldots, x_r, y_1, \ldots, y_r$ and the $4$ edges $\{x_1, x_2, x_3, \ldots, x_r\}$, $\{x_1, x_2, y_3, \ldots, y_r\}$, $\{y_1, y_2, x_3, \ldots, x_r\}$, $\{y_1, y_2, y_3, \ldots, y_r\}$, and let $A = \{x_1, y_1\}$. Observe that the edge $\{x_1, x_2, x_3, \ldots, x_r\}$ cannot be reconfigured to the edge $\{x_1, x_2, y_3, \ldots, y_r\}$.

We conclude by considering a reconfiguration version of Ryser's conjecture \cite{henderson1971permutation} for $r$-graphs. A multi-hypergraph $H$ is said to be \textit{$r$-partite} if there exists a partition of its vertex set into $r$ classes $(A_1, \ldots, A_r)$ such that every edge intersects every class $A_i$ in at most one vertex. Notice that an $r$-partite $r$-graph is necessarily a bipartite multi-hypergraph. A \textit{vertex cover} of a multi-hypergraph $H$ is a set of vertices that intersects every edge. Let $\tau(H)$ denote the minimum size of a vertex cover of $H$. It is easy to derive from definitions that every $r$-graph $H$ satisfies $\nu(H) \le \tau(H) \le r \cdot \nu(H)$. Ryser (actually his PhD student Henderson \cite{henderson1971permutation}) conjectured that the second inequality can be improved to $\tau(H) \le (r - 1)\nu(H)$ whenever $H$ is $r$-partite. K\H{o}nig's theorem is the case $r = 2$ of Ryser's conjecture, and in a breakthrough application of the topological Hall theorem, Aharoni \cite{aharoni2001ryser} proved the case $r = 3$. Tight constructions for $r = 3$ have been characterized in \cite{haxell2018extremal}, while Ryser's conjecture is still wide open for $r \ge 4$. A generalized conjecture by Lov\'asz \cite{lovasz1975minimax} has recently been disproven in \cite{abiad2025infinitely, clow2025counterexample}. Aharoni's theorem has the following reconfiguration variation.

\begin{thm}
Let $H$ be a $3$-partite $3$-graph and let $A$ be one of the $3$ partition classes. If $k < \tau(H)/2$, then $\recongraph_{\mathrm{Mat}}(H, A; k)$ is connected.
\end{thm}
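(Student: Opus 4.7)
The plan is to apply the deficiency-version Theorem \ref{thm:deficiency-link} with $d \coloneqq |A| - k$, reducing everything to a known topological connectedness bound for bipartite matching complexes. Since $H$ is $3$-partite with partition classes $(A, B_1, B_2)$, we view it as bipartite with bipartition $(A, B)$ where $B \coloneqq B_1 \cup B_2$. For any nonempty $X \subseteq A$, the link $\mathrm{lk}_H(X)$ is then a bipartite multi-graph between $B_1$ and $B_2$, so K\"onig's theorem gives $\nu(\mathrm{lk}_H(X)) = \tau(\mathrm{lk}_H(X))$, and Theorem \ref{thm:matching-number} (applied with $r = 2$) together with integrality of $\eta$ yields
\begin{align*}
\eta(\cM(\mathrm{lk}_H(X))) \ge \left\lceil \tau(\mathrm{lk}_H(X)) / 2 \right\rceil.
\end{align*}

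The key combinatorial input is the bound $\tau(\mathrm{lk}_H(X)) \ge \tau(H) - |A| + |X|$, which I would prove by checking that for any minimum vertex cover $C$ of $\mathrm{lk}_H(X)$, the set $C \cup (A - X)$ is a vertex cover of $H$: given $e \in E(H)$, its unique vertex $a \in e \cap A$ either lies in $A - X$, or it lies in $X$, in which case $e - \{a\}$ is an edge of $\mathrm{lk}_H(X)$ and hence meets $C$. Combining this with the previous display, we obtain
\begin{align*}
\eta(\cM(\mathrm{lk}_H(X))) \ge \left\lceil (\tau(H) - |A| + |X|)/2 \right\rceil.
\end{align*}

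To apply Theorem \ref{thm:deficiency-link}, it remains to verify $\eta(\cM(\mathrm{lk}_H(X))) \ge |X| - d + 1 = |X| - |A| + k + 1$ for all nonempty $X \subseteq A$. A short arithmetic check shows this reduces to $\tau(H) > 2k + |X| - |A|$, which is tightest at $|X| = |A|$ and becomes $\tau(H) > 2k$, i.e., $k < \tau(H)/2$ --- exactly the hypothesis. Theorem \ref{thm:deficiency-link} then gives that $\recongraph_{\mathrm{Mat}}(H, A; |A| - d) = \recongraph_{\mathrm{Mat}}(H, A; k)$ is connected, completing the proof. The only mild subtlety is ensuring integrality in the $\eta$-bound is handled correctly; no new topological input is needed beyond the previously stated results, and the structure mirrors Aharoni's topological proof of Ryser's conjecture for $3$-uniform hypergraphs in \cite{aharoni2001ryser}.
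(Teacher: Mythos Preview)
Your proposal is correct and follows essentially the same route as the paper: combine Theorem \ref{thm:deficiency-link} with $d = |A| - k$, the bound $\eta(\cM(G)) \ge \nu(G)/2$ from Theorem \ref{thm:matching-number}, and Aharoni's inequality $\nu(\mathrm{lk}_H(X)) \ge \tau(H) - |A| + |X|$ (which you prove via the vertex-cover argument plus K\"onig, exactly as Aharoni did). The arithmetic check is right, with the tightest case indeed at $|X| = |A|$.
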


This is proven along the same lines as Aharoni's theorem (see \cite{haxell2019topological}), namely by combining Theorem \ref{thm:deficiency-link}, Theorem \ref{thm:matching-number}, and the following straightforward inequality observed by Aharoni \cite{aharoni2001ryser}:
\begin{align*}
	\nu(\text{lk}_H(X)) \ge |X| - |A| + \tau(H) \hspace{25pt} \text{for all } X \subseteq A.
\end{align*}
In the spirit of Ryser's conjecture, we conjecture that our type of reconfiguration result holds for all larger uniformities.

\begin{conj} \label{conj:ryser}
Let $H$ be an $r$-partite $r$-graph and let $A$ be one of the $r$ partition classes, with $r \ge 2$. If $k < \tau(H)/(r - 1)$, then $\recongraph_{\mathrm{Mat}}(H, A; k)$ is connected.
\end{conj}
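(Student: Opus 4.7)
The plan is to follow verbatim the topological argument that established the case $r = 3$ above: apply Theorem~\ref{thm:deficiency-link} with deficiency $d = |A| - k$, and verify its hypothesis by feeding in the matching lower bound $\eta(\cM(\mathrm{lk}_H(X))) \ge \nu(\mathrm{lk}_H(X))/(r-1)$ from Theorem~\ref{thm:matching-number}, which is valid because the link of an $r$-partite $r$-graph along a subset of $A$ is an $(r-1)$-graph. Tracking the constants and letting $X = A$ supply the worst case, one checks that the conjectured threshold $k < \tau(H)/(r-1)$ would follow immediately from the generalized Aharoni inequality
\begin{align*}
	\nu(\mathrm{lk}_H(X)) \ge |X| - |A| + \tau(H) \quad \text{for every } X \subseteq A.
\end{align*}

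First I would record its vertex-cover counterpart, which does survive to all $r$: any vertex cover $C$ of $\mathrm{lk}_H(X)$ extends to the vertex cover $C \cup (A \setminus X)$ of $H$, giving $\tau(\mathrm{lk}_H(X)) \ge \tau(H) + |X| - |A|$. In the base case $r = 3$, the link is a bipartite graph and K\"onig's theorem converts this cover bound into the desired matching bound for free. This single step is precisely where the argument breaks for $r \ge 4$: the link is now an $(r-1)$-partite $(r-1)$-graph, and the gap between its matching and covering numbers is itself the content of Ryser's conjecture.

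The main obstacle is therefore establishing the generalized Aharoni inequality in the higher-uniformity regime, and this appears to be at least as difficult as Ryser's conjecture itself. Indeed, substituting the hypothetical inequality into the deficiency form of the ordinary topological Hall theorem (Theorem~\ref{thm:topological-hall}), rather than into its reconfiguration analogue, already yields an $H$-matching of size $\lceil \tau(H)/(r-1) \rceil$, which is precisely Ryser's bound. A full proof of Conjecture~\ref{conj:ryser} along this route would therefore simultaneously resolve Ryser's conjecture for $r$-partite $r$-graphs, which remains open for every $r \ge 4$. A more realistic short-term plan is to aim for conditional or partial versions that mirror known progress on Ryser: replace $\nu$ by the fractional matching number $\nu^{\ast}$ throughout and invoke the stronger bound $\eta_H(\cM(G)) \ge \nu^{\ast}(G)/r$ from Theorem~\ref{thm:matching-number}, or restrict to subclasses of $r$-partite $r$-graphs on which a K\"onig-type identity for the link is available.
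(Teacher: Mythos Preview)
This statement is labeled a \emph{conjecture} in the paper and is left open there; the paper offers no proof. Your proposal is not a proof either, but your diagnosis of the obstruction is accurate and matches the paper's implicit reasoning: the $r=3$ case goes through because the link $\mathrm{lk}_H(X)$ is bipartite and K\"onig's theorem converts Aharoni's vertex-cover inequality $\tau(\mathrm{lk}_H(X)) \ge \tau(H) + |X| - |A|$ into the matching inequality needed for Theorem~\ref{thm:deficiency-link}, whereas for $r \ge 4$ the link is an $(r-1)$-partite $(r-1)$-graph and bridging that gap is precisely Ryser's conjecture one uniformity down. Your observation that the route via Theorem~\ref{thm:deficiency-link} would already recover Ryser's bound on matching size (via the ordinary topological Hall theorem) confirms that this approach cannot succeed unconditionally without also resolving Ryser's conjecture for $r \ge 4$. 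So there is no gap to flag in your reasoning; you have correctly identified that the statement is open and why the available machinery falls short.
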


It is perhaps more natural to study this conjecture in terms of the usual notion of matching reconfiguration described at the beginning of this subsection, where the adjacency condition is slightly relaxed.

\subsection{Reconfigurations of list colorings} \label{sec:list-coloring}

In this subsection, we explain how our reconfiguration results relate to the reconfiguration of (list) colorings of graphs and hypergraphs. Coloring reconfigurations are some of the most commonly studied reconfiguration problems on graphs (e.g., \cite{bousquet2022polynomial, cereceda2008connectedness, feghali2016reconfigurations, ito2012reconfiguration}, as well as the surveys \cite{mynhardt2019reconfiguration, nishimura2018introduction}).

For list vertex-coloring, let $H$ be a graph and let $L = (L(x) : x \in V(H))$ be a list assignment for $V(H)$. The reconfiguration graph $\recongraph_{\mathrm{VLC}}(H, L)$ has vertex set consisting of all proper $L$-colorings of $H$, and two proper $L$-colorings are joined by an edge if they differ at one vertex's color. Proper $L$-colorings of $H$ can be represented as independent transversals in an auxiliary vertex-partitioned graph $(G, \cV)$ (see \cite{haxell2001note}). The \textit{maximum vertex-color degree} of $(H,L)$ is the maximum, over all colors $c$, of the maximum degree of the subgraph of $H$ induced by vertex subset $\{v \in V(H) : c \in L(v)\}$.
Buys, Kang, and Ozeki \cite{buys2025reconfiguration} observed the following consequence of their Theorem \ref{thm:BKO}.

\begin{cor}[\cite{buys2025reconfiguration}] \label{cor:vertex-coloring}
Let $H$ be a graph and let $L$ be a list assignment for $V(H)$. If $(H, L)$ has maximum vertex-color degree $\Delta$ and $|L(v)| \ge 2\Delta+1$ for all $v \in V(H)$, then $\recongraph_{\mathrm{VLC}}(H, L)$ is connected. 
\end{cor}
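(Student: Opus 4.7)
The plan is to reduce Corollary \ref{cor:vertex-coloring} to Corollary \ref{cor:two-Delta-plus-one} via the standard construction that encodes list-colorings as independent transversals. Specifically, I would define an auxiliary graph $G$ with vertex set $V(G) \coloneqq \{(v, c) : v \in V(H),\, c \in L(v)\}$, partitioned into classes $\cV \coloneqq \{V_v : v \in V(H)\}$ with $V_v \coloneqq \{(v, c) : c \in L(v)\}$. The edges of $G$ are defined by joining $(u, c)$ and $(v, c)$ whenever $uv \in E(H)$ (no edges inside a class, and only same-color edges across classes). Then proper $L$-colorings of $H$ correspond bijectively to independent transversals of $(G, \cV)$: the coloring $\varphi$ corresponds to the transversal $\{(v, \varphi(v)) : v \in V(H)\}$.

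Next I would verify the maximum degree bound. A vertex $(v,c)$ has neighbors only of the form $(u,c)$ with $uv \in E(H)$ and $c \in L(u)$, so its degree equals the number of neighbors of $v$ in the subgraph of $H$ induced by $\{w \in V(H) : c \in L(w)\}$. By the definition of the maximum vertex-color degree, this is at most $\Delta$. Since $|V_v| = |L(v)| \ge 2\Delta+1$ by hypothesis, Corollary \ref{cor:two-Delta-plus-one} applies to $(G, \cV)$ and yields that $\recongraph(\cI(G), \cV)$ is connected.

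The remaining step is to check that the adjacency in $\recongraph_{\mathrm{VLC}}(H, L)$ matches the adjacency in $\recongraph(\cI(G), \cV)$. Two proper $L$-colorings $\varphi_1, \varphi_2$ differing at exactly one vertex $v$ correspond to independent transversals $S, T$ with $S \triangle T = \{(v, \varphi_1(v)), (v, \varphi_2(v))\}$. Their union $S \cup T$ has size $|\cV|+1$, and it is independent: there are no edges within the class $V_v$, and neither $(v, \varphi_1(v))$ nor $(v, \varphi_2(v))$ has a neighbor in $S \cap T$ since $S$ and $T$ are themselves independent. Conversely, any edge of $\recongraph(\cI(G), \cV)$ arises in this way, since $|S \cup T| = |\cV|+1$ forces $S$ and $T$ to differ in precisely one class. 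Hence the two reconfiguration graphs are isomorphic, and connectedness transfers from one to the other.

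There is no real obstacle here; the only mildly subtle point is the adjacency check, namely that the strong adjacency condition defining $\recongraph(\cI(G), \cV)$ (requiring $S \cup T \in \cI(G)$ of size $|\cV|+1$) precisely matches the natural one-vertex-recoloring adjacency in $\recongraph_{\mathrm{VLC}}(H, L)$. This works out cleanly because the partition classes of $\cV$ are independent sets of $G$ by construction.
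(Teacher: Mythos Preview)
Your proposal is correct and is precisely the argument the paper has in mind: it states the corollary as an immediate consequence of Theorem~\ref{thm:BKO} (via Corollary~\ref{cor:two-Delta-plus-one}) under the standard encoding of $L$-colorings as independent transversals in the auxiliary graph on pairs $(v,c)$. Your verification that the adjacency in $\recongraph_{\mathrm{VLC}}(H,L)$ coincides with the adjacency in $\recongraph(\cI(G),\cV)$, using that each class $V_v$ is independent in $G$, is exactly the point that makes the reduction go through.
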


Based on the characterization of tight constructions for Theorem \ref{thm:topological-reconfiguration-graph} described in \cite{wdowinski2025tight}, the condition of Corollary \ref{cor:vertex-coloring} can be slightly improved to $|L(v)| \ge 2\Delta$ for all $v$. Buys, Kang, and Ozeki \cite{buys2025reconfiguration} posed the following problem.

\begin{prob}[\cite{buys2025reconfiguration}] \label{prob:vertex-coloring}
Determine an optimal bound $h(\Delta)$ such that $\recongraph_{\mathrm{VLC}}(H, L)$ is connected whenever $(H,L)$ has maximum vertex-color degree $\Delta$ and $|L(v)| \ge h(\Delta)$ for all $v \in V(H)$.
\end{prob}

The existence of a proper $L$-coloring in this setting when $h(\Delta) = \Delta + o(\Delta)$ was proven by Reed and Sudakov \cite{reed2002asymptotically} using probabilistic methods, with later generalizations given for independent transversals in locally sparse settings \cite{glock2022average, kang2022colorings, loh2007independent}. Examples with $|L(v)| = \Delta+1$ for all $v \in V(H)$ and no proper $L$-coloring were described by Bohman and Holzman \cite{bohman2002list} (see also \cite{haxell2024constructing}), which disproved a conjecture of Reed \cite{reed1999list}. We note that there is an easier example where $|L(v)| = \Delta+1$ for all $v \in V(H)$ and the reconfiguration graph $\recongraph_{\mathrm{VLC}}(H, L)$ is disconnected, namely when $H = K_{\Delta+1}$ and $L(v) = \{1, \ldots, \Delta+1\}$ for all $v \in V(H)$. On the other hand, it is easy to show that $\recongraph_{\mathrm{VLC}}(H,L)$ is connected if $H$ itself has maximum degree $\Delta$ and $|L(v)| \ge \Delta+2$ for all $v \in V(H)$ (similar to \cite{cereceda2008connectedness, jerrum1995very}).

For list edge-coloring, given a multi-hypergraph $H$ and a list assignment $L = (L(e) : e \in E(H))$ for $E(H)$, the reconfiguration graph $\recongraph_{\mathrm{ELC}}(H, L)$ has vertex set consisting of all proper $L$-colorings of $H$, and two proper $L$-colorings are joined by an edge if they differ at one edge's color. Analogous to list vertex-colorings, proper list edge-colorings of a multi-hypergraph can be represented as full rainbow matchings in an auxiliary edge-partitioned multi-hypergraph $(G, \cE)$ (see \cite{wdowinski2024bounded}). The \textit{maximum edge-color degree} of $(H,L)$ is the maximum, over all colors $c$, of the maximum degree of subhypergraph of $H$ induced by the edge subset $\{e \in E(H) : c \in L(e)\}$. The following is a consequence of Theorem \ref{thm:full-rainbow}.

\begin{cor}
Let $H$ be an $r$-graph and let $L$ be a list assignment for $E(H)$. If $(H, L)$ has maximum edge-color degree $\Delta$ and $|L(e)| \ge r \Delta+1$ for all $e \in E(H)$, then $\recongraph_{\mathrm{ELC}}(H, L)$ is connected.
\end{cor}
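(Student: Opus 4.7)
The plan is to reduce the statement to Theorem \ref{thm:full-rainbow} via a standard auxiliary construction that encodes proper list edge-colorings of $H$ as full rainbow matchings in an $r$-graph. Let $C \coloneqq \bigcup_{e \in E(H)} L(e)$ denote the full color set. I would construct the $r$-graph $G$ on vertex set $V(G) \coloneqq V(H) \times C$, where for every $e \in E(H)$ and every $c \in L(e)$ we include the hyperedge $e_c \coloneqq \{(v, c) : v \in e\}$. The partition of $E(G)$ is $\cE \coloneqq \{E_e : e \in E(H)\}$, with $E_e \coloneqq \{e_c : c \in L(e)\}$. A proper $L$-coloring $\phi$ of $H$ then corresponds to the full rainbow matching $M_\phi \coloneqq \{e_{\phi(e)} : e \in E(H)\}$ of $(G, \cE)$: for two $H$-edges $e, e'$ sharing a vertex $v$, the hyperedges $e_{\phi(e)}$ and $e'_{\phi(e')}$ share the $G$-vertex $(v, \phi(e))$ precisely when $\phi(e) = \phi(e')$, so the matching property of $M_\phi$ is equivalent to the proper coloring property of $\phi$. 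This gives a bijection between proper $L$-colorings of $H$ and full rainbow matchings of $(G, \cE)$.

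Next I would verify that $\phi \mapsto M_\phi$ is in fact an isomorphism of reconfiguration graphs $\recongraph_{\mathrm{ELC}}(H, L) \cong \recongraph(\cM(G), \cE)$. Two full rainbow matchings $M_1, M_2$ are adjacent in $\recongraph(\cM(G), \cE)$ if and only if $M_1 \cup M_2$ is a matching of $G$ of size $|E(H)| + 1$. Since each of $M_1, M_2$ contains exactly one representative per class in $\cE$, this happens exactly when $M_1$ and $M_2$ differ on a single class $E_e$, with distinguishing representatives $e_{c_1} \in M_1$ and $e_{c_2} \in M_2$; these two hyperedges are automatically vertex-disjoint in $G$ because $c_1 \ne c_2$, and they cannot conflict with the common part $M_1 \cap M_2$ since $M_1, M_2$ are themselves matchings. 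This corresponds exactly to the adjacency on the list coloring side, where two colorings are adjacent iff they differ at a single edge.

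Finally I would check that the hypotheses of Theorem \ref{thm:full-rainbow} apply to $(G, \cE)$. Each class satisfies $|E_e| = |L(e)| \ge r\Delta + 1$ by assumption. For the maximum degree of $G$, a vertex $(v, c) \in V(G)$ lies in precisely the hyperedges $e_c$ with $v \in e$ and $c \in L(e)$, so its degree in $G$ equals the degree of $v$ in the subhypergraph of $H$ induced by the edge subset $\{e \in E(H) : c \in L(e)\}$. By the definition of the maximum edge-color degree, this is at most $\Delta$. Applying Theorem \ref{thm:full-rainbow} then yields that $\recongraph(\cM(G), \cE)$ is connected, and via the isomorphism above we conclude that $\recongraph_{\mathrm{ELC}}(H, L)$ is connected. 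I do not expect any real obstacle here; the essential observation is simply that splitting each vertex of $H$ into one copy per color translates the edge-color-degree hypothesis on $(H, L)$ into the maximum-degree hypothesis on $G$ on the nose.
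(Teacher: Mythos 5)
Your proposal is correct and follows exactly the route the paper intends: encode proper $L$-colorings of $H$ as full rainbow matchings in the auxiliary edge-partitioned $r$-graph obtained by splitting each vertex into one copy per color, observe that the reconfiguration graphs are isomorphic (the union condition being automatic since representatives of the same class lie in different color layers), translate the maximum edge-color degree of $(H,L)$ into the maximum degree of the auxiliary hypergraph, and apply Theorem \ref{thm:full-rainbow}. The paper states this corollary without spelling out these details, and your write-up supplies them correctly.
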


Here, we pose a similar problem to the one above by Buys, Kang, and Ozeki \cite{buys2025reconfiguration}.

\begin{prob} \label{prob:edge-coloring}
Determine an optimal bound $h(\Delta)$ such that $\recongraph_{\mathrm{ELC}}(H, L)$ is connected whenever $(H,L)$ has maximum edge-color degree $\Delta$, $H$ has maximum codegree $o(\Delta)$, and $|L(e)| \ge h(\Delta)$ for all $e \in E(H)$.
\end{prob}

The existence of an $L$-coloring in this setting when $h(\Delta) = \Delta + o(\Delta)$ was shown by Delcourt and Postle \cite{delcourt2022finding} (see also \cite{wdowinski2024bounded}), generalizing Kahn's \cite{kahn1996asymptotically} asymptotic version of the famous List Edge-Coloring Conjecture. This was also done in a more general rainbow matching setting. Examples of bipartite graphs $H$ and list assignments $L$ with $|L(e)| = \Delta$ for all $e$ and no proper $L$-coloring were described in \cite{wdowinski2024bounded}.

\section{Discrete geometry applications} \label{sec:geometric-applications}

In this section, we describe applications of Theorem \ref{thm:topological-reconfiguration-graph} (or more accurately Theorem \ref{thm:complex-matroid-reconfiguration}, its matroidal generalization) to reconfiguration results in discrete geometry. Namely, we prove reconfiguration analogues of the colorful Helly, colorful Carath\'eodory, and Tverberg theorems. In the last subsection, we also mention generalizations of these results to higher dimensional homological connectedness. The above three theorems in their classical forms are recalled below, and see \cite{amenta2017helly, barany2018tverberg, deloera2019discrete, holmsen2017helly} for recent surveys about them.

The colorful Helly theorem was discovered by Lov\'asz, and the colorful Carath\'eodory theorem was subsequently discovered by B\'ar\'any, with both results originally reported in \cite{barany1982generalization}. The two results are related via linear programming duality. The ordinary Helly and Carath\'eodory theorems are recovered by taking all the families or all the point sets to be the same.

\begin{thm}[Colorful Helly theorem \cite{barany1982generalization}] \label{thm:colorful-helly}
Let $\cF_1, \ldots, \cF_{n}$ be finite families of convex subsets of $\mathbb{R}^d$. If $\bigcap \cF_i = \emptyset$ for all $i \in [n]$ and $n \ge d+1$, then there exist $C_i \in \cF_i$ for $i \in [n]$ such that $\bigcap_{i=1}^n C_i = \emptyset$.
\end{thm}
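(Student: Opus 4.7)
The plan is to prove the contrapositive: if every colorful transversal $(C_1, \ldots, C_n) \in \cF_1 \times \cdots \times \cF_n$ satisfies $\bigcap_{i=1}^n C_i \neq \emptyset$, then $\bigcap \cF_j \neq \emptyset$ for some $j$. First I would reduce to $n = d+1$: given $d+1$ representatives with empty intersection, extending arbitrarily to the remaining families keeps the overall intersection empty, so the case $n > d+1$ follows from the case $n = d+1$.

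For $n = d+1$, the natural approach is Bárány's extremal argument. Fix a generic reference point $z \in \mathbb{R}^d$ and, among all colorful transversals, choose one $(C_1, \ldots, C_{d+1})$ whose intersection $K = \bigcap_i C_i$ contains the point $p$ minimizing $\|p - z\|$. If $z \in K$, the argument concludes at once; otherwise $p \in \partial K$, and the hyperplane $H$ through $p$ perpendicular to $\overrightarrow{zp}$ supports each active $C_i$ at $p$ from the side away from $z$. The main claim is that $p \in D$ for every $D \in \cF_j$ for some $j$, which immediately gives $p \in \bigcap \cF_j$. Suppose to the contrary that for every $j$ there is some $D_j \in \cF_j$ with $p \notin D_j$. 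I would then show that one can replace a single $C_{j^\star}$ by $D_{j^\star}$ to produce a new transversal whose intersection contains a point strictly closer to $z$ than $p$, contradicting minimality. The index $j^\star$ and the direction of improvement come from the linear dependence among the $d+1$ outer normals (vectors in $\mathbb{R}^d$) of supporting hyperplanes to $C_1, \ldots, C_{d+1}$ at $p$: one obtains a nontrivial combination $\sum_i \lambda_i w_i = 0$ with $\lambda_i \ge 0$, and $j^\star$ is the index for which the swap toward $-\overrightarrow{zp}$ is locally permissible.

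I expect the main obstacle to be the boundary-versus-interior subtlety, since the supporting-hyperplane argument works cleanly only when $p$ lies on the boundary of each active $C_i$. The standard remedy is a genericity argument on $z$: for $z$ in a full-measure set, $p$ is uniquely determined and the subset of indices $i$ for which $p \in \partial C_i$ forms an active system of at most $d+1$ constraints, to which the linear-dependence swap applies without ambiguity. An alternative route I would keep in reserve, in case the swap step is delicate, is to deduce the colorful Helly theorem from Bárány's colorful Carathéodory theorem via a polarity/lifting trick, or to invoke the Kalai--Meshulam topological colorful Helly theorem applied to the $d$-Leray nerve complex of $\cF_1 \cup \cdots \cup \cF_n$; either would yield the conclusion immediately, though at the cost of importing heavier machinery than the direct extremal argument.
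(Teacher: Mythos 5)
The paper itself does not prove Theorem \ref{thm:colorful-helly}: it is quoted as a classical result of Lov\'asz from \cite{barany1982generalization}, and only its reconfiguration analogue (Theorem \ref{thm:colorful-helly-reconfigure}) is proved there, via Alexander duality of the $d$-Leray nerve and Theorem \ref{thm:complex-matroid-reconfiguration}. So your attempt has to be judged against the classical arguments. Your reduction to $n=d+1$ is fine, but the extremal argument as you set it up has a genuine gap: you \emph{minimize} $\lVert p - z\rVert$ over colorful transversals, and with that choice the improving-swap step cannot be carried out. First, the degenerate case is not handled: if $z\in K$ for the minimizing transversal, nothing ``concludes at once''---knowing $z$ lies in one particular colorful intersection says nothing about $\bigcap\cF_j$ for any $j$, and since no transversal can then be strictly closer to $z$, the swap mechanism has nothing to contradict. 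Second, in the nondegenerate case the nearest-point condition gives $z-p$ as a \emph{nonnegative combination of the outward normals} $w_i$ of the active sets at $p$ (not a vanishing combination $\sum_i\lambda_i w_i=0$, which need not exist); consequently every point $q$ lying in all active $C_i$ satisfies $\langle z-p,\,q-p\rangle\le 0$ and hence $\lVert q-z\rVert\ge\lVert p-z\rVert$. So swapping in a color that is not active can never produce a point strictly closer to $z$, and swapping an active color gives no control at all (the hypothesis only guarantees the new intersection is nonempty, not that it approaches $z$). In short, the inequality runs the wrong way for minimization.

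The argument is salvageable by reversing the extremal direction. Maximize $\mathrm{dist}(z,\bigcap_i C_i)$ over the finitely many transversals; at the maximizer with nearest point $p\ne z$, conic Carath\'eodory lets you write $z-p$ using at most $d$ of the $d+1$ outward normals, so some color $j^\star$ is spare; if some $D\in\cF_{j^\star}$ missed $p$, the swapped transversal would have a nonempty intersection contained in the active sets' intersection but avoiding $p$, hence lying \emph{strictly} farther from $z$, contradicting maximality; therefore $p\in\bigcap\cF_{j^\star}$. If instead the maximum distance is $0$, then $z$ lies in every member of every family. Even then you must address attainment and the decomposition of the normal cone of an intersection (which fails without a constraint qualification, e.g.\ two internally tangent disks); the standard fix is to first replace each set by the convex hull of witness points $q_T\in\bigcap_i C_i$ chosen for each transversal $T$, reducing to polytopes, which also disposes of your genericity-of-$z$ worry. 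Your two fallback routes are sound: the polarity/LP-duality derivation from the colorful Carath\'eodory theorem is the classical one the paper alludes to (note that Section \ref{sec:geometric-applications} of the paper runs the dualization in the opposite direction, deducing its reconfiguration Carath\'eodory statement from the reconfiguration Helly statement via Farkas), and invoking the Kalai--Meshulam theorem on the $d$-representable nerve is exactly the $m=0$ shadow of the machinery the paper uses for Theorem \ref{thm:colorful-helly-reconfigure}.
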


\begin{thm}[Colorful Carath\'eodory theorem \cite{barany1982generalization}] \label{thm:colorful-caratheodory}
Let $A_1, \ldots, A_n$ be finite sets of points in $\mathbb{R}^d$, and let $x \in \mathbb{R}^d$. If $x \in \convex(A_i)$ for all $i \in [n]$ and $n \ge d+1$, then there exist $a_i \in A_i$ for $i \in [n]$ such that $x \in \convex(\{a_1, \ldots, a_n\})$.
\end{thm}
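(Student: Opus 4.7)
The plan is to reproduce B\'ar\'any's classical minimization argument, which is purely geometric and does not require the topological machinery developed in Section \ref{sec:main-theorem}. Let $\mathcal{S}$ denote the finite collection of all colorful transversals $\sigma = \{a_1,\ldots,a_n\}$ with $a_i \in A_i$, and choose $\sigma^\ast = \{a_1^\ast, \ldots, a_n^\ast\} \in \mathcal{S}$ that minimizes the Euclidean distance $\delta(\sigma) \coloneqq \mathrm{dist}(x,\convex(\sigma))$. The goal is to show $\delta(\sigma^\ast) = 0$, which is exactly the conclusion of the theorem.

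Assume toward a contradiction that $\delta(\sigma^\ast) > 0$. Let $y$ be the unique nearest point of $\convex(\sigma^\ast)$ to $x$, and let $H$ be the hyperplane through $y$ orthogonal to the segment from $x$ to $y$; write $H^+$ for the open half-space containing $x$ and $H^-$ for the closed half-space containing $\convex(\sigma^\ast)$. The first step is to identify a color $j$ whose representative $a_j^\ast$ is redundant at $y$: since $y \in \convex(\sigma^\ast)$ and $|\sigma^\ast| = n \ge d+1$, the ordinary Carath\'eodory theorem in $\mathbb{R}^d$ supplies a subset $\tau \subseteq \sigma^\ast$ with $|\tau| \le d+1$ and $y \in \convex(\tau)$, so $\sigma^\ast \setminus \tau$ is nonempty; pick $a_j^\ast$ in it, and note that $y \in \convex(\sigma^\ast \setminus \{a_j^\ast\})$. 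Next, since $x \in \convex(A_j)$ but $x \in H^+$ while $H^-$ is closed and convex, $\convex(A_j)$ cannot be contained in $H^-$, so some $a_j' \in A_j$ lies strictly in $H^+$. Form the new colorful transversal $\sigma' \coloneqq (\sigma^\ast \setminus \{a_j^\ast\}) \cup \{a_j'\} \in \mathcal{S}$.

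The final step is to verify $\delta(\sigma') < \delta(\sigma^\ast)$, which will contradict the minimality of $\sigma^\ast$. The segment $\{(1-t)y + t a_j' : t \in [0,1]\}$ lies in $\convex(\sigma')$, begins at distance $\delta(\sigma^\ast)$ from $x$, and ends at a point strictly on the $x$-side of $H$. Differentiating $\|x - (1-t)y - t a_j'\|^2$ at $t=0$ yields $2\langle y - x,\, a_j' - y\rangle$, which is strictly negative because $a_j' - y$ has positive inner product with $x - y$ (as $a_j' \in H^+$ and $y \in H$). Hence for small $t > 0$ the segment produces a point of $\convex(\sigma')$ strictly closer to $x$ than $y$, giving the desired contradiction. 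The argument is entirely elementary; the only step needing any care is this final derivative computation, and no deeper obstacle is expected.
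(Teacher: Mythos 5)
The paper itself gives no proof of this statement (it is quoted as background from B\'ar\'any's 1982 paper), so the only question is whether your argument is correct; it is the standard minimal-distance argument of B\'ar\'any, and almost all of it is fine, but there is one genuine gap, and it sits exactly in the essential case $n = d+1$. You invoke ordinary Carath\'eodory in $\mathbb{R}^d$ to write $y \in \convex(\tau)$ with $\tau \subseteq \sigma^\ast$, $|\tau| \le d+1$, and then claim $\sigma^\ast \setminus \tau \neq \emptyset$ because $|\sigma^\ast| = n \ge d+1$. That inference fails: when $n = d+1$ it is possible that $|\tau| = d+1 = n$, i.e.\ $\tau = \sigma^\ast$, and then no color is redundant and the swap cannot be performed. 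Note that the case $n \ge d+2$, where your counting does work, is the easy case anyway (it follows from the $n = d+1$ case by adding arbitrary representatives, since enlarging a set only enlarges its convex hull), so as written the argument proves only the trivial reduction and not the core statement.

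The fix is the standard refinement, and it uses the geometry you have already set up. Since $\delta(\sigma^\ast) > 0$, the projection inequality gives $\langle a - y, x - y\rangle \le 0$ for every $a \in \convex(\sigma^\ast)$, i.e.\ $H$ is a supporting hyperplane of $\convex(\sigma^\ast)$ at $y$. Writing $y = \sum_i \lambda_i a_i^\ast$ with $\lambda_i \ge 0$, $\sum_i \lambda_i = 1$, and pairing with $x - y$ yields $\sum_i \lambda_i \langle a_i^\ast - y, x - y\rangle = 0$ with every summand nonpositive, so every $a_i^\ast$ with $\lambda_i > 0$ lies in $H$; hence $y \in \convex(\sigma^\ast \cap H)$. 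Now apply Carath\'eodory inside the $(d-1)$-dimensional affine hyperplane $H$ to get $\tau \subseteq \sigma^\ast \cap H$ with $|\tau| \le d$ and $y \in \convex(\tau)$. Since $n \ge d+1 > d \ge |\tau|$, a redundant color $j$ exists, and the rest of your argument (existence of $a_j' \in A_j \cap H^+$ and the derivative computation showing the swap strictly decreases the distance) goes through verbatim. With this replacement of the redundancy step, the proof is complete.
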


Tverberg's theorem \cite{tverberg1966generalization} is given below, with the case $r = 2$ known before and called Radon's theorem. The original proof by Tverberg was complicated, but simpler proofs were subsequently discovered (see \cite{barany2018tverberg}).

\begin{thm}[Tverberg's theorem \cite{tverberg1966generalization}] \label{thm:tverberg}
Let $X$ be a finite set of points in $\mathbb{R}^d$, and let $r \ge 1$ be an integer. If $|X| \ge (d+1)(r - 1)+1$, then there exists a partition $X_1, \ldots, X_r$ of $X$ into $r$ parts such that $\bigcap_{i=1}^r \convex(X_i) \neq \emptyset$.
\end{thm}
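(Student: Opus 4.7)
The plan is to deduce Tverberg's theorem from the colorful Carath\'eodory theorem (Theorem \ref{thm:colorful-caratheodory}) via Sarkaria's tensor trick. Write $X = \{x_1, \ldots, x_N\}$ with $N \ge (d+1)(r-1)+1$, and set the ambient dimension to $D \coloneqq (d+1)(r-1)$.

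First, I would choose vectors $v_1, \ldots, v_r \in \mathbb{R}^{r-1}$ forming the vertices of a regular simplex centered at the origin, so that $\sum_{j=1}^r v_j = 0$ and any $r-1$ of them are linearly independent; equivalently, up to scaling this is the unique linear relation among the $v_j$. For each $i \in [N]$, set $\tilde{x}_i \coloneqq (x_i, 1) \in \mathbb{R}^{d+1}$ and define the $r$-point set
\begin{align*}
A_i \coloneqq \{\tilde{x}_i \otimes v_j : j \in [r]\} \subset \mathbb{R}^{D}.
\end{align*}
Averaging over $j$ gives $\frac{1}{r}\sum_j \tilde{x}_i \otimes v_j = \tilde{x}_i \otimes \frac{1}{r}\sum_j v_j = 0$, so $0 \in \convex(A_i)$ for every $i$.

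Next, since $N \ge D+1$ and each $A_i$ has $0$ in its convex hull, Theorem \ref{thm:colorful-caratheodory} will supply a function $\pi : [N] \to [r]$ and nonnegative coefficients $\lambda_i$ summing to $1$ with
\begin{align*}
0 \;=\; \sum_{i=1}^N \lambda_i \, \tilde{x}_i \otimes v_{\pi(i)} \;=\; \sum_{j=1}^r w_j \otimes v_j, \qquad \text{where } w_j \coloneqq \sum_{i : \pi(i) = j} \lambda_i \tilde{x}_i \in \mathbb{R}^{d+1}.
\end{align*}
The uniqueness of the linear relation among the $v_j$ then forces $w_1 = \cdots = w_r$. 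Reading off the last coordinate gives $\sum_{i : \pi(i) = j} \lambda_i = 1/r$ for every $j$, so the rescaled combinations $r \sum_{i : \pi(i) = j} \lambda_i x_i$ all coincide at a common point $\bar{y} \in \mathbb{R}^d$. Setting $X_j \coloneqq \{x_i : \pi(i) = j\}$ then yields $\bar{y} \in \bigcap_{j=1}^r \convex(X_j)$, as required.

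The main obstacle is essentially bookkeeping rather than conceptual: one has to handle the mild nuisance that some class $X_j$ might be empty, which is resolved either by a generic-position perturbation of the $x_i$ or by noting that any nonsurjective $\pi$ can be refined to a surjective one without destroying the common intersection point. The elegance of the argument lies in the fact that Sarkaria's ambient dimension $D = (d+1)(r-1)$ is tuned precisely so that the colorful Carath\'eodory hypothesis $N \ge D+1$ coincides with the Tverberg bound $N \ge (d+1)(r-1)+1$.
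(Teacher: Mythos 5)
Your argument is correct and is precisely the Sarkaria--B\'ar\'any--Onn tensor derivation of Tverberg's theorem from the colorful Carath\'eodory theorem; the paper only cites Theorem \ref{thm:tverberg} rather than proving it, but this is exactly the machinery it sets up (Lemma \ref{lem:Sarkaria}) to obtain the reconfiguration version, so you are on the same route. One correction to your closing paragraph: the empty-class ``obstacle'' does not exist, because your own computation shows that the last coordinate of each $w_j$ equals $1/r > 0$, i.e.\ every class $\{i : \pi(i) = j\}$ carries positive $\lambda$-mass and is automatically nonempty; no perturbation or refinement is needed. Indeed, the proposed fix of ``refining a nonsurjective $\pi$'' would not obviously work as stated, since moving a point into an empty class need not place the common point $\bar{y}$ in $\convex$ of that new class -- fortunately the issue is vacuous and the proof is complete as written.
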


\subsection{Reconfigurations for colorful Helly and colorful Carath\'eodory theorems}

In this subsection, we state and prove our reconfiguration versions of the colorful Helly and the colorful Carath\'eodory theorems. We write these results in terms of reconfigurations of ordered tuples with possible repetitions, rather than transversals, because they translate better to the setting of Tverberg's theorem.

For a reconfiguration version of the colorful Helly theorem by Lov\'asz \cite{barany1982generalization}, we are given finite families $\cF_1, \ldots, \cF_n$ each consisting of convex subsets of $\mathbb{R}^d$. The reconfiguration graph $\recongraph_{\mathrm{CH}}(\cF_1, \ldots, \cF_{n})$ has vertex set consisting of all ordered tuples $(C_1, \ldots, C_n)$ satisfying the properties that $C_i \in \cF_i$ for all $i \in [n]$ and $\bigcap_{i=1}^n C_i = \emptyset$. Two such tuples $(C_1, \ldots, C_n)$ and $(D_1, \ldots, D_n)$ are joined by an edge in the graph whenever there exists a unique index $j \in [n]$ such that $C_j \neq D_j$ and $\bigcap_{i \neq j} C_i = \emptyset$. The colorful Helly theorem is equivalent to the statement that $\recongraph_{\mathrm{CH}}(\cF_1, \ldots, \cF_{n})$ is nonempty whenever $\bigcap \cF_i = \emptyset$ for all $i \in [n]$ and $n \ge d+1$. We show the following.

\begin{thm} \label{thm:colorful-helly-reconfigure}
Let $\cF_1, \ldots, \cF_{n}$ be finite families of convex subsets of $\mathbb{R}^d$. If $\bigcap \cF_i = \emptyset$ for all $i \in [n]$ and $n \ge d+2$, then $\recongraph_{\mathrm{CH}}(\cF_1, \ldots, \cF_{n})$ is connected.
\end{thm}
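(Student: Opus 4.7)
The plan is to deduce the theorem from the matroidal reconfiguration result (Theorem~\ref{thm:complex-matroid-reconfiguration}) applied to the Alexander dual of the nerve paired with the matroid dual of the partition matroid. Set $V \coloneqq \bigsqcup_{i=1}^n \cF_i$, let $\cM$ be the partition matroid on $V$ induced by $\cV = \{\cF_1, \ldots, \cF_n\}$ (so $r(\cM) = n$), and let $\cC \coloneqq \nerve(V)$ be the nerve whose simplices are the subsets $S \subseteq V$ with $\bigcap_{C \in S} C \neq \emptyset$; by Wegner's theorem, every induced subcomplex of $\cC$ is $d$-Leray. I will work instead with the Alexander dual $\cC^\vee \coloneqq \{S \subseteq V : V - S \notin \cC\}$ and the dual matroid $\cM^\ast$, whose bases are exactly the complements $V - T$ of colorful $n$-transversals $T$ of $\cV$ and whose circuits are the classes $\cF_i$ themselves.

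The key observation is that complementation $S \leftrightarrow V - S$ induces a graph isomorphism between $\recongraph(\cC^\vee, \cM^\ast)$ and $\recongraph_{\mathrm{CH}}(\cF_1, \ldots, \cF_n)$: a basis $V - T$ of $\cM^\ast$ lies in $\cC^\vee$ exactly when $\bigcap T = \emptyset$, and two such bases span a common $(|V| - n + 1)$-simplex of $\cC^\vee$ precisely when the colorful transversals $T, T'$ differ in a single class $j$ and the shared $(n-1)$-subset $T \cap T'$ has empty intersection, i.e., $\bigcap_{i \neq j} C_i = \emptyset$. It therefore suffices to verify the hypothesis of Theorem~\ref{thm:complex-matroid-reconfiguration} with $k = r(\cM^\ast) = |V| - n$, namely $\eta_H(\cC^\vee[X]) + r(\cM^\ast[V - X]) \ge |V| - n + 1$ for every $X \subseteq V$ with $V - X$ a flat of $\cM^\ast$ of rank at most $|V| - n - 1$.

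Since the circuits of $\cM^\ast$ are the $\cF_i$, its flats are precisely those subsets which, for each $i$, either contain $\cF_i$ entirely or miss at least two of its elements; writing $I \coloneqq \{i : \cF_i \cap X \neq \emptyset\}$ and $J \coloneqq [n] - I$, one computes $r(\cM^\ast[V - X]) = |V| - |X| - |J|$, and the required inequality simplifies to $\eta_H(\cC^\vee[X]) \ge |X| + |J| - n + 1$. If $J \neq \emptyset$, then some $\cF_j \subseteq V - X$, and so $\bigcap(V - S) \subseteq \bigcap \cF_j = \emptyset$ for every $S \subseteq X$; hence $\cC^\vee[X]$ is the full simplex on $X$ and the bound is trivial. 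The substantive case is $J = \emptyset$, which forces $X = V$: here combinatorial Alexander duality $\widetilde{H}_i(\cC^\vee) \cong \widetilde{H}^{|V| - i - 3}(\cC)$ turns the $d$-Leray property of $\cC$ into $\eta_H(\cC^\vee) \ge |V| - d - 1$, matching $|V| - n + 1$ exactly when $n \ge d + 2$. The main obstacle is the dualization bookkeeping -- correctly identifying the flats and ranks of $\cM^\ast$, matching the adjacencies of the two reconfiguration graphs under complementation, and invoking Alexander duality in the form best adapted to Theorem~\ref{thm:complex-matroid-reconfiguration} -- after which the desired connectedness follows directly.
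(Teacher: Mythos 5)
Your reduction is set up correctly in its combinatorial parts: the identification of $\recongraph(\cC^{\star},\cM^{\ast})$ with $\recongraph_{\mathrm{CH}}(\cF_1,\ldots,\cF_n)$ via complementation, the description of the flats of $\cM^{\ast}$, and the rank computation $r(\cM^{\ast}[V-X]) = |V|-|X|-|J|$ are all right, and this is in fact the same route the paper takes (its proof of this theorem is exactly an application of Theorem \ref{thm:topological-Helly-reconfigure}, which is Theorem \ref{thm:complex-matroid-reconfiguration} applied to $\cC^{\star}$ and $\cM^{\ast}$). But there is a genuine gap in your case analysis: the claim that $J=\emptyset$ forces $X=V$ is false. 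The flat condition on $V-X$ only says that each class $\cF_i$ is either contained in $V-X$ or meets $X$ in at least two elements; when $J=\emptyset$ this gives $|\cF_i\cap X|\ge 2$ for every $i$, which is far from $X=V$. For instance, if every $|\cF_i|\ge 3$ and $V-X$ consists of exactly one member of each family, then $V-X$ is a flat of $\cM^{\ast}$ of rank $n\le |V|-n-1$, so it belongs to the collection of sets you must check, yet $J=\emptyset$ and $X\neq V$. For all such $X$ you still need $\eta_H(\cC^{\star}[X])\ge |X|-n+1$, and Alexander duality applied to the full complex (your only topological input) says nothing about these induced subcomplexes: $\cC^{\star}[X]$ is \emph{not} the Alexander dual of $\cC[X]$, since $S\in\cC^{\star}[X]$ requires $V-S\notin\cC$ rather than $X-S\notin\cC[X]$.

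The missing ingredient is precisely the Kalai--Meshulam bound the paper isolates as Lemma \ref{lem:Alexander-dual}: if $\cC$ is $d$-Leray and $V\notin\cC$, then $\eta_H(\cC^{\star}[X])\ge |X|-d-1$ for \emph{all} $X\subseteq V$. With it, the remaining case splits as in the paper's proof of Theorem \ref{thm:topological-Helly-reconfigure}: if $V-X\notin\cC$ then $X\in\cC^{\star}$ and $\cC^{\star}[X]$ is a full simplex, so the inequality is trivial; if $V-X\in\cC$ then the lemma gives $\eta_H(\cC^{\star}[X])\ge |X|-d-1\ge |X|-n+1$ using $n\ge d+2$, which is what you need (and this argument in fact handles all nonempty $X$ at once, making the $J$-based case distinction and the restriction to flats unnecessary). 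So your proposal is essentially the paper's proof with its two steps unrolled, but as written it substitutes an incorrect "only $X=V$ matters" claim for the induced-subcomplex Alexander duality lemma that carries the real weight.
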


For a reconfiguration version of the colorful Carath\'eodory theorem by B\'ar\'any \cite{barany1982generalization}, we are given finite sets $A_1, \ldots, A_n$ of points in $\mathbb{R}^d$ and a point $x \in \mathbb{R}^d$. The reconfiguration graph $\recongraph_{\mathrm{CC}}((A_1, \ldots, A_n), x)$ has vertex set consisting of all ordered tuples $(a_1, \ldots, a_n)$ with the properties that $a_i \in A_i$ for all $i$ and $x \in \convex(\{a_1, \ldots, a_n\})$. Two such tuples $(a_1, \ldots, a_n)$ and $(b_1, \ldots, b_n)$ are joined by an edge in the graph whenever there exists a unique index $j \in [n]$ such that $a_j \neq b_j$ and $x \in \convex(\{a_i : i \neq j\})$. The colorful Carath\'eodory theorem is equivalent to the statement that $\recongraph_{\mathrm{CC}}((A_1, \ldots, A_n), x)$ is nonempty whenever $x \in \convex(A_i)$ for all $i \in [n]$ and $n \ge d+1$. We show the following.

\begin{thm} \label{thm:colorful-caratheodory-reconfigure}
Let $A_1, \ldots, A_{n}$ be finite sets of points in $\mathbb{R}^d$, and let $x \in \mathbb{R}^d$. If $x \in \convex(A_i)$ for all $i \in [n]$ and $n \ge d+2$, then $\recongraph_{\mathrm{CC}}((A_1, \ldots, A_{n}), x)$ is connected.
\end{thm}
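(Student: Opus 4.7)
The plan is to deduce Theorem~\ref{thm:colorful-caratheodory-reconfigure} from the colorful Helly reconfiguration (Theorem~\ref{thm:colorful-helly-reconfigure}) via polar duality, mirroring the standard derivation of the classical colorful Carath\'eodory theorem from the colorful Helly theorem. For each $a \in A_i$ introduce the open halfspace $B_a \coloneqq \{y \in \mathbb{R}^d : y \cdot (a-x) < 0\}$, and let $\cF_i \coloneqq (B_a)_{a \in A_i}$ be the associated indexed family of convex subsets of $\mathbb{R}^d$. The key translation is the separating-hyperplane identity
\[
x \in \convex(S) \iff \bigcap_{a \in S} B_a = \emptyset
\]
for every finite $S \subset \mathbb{R}^d$: if $x = \sum \lambda_a a$ is a convex combination and $y \in \bigcap_{a \in S} B_a$, then summing the strict inequalities $y\cdot(a-x) < 0$ with weights $\lambda_a$ gives a contradiction; conversely, a strict separator of $x$ from $\convex(S)$ produces a common $y \in \bigcap_{a\in S} B_a$.

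Applied to $S = A_i$, this dictionary turns each hypothesis $x \in \convex(A_i)$ into $\bigcap \cF_i = \emptyset$, while the dimension bound $n \ge d+2$ carries over identically, placing the $\cF_i$ exactly in the setting of Theorem~\ref{thm:colorful-helly-reconfigure}. Applying the identity to the selected tuples gives a bijection of vertices $\recongraph_{\mathrm{CC}}((A_1,\ldots,A_n),x) \leftrightarrow \recongraph_{\mathrm{CH}}(\cF_1,\ldots,\cF_n)$ via $(a_1,\ldots,a_n) \mapsto (B_{a_1},\ldots,B_{a_n})$, and applied once more with $S = \{a_i : i \neq j\}$ shows that the adjacency condition at coordinate $j$ on the Carath\'eodory side, $x \in \convex\{a_i : i\neq j\}$, matches exactly the Helly-side condition $\bigcap_{i\neq j} B_{a_i} = \emptyset$. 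Transporting the connectedness of $\recongraph_{\mathrm{CH}}$ supplied by Theorem~\ref{thm:colorful-helly-reconfigure} back through this graph isomorphism yields the theorem.

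The only subtlety worth flagging, and the main place where one can stumble, is that distinct points of $A_i$ lying on a common ray from $x$ produce coinciding halfspaces $B_a$; this is handled by reading $\cF_i$ as an \emph{indexed} family throughout, so that the graph isomorphism is bookkept by the labels $a \in A_i$ rather than by the underlying halfspaces as point sets. Beyond this indexing convention and the routine care required with strict-versus-weak inequalities in the polar dictionary (so that the equivalence holds even when $x$ lies on the relative boundary of $\convex(S)$), the proof is mechanical, and the substantive content is entirely absorbed into Theorem~\ref{thm:colorful-helly-reconfigure}.
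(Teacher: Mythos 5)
Your proposal is correct and is essentially the paper's proof: both reduce to Theorem~\ref{thm:colorful-helly-reconfigure} by replacing each point $a \in A_i$ with a halfspace determined by $a - x$ and using the equivalence $x \in \convex(S) \iff$ the corresponding halfspaces have empty intersection, which transports vertices and the adjacency condition verbatim. The only cosmetic difference is that the paper takes the closed halfspaces $\{y : (a-x)^\top y \ge 1\}$ and invokes Farkas' lemma, a normalization under which distinct points of $A_i$ always yield distinct (possibly empty) halfspaces, so the indexed-family bookkeeping for collinear points that you rightly flag never arises.
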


A reconfiguration version of the ordinary Helly theorem is obtained by taking all the convex set families $\cF_i$ to be the same in Theorem \ref{thm:colorful-helly-reconfigure}, and likewise a reconfiguration version of the ordinary Carath\'eodory theorem is obtained by taking all the point sets $A_i$ to be the same in Theorem \ref{thm:colorful-caratheodory-reconfigure}. One may state these latter results more intuitively as reconfigurations of sets rather than ordered tuples.

We remark that our reconfiguration version of the (non-colorful) Carath\'eodory theorem was already known by Matou\v{s}ek \cite{matousek2013lectures}. In his monograph \cite[Exercise 8.2.1]{matousek2013lectures}, he gave it as an exercise to prove the following equivalent statement: If $S$ and $T$ are $(d+1)$-element subsets of $\mathbb{R}^d$ each containing the point $x$ in their convex hulls, then there is a finite sequence $S = S_0, S_1, \ldots, S_N = T$ of $(d+1)$-element subsets $S_i$ of $S \cup T$ with each $S_i$ containing $x$ in its convex hull and $S_{i+1}$ obtained from $S_{i}$ by removing one point and adding another. The intended solution appears to have been to apply the Gale transform, so that such a sequence of sets can be translated roughly to a walk between two facets on a corresponding polytope. Thus, we are providing a topological proof of his exercise that also extends to the colorful setting. 

\bigskip

We proceed to the proofs of Theorems \ref{thm:colorful-helly-reconfigure} and \ref{thm:colorful-caratheodory-reconfigure}. We follow the approach of Kalai and Meshulam \cite{kalai2005topological}, who used the topological Hall theorem to prove a topological colorful Helly theorem about the intersection of $d$-Leray complexes and matroids. (See also \cite{cho2025colorful, holmsen2016intersection} for further topological refinements of the colorful Carath\'eodory theorem.)

For some terminology, a complex $\cC$ on ground set $V$ is said to be \textit{$d$-Leray} if the reduced homology group $\widetilde{H}_i(\cC[X])$, with rational coefficients, vanishes for all $X \subseteq V$ and $i \ge d$. For a finite family $\cF$ of sets, recall that the \textit{nerve} $\nerve(\cF)$ of $\cF$ is the simplicial complex whose vertices are all the sets in $\cF$, and a subfamily $\cG \subseteq \cF$ forms a simplex if $\bigcap \cG \neq \emptyset$. A complex is said to be \textit{$d$-representable} if it is isomorphic to the nerve of a finite family of convex sets in $\mathbb{R}^d$. Nerve theorems (e.g., Theorem \ref{lem:nerve} or \cite[Theorem 10.7]{bjorner1995topological}) imply that all $d$-representable complexes are $d$-Leray.

For a complex $\cC$ on ground set $V$, its \textit{Alexander dual} is the complex $\cC^{\star} \coloneqq \{X \subseteq V : V - X \notin \cC \}$ on the same ground set $V$. The combinatorial Alexander duality theorem states that $\widetilde{H}_{i}(\cC^{\star}) \cong \widetilde{H}_{|V| - i - 3}(\cC)$ for all $-1 \le i \le |V| - 2$ (assuming that $V \notin \cC$ and that we are using field coefficients, so that reduced homology and reduced cohomology groups are isomorphic). We only need the following corollary of this theorem.

\begin{lem}[\cite{kalai2005topological}] \label{lem:Alexander-dual}
If $\cC$ is a $d$-Leray complex on ground set $V$ with $V \notin \cC$, then $\eta_H(\cC^{\star}[X]) \ge |X| - d - 1$ for all $X \subseteq V$.
\end{lem}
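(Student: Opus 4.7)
My plan is to realize $\cC^\star[X]$ as the Alexander dual of a link inside $\cC$, and then invoke the combinatorial Alexander duality theorem (already quoted in the excerpt) together with the fact that links of $d$-Leray complexes are again $d$-Leray. Set $Y \coloneqq V - X$. If $Y \notin \cC$, then for every $\sigma \subseteq X$ the set $V - \sigma$ contains $Y$ and so is not in $\cC$, making $\cC^\star[X]$ the full simplex on $X$ and giving $\eta_H(\cC^\star[X]) = \infty$; thus I may assume $Y \in \cC$ and form $\mathrm{lk}_\cC(Y) \coloneqq \{\tau \subseteq X : \tau \cup Y \in \cC\}$, viewed as a complex on ground set $X$. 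The key rewriting is
\begin{align*}
	\cC^\star[X] = \{\sigma \subseteq X : (X - \sigma) \cup Y \notin \cC\} = \{\sigma \subseteq X : X - \sigma \notin \mathrm{lk}_\cC(Y)\} = (\mathrm{lk}_\cC(Y))^\star,
\end{align*}
i.e.\ the induced subcomplex $\cC^\star[X]$ coincides with the Alexander dual of $\mathrm{lk}_\cC(Y)$ on ground set $X$.

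Next I apply combinatorial Alexander duality to $\mathrm{lk}_\cC(Y)$. Since $V \notin \cC$, we have $X \notin \mathrm{lk}_\cC(Y)$, so the duality theorem gives
\begin{align*}
	\widetilde{H}_i(\cC^\star[X]) \cong \widetilde{H}_{|X| - i - 3}(\mathrm{lk}_\cC(Y)) \hspace{25pt} \text{for all } -1 \le i \le |X| - 2.
\end{align*}
The remaining ingredient is that $\mathrm{lk}_\cC(Y)$ is itself $d$-Leray. I would prove this by induction on $|Y|$, using $\mathrm{lk}_\cC(Y) = \mathrm{lk}_{\mathrm{lk}_\cC(Y - v)}(v)$ to reduce to the case of a single vertex $v$, and then invoking the Mayer--Vietoris sequence associated to the decomposition $\cC = (\cC - v) \cup \mathrm{st}_\cC(v)$, whose intersection is $\mathrm{lk}_\cC(v)$ and whose star piece is contractible; the $d$-Leray hypothesis on $\cC$ and on the induced subcomplex $\cC - v$ forces $\widetilde{H}_i(\mathrm{lk}_\cC(v)) = 0$ for $i \ge d$, and running the same argument inside every induced subcomplex of the form $\cC[W \cup \{v\}]$ upgrades this to $d$-Lerayness of the entire link.

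Combining the two displays, $\widetilde{H}_i(\cC^\star[X]) = 0$ whenever $|X| - i - 3 \ge d$, that is, for all $-1 \le i \le |X| - d - 3$; this is precisely $\eta_H(\cC^\star[X]) \ge |X| - d - 1$. The main step to get right is the elementary combinatorial identity $\cC^\star[X] = (\mathrm{lk}_\cC(Y))^\star$, which is really the crux of the argument; once one has it, the two tools already available (Alexander duality and Leray-stability of links) do the rest, so I do not expect a serious obstacle beyond taking care with the degenerate cases $Y \notin \cC$ and $Y = \emptyset$.
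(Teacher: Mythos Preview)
Your argument is correct. The paper does not supply its own proof of this lemma; it quotes the result from Kalai--Meshulam and describes it only as a ``corollary'' of the combinatorial Alexander duality theorem, without spelling out the derivation. Your proof fills in exactly those details via the standard route: the identity $\cC^\star[X] = (\mathrm{lk}_\cC(V - X))^\star$ on ground set $X$, the fact that links in a $d$-Leray complex are again $d$-Leray (your Mayer--Vietoris sketch is the usual way to see this), and then Alexander duality on the link. The degenerate case $Y \notin \cC$ is handled correctly, and the range check on $i$ together with the definition of $\eta_H$ gives the claimed bound, with the cases $|X| \le d+1$ being trivially true since $\eta_H \ge 0$ always.
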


The following is our reconfiguration version of the topological colorful Helly theorem by Kalai and Meshulam \cite{kalai2005topological}. Recall that for a complex $\cC$ and matroid $\cM$ on the same ground set, the reconfiguration graph $\recongraph(\cC, \cM)$ has vertex set consisting of all bases of $\cM$ that are simplices of $\cC$, and two such bases are joined by an edge if their union is a simplex of $\cC$ of cardinality $r(\cM)+1$. By taking complements of its vertices, we may view the graph $\recongraph(\cC^{\star}, \cM^{\ast})$ isomorphically as the graph whose vertex set consists of all bases $B$ of $\cM$ that are not faces of $\cC$, and two such bases $B$ and $B'$ are joined by an edge whenever $|B \cap B'| = r(\cM) - 1$ and $B \cap B' \notin \cC$. 

\begin{thm} \label{thm:topological-Helly-reconfigure}
Let $\cC$ be a complex and let $\cM$ be a matroid on the same ground set $V$. If $\cC$ is $d$-Leray and $r(\cM[V - A]) \ge d+2$ for all $A \in \cC$, then $\recongraph(\cC^{\star}, \cM^{\ast})$ is connected.
\end{thm}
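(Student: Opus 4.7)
The plan is to derive Theorem \ref{thm:topological-Helly-reconfigure} from our matroidal reconfiguration theorem (Theorem \ref{thm:complex-matroid-reconfiguration}), applied with complex $\cC^\star$, matroid $\cM^\ast$, and parameter $k = r(\cM^\ast)$. Bases of $\cM^\ast$ correspond via complementation to bases of $\cM$, so the vertices of $\recongraph(\cC^\star, \cM^\ast)$ are exactly the bases of $\cM$ not lying in $\cC$, matching the interpretation stated before the theorem. Before invoking Theorem \ref{thm:complex-matroid-reconfiguration}, I would observe that $V \notin \cC$, since otherwise the hypothesis with $A = V$ would demand $r(\cM[\emptyset]) \ge d+2$. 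This activates Lemma \ref{lem:Alexander-dual}, yielding $\eta_H(\cC^\star[X]) \ge |X| - d - 1$ for every $X \subseteq V$.

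The hypothesis to be checked is that, for every $X \subseteq V$ such that $V - X$ is a flat of $\cM^\ast$ of rank at most $r(\cM^\ast) - 1$,
\begin{align*}
    \eta_H(\cC^\star[X]) + r(\cM^\ast[V-X]) \ge r(\cM^\ast) + 1.
\end{align*}
Using the standard duality identity $r(\cM^\ast[V-X]) = r(\cM[X]) + |V-X| - r(\cM)$ recalled in Section \ref{sec:main-theorem}, a short calculation gives $r(\cM^\ast) - r(\cM^\ast[V-X]) = |X| - r(\cM[X])$, so the required inequality rearranges to $\eta_H(\cC^\star[X]) \ge |X| - r(\cM[X]) + 1$. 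I would then split on whether $V - X$ lies in $\cC$. If $V - X \in \cC$, the hypothesis of Theorem \ref{thm:topological-Helly-reconfigure} applied to $A = V - X$ gives $r(\cM[X]) \ge d+2$, and combining with $\eta_H(\cC^\star[X]) \ge |X| - d - 1$ from Lemma \ref{lem:Alexander-dual} yields the desired inequality. If $V - X \notin \cC$, then $V - Y \supseteq V - X \notin \cC$ for every $Y \subseteq X$ (since $\cC$ is subset-closed), so every $Y \subseteq X$ lies in $\cC^\star$; thus $\cC^\star[X]$ is the full simplex on $X$ and $\eta_H(\cC^\star[X]) = \infty$, making the inequality trivial.

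The main subtle point will be recognizing this case split, as the Alexander duality bound from Lemma \ref{lem:Alexander-dual} alone is tight enough only when $V - X \in \cC$, while in the opposite case $\cC^\star[X]$ collapses to a full simplex for a purely combinatorial reason unrelated to the Leray hypothesis. A minor side check is that the flat-rank restriction in Theorem \ref{thm:complex-matroid-reconfiguration} automatically excludes the choice $X = \emptyset$ (for which $V - X = V$ is a flat of full rank $r(\cM^\ast)$), so no boundary case needs separate treatment. The degenerate instance $r(\cM^\ast) = 0$ is trivial, since $\recongraph(\cC^\star, \cM^\ast)$ then has only the single vertex $\emptyset$.
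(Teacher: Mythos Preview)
Your proposal is correct and follows essentially the same approach as the paper: apply Theorem \ref{thm:complex-matroid-reconfiguration} with $(\cC^\star,\cM^\ast)$ at $k=r(\cM^\ast)$, split into the two cases $V-X\in\cC$ versus $V-X\notin\cC$ (equivalently, $X\notin\cC^\star$ versus $X\in\cC^\star$), and combine Lemma \ref{lem:Alexander-dual} with the rank duality identity in the first case while observing that $\cC^\star[X]$ is a full simplex in the second. Your extra care in noting that $V\notin\cC$ (so that Lemma \ref{lem:Alexander-dual} applies) and that the flat-rank restriction excludes $X=\emptyset$ are minor points the paper leaves implicit, but the argument is otherwise the same.
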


\begin{proof}
We apply Theorem \ref{thm:complex-matroid-reconfiguration}, wishing to show that $\eta_H(\cC^{\star}[X]) + r(\cM^{\ast}[V - X]) \ge r(\cM^{\ast})+1$ for all nonempty $X \subseteq V$. If $X \in \cC^{\star}$, then $\cC^{\star}[X]$ is a simplex and thus $\eta_H(\cC^{\star}[X]) = \infty$. On the other hand, if $X \notin \cC^{\star}$, then $V - X \in \cC$, so applying Lemma \ref{lem:Alexander-dual} and the theorem assumption with $A \coloneqq V - X$, we get that
\begin{align*}
	\eta_H(\cC^{\star}[X]) + r(\cM^{\ast}[V - X]) &\ge (|X| - d - 1) + (|V - X| - r(\cM) + r(\cM[X])) \\
	&= r(\cM^{\ast}) + r(\cM[V - A]) - d - 1 \\
	&\ge r(\cM^{\ast}) + 1,
\end{align*}
as required.
\end{proof}

For comparison, Kalai and Meshulam \cite{kalai2005topological} proved the existence of a basis $B$ of $\cM$ that is not a face of $\cC$ under the assumptions that $\cC$ is $d$-Leray and $r(\cM[V - A]) \ge d+1$ for all $A \in \cC$. Now we finish the proofs of Theorems \ref{thm:colorful-helly-reconfigure} and \ref{thm:colorful-caratheodory-reconfigure}.

\begin{proof}[Proof of Theorem \ref{thm:colorful-helly-reconfigure}]
Let $\cN \coloneq \nerve(\cF)$ be the nerve of the family $\cF \coloneqq \cF_1 \cup \cdots \cup \cF_{n}$. Let $\cC$ be the complex on vertex set $V \coloneqq \{(C,i) : i \in [n], C \in \cF_i \}$, where a subset $A \subseteq V$ lies in $\cC$ whenever $\pi(A) \in \cN$. Here, $\pi : (C,i) \mapsto C$ denotes the natural projection, which gives a homotopy equivalence between $\cC$ and $\cN$. Let $\cM$ be the partition matroid on $V$ defined by the partition $\cF_1 \times \{1\}, \ldots, \cF_{n} \times \{n\}$. Then, from the complementary point of view described above, the graphs $\recongraph(\cC^{\star}, \cM^{\ast})$ and $\recongraph_{\mathrm{CH}}(\cF_1, \ldots, \cF_{n})$ are isomorphic. The nerve $\cN$ is $d$-representable by definition, and hence it is $d$-Leray. Thus the complex $\cC$ is also $d$-Leray. In addition, using that $n \ge d+2$, the condition $\bigcap \cF_i = \emptyset$ for all $i \in [n]$ implies that $r(\cM[V - A]) \ge d+2$ for all $A \in \cC$. Applying Theorem \ref{thm:topological-Helly-reconfigure}, we deduce that $\recongraph_{\mathrm{CH}}(\cF_1, \ldots, \cF_{n})$ is connected.
\end{proof}

\begin{proof}[Proof of Theorem \ref{thm:colorful-caratheodory-reconfigure}]
For all $i \in [n]$, assume that $A_i \coloneqq \{a_{i,1}, \ldots, a_{i,k_i}\} \subset \mathbb{R}^d$ with $k_i \ge 1$. Consider the families $\cF_i \coloneqq \{H_{i,j} : j \in [k_i]\}$ consisting of the (possibly empty) closed half-spaces $H_{i,j} \coloneqq \{y \in \mathbb{R}^d : (a_{i,j} - x)^\top y \ge 1\}$, for $i \in [n]$. By an application of Farkas' lemma, for all subsets $A \subseteq \bigcup_{i=1}^n A_i$, say $A \coloneqq \{a_{i_1,j_1}, \ldots, a_{i_{\ell},j_{\ell}}\}$, we have $x \in \convex(A)$ if and only if $\bigcap_{k=1}^{\ell} H_{i_{k},j_{k}} = \emptyset$. In particular, the reconfiguration graphs $\recongraph_{\mathrm{CC}}((A_1, \ldots, A_{n}), x)$ and $\recongraph_{\mathrm{CH}}(\cF_1, \ldots, \cF_{n})$ are isomorphic. The condition $x \in \convex(A_i)$ is equivalent to the condition $\bigcap \cF_i = \emptyset$. Applying Theorem \ref{thm:colorful-helly-reconfigure}, we deduce that $\recongraph_{\mathrm{CC}}((A_1, \ldots, A_{n}), x)$ is connected.
\end{proof}

\subsection{Reconfigurations for Tverberg's theorem}

In this subsection, we prove our reconfiguration version of Tverberg's theorem \cite{tverberg1966generalization} which we stated in the Introduction (Theorem \ref{thm:Tverberg-reconfiguration-1}). We repeat the theorem statement below. 

\begin{thm} \label{thm:tverberg-reconfiguration}
	Let $X$ be a finite set of points in $\mathbb{R}^d$, and let $r \ge 1$ be an integer. If $|X| \ge (d+1)(r-1)+2$, then $\recongraph_{\mathrm{Tv}}(X, r)$ is connected.
\end{thm}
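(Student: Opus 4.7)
The plan is to reduce Theorem \ref{thm:tverberg-reconfiguration} to our reconfiguration version of the colorful Carath\'eodory theorem (Theorem \ref{thm:colorful-caratheodory-reconfigure}) via Sarkaria's tensor trick, mirroring the classical derivation of Tverberg's theorem from colorful Carath\'eodory due to Sarkaria and B\'ar\'any--Onn. This is natural because Theorem \ref{thm:colorful-caratheodory-reconfigure} provides connectivity in the sharp range ``dimension plus two,'' which matches exactly the excess of one point demanded by Theorem \ref{thm:tverberg-reconfiguration} over Tverberg's existence bound.

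Concretely, write $X = \{x_1, \ldots, x_n\}$ and let $\bar{x}_i = (x_i, 1) \in \mathbb{R}^{d+1}$ be the homogenization of $x_i$. Fix points $v_1, \ldots, v_r \in \mathbb{R}^{r-1}$ (e.g., vertices of a regular simplex centered at the origin) with $v_1 + \cdots + v_r = 0$ and any $r-1$ of them linearly independent. For each $i \in [n]$ I would form the $r$-point set
$$A_i = \{\bar{x}_i \otimes v_j : j \in [r]\} \subset \mathbb{R}^{(d+1)(r-1)}.$$
Since $\sum_j \bar{x}_i \otimes v_j = \bar{x}_i \otimes \sum_j v_j = 0$, we automatically have $0 \in \convex(A_i)$ for every $i$. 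A tuple $(a_1, \ldots, a_n) \in A_1 \times \cdots \times A_n$ is encoded by an assignment $c : [n] \to [r]$ via $a_i = \bar{x}_i \otimes v_{c(i)}$, which in turn determines the ordered $r$-partition $(X_1, \ldots, X_r)$ with $X_j = \{x_i : c(i) = j\}$.

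The key verification is that $0 \in \convex(\{a_1, \ldots, a_n\})$ if and only if $(X_1, \ldots, X_r)$ is an ordered Tverberg $r$-partition of $X$. The standard Sarkaria calculation: from a convex combination $0 = \sum_i \lambda_i a_i$, regrouping by color yields $\sum_j \mu_j \otimes v_j = 0$ with $\mu_j = \sum_{i : c(i) = j} \lambda_i \bar{x}_i$; using $\sum_j v_j = 0$ and the linear independence of $v_1, \ldots, v_{r-1}$, one deduces that all $\mu_j$ coincide, and comparing last coordinates forces $\sum_{i : c(i) = j} \lambda_i = 1/r$ for every $j$. In particular each $X_j$ is nonempty, and the common value of the first $d$ coordinates of $r \mu_j$ is a point of $\bigcap_j \convex(X_j)$. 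The converse direction is immediate by running the argument backward. Under this correspondence, two Tverberg partitions differ by the placement of a single point $x_k$ precisely when the associated tuples differ only at index $k$, and the requirement that deleting $x_k$ leave an ordered Tverberg partition of $X - \{x_k\}$ translates verbatim to the colorful Carath\'eodory edge condition $0 \in \convex(\{a_i : i \neq k\})$, which is symmetric in the two endpoints since it depends only on the common coordinates.

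Thus $\recongraph_{\mathrm{Tv}}(X, r) \cong \recongraph_{\mathrm{CC}}((A_1, \ldots, A_n), 0)$, and connectivity follows by applying Theorem \ref{thm:colorful-caratheodory-reconfigure} in ambient dimension $D = (d+1)(r-1)$, for which the hypothesis $|X| \ge D + 2$ is exactly what is assumed. The reduction is essentially formal once the tensor trick is in place; the only mildly delicate point is that nonemptiness of every color class is not assumed in the colorful Carath\'eodory setup but is forced here by the homogenizing ``$1$'' coordinate in $\bar{x}_i$, which ensures that each color carries total weight $1/r$ in any convex combination expressing $0$.
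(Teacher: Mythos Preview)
Your proposal is correct and follows essentially the same route as the paper: reduce to Theorem~\ref{thm:colorful-caratheodory-reconfigure} via Sarkaria's tensor trick, then observe that the bijection between colorful tuples and ordered $r$-partitions carries both vertices and edges of the two reconfiguration graphs to one another. The paper packages the key equivalence as Lemma~\ref{lem:Sarkaria} and simply asserts the graph isomorphism, whereas you spell out the Sarkaria calculation and the edge-condition match explicitly; otherwise the arguments are identical.
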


Our proof of Theorem \ref{thm:tverberg-reconfiguration} applies Sarkaria's \cite{sarkaria1992tverberg} well-known transformation relating Tverberg's theorem to the colorful Carath\'eodory theorem. We use the simplified tensor form of the transformation due to B\'ar\'any and Onn \cite{barany1997colourful}. Say that $X \coloneqq \{x_1, \ldots, x_n\} \subset \mathbb{R}^d$. Let $w_1, \ldots, w_r$ be vectors in $\mathbb{R}^{r-1}$ where the unique linear dependence, up to scaling, is $w_1 + \cdots + w_r = 0$. For $i \in [n]$ and $j \in [r]$, we define the tensor
\begin{align*}
	\overline{x}_{i,j} \coloneqq \begin{pmatrix}x_i\\1\end{pmatrix} \otimes w_j \in \mathbb{R}^{(d+1)(r-1)}.
\end{align*}
One may view the tensor product $y \otimes z$ of nonzero vectors $y = (y_1, \ldots, y_{d+1})$ and $z = (z_1, \ldots, z_{r-1})$ as the $(d+1) \times (r-1)$ rank-one matrix $(y_k \cdot z_{\ell})_{k,\ell}$. 

Allowing parts to be empty, there is a natural bijection between ordered partitions $(X_1, \ldots, X_r)$ of $X$ into $r$ parts and functions $j : [n] \to [r]$, where $n = |X|$. Specifically, we associate the function $j : [n] \to [r]$ with the ordered partition $P_j \coloneqq (X_{1,j}, \ldots, X_{r,j})$ where $X_{k,j} \coloneqq \{x_i \in X : j(i) = k\}$. We also associate the function $j : [n] \to [r]$ with the tuple $T_j = (\overline{x}_{1,j(1)}, \ldots, \overline{x}_{n,j(n)})$ consisting of the tensors $\overline{x}_{i,j(i)}$ defined above. The following fundamental lemma of Sarkaria \cite{sarkaria1992tverberg} relates Tverberg's theorem to the colorful Carath\'eodory theorem, proven in the form below in \cite[Lemma 2]{arocha2009very}.

\begin{lem}[\cite{arocha2009very, sarkaria1992tverberg}] \label{lem:Sarkaria}
The ordered partition $P_j \coloneqq (X_{1,j}, \ldots, X_{r,j})$ satisfies $\bigcap_{k=1}^r \convex(X_{k,j}) \neq \emptyset$ if and only if the tuple $T_j \coloneqq (\overline{x}_{1,j(1)}, \ldots, \overline{x}_{n,j(n)})$ satisfies $0 \in \convex(\{\overline{x}_{i,j(i)} : i \in [n] \})$.
\end{lem}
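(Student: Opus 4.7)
The plan is to prove both directions by exploiting the bilinearity of the tensor product together with the defining property of the vectors $w_1, \ldots, w_r$, namely that their unique (up to scalar) linear dependence is $w_1 + \cdots + w_r = 0$. The crucial algebraic identity I would establish first is that, for any nonnegative coefficients $\lambda_1, \ldots, \lambda_n$,
\begin{align*}
  \sum_{i=1}^n \lambda_i \, \overline{x}_{i,j(i)} \;=\; \sum_{k=1}^r \Bigl( \sum_{i : j(i)=k} \lambda_i \tbinom{x_i}{1} \Bigr) \otimes w_k,
\end{align*}
obtained by grouping the index $i$ according to the value of $j(i)$ and using bilinearity of $\otimes$.

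For the forward direction, I would start from a point $p \in \bigcap_{k=1}^r \convex(X_{k,j})$ and write, for each $k \in [r]$, a convex representation $\binom{p}{1} = \sum_{i : j(i)=k} \lambda_{i,k} \binom{x_i}{1}$ with $\lambda_{i,k} \ge 0$ and $\sum_{i : j(i)=k} \lambda_{i,k} = 1$. Substituting into the identity yields $\sum_{i=1}^n \lambda_{i,j(i)} \overline{x}_{i,j(i)} = \sum_{k=1}^r \binom{p}{1} \otimes w_k = \binom{p}{1} \otimes \sum_{k=1}^r w_k = 0$. Since the coefficients sum to $r$, dividing through by $r$ exhibits $0$ as a convex combination of the $\overline{x}_{i,j(i)}$.

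For the reverse direction, I would start from a convex combination $0 = \sum_{i=1}^n \mu_i \overline{x}_{i,j(i)}$ with $\mu_i \ge 0$ and $\sum_i \mu_i = 1$, and set $v_k \coloneqq \sum_{i : j(i)=k} \mu_i \binom{x_i}{1} \in \mathbb{R}^{d+1}$ for each $k \in [r]$, with the convention $v_k = 0$ if no $i$ satisfies $j(i)=k$. By the identity, $\sum_{k=1}^r v_k \otimes w_k = 0$. Viewing each summand as a $(d+1) \times (r-1)$ matrix and reading off rows, this says that for each coordinate $\ell \in [d+1]$ the scalars $(v_1)_\ell, \ldots, (v_r)_\ell$ give a linear dependence among $w_1, \ldots, w_r$; by the uniqueness assumption on the $w_k$, these scalars must all be equal. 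Hence $v_1 = \cdots = v_r \eqqcolon v$. Inspecting the last coordinate of $v$ gives $\sum_{i : j(i)=k} \mu_i = v_{d+1}$ for each $k$, which summed over $k$ equals $1$, so $v_{d+1} = 1/r > 0$. In particular every part $X_{k,j}$ is nonempty, and setting $\lambda_{i,k} \coloneqq r\mu_i$ for $i$ with $j(i)=k$ gives a convex representation of the common point $p \coloneqq r \cdot (\text{first $d$ coordinates of } v)$ from each class $X_{k,j}$, so $p \in \bigcap_{k=1}^r \convex(X_{k,j})$.

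The only subtlety I anticipate is the step that extracts $v_1 = \cdots = v_r$ from the vanishing tensor sum; I would handle this cleanly by fixing coordinates in $\mathbb{R}^{d+1}$ so that it reduces to the rank-one dependence property of $\{w_k\}$. Everything else is bookkeeping with convex combinations and the normalization factor $1/r$.
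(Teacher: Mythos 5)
Your proof is correct. The paper does not prove this lemma itself---it imports it from Sarkaria's work and from \cite[Lemma 2]{arocha2009very}---but your argument is precisely the standard one from those sources: the bilinearity identity regrouping $\sum_i \lambda_i\,\overline{x}_{i,j(i)}$ as $\sum_k v_k \otimes w_k$, the relation $\sum_k w_k = 0$ for the forward direction, and the one-dimensionality of the space of linear dependencies among $w_1,\ldots,w_r$ to force $v_1=\cdots=v_r$ in the reverse direction, with the last coordinate supplying the normalization $1/r$. All steps check out, including the observation that the reverse direction automatically yields nonemptiness of every part.
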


Using Lemma \ref{lem:Sarkaria}, we may now finish the proof of Theorem \ref{thm:tverberg-reconfiguration}.

\begin{proof}[Proof of Theorem \ref{thm:tverberg-reconfiguration}]
Let $X \coloneqq \{x_1, \ldots, x_n\}$ where $n \ge (d+1)(r-1)+2$. For all $i \in [n]$, consider the set of tensors $A_i \coloneqq \{\overline{x}_{i,1}, \ldots, \overline{x}_{i,r}\} \subset \mathbb{R}^{(d+1)(r-1)}$ as defined above. Each of the sets $A_i$ contains the origin $0$ in its convex hull, as the sum of the elements of $A_i$ is $0$. By Theorem \ref{thm:colorful-caratheodory-reconfigure}, the reconfiguration graph $\recongraph_{\mathrm{CC}}((A_1, \ldots, A_n), 0)$ is connected. It is then easy to verify using Lemma \ref{lem:Sarkaria} that $\recongraph_{\mathrm{Tv}}(X, r)$ is isomorphic to $\recongraph_{\mathrm{CC}}((A_1, \ldots, A_n), 0)$, and the theorem follows.
\end{proof}

The case $r = 2$ of Theorem \ref{thm:tverberg-reconfiguration} provides a reconfiguration version of Radon's theorem. Oliveros, Rold\'an, Sober\'on, and Torres \cite{oliveros2025tverberg} proved a different reconfiguration version of Radon's theorem. For a finite point set $X \subset \mathbb{R}^d$, an \textit{unordered Radon partition} is a partition $\{X_1, X_2\}$ of $X$ satisfying $\convex(X_1) \cap \convex(X_2) \neq \emptyset$. The above authors showed that for any two unordered Radon partitions $P, Q$ of $X$, if they exist, then there exists a finite sequence $P = P_0, P_1, \ldots, P_N = Q$ of unordered Radon partitions of $X$ in which $P_{i+1}$ is obtained from $P_i$ by moving one point $x$ from one part of $P_i$ to the other. Their argument was a linear interpolation between affine dependencies associated with the unordered Radon partitions $P$ and $Q$. Here, we adapt their approach to give an alternative, non-topological proof of the case $r = 2$ of Theorem \ref{thm:tverberg-reconfiguration}.

\begin{proof}[Alternative proof of Theorem \ref{thm:tverberg-reconfiguration} when $r=2$]
Consider any two distinct ordered Radon partitions $(X_1, X_2)$ and $(Y_1, Y_2)$ of $X \coloneqq \{x_1, \ldots, x_n\}$, where $|X| = n \ge d+3$. Then there exist $n$-tuples $(\alpha_1, \ldots, \alpha_n)$ and $(\beta_1, \ldots, \beta_n)$ in $\mathbb{R}^d$, with entries not all $0$ and having the properties that 
\begin{itemize}
	\item $\sum_{i = 1}^n \alpha_i = \sum_{i = 1}^n \beta_i = 0$ and $\sum_{i=1}^n \alpha_i x_i = \sum_{i=1}^n \beta_i x_i = 0$, and
	\item $\alpha_i \ge 0$ whenever $x_i \in X_1$, $\alpha_i \le 0$ whenever $x_i \in X_2$, $\beta_i \ge 0$ whenever $x_i \in Y_1$, and $\beta_i \le 0$ whenever $x_i \in Y_2$.
\end{itemize}
Consider the line segment $L(t) \coloneqq (1 - t) \cdot (\alpha_1, \ldots, \alpha_n) + t \cdot (\beta_1, \ldots, \beta_n)$ for $t \in [0,1]$. Assume first that $L(t) \neq 0$ for all $t \in [0,1]$. We define a reconfiguration sequence from $(X_1, X_2)$ to $(Y_1, Y_2)$ as follows: Increasing $t$ continuously from $0$ to $1$, every time the value at an entry $i$ of $L(t)$ changes sign, we move $x_i$ from one part to the other. If multiple entries become $0$ simultaneously, we do these moves in arbitrary order. Using that $L(t)$ is never $0$, each such element swap is a valid move in the reconfiguration graph. Thus, we use this to derive a valid reconfiguration sequence from $(X_1, X_2)$ to $(Y_1, Y_2)$. 

Now assume that $L(t_0) = 0$ for some $t_0 \in (0,1)$. Let $i \in [n]$ be such that $\alpha_i > 0$, so that $\beta_i < 0$. Using that $|X| \ge d+3$, we apply Radon's theorem to get an ordered Radon partition $(Z_1, Z_2)$ of $X - \{x_i\}$. Let $(\gamma_1, \ldots, \gamma_n)$ be an $n$-tuple in $\mathbb{R}^n$ associated with the Radon partition $(Z_1 \cup \{x_i\}, Z_2)$ of $X$, where $\gamma_i = 0$. Consider the two line segments $L_1(t) \coloneqq (1 - t) \cdot (\alpha_1, \ldots, \alpha_n) + t \cdot (\gamma_1, \ldots, \gamma_n)$ and $L_2(t) \coloneqq (1 - t) \cdot (\gamma_1, \ldots, \gamma_n) + t \cdot (\beta_1, \ldots, \beta_n)$ for $t \in [0,1]$. Since $\alpha_i > 0$, $\beta_i < 0$, and $\gamma_i = 0$, we have $L_1(t), L_2(t) \neq 0$ for all $t \in [0,1]$. Therefore, applying the same swapping procedure above, we get a valid reconfiguration sequence from $(X_1, X_2)$ to $(Z_1 \cup \{x_i\}, Z_2)$, followed by a valid reconfiguration sequence from $(Z_1 \cup \{x_i\}, Z_2)$ to $(Y_1, Y_2)$. This finishes the proof.
\end{proof}

\subsection{Higher dimensional connectedness}

In this subsection, we state our higher dimensional homological connectedness versions of the colorful Helly, colorful Carath\'eodory, and Tverberg theorems. These follow the same proofs as Theorems \ref{thm:colorful-helly-reconfigure}, \ref{thm:colorful-caratheodory-reconfigure}, and \ref{thm:tverberg-reconfiguration}, but use Theorem \ref{thm:complex-matroid-connectedness} in place of Theorem \ref{thm:complex-matroid-reconfiguration}. The proofs work with homology coefficients lying in any fixed field. In turn, the results also hold with homology coefficients lying in any fixed abelian group, via the universal coefficient theorem.

Recall that for a complex $\cC$ and matroid $\cM$ on the same ground set $V$, the intersection complex $\intersectioncomplex(\cC, \cM)$ is the order complex on the collection of faces of $\cC$ that contain a basis of $\cM$, partially ordered by inclusion. Letting $\cC^{\star}$ denote the Alexander dual of $\cC$ and $\cM^{\ast}$ denote the matroid dual of $\cM$, we may view the complex $\intersectioncomplex(\cC^{\star}, \cM^{\ast})$ isomorphically as the order complex on the collection of independent sets of $\cM$ that are not faces of $\cC$, partially ordered by inclusion. We have the following generalization of the Kalai--Meshulam theorem \cite{kalai2005topological} and of Theorem \ref{thm:topological-Helly-reconfigure}. (Note that definition of $d$-Leray should be adjusted according to the choice of homology coefficients.)

\begin{thm}
Let $\cC$ be a complex and let $\cM$ be a matroid on the same ground set $V$, and let $d, m \ge 0$ be integers. If $\cC$ is $d$-Leray and $r(\cM[V - A]) \ge d+m+1$ for all $A \in \cC$, then $\intersectioncomplex(\cC^{\star}, \cM^{\ast})$ is homologically $(m-1)$-connected.
\end{thm}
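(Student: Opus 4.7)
The plan is to invoke Theorem \ref{thm:complex-matroid-connectedness} applied to the pair $(\cC^{\star}, \cM^{\ast})$ with $k = r(\cM^{\ast})$, mirroring the structure of the proof of Theorem \ref{thm:topological-Helly-reconfigure} but upgrading the use of Theorem \ref{thm:complex-matroid-reconfiguration} to its higher-dimensional homological analogue. The conclusion $\eta_H(\intersectioncomplex(\cC^{\star}, \cM^{\ast})) \ge m+1$ supplied by Theorem \ref{thm:complex-matroid-connectedness} is precisely the statement that $\intersectioncomplex(\cC^{\star}, \cM^{\ast})$ is homologically $(m-1)$-connected, so the entire task reduces to verifying the hypothesis: for every $X \subseteq V$ such that $V - X$ is a flat of $\cM^{\ast}$ of rank at most $r(\cM^{\ast}) - 1$, I must show
\begin{align*}
\eta_H(\cC^{\star}[X]) + r(\cM^{\ast}[V-X]) \ge r(\cM^{\ast}) + m.
\end{align*}

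I would split into two cases. If $X \in \cC^{\star}$, then $\cC^{\star}[X]$ is a simplex, so $\eta_H(\cC^{\star}[X]) = \infty$ and the inequality is immediate. Otherwise $V - X \in \cC$ by definition of the Alexander dual, so setting $A := V - X$, the hypothesis $r(\cM[V-A]) \ge d + m + 1$ yields $r(\cM[X]) \ge d + m + 1$. Combining the Alexander duality bound $\eta_H(\cC^{\star}[X]) \ge |X| - d - 1$ from Lemma \ref{lem:Alexander-dual} with the standard matroid duality identity $r(\cM^{\ast}[V-X]) = r(\cM[X]) + |V-X| - r(\cM)$, I obtain
\begin{align*}
\eta_H(\cC^{\star}[X]) + r(\cM^{\ast}[V-X]) &\ge (|X| - d - 1) + r(\cM[X]) + |V-X| - r(\cM) \\
&= r(\cM^{\ast}) + r(\cM[X]) - d - 1 \\
&\ge r(\cM^{\ast}) + m,
\end{align*}
which is exactly what the hypothesis of Theorem \ref{thm:complex-matroid-connectedness} requires.

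I do not anticipate any substantial obstacle: the Alexander duality bound in Lemma \ref{lem:Alexander-dual} has precisely the right amount of slack, and the strengthened assumption $r(\cM[V-A]) \ge d+m+1$ (versus $d+2$ in the $m=1$ reconfiguration case of Theorem \ref{thm:topological-Helly-reconfigure}) contributes exactly the extra margin $m$ that Theorem \ref{thm:complex-matroid-connectedness} demands. The only minor care needed is to note that the statement of the Alexander duality isomorphism used in Lemma \ref{lem:Alexander-dual} applies with coefficients in any fixed field, so the proof carries over verbatim when the coefficient ring is changed, and then via the universal coefficient theorem the result extends to any abelian group.
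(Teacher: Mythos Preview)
Your proposal is correct and takes essentially the same approach as the paper. The paper does not spell out a separate proof for this statement but simply remarks that it follows the proof of Theorem~\ref{thm:topological-Helly-reconfigure} verbatim with Theorem~\ref{thm:complex-matroid-connectedness} substituted for Theorem~\ref{thm:complex-matroid-reconfiguration}; your two-case verification and chain of inequalities reproduce exactly that argument, including the remark about field coefficients and the universal coefficient theorem.
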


For a higher dimensional connectedness version of the colorful Helly theorem \cite{barany1982generalization}, we are given finite families $\cF_1, \ldots \cF_n$ each consisting of convex subsets of $\mathbb{R}^d$. We let $\star$ be a symbol that indicates the absence of a choice of representative. We define the complex $\textbf{ColHel}(\cF_1, \ldots, \cF_n)$ as the order complex on the collection of tuples of the form $(C_1, \ldots, C_n)$ satisfying $C_i \in \cF_i \cup \{\star\}$ for all $i \in [n]$ and $\bigcap_{i \in [n] : C_i \neq \star} C_i = \emptyset$. The partial order $\le$ on this collection defined by $(C_1, \ldots, C_n) \le (D_1, \ldots, D_n)$ whenever $C_i \in \{D_i, \star\}$ for all $i \in [n]$. We have the following generalization of Theorems \ref{thm:colorful-helly} and \ref{thm:colorful-helly-reconfigure} .

\begin{thm}
Let $\cF_1, \ldots, \cF_{n}$ be finite families of convex subsets of $\mathbb{R}^d$, and let $m \ge 0$ be an integer. If $\bigcap \cF_i = \emptyset$ for all $i \in [n]$ and $n \ge d+m+1$, then $\mathbf{ColHel}(\cF_1, \ldots, \cF_{n})$ is homologically $(m-1)$-connected.
\end{thm}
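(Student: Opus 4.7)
My plan is to reduce this to the higher dimensional connectedness version of the Kalai--Meshulam theorem stated just above, exactly mirroring the proof of Theorem \ref{thm:colorful-helly-reconfigure} but invoking Theorem \ref{thm:complex-matroid-connectedness} in place of Theorem \ref{thm:complex-matroid-reconfiguration}. The topological work is already packaged into that generalized Kalai--Meshulam theorem, so the task is to build the right auxiliary complex and matroid and then identify the resulting intersection complex with $\mathbf{ColHel}(\cF_1, \ldots, \cF_n)$.

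First I would set up the auxiliary data exactly as in the proof of Theorem \ref{thm:colorful-helly-reconfigure}. Let $\cN \coloneqq \nerve(\cF)$ where $\cF \coloneqq \cF_1 \cup \cdots \cup \cF_n$. On the ground set $V \coloneqq \{(C, i) : i \in [n],\ C \in \cF_i\}$ with natural projection $\pi : (C, i) \mapsto C$, define the complex $\cC \coloneqq \{A \subseteq V : \pi(A) \in \cN\}$, and let $\cM$ be the partition matroid on $V$ associated with the partition $\{\cF_i \times \{i\} : i \in [n]\}$. Since $\pi$ induces a homotopy equivalence $\cC \simeq \cN$ and $\cN$ is $d$-representable (hence $d$-Leray), the complex $\cC$ is $d$-Leray. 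For any $A \in \cC$, the projection $\pi(A)$ is a face of $\cN$, so $\bigcap \pi(A) \neq \emptyset$; since $\bigcap \cF_i = \emptyset$ by hypothesis, no class $\cF_i \times \{i\}$ can be fully contained in $A$, so $V - A$ meets all $n$ classes, giving $r(\cM[V - A]) = n \ge d + m + 1$. The hypotheses of the generalized Kalai--Meshulam theorem are therefore satisfied, and we conclude that $\intersectioncomplex(\cC^{\star}, \cM^{\ast})$ is homologically $(m-1)$-connected.

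It remains to exhibit a simplicial isomorphism between $\intersectioncomplex(\cC^{\star}, \cM^{\ast})$ and $\mathbf{ColHel}(\cF_1, \ldots, \cF_n)$. Using the complementary view described before Theorem \ref{thm:topological-Helly-reconfigure}, the vertices of $\intersectioncomplex(\cC^{\star}, \cM^{\ast})$ are the independent sets $B$ of $\cM$ that are not faces of $\cC$, ordered by inclusion. Such a $B$ is a partial transversal $\{(C_{i_1}, i_1), \ldots, (C_{i_k}, i_k)\}$ of the partition, and the condition $B \notin \cC$ means precisely that $C_{i_1} \cap \cdots \cap C_{i_k} = \emptyset$. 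Encoding the unchosen classes by the symbol $\star$ identifies $B$ with the tuple $(D_1, \ldots, D_n)$ in the definition of $\mathbf{ColHel}$, and the inclusion order on independent sets translates exactly to the partial order $(C_1, \ldots, C_n) \le (D_1, \ldots, D_n)$ iff $C_i \in \{D_i, \star\}$. Thus the two order complexes coincide and the homological $(m-1)$-connectedness transfers.

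The only real obstacle is the bookkeeping in this last identification: one must check carefully that partial transversals failing to satisfy $\pi(B) \in \cN$ correspond bijectively, in an order-preserving fashion, to the $\star$-decorated tuples with the stated empty-intersection condition. Once this dictionary is set, everything else in the argument is formally parallel to the proof of Theorem \ref{thm:colorful-helly-reconfigure}, with the homotopical/homological step absorbed into Theorem \ref{thm:complex-matroid-connectedness}.
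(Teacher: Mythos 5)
Your proposal is correct and is exactly the paper's intended argument: the paper proves this theorem by repeating the proof of Theorem \ref{thm:colorful-helly-reconfigure} (same nerve $\cN$, auxiliary complex $\cC$, and partition matroid $\cM$) with the generalized Kalai--Meshulam-type theorem from Theorem \ref{thm:complex-matroid-connectedness} replacing Theorem \ref{thm:complex-matroid-reconfiguration}, and your identification of $\intersectioncomplex(\cC^{\star}, \cM^{\ast})$ with $\mathbf{ColHel}(\cF_1, \ldots, \cF_n)$ via $\star$-decorated partial transversals is the right dictionary. The rank bound $r(\cM[V-A]) = n \ge d+m+1$ for $A \in \cC$ is verified just as you say, so nothing essential is missing.
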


For a higher dimensional connectedness version of the colorful Carath\'eodory theorem \cite{barany1982generalization}, we are given finite sets $A_1, \ldots, A_n$ of points in $\mathbb{R}^d$ and a point $x \in \mathbb{R}^d$. We define the complex $\mathbf{ColCat}((A_1, \ldots, A_n), x)$ as the order complex on the collection of tuples of the form $(a_1, \ldots, a_n)$ satisfying $a_i \in A_i \cup \{\star\}$ for all $i \in [n]$ and $x \in \convex(\{a_i : i \in [n], a_i \neq \star\})$. The partial order $\le$ on this collection is defined by $(a_1, \ldots, a_n) \le (b_1, \ldots, b_n)$ whenever $a_i \in \{b_i, \star\}$ for all $i \in [n]$. We have the following generalization of Theorems \ref{thm:colorful-caratheodory} and \ref{thm:colorful-caratheodory-reconfigure}.

\begin{thm}
Let $A_1, \ldots, A_{n}$ be finite sets of points in $\mathbb{R}^d$, let $x \in \mathbb{R}^d$, and let $m \ge 0$ be an integer. If $x \in \convex(A_i)$ for all $i \in [n]$ and $n \ge d+m+1$, then $\mathbf{ColCat}((A_1, \ldots, A_{n}), x)$ is homologically $(m-1)$-connected.
\end{thm}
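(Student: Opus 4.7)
The plan is to reduce the statement to the higher-dimensional colorful Helly theorem proved just above, using exactly the same Farkas' lemma transformation that was used to derive Theorem~\ref{thm:colorful-caratheodory-reconfigure} from Theorem~\ref{thm:colorful-helly-reconfigure} in the reconfiguration setting. No new topological content is needed; the isomorphism of the relevant order complexes should transport the homological connectedness directly.

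To begin, for each $i \in [n]$ I would write $A_i = \{a_{i,1}, \ldots, a_{i,k_i}\}$ and introduce the closed half-spaces $H_{i,j} \coloneqq \{y \in \mathbb{R}^d : (a_{i,j} - x)^\top y \ge 1\}$, collected into the family $\cF_i \coloneqq \{H_{i,j} : j \in [k_i]\}$. By Farkas' lemma, for every subset $S = \{a_{i_1,j_1}, \ldots, a_{i_\ell,j_\ell}\}$ of $\bigcup_i A_i$, the condition $x \in \convex(S)$ is equivalent to $\bigcap_{k=1}^{\ell} H_{i_k,j_k} = \emptyset$. In particular, the hypothesis $x \in \convex(A_i)$ translates directly to $\bigcap \cF_i = \emptyset$ for every $i \in [n]$.

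Next, I would extend the bijection $a_{i,j} \leftrightarrow H_{i,j}$ by $\star \mapsto \star$ and apply it coordinatewise to tuples. Under this map, the vertex condition $x \in \convex(\{a_i : a_i \ne \star\})$ defining $\mathbf{ColCat}((A_1, \ldots, A_n), x)$ corresponds exactly to the vertex condition $\bigcap_{i : C_i \ne \star} C_i = \emptyset$ defining $\mathbf{ColHel}(\cF_1, \ldots, \cF_n)$, and the two $\star$-replacement partial orders coincide by construction. Hence the two order complexes are isomorphic as simplicial complexes. Since $n \ge d + m + 1$, the higher-dimensional colorful Helly theorem then yields that $\mathbf{ColHel}(\cF_1, \ldots, \cF_n)$, and therefore $\mathbf{ColCat}((A_1, \ldots, A_n), x)$, is homologically $(m-1)$-connected.

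The main obstacle is really only the bookkeeping that verifies naturality of the Farkas correspondence for partial tuples, i.e., that replacing an entry by $\star$ on the $\mathbf{ColCat}$ side corresponds to omitting the matching half-space from the intersection on the $\mathbf{ColHel}$ side. This is an immediate consequence of applying Farkas' lemma subset-by-subset. All the genuinely topological work is already concentrated in the higher-dimensional colorful Helly theorem above, whose proof in turn is the matroidal lift via Theorem~\ref{thm:complex-matroid-connectedness}.
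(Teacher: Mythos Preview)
Your proposal is correct and follows exactly the approach indicated by the paper, which states that the higher-dimensional results ``follow the same proofs as Theorems \ref{thm:colorful-helly-reconfigure}, \ref{thm:colorful-caratheodory-reconfigure}, and \ref{thm:tverberg-reconfiguration}, but use Theorem \ref{thm:complex-matroid-connectedness} in place of Theorem \ref{thm:complex-matroid-reconfiguration}.'' Your Farkas' lemma reduction to $\mathbf{ColHel}(\cF_1,\ldots,\cF_n)$ and the observation that the $\star$-replacement partial orders coincide is precisely the intended argument.
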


For a higher dimensional connectedness version of Tverberg's theorem \cite{tverberg1966generalization}, we are given a finite point set $X \subset \mathbb{R}^d$ and an integer $r \ge 1$. We define the complex $\mathbf{Tver}(X, r)$ as the order complex on the collection of all ordered tuples of the form $(X_1, \ldots, X_r)$ such that $X_i \subseteq X$ for all $i \in [r]$, the $X_i$'s are pairwise disjoint, and $\bigcap_{i = 1}^r \convex(X_i) \neq \emptyset$. The partial order $\le$ on this collection is defined by $(X_1, \ldots, X_r) \le (Y_1, \ldots, Y_r)$ whenever $X_i \subseteq Y_i$ for all $i \in [r]$. We have the following generalization of Theorems \ref{thm:tverberg} and \ref{thm:tverberg-reconfiguration}.

\begin{thm} \label{thm:tverberg-homological-connectedness}
Let $X$ be a finite set of points in $\mathbb{R}^d$, and let $r \ge 1$ and $m \ge 0$ be integers. If $|X| \ge (d+1)(r - 1) + m + 1$, then $\mathbf{Tver}(X, r)$ is homologically $(m-1)$-connected.
\end{thm}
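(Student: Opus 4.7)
The plan is to deduce this from the preceding higher-dimensional colorful Carath\'eodory theorem by transferring the problem through Sarkaria's tensor transformation, exactly mirroring the reduction used in the proof of Theorem \ref{thm:tverberg-reconfiguration} (which is the case $m=1$), but now upgrading from an isomorphism of reconfiguration graphs to an isomorphism of the associated order complexes.

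First I would fix $X = \{x_1, \ldots, x_n\}$ with $n \ge (d+1)(r-1) + m + 1$, select vectors $w_1, \ldots, w_r \in \mathbb{R}^{r-1}$ whose unique (up to scaling) linear dependence is $w_1 + \cdots + w_r = 0$, and form the tensors $\overline{x}_{i,j} \in \mathbb{R}^{(d+1)(r-1)}$ as in the B\'ar\'any--Onn formulation. Set $A_i \coloneqq \{\overline{x}_{i,1}, \ldots, \overline{x}_{i,r}\}$. The relation $\sum_{k=1}^r w_k = 0$ implies $\sum_{k=1}^r \overline{x}_{i,k} = 0$, so $0 \in \convex(A_i)$ for every $i$. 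The numerology matches: the ambient dimension is $d' \coloneqq (d+1)(r-1)$, so the hypothesis becomes $n \ge d' + m + 1$, which is exactly what the higher-dimensional colorful Carath\'eodory theorem stated just above this one requires in order to conclude that $\mathbf{ColCat}((A_1, \ldots, A_n), 0)$ is homologically $(m-1)$-connected.

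The main step is to promote the bijection underlying Lemma \ref{lem:Sarkaria} to an isomorphism of posets between $\mathbf{Tver}(X, r)$ and $\mathbf{ColCat}((A_1, \ldots, A_n), 0)$. Given a partial tuple $(a_1, \ldots, a_n)$ in $\mathbf{ColCat}$, I would define $X_k \coloneqq \{x_i : a_i = \overline{x}_{i,k}\}$ for each $k \in [r]$; these sets are automatically pairwise disjoint, and Lemma \ref{lem:Sarkaria}, applied to the support of the associated partial choice function and treating the $\star$-coordinates as unassigned, converts $0 \in \convex(\{a_i : a_i \neq \star\})$ into the condition that the $\convex(X_k)$'s (over the nonempty ones) have common intersection. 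The partial orders on both sides are componentwise containment: replacing an entry $a_i = \star$ by some $\overline{x}_{i,k}$ on the $\mathbf{ColCat}$ side corresponds to adjoining $x_i$ to the $k$th class on the $\mathbf{Tver}$ side. Hence the correspondence is order-preserving with order-preserving inverse, giving an isomorphism of order complexes $\mathbf{Tver}(X, r) \cong \mathbf{ColCat}((A_1, \ldots, A_n), 0)$. Composing with the conclusion of the previous paragraph yields $\eta_H(\mathbf{Tver}(X, r)) \ge m+1$, as required.

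The main obstacle will be carefully reconciling the convention for empty classes between the two models, since $\mathbf{Tver}(X, r)$ is defined using $\bigcap_{i=1}^r \convex(X_i) \neq \emptyset$, which under the convention $\convex(\emptyset) = \emptyset$ formally rules out empty $X_i$, while partial tuples in $\mathbf{ColCat}$ may omit entire colors. The cleanest resolution is to read the intersection in $\mathbf{Tver}$ as ranging only over the nonempty $X_i$'s, so that a vertex of $\mathbf{Tver}$ records a partial ordered partition of some subset of $X$ carrying a valid Tverberg configuration on its support; this is the convention already implicit in the reconfiguration-level statement in Theorem \ref{thm:tverberg-reconfiguration}, and it is exactly the convention that makes the poset isomorphism above go through. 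Once this is fixed, the remainder of the argument is purely formal.
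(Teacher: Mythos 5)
Your overall route is precisely the paper's own: Theorem \ref{thm:tverberg-homological-connectedness} is obtained by rerunning the proof of Theorem \ref{thm:tverberg-reconfiguration} with the higher dimensional colorful Carath\'eodory statement (itself deduced from Theorem \ref{thm:complex-matroid-connectedness}) in place of Theorem \ref{thm:colorful-caratheodory-reconfigure}, i.e., Sarkaria's tensor transformation upgraded from an isomorphism of reconfiguration graphs to an isomorphism of order complexes $\mathbf{Tver}(X,r) \cong \mathbf{ColCat}((A_1,\ldots,A_n),0)$. Your numerology ($d' = (d+1)(r-1)$, $n \ge d' + m + 1$), the verification $0 \in \convex(A_i)$, and the order-preservation argument are all correct.

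The one genuine problem is your resolution of the empty-class convention in the last paragraph: it is exactly backwards, and as written it would break the isomorphism you rely on. Suppose a partial tuple has support $S \coloneqq \{i : a_i \neq \star\}$ with $a_i = \overline{x}_{i,j(i)}$ for $i \in S$, and $0 = \sum_{i \in S} \lambda_i \overline{x}_{i,j(i)}$ with $\lambda_i \ge 0$, $\sum_i \lambda_i = 1$. Writing this as $\sum_{k=1}^r u_k \otimes w_k = 0$ with $u_k \coloneqq \sum_{i \in S,\, j(i)=k} \lambda_i (x_i, 1) \in \mathbb{R}^{d+1}$, and using that the only linear dependence among $w_1, \ldots, w_r$ is $w_1 + \cdots + w_r = 0$, one finds that all $u_k$ are equal, so each color class carries total weight exactly $1/r > 0$. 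Hence \emph{every} vertex of $\mathbf{ColCat}((A_1,\ldots,A_n),0)$ uses all $r$ colors, and the matching convention on the Tverberg side is the literal reading of the paper's definition: the intersection $\bigcap_{i=1}^r \convex(X_i)$ runs over all $r$ classes, so (with $\convex(\emptyset) = \emptyset$) every class of a vertex of $\mathbf{Tver}(X,r)$ is nonempty, and your map $(a_1,\ldots,a_n) \mapsto (X_1,\ldots,X_r)$ is then a poset isomorphism. Under your proposed reading (intersection only over the nonempty classes), $\mathbf{Tver}(X,r)$ would acquire extra vertices such as $(X_1, \emptyset, \ldots, \emptyset)$ that have no counterpart in $\mathbf{ColCat}$, so the map would fail to be surjective and the claimed isomorphism would be false; correspondingly, your parenthetical ``over the nonempty ones'' in the translation via Lemma \ref{lem:Sarkaria} should read ``over all $r$ classes''. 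With the literal convention the rest of your argument goes through verbatim, so this is a localized, fixable error in identifying the convention rather than a flaw in the method.
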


\section{Conclusion}

In this paper, we demonstrated how topological methods developed around Hall's transversal theorem naturally adapt to showing that reconfiguration graphs are connected in various natural combinatorial settings. Our main reconfiguration theorem (Theorem \ref{thm:topological-reconfiguration-graph}) was a variation of the usual topological Hall theorem (Theorem \ref{thm:topological-hall}), which itself was a useful generalization of Hall's transversal theorem as well as Rado's matroid transversal theorem. We extended our reconfiguration theorem to a more general theorem (Theorem \ref{thm:topological-reconfiguration-complex}) about the higher order homological connectedness of the space of colorful simplices, in the form of the \textit{colorful complex}. We then further generalized to a matroidal setting, proving a result (Theorem \ref{thm:complex-matroid-reconfiguration}) about the homological connectedness of the space of sets of given size lying in the intersection of a complex and a matroid, in the form of the \textit{intersection complex}. We gave applications focusing on natural reconfiguration problems in graph theory and discrete geometry, emphasizing applications that have found previous success with the usual topological Hall theorem. We gave a complete alternative proof of Theorem \ref{thm:BKO} of Buys, Kang, and Ozeki \cite{buys2025reconfiguration} about reconfigurations of independent transversals in graphs, and we confirmed a conjecture of Oliveros, Rold{\'a}n, Sober{\'o}n, and Torres \cite[Conjecture 2]{oliveros2025tverberg} about a reconfiguration version of Tverberg's theorem \cite{tverberg1966generalization} (Theorem \ref{thm:Tverberg-reconfiguration-1}). Topological perspectives on reconfigurations have previously found use in the world of graph homomorphism reconfigurations (e.g., \cite{dochtermann2009hom, dochtermann2023homomorphism, wrochna2020homomorphism}), with a somewhat different flavor, and we believe that this paper's topological perspective and combinatorial contributions will also be fruitful in future directions on reconfigurations of combinatorial structures, as well as studies of higher dimensional topological connectedness of solution spaces.

We described some open problems in previous sections. Two of these problems were about reconfiguration versions of asymptotic existence results on equi-$n$-squares and Latin $n$-squares (Problems \ref{prob:equi} and \ref{prob:latin}). Another one was a conjectured reconfiguration version of Ryser's conjecture (Conjecture \ref{conj:ryser}). Finally, two others were about reconfigurations of list colorings in color degree settings (Problems \ref{prob:vertex-coloring} and \ref{prob:edge-coloring}). It is also natural to ask about the higher dimensional topological connectedness of the associated configuration spaces. We conclude this paper by highlighting a few broader problems. 

A significant limitation with our topological approach, similar to the usual topological Hall theorem, is that it does not come with an efficient algorithm for finding reconfiguration sequences. Indeed, it does not even provide a bound on the diameter of the reconfiguration graph. It is easy to construct examples where our reconfiguration topological Hall theorem applies but where some reconfiguration sequences require using all the colorful simplices of the complex. This is not ideal in most combinatorial applications, where there are often exponentially many configurations but reconfiguration graphs are usually thought to have polynomially-sized diameters. In addition, our work does not yet say anything about the mixing times of the Markov chains associated with our reconfiguration graphs, whereas this is a problem of significant interest for obtaining efficient approximate random sampling and counting algorithms on the objects of interest (see \cite{jerrum1995very} for more on this topic). These remarks motivate the following problems.

\begin{prob}
For the combinatorial applications that we discussed, investigate the complexity of finding reconfiguration sequences between two input configurations.
\end{prob}

\begin{prob}
For those same combinatorial settings, find effective bounds on the diameter of the reconfiguration graphs.
\end{prob}

\begin{prob}
Investigate mixing times of Markov chains associated with our reconfiguration graphs, in particular determining when they are slowly or rapidly mixing.
\end{prob}

On the more topological side, our generalized topological Hall theorem (Theorem \ref{thm:topological-reconfiguration-complex}), about the topological connectedness of the colorful complex, was done only in the homological setting $\eta_H$. Naturally, here we ask whether the same result also holds in the homotopical setting $\eta_{\pi}$, similar to the homotopical proof we presented in the case $m = 1$ in Section \ref{sec:proofs}.

\begin{prob}
Determine whether Theorem \ref{thm:topological-reconfiguration-complex} also holds when homological connectedness $\eta_H$ is replaced by homotopical connectedness $\eta_{\pi}$.
\end{prob}

In particular, it would be interesting to determine when the complexes discussed in this paper are simply-connected. Going further, it is worth investigating the homotopy types of these complexes.

\section*{Acknowledgments}

The author would like to thank Penny Haxell for stimulating discussions. This research was supported in full by the Austrian Science Fund (FWF) [10.55776/F1002]. For open access purposes, the author has applied a CC BY public copyright license to any author accepted manuscript version arising from this submission.

\printbibliography

@article{abiad2025infinitely,
	title={Infinitely many counterexamples to a conjecture of {L}ov\'asz},
	author={Abiad, Aida and Garbe, Frederik and Povill, Xavier and Spiegel, Christoph},
	journal={Preprint arXiv:2506.21286},
	year={2025}
}

@article{aharoni2000hall,
	title={Hall's theorem for hypergraphs},
	author={Aharoni, Ron and Haxell, Penny},
	journal={Journal of Graph Theory},
	volume={35},
	number={2},
	pages={83--88},
	year={2000},
	publisher={John Wiley \& Sons, Inc. New York, NY, USA}
}

@article{aharoni2001ryser,
	title={Ryser's conjecture for tripartite 3-graphs},
	author={Aharoni, Ron},
	journal={Combinatorica},
	volume={21},
	number={1},
	pages={1--4},
	year={2001},
	publisher={Springer}
}

@article{aharoni2002triangulated,
	title={Triangulated spheres and colored cliques},
	author={Aharoni, Ron and Chudnovsky, Maria and Kotlov, Dani},
	journal={Discrete \& Computational Geometry},
	volume={28},
	number={2},
	pages={223--229},
	year={2002},
	publisher={Springer}
}

@article{aharoni2002tree,
	title={A Tree Version of {K}{\H{o}}nig's Theorem},
	author={Aharoni, Ron and Berger, Eli and Ziv, Ran},
	journal={Combinatorica},
	volume={22},
	number={3},
	pages={335--343},
	year={2002},
	publisher={Springer}
}

@article{aharoni2005eigenvalues,
	title={Eigenvalues and homology of flag complexes and vector representations of graphs},
	author={Aharoni, Ron and Berger, Eli and Meshulam, Roy},
	journal={Geometric \& Functional Analysis GAFA},
	volume={15},
	number={3},
	pages={555--566},
	year={2005},
	publisher={Springer}
}

@article{aharoni2006intersection,
	title={The intersection of a matroid and a simplicial complex},
	author={Aharoni, Ron and Berger, Eli},
	journal={Transactions of the American Mathematical Society},
	volume={358},
	number={11},
	pages={4895--4917},
	year={2006}
}

@article{aharoni2007independent,
	title={Independent systems of representatives in weighted graphs},
	author={Aharoni, Ron and Berger, Eli and Ziv, Ran},
	journal={Combinatorica},
	volume={27},
	number={3},
	pages={253--267},
	year={2007},
	publisher={Springer}
}

@article{aharoni2015cooperative,
	title={Cooperative colorings and independent systems of representatives},
	author={Aharoni, Ron and Holzman, Ron and Howard, David and Spr{\"u}ssel, Philipp},
	journal={The Electronic Journal of Combinatorics},
	volume={22},
	number={2},
	pages={P2--27},
	year={2015}
}

@article{aharoni2016eigenvalues,
	title={Eigenvalues of {$K_{1,k}$}-Free Graphs and the Connectivity of Their Independence Complexes},
	author={Aharoni, Ron and Alon, Noga and Berger, Eli},
	journal={Journal of Graph Theory},
	volume={83},
	number={4},
	pages={384--391},
	year={2016},
	publisher={Wiley Online Library}
}

@article{aharoni2017conjecture,
	title={On a conjecture of {S}tein},
	author={Aharoni, Ron and Berger, Eli and Kotlar, Dani and Ziv, Ran},
	journal={Abhandlungen aus dem Mathematischen Seminar der Universit{\"a}t Hamburg},
	volume={87},
	number={2},
	pages={203--211},
	year={2017},
	organization={Springer}
}

@article{aharoni2024coloring,
	title={Coloring, list coloring, and fractional coloring in intersections of matroids},
	author={Aharoni, Ron and Berger, Eli and Guo, He and Kotlar, Dani},
	journal={Preprint arXiv:2407.08789},
	year={2024}
}

@article{amenta2017helly,
	title={Helly’s theorem: new variations and applications},
	author={Amenta, Nina and De Loera, Jes{\'u}s A and Sober{\'o}n, Pablo},
	journal={Algebraic and Geometric Methods in Discrete Mathematics},
	volume={685},
	pages={55--95},
	year={2017}
}

@article{anastos2025note,
	title={A note on finding large transversals efficiently},
	author={Anastos, Michael and Morris, Patrick},
	journal={Journal of Combinatorial Designs},
	year={2025},
	publisher={Wiley Online Library}
}

@article{arocha2009very,
	title={Very colorful theorems},
	author={Arocha, Jorge L and B{\'a}r{\'a}ny, Imre and Bracho, Javier and Fabila, Ruy and Montejano, Luis},
	journal={Discrete \& Computational Geometry},
	volume={42},
	number={2},
	pages={142--154},
	year={2009},
	publisher={Springer}
}

@article{babson2006complexes,
	title={Complexes of graph homomorphisms},
	author={Babson, Eric and Kozlov, Dmitry N},
	journal={Israel Journal of Mathematics},
	volume={152},
	number={1},
	pages={285--312},
	year={2006},
	publisher={Springer}
}

@article{babson2007proof,
	title={Proof of the {L}ov{\'a}sz conjecture},
	author={Babson, Eric and Kozlov, Dmitry N},
	journal={Annals of Mathematics},
	pages={965--1007},
	year={2007},
	publisher={JSTOR}
}

@article{barany1982generalization,
	title={A generalization of {C}arath{\'e}odory's theorem},
	author={B{\'a}r{\'a}ny, Imre},
	journal={Discrete Mathematics},
	volume={40},
	number={2-3},
	pages={141--152},
	year={1982},
	publisher={Elsevier}
}

@article{barany1997colourful,
	title={Colourful linear programming and its relatives},
	author={B{\'a}r{\'a}ny, Imre and Onn, Shmuel},
	journal={Mathematics of Operations Research},
	volume={22},
	number={3},
	pages={550--567},
	year={1997},
	publisher={INFORMS}
}

@article{barany2018tverberg,
	title={Tverberg’s theorem is 50 years old: a survey},
	author={B{\'a}r{\'a}ny, Imre and Sober{\'o}n, Pablo},
	journal={Bulletin of the American Mathematical Society},
	volume={55},
	number={4},
	pages={459--492},
	year={2018}
}

@article{bjorner1992homology,
	title={The homology and shellability of matroids and geometric lattices},
	author={Bj{\"o}rner, Anders and others},
	journal={Matroid applications},
	volume={40},
	pages={226--283},
	year={1992}
}

@article{bjorner1995topological,
	title={Topological Methods},
	author={Bj{\"o}rner, Anders},
	journal={Handbook of Combinatorics},
	volume={2},
	pages={1819--1872},
	year={1995},
	publisher={Amsterdam}
}

@article{bohman2002list,
	title={On a list coloring conjecture of {R}eed},
	author={Bohman, Tom and Holzman, Ron},
	journal={Journal of Graph Theory},
	volume={41},
	number={2},
	pages={106--109},
	year={2002},
	publisher={Wiley Online Library}
}

@inproceedings{bonamy2019perfect,
	title={The Perfect Matching Reconfiguration Problem},
	author={Bonamy, Marthe and Bousquet, Nicolas and Heinrich, Marc and Ito, Takehiro and Kobayashi, Yusuke and Mary, Arnaud and M{\"u}hlenthaler, Moritz and Wasa, Kunihiro},
	booktitle={MFCS 2019-44th International Symposium on Mathematical Foundations of Computer Science},
	pages={1--14},
	year={2019}
}

@article{bousquet2022polynomial,
	title={A polynomial version of {C}ereceda's conjecture},
	author={Bousquet, Nicolas and Heinrich, Marc},
	journal={Journal of Combinatorial Theory, Series B},
	volume={155},
	pages={1--16},
	year={2022},
	publisher={Elsevier}
}

@article{bousquet2023feedback,
	title={Feedback vertex set reconfiguration in planar graphs},
	author={Bousquet, Nicolas and Hommelsheim, Felix and Kobayashi, Yusuke and M{\"u}hlenthaler, Moritz and Suzuki, Akira},
	journal={Theoretical Computer Science},
	volume={979},
	pages={114188},
	year={2023},
	publisher={Elsevier}
}

@article{browder1960continuity,
	title={On continuity of fixed points under deformations of continuous mappings},
	author={Browder, Felix E},
	journal={Summa Brasiliensis Mathematicae},
	volume={4},
	pages={183--191},
	year={1960}
}

@book{brualdi1991combinatorial,
	title={Combinatorial Matrix Theory},
	author={Brualdi, Richard A and Ryser, Herbert John},
	volume={39},
	year={1991},
	publisher={Springer}
}

@article{buys2025reconfiguration,
	title={Reconfiguration of Independent Transversals},
	author={Buys, Pjotr and Kang, Ross J and Ozeki, Kenta},
	journal={Random Structures \& Algorithms},
	volume={67},
	number={1},
	pages={e70025},
	year={2025},
	publisher={Wiley Online Library}
}

@article{cereceda2008connectedness,
	title={Connectedness of the graph of vertex-colourings},
	author={Cereceda, Luis and Van Den Heuvel, Jan and Johnson, Matthew},
	journal={Discrete Mathematics},
	volume={308},
	number={5-6},
	pages={913--919},
	year={2008},
	publisher={Elsevier}
}

@article{chakraborti2024almost,
	title={Almost-full transversals in equi-{$n$}-squares},
	author={Chakraborti, Debsoumya and Christoph, Micha and Hunter, Zach and Montgomery, Richard and Petrov, Teo},
	journal={Preprint arXiv:2412.07733},
	year={2024}
}

@article{cho2025colorful,
	title={Colorful circuits and colorful topes in oriented matroids},
	author={Cho, Minho and Lee, Seunghun and Meunier, Fr{\'e}d{\'e}ric},
	journal={arXiv preprint arXiv:2509.13718},
	year={2025}
}

@article{clow2025counterexample,
	title={A Counterexample to a Conjecture of {L}ov\'asz},
	author={Clow, Alexander and Haxell, Penny and Mohar, Bojan},
	journal={Preprint arXiv:2505.05339},
	year={2025}
}

@article{deloera2019discrete,
	title={The discrete yet ubiquitous theorems of {C}arath{\'e}odory, {H}elly, {S}perner, {T}ucker, and {T}verberg},
	author={De Loera, Jes{\'u}s and Goaoc, Xavier and Meunier, Fr{\'e}d{\'e}ric and Mustafa, Nabil},
	journal={Bulletin of the American Mathematical Society},
	volume={56},
	number={3},
	pages={415--511},
	year={2019}
}

@article{delcourt2022finding,
	title={Finding an almost perfect matching in a hypergraph avoiding forbidden submatchings},
	author={Delcourt, Michelle and Postle, Luke},
	journal={Preprint arXiv:2204.08981},
	year={2022}
}

@article{dochtermann2009hom,
	title={Hom complexes and homotopy theory in the category of graphs},
	author={Dochtermann, Anton},
	journal={European Journal of Combinatorics},
	volume={30},
	number={2},
	pages={490--509},
	year={2009},
	publisher={Elsevier}
}

@article{dochtermann2023homomorphism,
	title={Homomorphism complexes, reconfiguration, and homotopy for directed graphs},
	author={Dochtermann, Anton and Singh, Anurag},
	journal={European Journal of Combinatorics},
	volume={110},
	pages={103704},
	year={2023},
	publisher={Elsevier}
}

@article{eaves1972homotopies,
	title={Homotopies for computation of fixed points},
	author={Eaves, B Curtis},
	journal={Mathematical Programming},
	volume={3},
	number={1},
	pages={1--22},
	year={1972},
	publisher={Springer}
}

@inproceedings{edmonds2003submodular,
	title={Submodular functions, matroids, and certain polyhedra},
	author={Edmonds, Jack},
	booktitle={Combinatorial Optimization--Eureka, You Shrink! Papers Dedicated to Jack Edmonds 5th International Workshop Aussois, France, March 5--9, 2001 Revised Papers},
	pages={11--26},
	year={2003},
	organization={Springer}
}

@article{feghali2016reconfigurations,
	title={A reconfigurations analogue of {B}rooks' Theorem and its consequences},
	author={Feghali, Carl and Johnson, Matthew and Paulusma, Dani{\"e}l},
	journal={Journal of Graph Theory},
	volume={83},
	number={4},
	pages={340--358},
	year={2016},
	publisher={Wiley Online Library}
}

@article{gale1979game,
	title={The game of {H}ex and the {B}rouwer fixed-point theorem},
	author={Gale, David},
	journal={The American Mathematical Monthly},
	volume={86},
	number={10},
	pages={818--827},
	year={1979},
	publisher={Taylor \& Francis}
}

@article{garcia1975fixed,
	title={A fixed point theorem including the last theorem of {P}oincar{\'e}},
	author={Garcia, C. B.},
	journal={Mathematical Programming},
	volume={9},
	number={1},
	pages={227--239},
	year={1975},
	publisher={Springer}
}

@article{glock2022average,
	title={An average degree condition for independent transversals},
	author={Glock, Stefan and Sudakov, Benny},
	journal={Journal of Combinatorial Theory, Series B},
	volume={154},
	pages={370--391},
	year={2022},
	publisher={Elsevier}
}

@article{graf2020finding,
	title={Finding independent transversals efficiently},
	author={Graf, Alessandra and Haxell, Penny},
	journal={Combinatorics, Probability and Computing},
	volume={29},
	number={5},
	pages={780--806},
	year={2020},
	publisher={Cambridge University Press}
}

@article{hall1935representation,
	title={On representation of subsets},
	author={Hall, Philip},
	journal={Journal of the London Mathematical Society},
	volume={10},
	issue={1},
	pages={26--30},
	year={1935}
}

@book{hatcher2002algebraic,
	title={Algebraic Topology},
	author={Hatcher, Allen},
	year={2002},
	publisher={Cambridge University Press}
}

@article{haxell1995condition,
	title={A condition for matchability in hypergraphs},
	author={Haxell, Penny E},
	journal={Graphs and Combinatorics},
	volume={11},
	number={3},
	pages={245--248},
	year={1995},
	publisher={Springer}
}

@article{haxell2001note,
	title={A note on vertex list colouring},
	author={Haxell, Penny E},
	journal={Combinatorics, Probability and Computing},
	volume={10},
	number={4},
	pages={345--347},
	year={2001},
	publisher={Cambridge University Press}
}

@incollection{haxell2016independent,
	title={Independent transversals and hypergraph matchings--an elementary approach},
	author={Haxell, Penny},
	booktitle={Recent trends in combinatorics},
	pages={215--233},
	year={2016},
	publisher={Springer}
}

@article{haxell2018extremal,
	title={Extremal hypergraphs for {R}yser's Conjecture},
	author={Haxell, Penny and Narins, Lothar and Szab{\'o}, Tibor},
	journal={Journal of Combinatorial Theory, Series A},
	volume={158},
	pages={492--547},
	year={2018},
	publisher={Elsevier}
}

@inproceedings{haxell2019topological,
	title={Topological connectedness and independent sets in graphs},
	author={Haxell, Penny},
	booktitle={Surveys in Combinatorics 2019. London Mathematical Society Lecture Notes},
	volume={456},
	pages={89--114},
	year={2019},
	publisher={Cambridge University Press}
}

@article{haxell2024degree,
	title={Degree criteria and stability for independent transversals},
	author={Haxell, Penny and Wdowinski, Ronen},
	journal={Journal of Graph Theory},
	volume={106},
	number={2},
	pages={352--371},
	year={2024},
	publisher={Wiley Online Library}
}

@article{haxell2024constructing,
	title={Constructing Graphs with No Independent Transversals},
	author={Haxell, Penny and Wdowinski, Ronen},
	journal={The Electronic Journal of Combinatorics},
	volume={31},
	number={2},
	pages={P2--39},
	year={2024}
}

@phdthesis{henderson1971permutation,
	title={Permutation Decompositions of (0,1)-matrices and decomposition transversals},
	author={Henderson, John Robert},
	year={1971},
	school={California Institute of Technology}
}

@article{herings1998intersection,
	title={Intersection theorems with a continuum of intersection points},
	author={Herings, P. J.-J. and Talman, D.},
	journal={Journal of Optimization Theory and Applications},
	volume={96},
	number={2},
	pages={311--335},
	year={1998},
	publisher={Plenum Press New York, NY, USA}
}

@article{herings2001variational,
	title={Variational inequality problems with a continuum of solutions: existence and computation},
	author={Herings, P Jean-Jacques and Talman, Dolf and Yang, Zaifu},
	journal={SIAM Journal on Control and Optimization},
	volume={39},
	number={6},
	pages={1852--1873},
	year={2001},
	publisher={SIAM}
}

@article{holmsen2016intersection,
	title={The intersection of a matroid and an oriented matroid},
	author={Holmsen, Andreas F},
	journal={Advances in Mathematics},
	volume={290},
	pages={1--14},
	year={2016},
	publisher={Elsevier}
}

@incollection{holmsen2017helly,
	title={Helly-type theorems and geometric transversals},
	author={Holmsen, Andreas and Wenger, Rephael},
	booktitle={Handbook of Discrete and Computational Geometry},
	pages={91--123},
	year={2017},
	publisher={Chapman and Hall/CRC}
}

@article{ito2011complexity,
	title={On the complexity of reconfiguration problems},
	author={Ito, Takehiro and Demaine, Erik D and Harvey, Nicholas JA and Papadimitriou, Christos H and Sideri, Martha and Uehara, Ryuhei and Uno, Yushi},
	journal={Theoretical Computer Science},
	volume={412},
	number={12-14},
	pages={1054--1065},
	year={2011},
	publisher={Elsevier}
}

@article{ito2019reconfiguration,
	title={Reconfiguration of maximum-weight $b$-matchings in a graph},
	author={Ito, Takehiro and Kakimura, Naonori and Kamiyama, Naoyuki and Kobayashi, Yusuke and Okamoto, Yoshio},
	journal={Journal of Combinatorial Optimization},
	volume={37},
	number={2},
	pages={454--464},
	year={2019},
	publisher={Springer}
}

@article{ito2012reconfiguration,
	title={Reconfiguration of list edge-colorings in a graph},
	author={Ito, Takehiro and Kami{\'n}ski, Marcin and Demaine, Erik D},
	journal={Discrete Applied Mathematics},
	volume={160},
	number={15},
	pages={2199--2207},
	year={2012},
	publisher={Elsevier}
}

@article{jerrum1995very,
	title={A very simple algorithm for estimating the number of k-colorings of a low-degree graph},
	author={Jerrum, Mark},
	journal={Random Structures \& Algorithms},
	volume={7},
	number={2},
	pages={157--165},
	year={1995},
	publisher={Wiley Online Library}
}

@article{kahn1996asymptotically,
	title={Asymptotically good list-colorings},
	author={Kahn, Jeff},
	journal={Journal of Combinatorial Theory, Series A},
	volume={73},
	number={1},
	pages={1--59},
	year={1996},
	publisher={Elsevier}
}

@article{kalai2005topological,
	title={A topological colorful {H}elly theorem},
	author={Kalai, Gil and Meshulam, Roy},
	journal={Advances in Mathematics},
	volume={191},
	number={2},
	pages={305--311},
	year={2005},
	publisher={Elsevier}
}

@article{kang2022colorings,
	title={Colorings, transversals, and local sparsity},
	author={Kang, Ross J and Kelly, Tom},
	journal={Random Structures \& Algorithms},
	volume={61},
	number={1},
	pages={173--192},
	year={2022},
	publisher={Wiley Online Library}
}

@article{kulpa2000parametric,
	title={Parametric extension of the {P}oincar{\'e} theorem},
	author={Kulpa, W{\l}adys{\l}aw and Socha, Lesƚaw and Turza{\'n}ski, Marian},
	journal={Acta Universitatis Carolinae. Mathematica et Physica},
	volume={41},
	number={2},
	pages={39--46},
	year={2000},
	publisher={Charles University in Prague}
}

@article{loh2007independent,
	title={Independent transversals in locally sparse graphs},
	author={Loh, Po-Shen and Sudakov, Benny},
	journal={Journal of Combinatorial Theory, Series B},
	volume={97},
	number={6},
	pages={904--918},
	year={2007},
	publisher={Elsevier}
}

@article{lovasz1975minimax,
	title={On minimax theorems of combinatorics ({D}octoral thesis, in {H}ungarian)},
	author={Lov{\'a}sz, L{\'a}szl{\'o}},
	journal={Math. Lapok},
	volume={26},
	number={3-4},
	pages={209--264},
	year={1975}
}

@book{lovasz2009matching,
	title={Matching Theory},
	author={Lov{\'a}sz, L{\'a}szl{\'o} and Plummer, Michael D},
	volume={367},
	year={2009},
	publisher={American Mathematical Society}
}

@article{lovasz1978kneser,
	title={Kneser's conjecture, chromatic number, and homotopy},
	author={Lov{\'a}sz, L{\'a}szl{\'o}},
	journal={Journal of Combinatorial Theory, Series A},
	volume={25},
	number={3},
	pages={319--324},
	year={1978},
	publisher={Elsevier}
}

@book{matousek2013lectures,
	title={Lectures on {D}iscrete {G}eometry},
	author={Matou\v{s}ek, Jiri},
	year={2002},
	publisher={Springer}
}

@article{meshulam2001clique,
	title={The clique complex and hypergraph matching},
	author={Meshulam, Roy},
	journal={Combinatorica},
	volume={21},
	number={1},
	pages={89--94},
	year={2001},
	publisher={Springer}
}

@article{meshulam2003domination,
	title={Domination numbers and homology},
	author={Meshulam, Roy},
	journal={Journal of Combinatorial Theory, Series A},
	volume={102},
	number={2},
	pages={321--330},
	year={2003},
	publisher={Elsevier}
}

@article{meunier2020different,
	title={Different versions of the nerve theorem and colourful simplices},
	author={Meunier, Fr{\'e}d{\'e}ric and Montejano, Luis},
	journal={Journal of Combinatorial Theory, Series A},
	volume={169},
	pages={105125},
	year={2020},
	publisher={Elsevier}
}

@article{montejano2017variation,
	title={A variation on the homological nerve theorem},
	author={Montejano, Luis},
	journal={Topology and its Applications},
	volume={225},
	pages={139--144},
	year={2017},
	publisher={Elsevier}
}

@article{montgomery2023proof,
	title={A proof of the {R}yser-{B}rualdi-{S}tein conjecture for large even $n$},
	author={Montgomery, Richard},
	journal={Preprint arXiv:2310.19779},
	year={2023}
}

@article{mukherjee2016random,
	title={Random walks on simplicial complexes and harmonics},
	author={Mukherjee, Sayan and Steenbergen, John},
	journal={Random Structures \& Algorithms},
	volume={49},
	number={2},
	pages={379--405},
	year={2016},
	publisher={Wiley Online Library}
}

@book{munkres2018elements,
	title={Elements of Algebraic Topology},
	author={James R Munkres},
	year={2018},
	publisher={CRC press}
}

@incollection{mynhardt2019reconfiguration,
	title={Reconfiguration of colourings and dominating sets in graphs},
	author={Mynhardt, Christina M and Nasserasr, Shahla},
	booktitle={50 Years of Combinatorics, Graph Theory, and Computing},
	pages={171--191},
	year={2019},
	publisher={Chapman and Hall/CRC}
}

@phdthesis{narins2015extremal,
	title={Extremal hypergraphs for Ryser's conjecture},
	author={Narins, Lothar},
	year={2015},
	school={Free Univeristy of Berlin}
}

@article{nishimura2018introduction,
	title={Introduction to reconfiguration},
	author={Nishimura, Naomi},
	journal={Algorithms},
	volume={11},
	number={4},
	pages={52},
	year={2018},
	publisher={MDPI}
}

@article{oliveros2025tverberg,
	title={Tverberg partition graphs},
	author={Oliveros, Deborah and Rold{\'a}n, {\'E}rika and Sober{\'o}n, Pablo and Torres, Antonio J},
	journal={SIAM Journal on Discrete Mathematics},
	volume={39},
	number={2},
	pages={863--880},
	year={2025},
	publisher={SIAM}
}

@book{oxley2006matroid,
	title={Matroid Theory},
	author={Oxley, James G},
	volume={3},
	year={2006},
	publisher={Oxford University Press, USA}
}

@article{pokrovskiy2019counterexample,
	title={A counterexample to {S}tein’s Equi-{$n$}-square Conjecture},
	author={Pokrovskiy, Alexey and Sudakov, Benny},
	journal={Proceedings of the American Mathematical Society},
	volume={147},
	number={6},
	pages={2281--2287},
	year={2019}
}

@article{rado1942theorem,
	title={A theorem on independence relations},
	author={Rado, Richard},
	journal={The Quarterly Journal of Mathematics},
	number={1},
	pages={83--89},
	year={1942},
	publisher={Oxford University Press}
}

@article{reed1999list,
	title={The list colouring constants},
	author={Reed, Bruce},
	journal={Journal of Graph Theory},
	volume={31},
	number={2},
	pages={149--153},
	year={1999},
	publisher={John Wiley \& Sons, Inc. New York, NY, USA}
}

@article{reed2002asymptotically,
	title={Asymptotically the list colouring constants are 1},
	author={Reed, Bruce and Sudakov, Benny},
	journal={Journal of Combinatorial Theory, Series B},
	volume={86},
	number={1},
	pages={27--37},
	year={2002},
	publisher={Elsevier}
}

@article{ryser1967neuere,
	title={Neuere probleme der kombinatorik},
	author={Ryser, Herbert J},
	journal={Vortr{\"a}ge {\"u}ber Kombinatorik, Oberwolfach},
	volume={69},
	number={91},
	year={1967},
	publisher={Matematisches Forschungsinstitute Oberwolfach, Germany}
}

@article{sarkaria1992tverberg,
	title={Tverberg’s theorem via number fields},
	author={Sarkaria, Karanbir S},
	journal={Israel Journal of Mathematics},
	volume={79},
	number={2},
	pages={317--320},
	year={1992},
	publisher={Springer-Verlag New York}
}

@article{solan2023browder,
	title={Browder’s theorem through {B}rouwer’s fixed point theorem},
	author={Solan, Eilon and Solan, Omri N},
	journal={The American Mathematical Monthly},
	volume={130},
	number={4},
	pages={370--374},
	year={2023},
	publisher={Taylor \& Francis}
}

@book{spanier2012algebraic,
	title={Algebraic Topology},
	author={Spanier, Edwin H},
	year={2012},
	publisher={Springer Science}
}

@article{sperner1928neuer,
	title={Neuer, Beweis f{\"u}r die Invarianz der Dimensionszahl und des Gebietes},
	author={Sperner, Emanuel},
	journal={Abhandlungen aus dem Mathematischen Seminar der Universit{\"a}t Hamburg},
	number={6},
	year={1928},
	pages={265--272},
	publisher={Vandenhoeck \& Ruprecht}
}

@article{stein1975transversals,
	title={Transversals of {L}atin squares and their generalizations},
	author={Stein, Sherman K},
	journal={Pacific Journal of Mathematics},
	volume={59},
	pages={567--575},
	year={1975}
}

@article{szabo2006extremal,
	title={Extremal problems for transversals in graphs with bounded degree},
	author={Szab{\'o}, Tibor and Tardos, G{\'a}bor},
	journal={Combinatorica},
	volume={26},
	number={3},
	pages={333--351},
	year={2006},
	publisher={Springer}
}

@article{talman2012parameterized,
	title={On a parameterized system of nonlinear equations with economic applications},
	author={Talman, Dolf and Yang, Zaifu},
	journal={Journal of Optimization Theory and Applications},
	volume={154},
	pages={644--671},
	year={2012},
	publisher={Springer}
}

@article{tverberg1966generalization,
	title={A generalization of {R}adon's theorem},
	author={Tverberg, Helge},
	journal={Journal of the London Mathematical Society},
	volume={1},
	number={1},
	pages={123--128},
	year={1966},
	publisher={Oxford University Press}
}

@article{walker1988canonical,
	title={Canonical homeomorphisms of posets},
	author={Walker, James W.},
	journal={European Journal of Combinatorics},
	volume={9},
	number={2},
	pages={97--107},
	year={1988},
	publisher={Elsevier}
}

@article{wdowinski2024bounded,
	title={Bounded degree graphs and hypergraphs with no full rainbow matchings},
	author={Wdowinski, Ronen},
	journal={Preprint arXiv:2401.06029},
	year={2024}
}

@article{wdowinski2025tight,
	title={Tight constructions for reconfigurations of independent transversals},
	author={Wdowinski, Ronen},
	journal={In preparation},
	year={2025}
}

@article{welsh1970matroid,
	title={On Matroid Theorems of {E}dmonds and {R}ado},
	author={Welsh, DJA},
	journal={Journal of the London Mathematical Society},
	volume={2},
	number={2},
	pages={251--256},
	year={1970},
	publisher={Wiley Online Library}
}

@article{wrochna2020homomorphism,
	title={Homomorphism reconfiguration via homotopy},
	author={Wrochna, Marcin},
	journal={SIAM Journal on Discrete Mathematics},
	volume={34},
	number={1},
	pages={328--350},
	year={2020},
	publisher={SIAM}
}

\end{document}